\PassOptionsToPackage{vlined,linesnumbered}{algorithm2e}
\documentclass[]{alt2027}

\usepackage{times}
\usepackage{mathtools,enumitem,booktabs,microtype}
\hypersetup{hidelinks}
\renewenvironment{algorithm}[1][htbp]
  {\begin{algorithm2e}[#1]}{\end{algorithm2e}}
\SetKwInput{KwInput}{Input}
\SetKwInput{KwOutput}{Output}

\numberwithin{theorem}{section}
\newtheorem{assumption}[theorem]{Assumption}

\renewenvironment{proof}[1][\proofname]
  {\par\noindent{\bfseries\upshape #1\ }}
  {\jmlrQED}

\newcommand{\E}{\mathbb E}
\newcommand{\Prob}{\mathbb P}
\newcommand{\dd}{\,\mathrm d}
\newcommand{\cF}{\mathcal F}
\newcommand{\cP}{\mathcal P}
\newcommand{\eps}{\varepsilon}
\newcommand{\Ber}{\mathrm{Ber}}
\newcommand{\KL}{\mathrm{KL}}
\newcommand{\gft}{\mathrm{gft}}

\newcommand{\Xinfo}{\mathcal X_{\mathrm{info}}}
\newcommand{\Xminus}{\mathcal X_-}
\newcommand{\Xplus}{\mathcal X_+}

\newcommand{\CL}{C_{\lambda,L}}

\title{From Fairness to Efficiency in Repeated Bilateral Trade}

\begin{document}
\maketitle


\vspace{-1cm}

\begin{center}
Fran\c{c}ois Bachoc$^1$, Tommaso Cesari$^2$
		and Roberto Colomboni$^3$ \\
		$^1$University of Lille and Institut Universitaire de France (IUF), France\\
		$^2$EECS, University of Ottawa, Canada \\
		$^3$ School of Mathematics, University of Bristol, United Kingdom
\end{center}

\begin{abstract}
Recently, \cite{bachoc2026repeated} proposed an axiomatic approach to fairness for repeated bilateral trade. Their resulting objective is a H\"older mean of seller and buyer utilities with non-positive order $\lambda$, interpolating between Rawlsian fairness at $\lambda = -\infty$ and Nash fairness at $\lambda = 0$. Here we study this family of H\"older means
in the complementary setting of orders $\lambda \in (0,1]$, 
which constitutes a tradeoff between fairness and efficiency, with the weight of efficiency increasing with $\lambda$, and where $\lambda=1$ 
yields the existing gain from trade criterion, focusing solely on efficiency.
We obtain an axiomatic characterization of these H\"older means, relying on a Pigou-Dalton principle of transferring from rich to poor. 
From an online learning perspective, we show that, contrary to the case $\lambda \le 0$, the problem is unlearnable in full generality, even under seller's and buyer's valuation independence. We focus on seller's and buyer's valuations with  distributions having bounded densities, in which case we obtain matching lower and upper bounds on the cumulative regret, of order $\widetilde{\Theta}(T^{2/3})$ with respect to the time horizon $T$.
The upper bounds rely on a reconstruction identity where the expected objective is written as a gain-from-trade term plus a two-dimensional integral involving the seller and buyer distribution and survival functions, and a kernel function. For $0 < \lambda < 1/2$, the strong singularity of the kernel is addressed by two main ingredients.
First, we propose a new zooming algorithm that sequentially rejects or refines layers of the set of prices. Second, we propose
a tailored decomposition of the two-dimensional integral, enabling us to truncate it with a controlled impact on the truncation error. For $1/2 \le \lambda \le 1$, the singularity of the kernel is mild enough, so that the existing scouting bandit algorithm, already proposed for the case $\lambda=1$, can be extended.
\end{abstract}

\section{Introduction}

Consider repeated bilateral trade, where at round $t \in \mathbb{N}$ a learner posts a price $P_t \in [0,1]$, a seller with valuation $S_t \in [0,1]$ and a buyer with valuation $B_t \in [0,1]$ arrive independently, and trade happens if and only if $S_t \le P_t \le B_t$. If trade occurs, the seller's gain is $P_t - S_t$ and the buyer's is $B_t - P_t$. The learner does not observe $S_t$ or $B_t$ but does observe the bits $Y_t := \mathbb{I}\{S_t \le P_t\}$ and $Z_t := \mathbb{I}\{P_t \le B_t\}$. The classical objective in this setting, the \emph{gain from trade}, measures the learner's performance according to the sum of the participants' gains $(P_t - S_t) + (B_t - P_t) = B_t - S_t$, whenever trade happens. Our goal is instead to optimize a notion of fairness, taking into account how evenly this gain sum is split between buyer and seller. The reference \cite{bachoc2026repeated} considered a family of objectives measuring fairness with H\"older means, parameterized by their H\"older order $\lambda \in (-\infty, 0]$: for
$x,y \ge 0$, for $\lambda < 0$,
\begin{equation} \label{eq:holder:means}
M_\lambda(x, y) := \left(\frac{x^\lambda + y^\lambda}{2}\right)^{1/\lambda},
\end{equation}
$M_{-\infty}(x,y):=\min(x,y)$
and $M_0(x,y):=\sqrt{xy}$.
In this setting, the learner's fairness performance is $M_\lambda(P_t - S_t, B_t - P_t)$, called \emph{$\lambda$-fair gain from trade}, whenever trade occurs.
Note that  $M_{-\infty}$ corresponds to Rawlsian fairness, that $M_0$ is associated with Nash's bargaining solution and that fairness becomes less stringent as $\lambda$ increases. Furthermore, for any $\lambda \in [- \infty,0]$,
from the convention $1 / \infty = 0$, we have
$M_{\lambda}(x,0) = M_{\lambda}(0,y) = 0$, that is, a division of the total gain of the form ``one takes it all'' receives fairness score zero.

\subsection{Contributions}
\label{subsection:contributions}

In this paper, we consider the H\"older means $M_{\lambda}$ in \eqref{eq:holder:means}, but with $\lambda \in (0,1]$ and again, the learner's fairness performance is $M_\lambda(P_t - S_t, B_t - P_t)$, whenever trade occurs.
For this range of $\lambda$, for $x,y>0$, we have $M_{\lambda}(x,0) >0$ and  $M_{\lambda}(0,y) > 0$, so that completely unfair trades are given non-zero score. 
Since also $M_1(x,y) = (x+y)/2$ is half of the gain from trade, the family $M_{\lambda}, \lambda \in (0,1]$, interpolates between fairness at $\lambda=0$ and efficiency at $\lambda=1$.
Note that $M_1$ was studied in \cite{cesa2024bilateralMOR}.
We provide an axiomatic characterization of this interpolation in Appendix~\ref{app:axioms}.

We then focus on the cumulative regret, and provide algorithms, their upper bounds, and matching lower bounds for all positive H\"older orders $\lambda \in (0, 1]$.
Together with the results of \cite{bachoc2026repeated} for $\lambda \le 0$, we exhibit a phase transition at $\lambda = 0$. For $\lambda \le 0$, sublinear regret can be obtained for arbitrary distributions of $S_t$ and $B_t$ (that are assumed to be independent), while for any $\lambda \in (0,1]$ we prove a lower bound with linear regret for arbitrary distributions, see Section \ref{section:linear:lower}.
For this lower bound, we use a ``needle in a haystack'' scheme as in \cite{cesa2024bilateralMOR}.

Then, for any fixed $\lambda \in (0,1]$, we assume that $S_t$ and $B_t$ have bounded densities. We obtain upper and lower bounds on the cumulative regret, of orders $\widetilde{\Theta}(T^{2/3})$. The lower bound
is provided in Section \ref{section:LB:bounded:dens} and its
proof follows a revealing-action structure, extending \cite{bachoc2024fairNeurIPS,cesa2024bilateralMOR,bachoc2026repeated}. Regarding the algorithms and upper bounds, we rely on a representation lemma of the expected fair gain from trade, in the form of a gain-from-trade term plus a two-dimensional integral involving a kernel function and the distribution and survival functions of $S_t$ and $B_t$, see Section \ref{section:representation:lemma}.

Then, we treat two ranges of $\lambda$ separately. 
For $\lambda \in (0, 1/2)$, addressed in Section \ref{sec:hard-alg-main}, 
the kernel is strongly singular. As a consequence, we need to construct a decomposition of it that enables us to truncate it, to filter out the singularity, with sufficient control on the resulting bias. Also, we provide a zooming algorithm that successfully refines or discards regions of the set of possible prices, $[0,1]$. 
Consider next $\lambda \in [1/2, 1]$. For $\lambda =1$ we simply refer to \cite{cesa2024bilateralMOR}. For $\lambda \in [1/2,1)$, the previous decomposition and zooming algorithm are not needed, and we can more simply adapt the scouting bandit algorithm of \cite{cesa2024bilateralMOR}, see Section \ref{subsection:lambda:half:one}.

\subsection{Techniques and challenges}

The main challenge for the upper bounds is the setting $0 < \lambda < 1/2$, where as discussed above, the kernel has a strong singularity. This singularity obstructs a direct extension of the global estimation analysis of \cite{bachoc2026repeated} with the concentration rate required for our regret bound. The zooming algorithm we suggest proceeds in epochs, where at each epoch $m$ ($m=0,1\dots$), an estimation accuracy of order $\Delta_m = 2^{-m}$ is aimed for, with sufficiently high probability. At epoch $m$, this accuracy is aimed only for an active layer, which is a subset of $[0,1]$, and whose complement consists of layers that were eliminated during previous epochs. The estimation procedure at epoch $m$ relies on two main original ingredients. First, we sample exploration prices independently on the entire price set $[0,1]$, but with a non-uniform density that is larger in regions where we currently have stronger confidence that they contain close-to-optimal prices for the fair gain from trade. By precisely tuning the non-uniformity, we can post $\widetilde{O}(\Delta_m^{-3})$ prices but paying only  $\widetilde{O}(\Delta_m^{-2})$ regret, and achieving accuracy of order $\Delta_m$. Second, given these non-uniform samples, the estimation procedure estimates the double integral stemming from the representation lemma discussed above.
Similarly as in \cite{bachoc2026repeated}, we use a double sum but, in contrast, we also need to introduce a new involved decomposition that enables us to lower the bias caused by a truncation step.

Our lower bounds under bounded densities rely on constructing a smoothed revealing-action instance, similarly as in \cite{bachoc2026repeated}. 
We construct environments with distributions that differ by towers of fixed width that have mass shifts sufficient to reverse which of two separated reward peaks is larger. Crucially the only prices from which one can gain information to distinguish environments come with a constant regret gap, thus separating reward and information. 
For a shift of mass $\varepsilon$,
each informative sample has KL divergence $O(\varepsilon^2)$, so one must balance $\Omega(\varepsilon^{-2})$ costly informative samples against $\Omega(T\varepsilon)$ regret in at least one environment from choosing the wrong side, leading to an $\Omega(T^{2/3})$ lower bound, by tuning $\varepsilon=\Theta(T^{-1/3})$. 
The technical arguments are however significantly more challenging than in \cite{bachoc2026repeated}, since our distributions for $S_t$ and $B_t$, in both environments, must have bounded densities, with a fixed bound.

\subsection{Model and objectives}
\label{sec:model-objectives}

The learner's task is the following. At each round $t$, it chooses a price $P_t \in [0, 1]$ to post. Then a pair of seller and buyer valuations $(S_t, B_t)$ is drawn from a product\footnote{Similarly as in \cite{cesa2024bilateralMOR,bachoc2026repeated}, the dependence between buyer and seller valuations would make the problem unlearnable.} distribution, $(S,B) \sim \nu = \nu_S \otimes \nu_B$, independently from round to round. The learner observes whether or not a trade takes place according to the feedback
\begin{equation}
\label{eq:Yt:Zt}
Y_t := \mathbb{I}\{S_t \le P_t\}
\qquad\text{and}\qquad
Z_t := \mathbb{I}\{P_t \le B_t\}.
\end{equation}

In particular, the learner never observes the actual valuations of sellers or buyers. For a distribution $\nu = \nu_S \otimes \nu_B$ of product form, we shall denote $F_S$ for the cdf of the distribution of the seller, and write $F_B^\circ$ for the survival function of the distribution of the buyer: $F_B^\circ(p) = \Prob(B \ge p)$. For $1 \le L < \infty$, we write $\mathcal D_L$ for the class of distributions $\nu \in \mathcal P([0, 1]^2)$ of product form $\nu = \nu_S \otimes \nu_B$ for which both $S$ and $B$ take their values in $[0, 1]$ and admit a density that is bounded by $L$. If $\nu \in \mathcal D_L$, then, for all $0 \le x \le y \le 1$:
\[
0 \le F_S(y) - F_S(x) \le L(y-x),
\qquad
0 \le F_B^\circ(x) - F_B^\circ(y) \le L(y-x).
\]

For $\lambda \in [0, 1]$, recall the definition of $M_{\lambda}$ in Section 
\ref{subsection:contributions} and in \eqref{eq:holder:means}.
The $\lambda$-fair gain from trade is
\[
\gft_\lambda(p,s,b)
:=
\mathbb{I}\{s\le p\le b\}M_\lambda(p-s,b-p),
~ ~ ~ ~
\lambda,p,s,b \in [0,1].
\]

For all $\nu \in \mathcal P([0, 1]^2)$, define the mean of the gain from trade, parameterized by $\lambda \in [0, 1]$, as the function $G_\lambda : [0, 1] \to \mathbb R$ defined by:
\[
G_\lambda(p) = \E[\gft_\lambda(p,S,B)],
\]
where $(S, B) \sim \nu$. We shall write
\[
G_\lambda^\star := \sup_{p\in[0,1]} G_\lambda(p).
\]
Given a learner strategy $\alpha$, its regret when playing against $\nu$, at horizon $T \in \mathbb{N}$, is
\[
R_\lambda^\nu(\alpha,T)
=
\E_\nu\left[
\sum_{t=1}^T
\bigl(G_\lambda^\star-G_\lambda(P_t)\bigr)
\right],
\]
where $P_t$ denotes the price posted by $\alpha$ at time $t$. Our goal is to understand the dependence of
\[
\inf_\alpha \sup_{\nu\in\mathcal D_L} R_\lambda^\nu(\alpha,T),
\]
for fixed $\lambda$, with respect to $T$.

\subsection{Related work}

The closest references to our work address fair bilateral trade.
\cite{bachoc2024fairNeurIPS} considered repeated bilateral trade from a fairness perspective by optimizing $M_{-\infty}$. \cite{bachoc2026repeated} extended this study to the entire range $M_{\lambda}$, $\lambda \le 0$. Also, 
\cite{cesari2025volumeICLR} studied bilateral trade with the goal of maximizing trading volume, that is the probability of an accepted trade, irrespective of its magnitude.

There is also a significant literature focusing on bilateral trade from a pure efficiency viewpoint, starting with \cite{cesa2021bilateralEC,cesa2024bilateralMOR}, in the stochastic setting, similar to our work. 
Several extensions were then carried out, including the adversarial setting with smoothed adversaries~\citep{cesa2023smoothT34COLT,cesa2024smoothT34JMLR}, global budget balance~\citep{bernasconi2024globalSTOC}, adversarial notion of $\alpha$-regret~\citep{azar2022alphaNeurIPS}, limited information~\citep{bacchiocchi2025NeurIPS}, and a tight characterization of performance achievable by fixed-pricing~\citep{chen2026tightICALP}. 

Related to bilateral trade,
there are multiple related online learning problems stemming from economic models.
The brokerage problem was first addressed in~\cite{bolic2024onlineAAMAS,bachoc2025online} and then extended to the contextual setting, with parametric~\citep{bachoc2025parcnxtICML} and nonparametric~\citep{bachoc2025nonparcnxtAISTATS} structures. Another related model is the repeated mediated newsvendor one~\citep{bolic2025newsvendorNeurIPS}.
Adaptive social-welfare maximization~\citep{Cesa2025econometrica} considers a more abstract version of efficiency where the social-welfare is unknown and inferred from observations. Market making~\citep{cesa2025marketCOLT} studies the economic aspects of information elicitation and the price of information.

From a learning theory viewpoint, the techniques used in our work are related to several references.
Learning from incomplete observations has been considered in the budget-dependent zeroth-order convex optimization problem~\citep{bachoc2024zerothTMLR}, partial monitoring~\citep{bartok2014partial}, and continuum-armed bandits~\citep{bubeck2011xarmed}. 
Our problem of determining where to spend sampling budget is reminiscent of active learning~\citep{cohn1996active} and connects to the Horvitz--Thompson estimator~\citep{horvitz1952generalization} and adaptive importance sampling~\citep{he2024adaptive}. Our zooming algorithm and its successive region eliminations are related to  \cite{munos2011optimistic,de2012exponential,bouttier2020regret,bachoc2021sample,bachoc2021instance}.

\section{Main regret results under bounded densities}

\subsection{Representation lemma}
\label{section:representation:lemma}

The following lemma extends the representation lemma of
\cite[Lemma~2]{bachoc2026repeated} from $\lambda\leq0$ to $\lambda\in(0,1]$.

\begin{lemma}[Representation lemma]
\label{lem:main-reconstruction}
Fix $\lambda\in(0,1]$ and set
\[
H_\lambda(x,y)
=
\frac{1-\lambda}{4}
M_\lambda(x,y)^{1-2\lambda}
x^{\lambda-1}y^{\lambda-1},
\qquad x,y>0.
\]
Then
\[
M_\lambda(x,y)
=
2^{-1/\lambda}(x+y)
+
\int_0^x\int_0^y
H_\lambda(u,v)\dd v\dd u,
\qquad x,y\ge0,
\]
where the double integral on the right-hand side is finite. If in addition $S$ and $B$ are independent, then
\[
G_\lambda(p)
=
2^{1-1/\lambda}G_1(p)
+
\mathcal I_\lambda(p),
\]
where
\begin{equation}
\label{eq:Gun}
G_1(p)
=
\frac12
\left[
F_B^\circ(p)\int_0^pF_S(v)\dd v
+
F_S(p)\int_p^1F_B^\circ(w)\dd w
\right],
\end{equation}
and
\[
\mathcal I_\lambda(p)
=
\int_0^p\int_p^1
H_\lambda(p-v,w-p)
F_S(v)F_B^\circ(w)
\dd w\dd v,
\]
for any $p\in[0,1]$.
\end{lemma}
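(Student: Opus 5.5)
The plan is to prove the pointwise identity for $M_\lambda$ first, and then integrate it against the product law of $(S,B)$ using Fubini and independence.

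\textbf{Step 1: boundary values and the mixed derivative.} For $x,y>0$, $M_\lambda$ is smooth. Write $M=M_\lambda$. Then
\[
\partial_x M = \tfrac12 M^{1-\lambda}x^{\lambda-1}.
\]
Differentiating again in $y$ gives
\[
\partial_y\partial_x M=\tfrac12(1-\lambda)M^{-\lambda}\,\partial_y M\, x^{\lambda-1}=\tfrac{1-\lambda}{4}M^{1-2\lambda}x^{\lambda-1}y^{\lambda-1}=H_\lambda(x,y).
\]
Since $\lambda>0$, we have $M(x,0)=2^{-1/\lambda}x$ and $M(0,y)=2^{-1/\lambda}y$, and $M(0,0)=0$. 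The fundamental theorem of calculus in two variables then formally gives
\[
M(x,y)=M(x,0)+M(0,y)-M(0,0)+\int_0^x\int_0^y\partial_u\partial_vM .
\]
To make this rigorous despite the singularity at the axes, I will apply it on $[\eps,x]\times[\delta,y]$ and let $\eps,\delta\downarrow0$, using continuity of $M$ on $[0,\infty)^2$. Because $H_\lambda\ge0$, monotone convergence identifies the limit of the integrals with the full double integral. Finiteness then follows automatically, since the integral equals $M(x,y)-2^{-1/\lambda}(x+y)<\infty$. The degenerate cases where $x=0$ or $y=0$ are immediate.

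\textbf{Step 2: taking expectations.} Plug in $x=p-S$ and $y=B-p$ on the trade event $\{S\le p\le B\}$. The linear term gives
\[
2^{-1/\lambda}\,\E\bigl[\mathbb I\{S\le p\le B\}(B-S)\bigr]=2^{-1/\lambda}\cdot 2G_1(p)=2^{1-1/\lambda}G_1(p),
\]
because $M_1(x,y)=(x+y)/2$. The formula \eqref{eq:Gun} for $G_1$ then follows from independence together with the layer-cake identities
\[
\E[(p-S)^+]=\int_0^pF_S(v)\dd v,\qquad \E[(B-p)^+]=\int_p^1F_B^\circ(w)\dd w,
\]
and $\Prob(B\ge p)=F_B^\circ(p)$.

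\textbf{Step 3: the double-integral term.} Change variables to $v'=p-u$ and $w'=p+v$, which turns the term into $\int_{S}^{p}\int_{p}^{B}H_\lambda(p-v',w'-p)\dd w'\dd v'$. Next write the integration region as indicators $\mathbb I\{S\le v'\}\,\mathbb I\{w'\le B\}$ with $v'\in[0,p]$ and $w'\in[p,1]$. These indicators automatically enforce $S\le p\le B$, so the trade indicator can be absorbed. Tonelli's theorem applies since the integrand is nonnegative, and taking expectations with independence yields
\[
\E\bigl[\mathbb I\{S\le v'\}\,\mathbb I\{w'\le B\}\bigr]=F_S(v')F_B^\circ(w'),
\]
which gives $\mathcal I_\lambda(p)$.

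\textbf{Main obstacle.} The only delicate points are the boundary convention at points where $M$ is singular and the exchange of limits near the axes; positivity of $H_\lambda$ handles both. I also need to check the endpoint conventions ($\le$ versus $<$ in $F_B^\circ$), which do not affect the Lebesgue integrals.
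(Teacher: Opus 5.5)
Your proposal is correct and follows essentially the same route as the paper: the mixed derivative and the axis values of $M_\lambda$, the two-dimensional fundamental theorem of calculus on rectangles shrinking to the axes with monotone convergence (which also gives finiteness), and then expectations under independence with a change of variables. The only differences are cosmetic: you use two truncation parameters, and you apply Tonelli directly to the nonnegative integrand rather than truncating, applying Fubini, and then passing to the limit, which is equally valid.
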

The proof of Lemma~\ref{lem:main-reconstruction} is postponed to Appendix~\ref{app:kernel-proof}.

\subsection{The case \texorpdfstring{$0<\lambda<1/2$}{0<lambda<1/2}}
\label{sec:hard-alg-main}

\subsubsection{Preliminaries}

For $0<\lambda<1/2$, we shall use a large constant $\CL$ that depends on $L$ and $\lambda$ but not $T$. We set
\begin{equation}
\CL
:=
\frac{2^{28}(1+L)^2}{\lambda^4},
\label{eq:CL:Def}
\end{equation}
and do not try to minimize its dependency on these two parameters. For $\Delta\in(0,1]$ and $\delta\in(0,1)$, we write
\[
\ell(\Delta,\delta)
:=
\log\left(\frac{64}{\Delta\delta}\right),
\qquad
q_\Delta(\delta)
:=
\left\lceil
\frac{\ell(\Delta,\delta)}{2\Delta^2}
\right\rceil.
\]

We use the following standard notation for a tree structure on the dyadic subsets of $[0,1]$. For $m\in\mathbb N\cup\{0\}$, write $
\Delta_m:=2^{-m}
$
and define the dyadic partition of $[0,1]$ to be
\[
\mathcal D_m
:=
\left\{
\left[\frac{k-1}{2^m},\frac{k}{2^m}\right):
k=1,\ldots,2^m-1
\right\}
\cup
\left\{
\left[\frac{2^m-1}{2^m},1\right]
\right\}.
\]
For $a<b$, if $I=[a,b]$ or $I=[a,b)$, we define its center to be $c(I):=(a+b)/2$. We define the two children of $[a,b]$ as
$
\left[a,\frac{a+b}{2}\right)
\quad\text{and}\quad
\left[\frac{a+b}{2},b\right]$
and the two children of $[a,b)$ as $
\left[a,\frac{a+b}{2}\right)
\quad\text{and}\quad
\left[\frac{a+b}{2},b\right)$.

\subsubsection{The algorithm}
\label{subsubsection:algorithm}
For simplicity, we only define our algorithm, see Algorithm~\ref{alg:main}, in the case where $T$ is large enough such that
\begin{equation}
\label{eq:T:large:enough}
\left(
\frac{4\CL\log(64T^5)}{T}
\right)^{1/3}
\le1,
\end{equation}
that is, $T$ is larger than some constant that may depend on $L$ and $\lambda$.

We divide the rounds into a sequence of epochs indexed by $m=0,1,\dots$. During each epoch, the algorithm keeps track of a collection of dyadic intervals
\[
\mathsf A_m\subseteq\mathcal D_m,
\]
the goal being that this collection contains those prices that are optimal at scale $\Delta_m$ for the expected objective $G_\lambda$, with high probability. We take
$
\mathsf A_0:=\{[0,1]\}$.
In the $m$-th epoch, the algorithm obtains an estimator for $G_\lambda(p)$ uniformly for all midpoints
\begin{equation}
\label{eq:Nm}
N_m:=\{c(I):I\in\mathsf A_m\}.
\end{equation}
This step (Line~\ref{line:estimation} of Algorithm~\ref{alg:main}) is achieved by implementing Algorithm~\ref{alg:estimate}\footnote{When $m=0$, Algorithm~\ref{alg:estimate} has
$
(\mathsf A_0,N_0,0,\delta_0,1)
$
as input and skips Lines~\ref{line:loop:rejected} to~\ref{sampling:cell:R}.}, which is described in Section~\ref{subsubsection:estimation}.
There $\delta_m$ is a confidence level.
During this step, Algorithm~\ref{alg:estimate} posts prices and observes the corresponding two-bit feedback. 
This estimation
enables the algorithm to identify a collection of dyadic intervals
\[
\mathsf R_m\subseteq\mathsf A_m
\]
whose prices are not optimal at scale $\Delta_{m+1}$, with high probability (Line~\ref{line:reject}).
These intervals are thus rejected at epoch $m$ (Line~\ref{line:reject:bis}).
The algorithm then splits each dyadic interval in $\mathsf A_m \backslash \mathsf R_m$  into its two dyadic children (hence its name, Zooming), which constitute $\mathsf A_{m+1}\subseteq\mathcal D_{m+1}$ in epoch $m+1$ (Line~\ref{line:reject:three}).

More precisely, in each epoch $m$, the algorithm maintains
\[
\mathsf R_0,\ldots,\mathsf R_{m-1},
\qquad
\mathsf R_j\subseteq\mathcal D_j,
\]
where $\mathsf R_0,\ldots,\mathsf R_{m-1}$ are the collections of dyadic intervals that were previously determined to be sub-optimal, and
$
\mathsf A_m\subseteq\mathcal D_m
$
is the collection of dyadic intervals still deemed relevant. We define
\[
A_m:=\bigcup_{I\in\mathsf A_m}I,
\qquad
R_j:=\bigcup_{I\in\mathsf R_j}I.
\]
Observe that
$
R_0,\ldots,R_{m-1},A_m
$
are pairwise disjoint and form a partition of $[0,1]$.
%
 The last epoch $M_T$ is a function of $\lambda,L,T$ and thus is known with certainty ahead of time, see Line~\ref{line:MT}. After $M_T$, the algorithm enters an exploitation stage. For the remaining times $t$ after epoch $M_T$ is completed, it chooses an arbitrary price in $A_{M_T+1}$ and posts it, see Lines~\ref{line:exploit:while} and~\ref{line:exploit}. By Lemma~\ref{lem:exploration-budget-regret}, the $M_T$-th epoch ends by time $T/2$.

\begin{algorithm}[!htb]
\caption{\textsc{Zooming algorithm for $0 < \lambda < \frac{1}{2}$}}
\label{alg:main}
\KwInput{Horizon $T\ge2$, order $0<\lambda<1/2$, density bound $L\ge1$ such that \eqref{eq:T:large:enough} holds.}
\KwOutput{Posted prices $P_1,\ldots,P_T$.}
Set
\[
        \overline\Delta_T
        \leftarrow
        \left(\frac{4\CL\log(64T^5)}{T}\right)^{1/3},
        \qquad t\leftarrow1.
\]

Let $M_T$ be the largest integer $m\ge0$ such that
$\Delta_m\ge\overline\Delta_T$\;
\label{line:MT}
Initialize $\mathsf A_0\leftarrow\{[0,1]\}$\;
\For{$m=0,1,\ldots,M_T$}{
        Set $\delta_m\leftarrow1/(T^2(m+1)^2)$ and
        $N_m\leftarrow\{c(I):I\in\mathsf A_m\}$\;
        \label{line:delta:m}
        Run Algorithm~\ref{alg:estimate} with
        $(\mathsf R_0,\ldots,\mathsf R_{m-1},\mathsf A_m,N_m,m,\delta_m,t)$,
        obtaining
        $(\widehat G_m(p))_{p\in N_m}$ and the updated counter $t$\;
            \label{line:estimation}
        Set $\widehat M_m\leftarrow\max_{p\in N_m}\widehat G_m(p)$\;
        Set
        \[
              \mathsf A_m^+
              \leftarrow
              \{I\in\mathsf A_m:
              \widehat G_m(c(I))
              \ge
              \widehat M_m-\CL\Delta_m\}.
        \]
        \label{line:reject}
        
        Set $\mathsf R_m\leftarrow\mathsf A_m\setminus\mathsf A_m^+$\;
        \label{line:reject:bis}
        Set $\mathsf A_{m+1}$ to be the collection of the two children of each
        $I\in\mathsf A_m^+$\;
        \label{line:reject:three}
        
}
\While{$t\le T$}{
\label{line:exploit:while}
        Post any price in $A_{M_T+1}$, observe the feedback, and set $t\leftarrow t+1$\;
        \label{line:exploit}
}
\end{algorithm}

\subsubsection{The estimation procedure}
\label{subsubsection:estimation}

Algorithm~\ref{alg:estimate} takes the rejected layers $\mathsf R_0,\ldots,\mathsf R_{m-1}$, the active dyadic intervals in $\mathsf A_m$, the active midpoint set $N_m$, the epoch $m$, the confidence parameter $\delta = \delta_m$, and the current round counter $t$ as inputs. Its goal is to construct estimates $(\widehat G_m(p))_{p\in N_m}$ such that, when the current partition is accurate,
\[
\sup_{p\in N_m}
\left|
\widehat G_m(p)-G_\lambda(p)
\right|
\le
\frac{\CL}{16}\Delta_m
\]
with high conditional probability (see Lemma \ref{lem:estimation} and \eqref{eq:hard-def-Bm}). 

Algorithm~\ref{alg:estimate} divides each interval in
$
\mathsf R_0,\ldots,\mathsf R_{m-1},\mathsf A_m
$
into $n_m$ equal sampling cells (Lines~\ref{line:Nm}, \ref{sampling:cell:A} and~\ref{sampling:cell:R}), where, with $\Delta=\Delta_m$,
\[
n_m
=
\left\lceil
\frac{\CL\ell(\Delta,\delta)}{16\Delta^2}
\right\rceil.
\]
Thus, $n_m$ depends on the confidence parameter $\delta$ through the logarithmic factor $\ell(\Delta,\delta)$. The procedure posts a single price uniformly distributed on each sampling cell and records the corresponding two-bit feedback (Line~\ref{line:unif:price}). It also posts each price $p\in N_m$ a total of $q_\Delta(\delta)$ times and records the corresponding two-bit feedback (Line~\ref{line:Ypa:Zpa}).

The feedback $(Y_J,Z_J)$ collected when posting $U_J$ uniformly on $J$ in Line~\ref{line:unif:price} can be used simultaneously in the estimation of $G_\lambda(p)$ for all $p\in N_m$. There is the same number $n_m$ of sampling cells per interval $I\in\mathsf R_0\cup\dots\cup\mathsf R_{m-1}\cup\mathsf A_m$, while the length of $I\in\mathsf R_j$ is $2^{-j}$ and the length of an interval in $\mathsf A_m$ is $2^{-m}$. Hence, the sampling density is higher in $R_j$ for larger $j$ and is highest in $A_m$. 
The prices in $R_j$ have a sub-optimality gap of order $\Delta_j$ for $G_\lambda$ and the prices in $A_m$ are optimal up to order $\Delta_m$ for $G_\lambda$, with high probability (see Lemma~\ref{lem:induction}). Thus, intuitively, Algorithm~\ref{alg:estimate} focuses its limited sampling budget on regions containing prices with larger values of $G_\lambda$.

\begin{algorithm}[!htb]
\caption{\textsc{Estimation procedure for $0 < \lambda < \frac{1}{2}$}}
\label{alg:estimate}
\KwInput{Rejected layers $\mathsf R_0,\ldots,\mathsf R_{m-1}$, active dyadic intervals
$\mathsf A_m$, active midpoint set $N_m$, epoch $m$, confidence
$\delta$, current round counter $t$.}
\KwOutput{Estimates $(\widehat G_m(p))_{p\in N_m}$ and updated counter $t$.}
Set
\[
        \Delta\leftarrow\Delta_m,
        \qquad
        n_m\leftarrow
        \left\lceil{\frac{\CL\ell(\Delta,\delta)}{16\Delta^2}}\right\rceil.
\]
Initialize $\cP_m\leftarrow\emptyset$\;
\label{line:Nm}
\ForEach{active dyadic interval $I\in\mathsf A_m$, with $|I|=\Delta_m$}{
        Split $I$ into $n_m$ equal cells and add them to $\cP_m$\;
}
\label{sampling:cell:A}
\For{$j=0,1,\ldots,m-1$}{
\label{line:loop:rejected}
        \ForEach{rejected dyadic interval $I\in\mathsf R_j$, with $|I|=\Delta_j$}{
                Split $I$ into $n_m$ equal cells and add them to $\cP_m$\;
        }
        \label{sampling:cell:R}
}
\ForEach{cell $J\in\cP_m$}{
       Draw $U_J\sim\mathrm{Unif}(J)$, post $P_{t}=U_J$, observe $(Y_{t},Z_{t})$, set $(Y_J,Z_J)\leftarrow(Y_{t},Z_{t})$, and set $t\leftarrow t+1$\;
        \label{line:unif:price}
}
Set $q\leftarrow q_\Delta(\delta)$\;
\ForEach{$p\in N_m$}{
        \For{$a=1,\ldots,q$}{
                Post $P_{t}=p$, observe $(Y_{t},Z_{t})$, set $Y_{p,a}\leftarrow Y_{t}$, set $Z_{p,a}\leftarrow Z_{t}$, and set $t\leftarrow t+1$\;
        \label{line:Ypa:Zpa}
        }
        Form $\widehat G_m(p)$ according to \eqref{eq:Ghat-hard-estimator}\;
}
\Return $(\widehat G_m(p))_{p\in N_m}$ and $t$\;
\label{lines:hatG}
\end{algorithm}

Given the collected data, the estimators $(\widehat G_m(p))_{p\in N_m}$ are returned at Line~\ref{lines:hatG}, and their computation is defined next. Fix $p\in N_m$. The starting point is the decomposition of $G_\lambda(p)$ in Lemma~\ref{lem:main-reconstruction}:
\[
G_\lambda(p)
=
2^{1-1/\lambda}G_1(p)
+
\mathcal I_\lambda(p).
\]

For the estimation of $G_1(p)$, we set
\begin{equation}
\label{eq:hatFS:FB}
\widehat F_S(p)
:=
\frac1q\sum_{a=1}^qY_{p,a},
\qquad
\widehat F_B^\circ(p)
:=
\frac1q\sum_{a=1}^qZ_{p,a},
\end{equation}
\begin{equation}
\label{eq:hatAS}
\widehat A^S(p)
:=
\sum_{J\in\cP_m}
|J|\mathbb{I}\{U_J\le p\}Y_J,
\end{equation}
\begin{equation}
\label{eq:hatAB}
\widehat A^B(p)
:=
\sum_{J\in\cP_m}
|J|\mathbb{I}\{U_J\ge p\}Z_J,
\end{equation}
and
\begin{equation}
\label{eq:hat:Gone}
\widehat G_1(p)
:=
\frac12
\left(
\widehat F_B^\circ(p)\widehat A^S(p)
+
\widehat F_S(p)\widehat A^B(p)
\right).
\end{equation}

Note that \eqref{eq:hat:Gone} matches \eqref{eq:Gun} in Lemma~\ref{lem:main-reconstruction}, that $Y_{p,a},Z_{p,a}$ are collected at Line~\ref{line:Ypa:Zpa}, and that $U_J,Y_J,Z_J$ are collected at Line~\ref{line:unif:price}.

Let us now turn to the estimation of $\mathcal I_\lambda(p)$. In Lemma~\ref{lem:centered-decomposition}, we decompose it in the form
\begin{equation}
\label{eq:deco:I:lambda}
\mathcal I_\lambda(p)
=
F_S(p)F_B^\circ(p)J(p)
+
F_B^\circ(p)L^S(p)
+
F_S(p)L^B(p)
+
C(p),
\end{equation}
where
\begin{equation}
\label{eq:Jp}
J(p)
:=
\int_0^p\int_p^1
H_\lambda(p-v,w-p)\,\dd w\,\dd v
\end{equation}
and $L^S(p)$, $L^B(p)$, and $C(p)$ are defined in Lemma~\ref{lem:centered-decomposition}.

Then in Lemma~\ref{lem:truncation}, we show that $L^S(p)$, $L^B(p)$, and $C(p)$ are well-approximated by quantities written $L^S_\Delta(p)$, $L^B_\Delta(p)$, and $C_\Delta(p)$, for which we provide the following estimators. We set
\begin{equation}
\label{eq:KpS}
K_p^S(u)
:=
\int_p^1H_\lambda(u,w-p)\,\dd w,
\qquad
0<u\le p,
\end{equation}
\begin{equation}
\label{eq:KpB}
K_p^B(z)
:=
\int_0^pH_\lambda(p-v,z)\,\dd v,
\qquad
0<z\le1-p.
\end{equation}
We then define
\begin{equation}
\widehat L_\Delta^S(p)
:=
\begin{cases}
\displaystyle
\sum_{J\in\cP_m}
|J|\mathbb{I}\!\left\{U_J\le p-\Delta^{\frac{1}{1+\lambda}}\right\}
\left(Y_J-\widehat F_S(p)\right)
K_p^S(p-U_J),
&\text{if }p\ge\Delta^{\frac{1}{1+\lambda}},\smallskip\\
0,
&\text{if }p<\Delta^{\frac{1}{1+\lambda}}
\end{cases}
\label{eq:hard-def-LShat}
\end{equation}
and
\begin{equation}
\widehat L_\Delta^B(p)
:=
\begin{cases}
\displaystyle
\sum_{J\in\cP_m}
|J|\mathbb{I}\!\left\{U_J\ge p+\Delta^{\frac{1}{1+\lambda}}\right\}
\left(Z_J-\widehat F_B^\circ(p)\right)
K_p^B(U_J-p),
&\text{if }p\le1-\Delta^{\frac{1}{1+\lambda}},\smallskip\\
0,
&\text{if }p>1-\Delta^{\frac{1}{1+\lambda}}.
\end{cases}
\label{eq:hard-def-LBhat}
\end{equation}
We set $\widehat C_\Delta(p):=0$ unless
$
\Delta^{\frac{1}{1+\lambda}}\le\frac12
\quad\text{and}\quad
\Delta^{\frac{1}{1+\lambda}}
\le p
\le
1-\Delta^{\frac{1}{1+\lambda}}.
$
If these two conditions hold, we set
\begin{equation}
\begin{aligned}
\widehat C_\Delta(p)
:=
\sum_{J,K\in\cP_m}
&|J||K|
\mathbb{I}\!\left\{U_J\le p-\Delta^{\frac{1}{1+\lambda}}\right\}
\mathbb{I}\!\left\{U_K\ge p+\Delta^{\frac{1}{1+\lambda}}\right\}\\
&\times
\left(Y_J-\widehat F_S(p)\right)
\left(Z_K-\widehat F_B^\circ(p)\right)
H_\lambda(p-U_J,U_K-p).
\end{aligned}
\label{eq:hard-def-Chat}
\end{equation}

Then, we set the estimator counterpart to \eqref{eq:deco:I:lambda},
\[
\widehat{\mathcal I}_{\lambda,\Delta}(p)
:=
\widehat F_S(p)\widehat F_B^\circ(p)J(p)
+
\widehat F_B^\circ(p)\widehat L_\Delta^S(p)
+
\widehat F_S(p)\widehat L_\Delta^B(p)
+
\widehat C_\Delta(p).
\]
Finally, we set the estimator counterpart to
$G_\lambda(p)=2^{1-1/\lambda}G_1(p)+\mathcal I_\lambda(p)$,
\begin{equation}
\widehat G_m(p)
:=
2^{1-1/\lambda}\widehat G_1(p)
+
\widehat{\mathcal I}_{\lambda,\Delta}(p).
\label{eq:Ghat-hard-estimator}
\end{equation}

\subsubsection{Regret upper bound}

Recall that $\mathcal D_L$ consists of product distributions under which $S$ and $B$ have probability density functions bounded by $L$.
\begin{theorem}[Regret upper bound, $0 < \lambda < \frac{1}{2}$; proved in Appendix~\ref{app:hard-upper}]
\label{thm:hard-main-statement}
Fix $0<\lambda<1/2$ and $L\ge1$.
Consider an unknown distribution $\nu \in \mathcal D_L$.
Then, when the algorithm $\alpha$ is Algorithm~\ref{alg:main}, we have
\[
        R_\lambda^\nu(\alpha,T)\le
        \left[
        \frac{2^{19}}{3}\CL^2(4\CL)^{-2/3}
        +3\CL(4\CL)^{1/3}
        \right]
        T^{2/3}\bigl(\log(64T^5)\bigr)^{1/3}+\frac{\pi^2}{6T},
\]
whenever $((4\CL\log(64T^5))/T)^{1/3}\le1$, where $\CL$ is defined in \eqref{eq:CL:Def}. In particular,
\[
        R_\lambda^\nu(\alpha,T)=\widetilde O_{\lambda,L}(T^{2/3}).
\]
\end{theorem}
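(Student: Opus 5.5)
The proof is a standard zooming or successive-elimination analysis, built on a uniform estimation guarantee per epoch. The plan has four steps.

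\textbf{Step 1: good events.} For each epoch $m\le M_T$, let $\mathcal E_m$ be the event that
\[
\sup_{p\in N_m}|\widehat G_m(p)-G_\lambda(p)|\le \tfrac{\CL}{16}\Delta_m .
\]
This is the content of Lemma~\ref{lem:estimation}, which holds with conditional probability at least $1-\delta_m$ given the past, whenever the current partition is accurate. Since $\delta_m=1/(T^2(m+1)^2)$, a union bound over epochs shows that the complement of $\bigcap_m\mathcal E_m$ has probability at most $\sum_m \delta_m\le \pi^2/(6T^2)$. On that complement the regret is at most $T$, which produces the additive term $\pi^2/(6T)$.

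\textbf{Step 2: induction on the good event.} On the good event I would prove, as in Lemma~\ref{lem:induction}, two claims by induction on $m$.
\begin{itemize}
\item[(a)] $A_m$ contains a maximizer of $G_\lambda$, or at least a point within $O(\Delta_m)$ of $G_\lambda^\star$.
\item[(b)] Every $p\in A_m$ satisfies $G_\lambda^\star-G_\lambda(p)\le c\,\CL\Delta_m$, and every $p\in R_j$ satisfies $G_\lambda^\star-G_\lambda(p)\le c\,\CL\Delta_j$, for an absolute constant $c$.
\end{itemize}
The ingredients are:
\begin{itemize}
\item[(i)] A Lipschitz-type property of $G_\lambda$. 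For bounded densities, $|G_\lambda(p)-G_\lambda(c(I))|\lesssim (1+L)\Delta_m/\lambda$ within a dyadic cell. This follows from the representation lemma, or directly, since $M_\lambda$ is $1$-Lipschitz-ish on the trade region and the boundary terms are controlled by $L$. It is absorbed into $\CL\Delta_m$ because of the large constant.
\item[(ii)] The rejection threshold $\CL\Delta_m$ dominates twice the estimation error plus the within-cell variation. Hence the cell containing the maximizer is never rejected.
\item[(iii)] Any surviving cell has a center within $\CL\Delta_m+\CL\Delta_m/8$ of the best center, and therefore has gap $O(\CL\Delta_m)$.
\end{itemize}
The rejected cells of layer $j$ were active at epoch $j$, so their gap is $O(\CL\Delta_j)$.

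\textbf{Step 3: counting rounds and regret per epoch.} Epoch $m$ uses $(|\mathsf A_m|+\sum_{j<m}|\mathsf R_j|)\,n_m+|N_m|\,q$ rounds. The exploration regret in epoch $m$ is
\[
n_m\Bigl(|\mathsf A_m|\,O(\CL\Delta_m)+\sum_{j<m}|\mathsf R_j|\,O(\CL\Delta_j)\Bigr)+|N_m|\,q\,O(\CL\Delta_m).
\]
The key counting fact is that the number of cells of length $\Delta_j$ with gap at most $c\CL\Delta_j$ is at most $1/\Delta_j$, trivially. So $|\mathsf R_j|\Delta_j\le1$ and $|\mathsf A_m|\Delta_m\le 1$, and the regret of epoch $m$ is at most
\[
O\bigl(\CL\, n_m (m+1)\bigr)+O\bigl(\CL q\bigr)=\widetilde O(\CL^2\Delta_m^{-2}).
\]
Summing the geometric series up to $M_T$ gives $\widetilde O(\CL^2\overline\Delta_T^{-2})$. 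A similar count bounds the number of rounds by $\widetilde O(\CL\Delta_m^{-3})$. Summed, this is at most $T/2$ by the choice of $\overline\Delta_T$, which is Lemma~\ref{lem:exploration-budget-regret}, so the exploration phase fits in the horizon.

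\textbf{Step 4: exploitation and balancing.} In the exploitation phase, every price in $A_{M_T+1}$ has gap $O(\CL\Delta_{M_T})\le O(\CL\overline\Delta_T)$, contributing $T\,\CL\overline\Delta_T$. With
\[
\overline\Delta_T=\bigl(4\CL\log(64T^5)/T\bigr)^{1/3},
\]
both terms are of order $T^{2/3}\log^{1/3}$, and the explicit constants follow by tracking the factors of $2$.

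\textbf{Main obstacle.} The hardest step is Lemma~\ref{lem:estimation} itself, namely uniform accuracy $\CL\Delta_m/16$ from only about $\Delta_m^{-2}$ samples per layer, given the non-uniform sampling density and the singular kernel $H_\lambda$ with $\lambda<1/2$. I would take that lemma, together with the truncation bound of Lemma~\ref{lem:truncation} and the centered decomposition of Lemma~\ref{lem:centered-decomposition}, as given. If it had to be reproved, I would bound the bias by Lemma~\ref{lem:truncation}: the truncation at $\Delta^{1/(1+\lambda)}$ costs $O(\Delta)$ because the centered factors $F_S(v)-F_S(p)=O(L(p-v))$ tame the singularity. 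The fluctuation of the terms built from $(U_J,Y_J)$ would be handled by Hoeffding's inequality, using that each weighted summand is bounded. The $U$-statistic $\widehat C_\Delta$ would be handled by a decoupling or bounded-differences argument, using that products of independent cells are bounded by $\sup H_\lambda$ on the truncated region times $|J||K|$.
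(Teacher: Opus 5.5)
Your architecture (a conditional per-epoch estimation guarantee, an induction over active and rejected layers, then balancing exploration against exploitation) is the paper's. However, two steps do not go through as written.

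\textbf{The count in Step 3 loses a logarithm.} Bounding each rejected layer separately by $|\mathsf R_j|\Delta_j\le1$ produces the factor $m+1$ you wrote. Then $\sum_{m\le M_T}(m+1)\Delta_m^{-2}$ is of order $M_T\Delta_{M_T}^{-2}$, not $\Delta_{M_T}^{-2}$. Since $M_T\asymp\log T$, the exploration term is of order $T^{2/3}(\log T)^{4/3}$. So the explicit inequality of the theorem, with $(\log(64T^5))^{1/3}$ and a $T$-independent constant, does not follow by ``tracking factors of 2''; only the $\widetilde O(T^{2/3})$ claim survives. The missing observation is that $R_0,\ldots,R_{m-1},A_m$ are disjoint and partition $[0,1]$, see \eqref{eq:hard-layer-partition}. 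In epoch $m$, layer $R_j$ (of Lebesgue measure $|R_j|$) receives $n_m|R_j|/\Delta_j$ samples, each costing at most $3\CL\Delta_j$, hence $3\CL n_m|R_j|$ in total. Summing over layers gives $3\CL n_m\bigl(|A_m|+\sum_{j<m}|R_j|\bigr)=3\CL n_m$ uniformly in $m$, and the sum over epochs is then genuinely geometric.

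\textbf{Your induction invariant is too weak to invoke Lemma~\ref{lem:estimation}.} That lemma is conditional on $H_m$. Besides the upper bounds of your (b), $H_m$ contains the \emph{lower} bound $G_\lambda^\star-G_\lambda(x)\ge\frac{\CL}{2}\Delta_j$ for $x\in R_j$. This is exactly what makes the non-uniform sampling work. Combined with the bound $3\CL\Delta_m$ on $A_m$ and the Lipschitz property, it gives $\Delta_j\le7(\Delta_m+|p-x|)$ for $p\in A_m$ and $x\in R_j$ (Lemma~\ref{lem:layer-comparison}). Hence a coarse rejected layer never sits next to an active midpoint, and a sampling cell at distance $u$ from $p$ has length $O(\Delta_m^2(\Delta_m+u)/(\CL\ell))$. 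All the bounded-difference sums in Lemmas~\ref{lem:discrete-mass} and~\ref{lem:quadratic-envelope} rest on this.

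You must carry the lower bound through the induction; it is easy on the good event. A rejected center satisfies $\widehat G_m(c(I))<\widehat M_m-\CL\Delta_m$. Subtracting the estimation errors and the half-cell Lipschitz variation gives a gap larger than $\frac{27}{32}\CL\Delta_m$ on the whole rejected interval.

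The same idea is missing from your fallback sketch for $\widehat C_\Delta$. Bounding each product by $|J||K|$ times a uniform bound on $H_\lambda$ over the truncated region cannot give accuracy $\Delta$. That supremum grows like $\Delta^{-1/(1+\lambda)}$, attained near $u=z=\Delta^{1/(1+\lambda)}$. The paper instead uses the position-dependent envelopes $a_J^{\pm}(p)\approx|J|\sup_{x\in J}|p-x|^{\lambda-1}$, together with the cell-size control above.

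A minor point: $M_\lambda$ is not Lipschitz near the axes when $\lambda<1$, so ``$1$-Lipschitz-ish'' is not the right justification. The Lipschitz bound on $G_\lambda$ comes from integrating the singularity $(p-s)^{\lambda-1}$ against a density bounded by $L$, which is where the factor $1/\lambda$ comes from.
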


\subsection{The case \texorpdfstring{$1/2 \le \lambda < 1$}{1/2 <= lambda < 1}}
\label{subsection:lambda:half:one}

For $1/2 \le \lambda < 1$, the singularity of the kernel $H_{\lambda}$ in Lemma~\ref{lem:main-reconstruction}, that is driven by the quantities $x^{\lambda-1}y^{\lambda-1}$ as $x \to 0, y \to 0$, becomes sufficiently mild that
a more direct adaptation of the scouting bandit algorithm of \cite{cesa2024bilateralMOR} (suggested for $\lambda = 1$) can be carried out.
This adaptation is provided in Algorithm~\ref{alg:easy-scout}.
The idea is to divide the times $t=1,\dots,T$ into two phases, a scouting phase and a bandit phase (the latter starting at Line~\ref{line:start:bandit}). In the scouting phase of length
 $
n:=\left\lceil T^{2/3}\log(eT)\right\rceil$,
the mechanism samples one price $U_J$ uniformly in each cell $J$ of the regular partition
\begin{equation}
        \mathcal P_n
        :=
        \left\{
        \left[\frac{j-1}{n},\frac{j}{n}\right):
        j=1,\ldots,n-1
        \right\}
        \cup
        \left\{
        \left[\frac{n-1}{n},1\right]
        \right\}.
\label{eq:easy-partition}
\end{equation}
It records the resulting seller-side bit $Y_J$ and buyer-side bit $Z_J$ (Line~\ref{line:scouting:n:feedback}). These observations can be used to estimate
$
\int_0^p F_S(v)\dd v$,
$\int_p^1 F_B^\circ(w)\dd w$,
and
$\mathcal I_\lambda(p)$ (recall Lemma~\ref{lem:main-reconstruction}),
for prices $p\in\mathcal G_K$, where
\begin{equation}
        \mathcal G_K
        :=
        \left\{\frac{k}{K+1}:k=1,\ldots,K\right\},
\label{eq:easy-grid}
\end{equation}
is the price grid with $K:=\left\lceil T^{1/3}\right\rceil$. For every $q\in\mathcal G_K$, the first two integrals are estimated by
\begin{equation}
        \widehat A^S(q)
        :=
        \sum_{J\in\mathcal P_n}
        |J|\mathbb I\{U_J\le q\}Y_J
\label{eq:easy-AShat}
\end{equation}
and
\begin{equation}
        \widehat A^B(q)
        :=
        \sum_{J\in\mathcal P_n}
        |J|\mathbb I\{U_J\ge q\}Z_J.
\label{eq:easy-ABhat}
\end{equation}

The more challenging quantity to estimate is $\mathcal I_\lambda(q)$, which is estimated as follows. Let $1/n$ denote the gap used for the truncation. Then, let
\begin{equation}
\widehat{\mathcal I}_{\lambda,n}(q)
:=
\begin{cases}
\displaystyle
\sum_{J,J'\in\mathcal P_n}
|J||J'|
\mathbb I\left\{U_J\le q-\frac1n\right\}
\mathbb I\left\{U_{J'}\ge q+\frac1n\right\}
Y_JZ_{J'}
\\
\displaystyle\hspace{4cm}\times
H_\lambda(q-U_J,U_{J'}-q),
&
\text{if }\dfrac1n\le q\le1-\dfrac1n,
\smallskip\\
0,
&
\text{otherwise,}
\end{cases}
\label{eq:easy-Ihat}
\end{equation}
The $1/n$ truncation gap introduces a controlled bias in exchange for being able to control the estimator better, as shown in Lemma~\ref{lem:easy-partition-bounds}. This allows us to manage the singularity of $H_\lambda$ at zero. We also let
\begin{equation}
        \widetilde{\mathcal I}_{\lambda,n}(q)
        :=
        \min\left\{
        1,
        \widehat{\mathcal I}_{\lambda,n}(q)
        \right\}.
\label{eq:easy-Ihat-projected}
\end{equation}

In the bandit phase we apply an off-the-shelf bandit algorithm with approximate expected rewards
\[
2^{1-1/\lambda}\,
\frac12
\left[
F_B^\circ(q)\widehat A^S(q)
+
F_S(q)\widehat A^B(q)
\right]
+
\widetilde{\mathcal I}_{\lambda,n}(q),
\]
and use the estimator
\begin{equation}
\begin{aligned}
        \widehat G_{\lambda,t}(q)
        &:=
        2^{1-1/\lambda}\,
        \frac12
        \left[
        Z_t\widehat A^S(q)
        +
        Y_t\widehat A^B(q)
        \right]
        +
        \widetilde{\mathcal I}_{\lambda,n}(q)
\end{aligned}
\label{eq:easy-Ghat}
\end{equation}
as bandit feedback, when posting price $q$ at time $t$. Its conditional expectation (given the scouting phase) equals the approximate reward.

Specifically, we apply the following algorithm $\beta_K$ for $K$ arms: Algorithm $\beta_K$ (Line~\ref{line:apply:bandit}) is the well-known Exp3 algorithm, applied to rewards divided by $2$ and tuned as in~\cite[Corollary~3.2]{auer2002nonstochastic} with $g=T-n$. After $T-n$ rounds, Algorithm $\beta_K$ has expected regret of at most
\begin{equation}
        6\sqrt{K (T-n) \log(K)}.
\label{eq:easy-bandit-guarantee}
\end{equation}
Note that our rewards in \eqref{eq:easy-Ghat} are known to lie in $[0,2]$, so we apply $\beta_K$ to scaled rewards in $[0,1]$.

\begin{algorithm}
\caption{\textsc{Scouting bandits for $1/2\le\lambda < 1$}}
\label{alg:easy-scout}
\KwInput{Horizon $T$
(large enough such that  $n :=\left\lceil T^{2/3}\log(eT)\right\rceil < T$)
, order $\lambda\in[1/2,1)$, density
bound $L \ge 1$.}
\KwOutput{Posted prices $P_1,\ldots,P_T$.}
Form
$n,\mathcal P_n$
and
$K, \mathcal G_K$
according to
\eqref{eq:easy-partition}
and
\eqref{eq:easy-grid}\;
Set $t\leftarrow1$\;
\ForEach{$J\in\mathcal P_n$}{
\label{line:scouting:n}
        draw $U_J\sim{\mathrm{Unif}}(J)$, post $P_t=U_J$, observe
        $(Y_t,Z_t)$, set $(Y_J,Z_J)\leftarrow(Y_t,Z_t)$, and set
        $t\leftarrow t+1$\;
        \label{line:scouting:n:feedback}
}
For every $q\in\mathcal G_K$, compute
$\widehat A^S(q)$, $\widehat A^B(q)$, and
$\widetilde{\mathcal I}_{\lambda,n}(q)$ according to
\eqref{eq:easy-AShat}, \eqref{eq:easy-ABhat},
\eqref{eq:easy-Ihat} and \eqref{eq:easy-Ihat-projected}\;
Initialize the $K$-armed bandit algorithm $\beta_K$\;
\For{$t=n+1,\ldots,T$}{
\label{line:start:bandit}
        obtain an arm $I_t\in\{1,\ldots,K\}$ from $\beta_K$, and
        set $q_t\leftarrow I_t/(K+1)$\;
        \label{line:apply:bandit}
        post $P_t=q_t$ and observe $(Y_t,Z_t)$\;
        \label{line:bandit:feedback}
        compute $\widehat G_{\lambda,t}(q_t)$ according to
        \eqref{eq:easy-Ghat}, and give this reward to $\beta_K$\;
}
\end{algorithm}

\begin{theorem}[Regret upper bound, $1/2\le\lambda < 1$;
proved in Appendix~\ref{app:easy-upper}]
\label{thm:easy-upper}
Fix $1/2\le\lambda < 1$ and $L\ge1$, and consider an unknown
distribution $\nu\in\mathcal D_L$. Let $\alpha$ be
Algorithm~\ref{alg:easy-scout}. If $n < T$, then
\begin{equation}
        R_\lambda^\nu(\alpha,T)
        \le
        2^8(1+L)T^{2/3}\log(eT).
\label{eq:easy-regret-bound}
\end{equation}
In particular,
\[
        R_\lambda^\nu(\alpha,T)
        =
        \widetilde O_L(T^{2/3})
\]
uniformly over $\lambda\in[1/2,1)$.
\end{theorem}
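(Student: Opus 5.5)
We bound the regret by splitting it into three terms: the scouting phase, the discretization of $[0,1]$ into $\mathcal G_K$, and the bandit phase run on approximate rewards. Throughout, set $\widetilde G(q):=2^{1-1/\lambda}\frac12[F_B^\circ(q)\widehat A^S(q)+F_S(q)\widehat A^B(q)]+\widetilde{\mathcal I}_{\lambda,n}(q)$ for $q\in\mathcal G_K$; this is the conditional mean of the bandit feedback \eqref{eq:easy-Ghat} given the scouting data. Since $0\le G_\lambda\le 1$, the scouting phase contributes at most $n\le T^{2/3}\log(eT)+1$.

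For the bandit phase we decompose, for $p^\star$ optimal and $q^\star\in\mathcal G_K$ nearest to $p^\star$,
\[
\textstyle\sum_{t>n}\bigl(G^\star_\lambda-G_\lambda(q_t)\bigr)=\sum_{t>n}\bigl(G_\lambda(p^\star)-G_\lambda(q^\star)\bigr)+\sum_{t>n}\bigl(\widetilde G(q^\star)-\widetilde G(q_t)\bigr)+2T\max_{q\in\mathcal G_K}\bigl|G_\lambda(q)-\widetilde G(q)\bigr|.
\]

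\textbf{Discretization term.} We first show $G_\lambda$ is $O(L)$-Lipschitz under $\mathcal D_L$. Write $G_\lambda(p)=\int\!\!\int \mathbb I\{s\le p\le b\}M_\lambda(p-s,b-p)\,\dd\nu$. Moving $p$ changes the indicator only on a set of probability $O(L|\Delta p|)$, on which the integrand is at most $1$. The integrand is $1$-Lipschitz in $p$, because $M_\lambda$ is nondecreasing and $1$-homogeneous with $\partial_xM_\lambda+\partial_yM_\lambda\le 2$ and $\partial_x M_\lambda-\partial_y M_\lambda$ bounded by $1$. Hence the first term is $O(LT/K)=O(LT^{2/3})$.

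\textbf{Bandit term.} By \eqref{eq:easy-bandit-guarantee}, applied after rescaling by $2$ and using that the feedback is conditionally unbiased for $\widetilde G$, this term is at most $12\sqrt{KT\log K}=O(T^{2/3}\sqrt{\log T})$.

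\textbf{Estimation term (main obstacle).} It remains to bound $\E\max_{q}|G_\lambda(q)-\widetilde G(q)|$ by $O((1+L)T^{-1/3}\log(eT))$. We use Lemma~\ref{lem:main-reconstruction}.

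\emph{The $G_1$ part.} Each of $\widehat A^S(q)$ and $\widehat A^B(q)$ is a sum of $n$ independent terms bounded by $1/n$. Its mean differs from $\int_0^qF_S$, respectively $\int_q^1F_B^\circ$, only through the cell containing $q$, which contributes $O(1/n)$. Hoeffding's inequality plus a union bound over the $K$ grid points gives an error of $O(\sqrt{\log(KT)/n})=O(T^{-1/3})$ with probability $1-1/T$.

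\emph{The $\mathcal I_\lambda$ part.} We first control the truncation bias from removing $\{q-v<1/n\}\cup\{w-q<1/n\}$. Using $F_S,F_B^\circ\le1$ and $M_\lambda(u,v)^{1-2\lambda}\le \max(u,v)^{1-2\lambda}\le1$ (valid since $\lambda\ge1/2$ gives $1-2\lambda\le0$, with $M_\lambda\ge 2^{-1/\lambda}\max(u,v)$), we get $H_\lambda(u,v)\lesssim u^{\lambda-1}v^{\lambda-1}\max(u,v)^{1-2\lambda}\le u^{-\lambda}v^{\lambda-1}$ when $u\ge v$, and symmetrically otherwise. This is integrable, and the mass of the strip of width $1/n$ is $O(n^{-\lambda}\cdot \log)$-small at worst. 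This is where $\lambda\ge1/2$ should be needed, to get $O(n^{-1/2})$ up to logs.

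\emph{The double-sum estimator.} Next we handle the double sum \eqref{eq:easy-Ihat}. Its mean over the independent $U_J,U_{J'}$ (distinct cells, since they lie on opposite sides of $q$) matches the truncated integral up to one boundary cell per side. Since $Y_J$ and $Z_{J'}$ are functions of different rounds, the product $F_S(U_J)F_B^\circ(U_{J'})$ appears in expectation. For concentration we use a U-statistic/Efron–Stein or McDiarmid argument. Changing one $U_J$ alters the sum by at most $\frac1n\sum_{J'}\frac1n H_\lambda(q-U_J,U_{J'}-q)$. On the truncated region this is bounded by $\frac1n\sup_{u\ge1/n}\int_0^1H_\lambda(u,v)\dd v\lesssim \frac1n\, n^{1-\lambda}$, using $\int_0^1 u^{\lambda-1}v^{\lambda-1}\max(u,v)^{1-2\lambda}\dd v\lesssim u^{-\lambda}$. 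McDiarmid then gives a deviation of $O(\sqrt{n\cdot n^{-2\lambda}\log(KT)})=O(n^{1/2-\lambda}\sqrt{\log})$. This is too weak for $\lambda$ near $1/2$, so instead we bound the variance via Efron–Stein, using the $L^2$ norm of the increments rather than their supremum; this is square-integrable exactly when $\lambda>1/4$-type conditions hold, which yields $O(n^{-1/2}\log)$. We expect this variance computation, balancing truncation bias against the singular increments, to be the delicate step. Clipping at $1$ in \eqref{eq:easy-Ihat-projected} only reduces the error, since $\mathcal I_\lambda\le 1$, and it bounds the bad-event contribution by $O(1)$ times probability $1/T$.

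Summing the four pieces with $n\asymp T^{2/3}\log(eT)$ and $K\asymp T^{1/3}$ gives \eqref{eq:easy-regret-bound}, with constants tracked crudely to obtain the factor $2^8(1+L)$.
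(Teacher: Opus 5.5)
Your decomposition (scouting cost $n$, grid error $O(LT/K)$, Exp3 regret, and a uniform estimation error over $\mathcal G_K$, with truncation bias $\int_0^{1/n}u^{\lambda-1}\dd u=n^{-\lambda}/\lambda\le 2n^{-1/2}$ thanks to $\lambda\ge1/2$) matches the paper's. However, the step you flag as the main obstacle is not carried out, and the route you sketch would not close it.

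Your McDiarmid attempt gives every cell the worst-case bounded-difference constant $n^{-\lambda}$, which is the constant of the cells adjacent to $q$. This yields a deviation of order $n^{1/2-\lambda}$, which is too large on all of $[1/2,1)$, not only near $1/2$. The Efron--Stein fallback yields only a variance, so it provides no exponential tail and cannot be union-bounded over the $K\approx T^{1/3}$ grid prices. Uniform accuracy over $\mathcal G_K$ is needed because the arms $q_t$ are chosen adaptively, and from variances alone you get at best $\sqrt{K\,\mathrm{Var}}\approx T^{-1/6}$. The integrability threshold you quote ($\lambda>1/4$) is also not the relevant one.

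The missing idea is that McDiarmid only involves $\sum_J c_J^2$, so the constants should depend on the distance from the cell to $q$. Dominate $H_\lambda(u,\cdot)$ by the non-increasing envelope $\psi_u(z)=(1-\lambda)2^{-1/\lambda}\bigl[u^{-\lambda}z^{\lambda-1}\mathbb I\{z\le u\}+u^{\lambda-1}z^{-\lambda}\mathbb I\{z>u\}\bigr]$, whose integral over $[0,1]$ is at most $u^{\lambda-1}$. Comparing the sum with this integral (one point per cell of width $1/n$) gives, whenever $U_J\le q-1/n$,
\[
\sum_{J'\in\mathcal P_n}|J'|\,\mathbb I\{U_{J'}\ge q+1/n\}\,H_\lambda(q-U_J,U_{J'}-q)\le 2(q-U_J)^{\lambda-1}.
\]
A cell cannot meet both $[0,q-1/n]$ and $[q+1/n,1]$. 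So if $J$ meets $[0,q-1/n]$ and $k\ge1$ is the integer with $\mathrm{dist}(q,J\cap[0,q-1/n])\in[k/n,(k+1)/n)$, resampling the triple of $J$ changes the estimator by at most $c_J\le\frac4n(k/n)^{\lambda-1}$. At most two cells share each $k\in\{1,\dots,n\}$, whence
\[
\sum_{J\text{ left of }q}c_J^2\le 32\,n^{-2\lambda}\sum_{k=1}^n k^{2\lambda-2}\le\frac{32(1+\log n)}{n}.
\]
The last step is exactly where $2\lambda-2\ge-1$, i.e.\ $\lambda\ge1/2$, is used; the cells to the right of $q$ are handled symmetrically. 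McDiarmid then gives $|\widehat{\mathcal I}_{\lambda,n}(q)-\mathcal I_{\lambda,n}(q)|=O\bigl(\sqrt{(1+\log n)\log(KT)/n}\bigr)=O\bigl(T^{-1/3}\sqrt{\log(eT)}\bigr)$ with exponentially small failure probability. This survives the union bound, and the rest of your accounting then goes through.

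Separately, your Lipschitz argument is wrong as written. For $\lambda<1$ the map $p\mapsto M_\lambda(p-s,b-p)$ is not Lipschitz on $[s,b]$, because $\partial_xM_\lambda(x,y)=\frac12M_\lambda(x,y)^{1-\lambda}x^{\lambda-1}\to\infty$ as $x\to0$ with $y>0$. For instance, with $\lambda=1/2$, $\partial_xM_{1/2}(x,y)=\frac14(1+\sqrt{y/x})$. So neither $\partial_xM_\lambda+\partial_yM_\lambda$ nor $\partial_xM_\lambda-\partial_yM_\lambda$ is bounded. The conclusion still holds, but the bounded densities must also absorb this singularity: $\int_0^p(p+t-s)^{\lambda-1}f_S(s)\dd s\le L/\lambda$, and symmetrically for the buyer. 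Together with the two strips of probability at most $L|p-q|$, this gives the Lipschitz constant $L(2+1/\lambda)\le4L$.

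Two smaller points. First, $\widehat A^S$ and $\widehat A^B$ are exactly unbiased; there is no boundary-cell bias. Second, in your kernel bound the chain $M_\lambda(u,v)^{1-2\lambda}\le\max(u,v)^{1-2\lambda}\le1$ points the wrong way, since $1-2\lambda\le0$. Your final estimate $H_\lambda(u,v)\le(1-\lambda)2^{-1/\lambda}u^{-\lambda}v^{\lambda-1}$ for $u\ge v$ is nonetheless correct.
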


\begin{remark}[Orders $\lambda\ge1$]
The scouting arguments above can be extended to any fixed $\lambda\ge1$ (for $\lambda>1$, we use the same formula for $M_\lambda$ and the same definitions of the objective and regret as in Section~\ref{sec:model-objectives}). The case $\lambda=1$ is considered in detail in~\cite{cesa2024bilateralMOR}.
The case $\lambda > 1$ rewards unequal splits of surplus. More precisely, for any $G>0$, $[0,G] \ni x \mapsto M_{\lambda} (x,G-x)$ is maximal at $0$ and $G$. Hence the case $\lambda > 1$ is not relevant to fairness. 
\end{remark}

\subsection{Lower bound under bounded densities for \texorpdfstring{$\lambda\in[0,1]$}{lambda in [0,1]}}
\label{section:LB:bounded:dens}

Next, we provide a lower bound of order $T^{2/3}$, under bounded density, establishing the optimality of our upper bounds. 

\begin{theorem}[Lower bound under bounded densities; proved in Appendix~\ref{app:smooth-lower}]\label{thm:smooth-lower-main}
For every $L\ge20$, every $\lambda\in[0,1]$, every $T\ge32$, and every randomized posted-price algorithm $\alpha$, there exists a distribution $\nu\in\mathcal D_L$ such that
\[
        R_\lambda^\nu(\alpha,T)\ge \frac{1}{12800} T^{2/3}.
\]
\end{theorem}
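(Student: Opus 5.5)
The plan is a two-environment revealing-action argument, following the outline in the introduction. I would build two distributions $\nu^{+},\nu^{-}\in\mathcal D_L$ from a common base $\nu^0=\nu_S^0\otimes\nu_B^0$. The base seller and buyer densities would be piecewise constant, with values at most $L/2$, placed on a few disjoint intervals of fixed width. I would choose them so that $G_\lambda$ under $\nu^0$ has two separated local peaks, near prices $p_1<p_2$, of equal height. The perturbations $\nu^\pm$ would then shift a mass $\pm\eps$ between two small "towers" of fixed width, located inside a region $\Xinfo$ of valuations. The two constraints on this shift are as follows.

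\begin{itemize}
\item The shift must change $G_\lambda(p_1)-G_\lambda(p_2)$ by $\pm c\eps$ for a constant $c>0$ that is uniform over $\lambda\in[0,1]$. This requires the perturbed mass to lie on the trade side for only one of the two peaks.
\item The feedback distribution of $(Y_t,Z_t)$ at price $p$ must coincide under $\nu^+$ and $\nu^-$ for every $p\notin\Xinfo$. Equivalently, $F_S(p)$ and $F_B^\circ(p)$ must agree there. This holds because the shifted mass is moved within an interval that $p$ does not split.
\end{itemize}

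Bounded densities are preserved as long as $\eps$ is small compared with the tower width times $L/2$; this is why $L\ge20$ is needed.

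The key steps, in order, are the following.

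\begin{enumerate}
\item Verify the constant regret gap on informative prices: $\sup_p G_\lambda(p)-G_\lambda(p)\ge c_0$ for every $p\in\Xinfo$ under both environments. I would handle this by placing $\Xinfo$ where trade essentially never happens, or where $M_\lambda$ is small.
\item Show that in each environment, every price other than those near the better peak incurs a gap of at least $c\eps$.
\item Compute, for each informative price, $\KL(\Ber(F^+)\,\|\,\Ber(F^-))=O(\eps^2)$ for both bits. This uses that $F_S,F_B^\circ$ at those prices stay in $[1/4,3/4]$ and differ by $O(\eps)$.
\item Apply the chain rule for KL together with Pinsker or Bretagnolle--Huber. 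Let $N$ be the number of informative rounds. Either $\E[N]\gtrsim\eps^{-2}$ under some environment, giving regret $\ge c_0\eps^{-2}$, or the two laws are close and the algorithm spends $\Omega(T)$ rounds on the wrong side in one environment, giving regret $\ge c\eps T/4$.
\item Take $\eps=T^{-1/3}$, with $T\ge32$ ensuring $\eps$ is small enough. This yields $\max(c_0\eps^{-2},c\eps T)\gtrsim T^{2/3}$, and tracking the constants gives $1/12800$.
\end{enumerate}

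The main obstacle is the first constraint together with the uniformity over $\lambda\in[0,1]$. The perturbation's effect on $G_\lambda(p_1)-G_\lambda(p_2)$ must be bounded below by a constant times $\eps$ for every $\lambda$, including $\lambda=0$, where $M_0$ vanishes on one-sided splits. I would first try putting the buyer towers at distinct buyer values above both peaks, with the seller fixed at a point mass smoothed to a narrow block. Then moving mass upward raises the gain from trade by a term of order $\eps$ times $M_\lambda(p-s,\Delta b)$, whose $p_1$-versus-$p_2$ difference I would bound from below via monotonicity of $M_\lambda$ in $\lambda$ (between $M_0$ and $M_1$). The same bounds would be used to check that the base peaks are exactly balanced, which is achieved by tuning one mass parameter by continuity.
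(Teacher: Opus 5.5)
Your steps 3--5 are the same revealing-action template the paper invokes as a black box (Theorem~37 of \cite{bachoc2026repeated}). The gap is in the construction, which is where essentially all of the paper's proof lies. The design you propose fails exactly at the obstacle you identified. Suppose the seller sits on one narrow block below both peaks, and you move buyer mass $\eps$ between towers at $b_1<b_2$ that lie above both peaks. At $\lambda=1$ the effect on $G_1(p)$ is $\eps\frac{b_2-b_1}{2}F_S(p)$. This is identical at $p_1$ and $p_2$, because $F_S(p_1)=F_S(p_2)=1$. So the differential is exactly zero at $\lambda=1$ and degenerates as $\lambda\to1$, and no constant $c>0$ uniform on $[0,1]$ exists for this design. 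The design also contradicts your own first bullet. The tool you suggest would not rescue it. The effect at $p$ is $\eps[M_\lambda(p-s,b_2-p)-M_\lambda(p-s,b_1-p)]$, not $\eps M_\lambda(p-s,\Delta b)$. The $p_1$-versus-$p_2$ comparison is therefore a difference of differences of $M_\lambda$, and such a quantity cannot be bounded below by sandwiching $M_0\le M_\lambda\le M_1$. That sandwich only helps when a quantity bounded from below is compared with a quantity bounded from above.

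The paper's instance avoids these problems in three ways.
\begin{itemize}
\item \emph{Balance by symmetry, not tuning.} The base instance is reflection-symmetric: the buyer density is the mirror image of the unperturbed seller density, with seller towers at $0,\frac1{10},\frac15,\frac3{10},\frac35$. Hence $G_{\lambda,0}(p)=G_{\lambda,0}(1-p)$ for every $\lambda$ at once, with no $\lambda$-dependent tuning by continuity.
\item \emph{Asymmetry from the other side of the market.} The perturbation moves seller mass $\eps$ from the tower at $\frac1{10}$ to the tower at $\frac15$. Both towers lie below both peaks, so your first bullet is not necessary. Write $G_{\lambda,\delta}=G_{\lambda,0}-\delta D_\lambda$. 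A fraction $\frac{17}{20}$ of the buyer mass sits near $\frac25$, so it trades at left-side maximizers in $(\frac{161}{800},\frac{79}{200})$ but not at their reflections. The key estimate $D_\lambda(p)-D_\lambda(1-p)>\frac1{200}$ follows from $\partial_xM_\lambda(x,y)=\frac12M_\lambda(1,y/x)^{1-\lambda}\ge\frac12\sqrt{y/x}$ for $x\ge y$, together with monotonicity of this derivative in $p$. That bound is uniform in $\lambda\in[0,1]$ and stays nontrivial at $\lambda=1$.
\item \emph{Limited use of the sandwich.} The inequalities $G_0\le G_\lambda\le G_1$, together with $G_{1/2}=\frac12(G_0+G_1)$, are used only to localize the maximizers and to get the constant gap on $\Xinfo=[\frac{79}{800},\frac{161}{800}]$. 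Both follow from explicit evaluations at orders $0,\frac12,1$.
\end{itemize}

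Finally, the constant $\frac1{12800}$ and the threshold $T\ge32$ come from the specific values $c_{\mathrm{opt}}=\frac1{200}$, $c_{\mathrm{gap}}=\frac1{2000}$, $c_{\mathrm{info}}=100$, $\eps_0=\frac1{50}$ and $\eps_T=(4000T)^{-1/3}$. Your choice $\eps=T^{-1/3}$ is about $0.31$ at $T=32$, which is not a small perturbation; the paper's instance admits only $\eps\le\frac1{50}$. No tracking of constants can produce $\frac1{12800}$ without an explicit instance.
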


\section{Linear lower bound without bounded density}
\label{section:linear:lower}

In \cite{bachoc2026repeated}, for $\lambda \leq 0$, upper bounds on the regret of order $\widetilde O(T^{2/3})$ are proved without assuming a bounded density of $S$ and $B$ (that remain independent). 
For $\lambda \in (0,1]$, we show next that the situation is fundamentally different, since a linear worst case lower bound on the regret holds, when the bounded density assumption is removed. The reason for this difference is that for $\lambda \le 0$ and $s,b \in [0,1]$, the function $[0,1] \ni p \mapsto \gft_\lambda(p,s,b)$ is continuous, while for $\lambda \in (0,1]$, this function can be discontinuous at $s$ and $b$. This discontinuity makes it possible to create a ``needle in a haystack'' situation, extending \cite{cesa2024bilateralMOR} that addressed the case $\lambda = 1$. 
More precisely, we can construct instances where the expected gain from trade has an isolated maximum, that is impossible to find in finite time, under binary feedback.

\begin{theorem}[Linear regret without bounded density; proved in Appendix~\ref{app:no-density}]\label{thm:linear-arbitrary}
For every fixed $\lambda \in (0,1]$, there is an explicit constant $c_\lambda>0$ such that, for every horizon $T$ and every possibly randomized posted-price algorithm $\alpha$, there exists a distribution $\nu$ for $(S,B)$ (under which they are independent) such that
\[
        R_\lambda^\nu(\alpha,T)\ge c_\lambda T.
\]
One may take $c_\lambda=\gamma_\lambda$, where $\gamma_\lambda$ is defined in Appendix~\ref{app:no-density}.
\end{theorem}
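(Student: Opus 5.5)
The plan is a needle-in-a-haystack construction in the spirit of \cite{cesa2024bilateralMOR}. It exploits that for $\lambda\in(0,1]$ the map $p\mapsto\gft_\lambda(p,s,b)$ jumps at $p=s$ and at $p=b$: at $p=b$ the buyer gets $0$ but the seller gets $b-s$, and $M_\lambda(b-s,0)=2^{-1/\lambda}(b-s)>0$.

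\textbf{The family of instances.} I would build a family $\{\nu_x\}_{x\in X}$, indexed by a continuum (or a very fine finite grid of size $\gg T$) of positions $x$. The environments are designed so that:
\begin{itemize}
\item the feedback distributions $(Y_t,Z_t)$ under $\nu_x$ and $\nu_{x'}$ coincide at every price except on a set of prices of vanishing (Lebesgue) measure near $x$, or at finitely many points near $x$;
\item $G_\lambda$ under $\nu_x$ has a maximum at a single point $p=x$ that exceeds $\sup_{|p-x|>0}G_\lambda(p)$ by a gap $\gamma_\lambda$.
\end{itemize}

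A concrete attempt is to put a seller atom at $x$ and a buyer atom at $x$ as well (or at $x+\eta$ with $\eta$ tiny), each with constant mass. Posting exactly $p=x$ then triggers trades with $S=B=x$, which contribute $0$. That is not useful, so I would instead follow the $\lambda=1$ construction. For instance, take $S$ with atoms at $0$ and at $x$, and $B$ with atoms at $x$ and at $1$. At $p=x$ the pairs $(0,x)$ and $(x,1)$ trade, giving $M_\lambda(x,0)+M_\lambda(0,1-x)$ weighted. For $p$ slightly off $x$, only $(0,1)$ trades on one side.

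Computing $G_\lambda$ on $[0,x)$, $\{x\}$, and $(x,1]$, I would choose the atom masses, depending on $\lambda$, so that the value at $\{x\}$ strictly dominates by a constant gap. That constant is $\gamma_\lambda$, and it involves $2^{-1/\lambda}$. To make the masses independent of $x$, I would restrict $x\in[1/3,2/3]$ and keep the off-needle values flat in $p$ by mixing in a uniform component. This flattening is the part needing explicit tuning, and it is where the constant $\gamma_\lambda$ is defined.

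\textbf{Indistinguishability.} The feedback at price $p$ is $(\Ber(F_S(p)),\Ber(F_B^\circ(p)))$, independent since $S\perp B$. If $F_S$ and $F_B^\circ$ are the same functions across environments except near $x$, the learner cannot locate $x$. Atoms at $x$ do shift $F_S$ on $[x,1]$ for all $p$, however. So I would pair each atom with a compensating change so that the family shares the same $F_S$ and $F_B^\circ$ everywhere except at the single point $x$. One way is to let the seller atom sit at $x$ in one construction versus a continuous piece, but a cleaner route is the one in \cite{cesa2024bilateralMOR}: compare $\nu_x$ with a single baseline $\nu_0$ whose feedback laws coincide with those of $\nu_x$ for all $p\neq x$ (differing only at the atom location). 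Because an atom at $x$ affects $\Prob(S\le p)$ only at $p=x$ relative to an atom at $x^{+}$, this is possible: $\nu_0$ places the atoms at $x$ ``from the right side'' as the limit. I would realize this by letting $\nu_0$ carry the mass just so that $F_S$ agrees off $x$. Under $\nu_0$, all prices have regret at least $\gamma_\lambda$ against any $\nu_x$'s optimum.

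\textbf{Averaging argument.} Run $\alpha$ on $\nu_0$. Then for $x$ drawn uniformly from $[1/3,2/3]$, the probability that any of the $T$ posted prices equals $x$ is $0$, by Fubini, since each $P_t$ has a law with at most countably many atoms. Hence under $\nu_x$ the trajectory law coincides with that under $\nu_0$ almost surely, and the learner never posts $x$. The regret under $\nu_x$ is then at least $\gamma_\lambda T$ for almost every $x$, so some $x$ works.

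The main obstacle is the explicit two-atom-per-side design that simultaneously (i) yields a strict isolated maximum at $x$ with constant gap for every $\lambda\in(0,1]$, and (ii) leaves $F_S$ and $F_B^\circ$ identical across environments off the single point $x$. I would first try to transplant the $\lambda=1$ instance of \cite{cesa2024bilateralMOR} verbatim, check the gap using $M_\lambda(a,0)=2^{-1/\lambda}a$, and then verify that $M_\lambda\ge 2^{-1/\lambda}(a+b)$ (from Lemma~\ref{lem:main-reconstruction}, since $H_\lambda\ge0$) does not destroy the gap.
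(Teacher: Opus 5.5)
Your plan has two genuine gaps, one in each half.

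\textbf{Indistinguishability cannot be arranged, and it is not what drives the bound.} The feedback law at $p$ is determined by $(F_S(p),F_B^\circ(p))$. Here $F_S$ is right-continuous and $F_B^\circ(p)=\Prob(B\ge p)$ is left-continuous. So if the feedback laws of $\nu_x$ and $\nu_0$ agree at every $p\ne x$ (or off any finite set), they agree everywhere. The marginals and product laws then coincide, i.e.\ $\nu_x=\nu_0$. An atom ``at $x^+$'' would violate right-continuity of $F_S$. Hence the step ``under $\nu_x$ the trajectory law coincides with that under $\nu_0$'' fails, and your averaging argument falls with it. A small window instead of a point does not help either: one baseline must reserve disjoint windows for many $x$, so each window carries little mass, and a modification inside it moves $G_\lambda$ only slightly.

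The missing idea is that no comparison of environments is needed. In the true instances, posting $p\neq x$ does reveal (noisily) on which side of $x$ you are, so bisection localizes $x$ arbitrarily well. But the feedback alphabet is finite, so the price can never land exactly on $x$. Write the learner's randomness as seeds $U$ independent of $x$, and the round-$t$ price as $p_t(U,h)$ for the feedback string $h\in\{0,1\}^{2(t-1)}$. Then $\Prob_{\nu_x}(P_t=x)\le\sum_h\Prob(p_t(U,h)=x)$. The right-hand side is a fixed finite measure evaluated at $\{x\}$, hence zero for all but countably many $x$. Your Fubini/countable-atoms idea is the right one, but it must be applied to this $x$-independent measure. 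The paper does the equivalent computation by integrating the likelihood $L_{t-1}(\theta;u,h)$ against a uniform prior on $\theta$ via Tonelli.

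\textbf{Your concrete instance does not put the maximizer at $x$ for small $\lambda$.} With outer atoms fixed at $0$ and $1$, the pair $(0,1)$ scores $M_\lambda(p,1-p)$, which peaks at $p=1/2$. Meanwhile the bonus from the one-sided splits at $p=x$ is only $O(2^{-1/\lambda})$, and away from $x$ the surviving boundary pair gets a much fairer split. (Also, for $p$ just below $x$ the pair $(0,x)$ still trades, not only $(0,1)$.) With masses $1-\eta,\eta$ on each side and $x=1/3$,
\[
G_\lambda(1/3)-G_\lambda(1/2)=(1-\eta)\Bigl[(1-\eta)\bigl(M_\lambda(\tfrac13,\tfrac23)-\tfrac12\bigr)+\eta\bigl(2^{-1/\lambda}-M_\lambda(\tfrac16,\tfrac12)\bigr)\Bigr].
\]
Both brackets are negative for every $\lambda\le 0.55$, since $M_\lambda(\tfrac16,\tfrac12)\ge\sqrt{1/12}>2^{-1/\lambda}$. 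No choice of $\eta$, and no unspecified ``flattening'' component, repairs this.

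The paper instead moves the outer atoms with the needle: $S\in\{\theta-d,\theta\}$ and $B\in\{\theta,\theta+d\}$ with masses $1-\eta,\eta$. The dominant central pair then scores $dA_\lambda(u)$, with $A_\lambda(u)=M_\lambda(1+u,1-u)$ maximized exactly at $p=\theta$. Taking $\eta=\eta_\lambda$ small (for $\lambda<1$), two regimes cover all $p\neq\theta$. Within distance $\rho_\lambda d$ of $\theta$, the single surviving boundary pair scores at most $\tfrac32 b_\lambda d$, below the $2b_\lambda d$ the two boundary pairs earn at $\theta$. Farther away, the central pair loses at least $m_\lambda d$, which dominates the $\eta$-weighted boundary terms. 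Such moving atoms are harmless precisely because, as explained above, no indistinguishability is required.
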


\section{Conclusions}

We studied bilateral trade with the goal of maximizing the expected $\lambda$-fair gain from trade, with the range $\lambda \in (0,1]$, that corresponds to an interpolation between fairness and efficiency. Our economic motivation for this objective is supported by an axiomatic characterization. From an online learning point of view, the bounded-density assumption is not merely a technical convenience: it cannot simply be dropped from our sublinear-regret guarantees. Unlike the case $\lambda\le0$, allowing arbitrary independent seller and buyer distributions makes linear worst-case regret unavoidable for every fixed $\lambda>0$, as our ``needle in a haystack'' construction shows. For $\lambda \in (0,1/2)$, the kernel appearing in our reconstruction lemma is strongly singular. This causes a technical challenge, that we solve thanks to two main original arguments. First, we introduce a Zooming algorithm, where we sample exploration prices non-uniformly, to cover more densely the regions that are currently estimated as more promising. Second, we use an involved decomposition of the double integral appearing in our reconstruction lemma, in order to reduce the bias caused by the truncation step. In the end, we obtain matching lower and upper bounds on the regret of order $\widetilde{\Theta}(T^{2/3})$.

\newpage

\section*{AI Disclosure}


AI (ChatGPT 5 and 6) was used as a support tool in tasks such as brainstorming ideas, polishing the writing, and translating notes into \LaTeX. The authors personally checked the entirety of the final manuscript and take full responsibility for it.

\appendix

\section{Axiomatic characterization of the relaxed fairness family}
\label{app:axioms}

We start from a similar axiomatization as Appendix~A of \cite{bachoc2026repeated}: the scalar fair-gain score $\phi:\mathbb R_+^2\to\mathbb R_+$ is assumed to satisfy a common-value scale (CWS) and scale covariance (SC). Replacing the zero-fairness axiom for one-takes-all splits, ZF(OTI), by Pigou-Dalton inequality aversion, we want to characterize the H\"older means of finite orders $\lambda\le1$.

\begin{assumption}[Common-value scale]
\label{ax:common-value-scale}
There exists a strictly increasing continuous function
$u:\mathbb R_{++}\to\mathbb R$ such that, for every pair of positive gains
$x,y>0$,
\[
        \phi(x,y)=
        u^{-1}\left(\frac{u(x)+u(y)}{2}\right).
\]
Furthermore, $\phi$ is continuous on the whole quadrant $\mathbb R_+^2$.
\end{assumption}

\begin{assumption}[Scale covariance]
\label{ax:scale-covariance}
For every $t>0$ and every $x,y\ge0$,
\[
        \phi(tx,ty)=t\phi(x,y).
\]
\end{assumption}

For completeness, we spell out the axiom of Pigou-Dalton transfers:

\begin{assumption}[Pigou-Dalton inequality aversion]
\label{ax:pigou-dalton}
For every total surplus $z>0$, every $x\in[0,z/2]$, and every
$\eta\in[0,z/2-x]$,
\[
        \phi(x+\eta,z-x-\eta)\ge \phi(x,z-x).
\]
In words, if we hold the total surplus fixed, then transferring some surplus from the ``better-off'' to the ``worse-off'' side does not decrease the score until the two sides cross.
\end{assumption}

\begin{theorem}[Relaxed fairness characterization]\label{thm:axiom-relaxed-holder}
Suppose that $\phi:\mathbb R_+^2\to\mathbb R_+$ satisfies
Assumptions~\ref{ax:common-value-scale} and~\ref{ax:scale-covariance}.  Then
$\phi$ satisfies Assumption~\ref{ax:pigou-dalton} if and only if
\[
        \phi=M_\lambda
        \qquad\text{for some finite }\lambda\le1.
\]
If pointwise limits of such scores are included, the closed characterized
family is
\[
        \{M_\lambda:\lambda\in[-\infty,1]\},
        \qquad M_{-\infty}(x,y)=\min\{x,y\}.
\]
Moreover, if Assumption~\ref{ax:pigou-dalton} is replaced by the stronger
one-takes-it-all zero-fairness axiom
\[
        \phi(1,0)=\phi(0,1)=0,
\]
then $\phi$ must be a H\"older mean with a finite order $\lambda\le0$, and every such mean satisfies the three axioms. We can also include the Rawlsian score $M_{-\infty}(x,y)=\min\{x,y\}$ as the pointwise limit of these means as $\lambda\to-\infty$. With this limiting case included, the family is indexed by $\lambda\in[-\infty,0]$.
\end{theorem}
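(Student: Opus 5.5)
The first step is to pin down the admissible class. Assumptions~\ref{ax:common-value-scale} and~\ref{ax:scale-covariance} already restrict $\phi$ to the H\"older family: this is the classical characterization of homogeneous quasi-arithmetic means (Hardy--Littlewood--P\'olya). We take it from the analogous step in Appendix~A of \cite{bachoc2026repeated}. The underlying reason is that homogeneity of the quasi-arithmetic mean generated by $u$ forces $u$ to be an affine transform of either $x^\lambda$ for some $\lambda\neq0$, or $\log x$. This gives $\phi=M_\lambda$ on $\mathbb R_{++}^2$ for some finite $\lambda\in\mathbb R$. Continuity of $\phi$ on the closed quadrant then determines $\phi$ on the axes. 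For $\lambda>0$ we get $M_\lambda(x,0)=2^{-1/\lambda}x$, and for $\lambda\le0$ we get $M_\lambda(x,0)=0$. So $\phi=M_\lambda$ on all of $\mathbb R_+^2$ for some finite $\lambda$, and it remains to decide which orders $\lambda$ are compatible with each of the remaining axioms.

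For the Pigou-Dalton part, by Assumption~\ref{ax:scale-covariance} it suffices to take $z=1$. We then study $g_\lambda(x):=M_\lambda(x,1-x)$ on $[0,1/2]$; Assumption~\ref{ax:pigou-dalton} says exactly that $g_\lambda$ is nondecreasing there.
\begin{itemize}
\item For $\lambda\neq0$, write $g_\lambda=h^{1/\lambda}$ with $h(x)=(x^\lambda+(1-x)^\lambda)/2$. On $(0,1/2)$,
\[
h'(x)=\tfrac{\lambda}{2}\bigl(x^{\lambda-1}-(1-x)^{\lambda-1}\bigr).
\]
Since $x<1-x$, the difference $x^{\lambda-1}-(1-x)^{\lambda-1}$ is positive when $\lambda<1$, zero when $\lambda=1$, and negative when $\lambda>1$. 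Combining this with the sign of $1/\lambda$, one checks that $g_\lambda'\ge0$ on $(0,1/2)$ iff $\lambda\le1$ (for $\lambda<0$, both $h'$ and the exponent $1/\lambda$ are negative). Continuity at the endpoints extends monotonicity to the closed interval $[0,1/2]$.
\item For $\lambda=0$, $g_0(x)=\sqrt{x(1-x)}$ is increasing on $[0,1/2]$.
\item For $\lambda>1$, $g_\lambda$ is strictly decreasing near $0$, so the axiom fails. This is also the content of the remark on orders $\lambda>1$.
\end{itemize}
This proves the equivalence with finite $\lambda\le1$.

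The limiting statement is a direct check. As $\lambda\to-\infty$, $M_\lambda\to\min$ pointwise, and $M_\lambda$ is monotone in $\lambda$. Pointwise limits along $\lambda_k\to\lambda^\ast\in[-\infty,1]$ converge to $M_{\lambda^\ast}$, so the closure of the family is indexed by $[-\infty,1]$. For the zero-fairness variant, the requirement $\phi(1,0)=0$ combined with $M_\lambda(1,0)=2^{-1/\lambda}>0$ for $\lambda>0$ forces $\lambda\le0$. Conversely, every $M_\lambda$ with $\lambda\le0$ vanishes on the axes, is continuous, homogeneous and quasi-arithmetic (with $u=x^\lambda$ or $u=\log$), and is Pigou-Dalton by the computation above. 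The same holds for $M_{-\infty}$.

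The main obstacle is the first step: deriving $u\in\{ax^\lambda+b,\ a\log x+b\}$ from homogeneity using only continuity and strict monotonicity of $u$. I would invoke the cited result rather than reprove it. If a self-contained argument is needed, normalize $u_t(x):=u(tx)$. Homogeneity implies that $u_t$ generates the same quasi-arithmetic mean as $u$, hence $u_t=a(t)u+b(t)$ by the uniqueness theorem for quasi-arithmetic means. One then solves the resulting Pexider-type functional equation under continuity, which yields the power and logarithmic forms.
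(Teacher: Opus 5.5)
Your proposal is correct and follows essentially the same route as the paper. You take the Hardy--Littlewood--P\'olya characterization as a black box to get $\phi=M_\lambda$ on $\mathbb R_{++}^2$ and extend it to the axes by continuity. You then read off $\lambda\le1$ from the sign of $x^{\lambda-1}-(z-x)^{\lambda-1}$, and $\lambda\le0$ from $M_\lambda(1,0)=2^{-1/\lambda}>0$ for $\lambda>0$.

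One small slip: for $\lambda<0$, the generator that verifies Assumption~\ref{ax:common-value-scale} must be $u(x)=-x^\lambda$, which is strictly increasing, rather than $x^\lambda$, which is decreasing. This changes nothing, since quasi-arithmetic means are invariant under affine changes of the generator.
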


\begin{proof}
We use the characterization step from Appendix~A of \cite{bachoc2026repeated} as a black box: the common-value-scale axiom
makes $\phi$ a quasi-arithmetic mean on $\mathbb R_{++}^2$, and scale
covariance forces this quasi-arithmetic mean to be homogeneous.  The standard
Hardy-Littlewood-Polya characterization of homogeneous quasi-arithmetic
means then implies that, on $\mathbb R_{++}^2$,
\[
        \phi(x,y)=M_\lambda(x,y)
\]
for some finite $\lambda\in\mathbb R$, where the logarithmic generator
corresponds to $\lambda=0$.  Thus, since we are assuming continuity on the whole quadrant, it also holds that $\phi(x,y)=M_\lambda(x,y)$ when one or both of $x,y$ are zero.

It remains only to identify which H\"older orders satisfy
Pigou-Dalton.  Fix $z>0$ and set
\[
        f_\lambda(x)=M_\lambda(x,z-x),
        \qquad x\in[0,z].
\]
For a finite order $\lambda\ne0$, we differentiate $f_\lambda(x)=M_\lambda(x,z-x)$ with respect to $x$, keeping the total surplus $z$ fixed. Increasing $x$ therefore increases the first gain and decreases the second by the same amount. For $0<x<z$, both gains are positive, and differentiating the formula for $M_\lambda$ gives
\[
        f_\lambda'(x)
        =\frac{1}{2}M_\lambda(x,z-x)^{1-\lambda}
        \bigl(x^{\lambda-1}-(z-x)^{\lambda-1}\bigr).
\]
For $\lambda=0$, $f_0(x)=\sqrt{x(z-x)}$.  Hence $f_0$ increases on
$[0,z/2]$.  For $\lambda<1$ with $\lambda\ne0$, the exponent $\lambda-1$ is negative. Thus, whenever $0<x<z/2$, we have $x<z-x$ and hence $x^{\lambda-1}>(z-x)^{\lambda-1}$. The derivative above is therefore positive on $(0,z/2)$, so $f_\lambda$ increases as the split becomes more equal. We have so far considered splits with $0<x<z/2$. At $x=0$, one participant receives nothing, while at $x=z/2$, the two gains are equal. Since $f_\lambda$ is continuous, its values at these endpoints are the limits of its values inside the interval. Its strict increase therefore also holds for transfers that start with a zero gain or end with equal gains. Thus, for these orders, transferring any positive amount from the participant with the larger gain to the one with the smaller gain increases the score, provided the transfer stops at or before equality.

For $\lambda=1$, the score is simply half the total surplus: $f_1(x)=(x+(z-x))/2=z/2$. It is therefore the same for every way of dividing $z$ between the two participants, including the equal split. Pigou-Dalton allows this: it requires that a transfer towards equality not lower the score, but it does not insist that the score increase. For $\lambda>1$, however, the derivative is negative on $(0,z/2)$. A transfer towards equality then decreases the score, contrary to Pigou-Dalton. Together with the case $\lambda=0$ considered above, this shows that the axiom holds precisely for the finite orders $\lambda\le1$.

The pointwise limit $M_\lambda\to M_{-\infty}=\min\{x,y\}$ as
$\lambda\to-\infty$ clearly satisfies Pigou-Dalton, giving the closed family
$[-\infty,1]$.  Finally,
\[
        M_\lambda(1,0)=0\quad\text{for }\lambda\le0,
        \qquad
        M_\lambda(1,0)=2^{-1/\lambda}>0\quad\text{for }\lambda>0.
\]
As above, ZF(OTI) singles out $\lambda\le0$, with Rawlsian $\lambda=-\infty$ as a limit.
\end{proof}

\section{Proof of Lemma~\ref{lem:main-reconstruction}}
\label{app:kernel-proof}

\begin{proof}[Proof of Lemma~\ref{lem:main-reconstruction}]
Fix $\lambda\in(0,1]$. For $x,y>0$, we differentiate $M_\lambda(x,y)$ once with respect to $x$ and once with respect to $y$, obtaining
\[
        \partial_{xy}^2M_\lambda(x,y)
        =
        \frac{1-\lambda}{4}
        M_\lambda(x,y)^{1-2\lambda}x^{\lambda-1}y^{\lambda-1}
        =H_\lambda(x,y).
\]
Moreover,
\[
        M_\lambda(x,0)=2^{-1/\lambda} x,
        \qquad
        M_\lambda(0,y)=2^{-1/\lambda} y,
        \qquad M_\lambda(0,0)=0.
\]
For $\varepsilon\in(0,\min\{x,y\})$, the ordinary fundamental theorem of calculus on $[\varepsilon,x]\times[\varepsilon,y]$ gives
\[
\begin{split}
        \int_\varepsilon^x\int_\varepsilon^yH_\lambda(u,v)\dd v\dd u
        &=
        M_\lambda(x,y)-M_\lambda(\varepsilon,y)
        -M_\lambda(x,\varepsilon)+M_\lambda(\varepsilon,\varepsilon).
\end{split}
\]
We now let $\varepsilon$ decrease to zero. The integration rectangles expand, and since $H_\lambda$ is nonnegative, the integrals on the left converge to the integral with both lower limits equal to zero. On the right, continuity of $M_\lambda$ gives $M_\lambda(\varepsilon,y)\to M_\lambda(0,y)$, $M_\lambda(x,\varepsilon)\to M_\lambda(x,0)$, and $M_\lambda(\varepsilon,\varepsilon)\to0$. Substituting the values on the axes computed above, we obtain
\[
        \int_0^x\int_0^yH_\lambda(u,v)\dd v\dd u
        =
        M_\lambda(x,y)-2^{-1/\lambda} x-2^{-1/\lambda} y.
\]
Although $H_\lambda$ can become unbounded near the axes, the preceding calculation shows that its integral is finite: the truncated integrals converge to the finite expression on the right-hand side. Rearranging this equality gives the claimed formula for $M_\lambda(x,y)$ when $x,y>0$. It remains to check the cases where one or both gains are zero. If $y=0$, the inner integral has equal lower and upper limits, so the double integral is zero. The claimed formula then reduces to $M_\lambda(x,0)=2^{-1/\lambda}x$, which we have already established. If $x=0$, the outer integration interval has length zero, and the formula similarly reduces to $M_\lambda(0,y)=2^{-1/\lambda}y$. When both gains are zero, both sides are zero. Thus the formula holds for all $x,y\ge0$.

To derive the reconstruction formula, compute the expectation over the event of trade. Let $p\in[0,1]$. On this event, put $x=p-S$ and $y=B-p$. The boundary term in the pointwise identity gives
\[
        2^{-1/\lambda}\E\!\left[\mathbb{I}\{S\le p\le B\}(B-S)\right]
        =2^{1-1/\lambda} G_1(p),
\]
because our notation $G_1$ uses the H\"older mean $M_1(x,y)=(x+y)/2$, i.e.,
one half of the classical gain from trade. Taking expectations under independence gives
\[
\begin{split}
        \E[(p-S)\mathbb{I}\{S\le p\le B\}]
        &=F_B^\circ(p)\E[(p-S)\mathbb{I}\{S\le p\}]\\
        &=F_B^\circ(p)\int_0^pF_S(v)\dd v,
\end{split}
\]
and similarly
\[
        \E[(B-p)\mathbb{I}\{S\le p\le B\}]
        =
        F_S(p)\int_p^1F_B^\circ(w)\dd w.
\]
This gives~\eqref{eq:Gun}.

We now take the expectation of the double-integral term. We first apply Fubini's theorem with the integration variables bounded away from zero, which allows us to interchange expectation and integration. Since the integrand is nonnegative, we can then let the truncation tend to zero by monotone convergence. Using the independence of $S$ and $B$ to factor the probability of the two threshold conditions, we obtain
\[
\begin{split}
&\E\left[
\mathbb{I}\{S\le p\le B\}
\int_0^{p-S}\int_0^{B-p}H_\lambda(u,z)\dd z\dd u
\right]\\
&\qquad =
\int_0^p\int_0^{1-p}
H_\lambda(u,z)\Prob(S\le p-u)\Prob(B\ge p+z)\dd z\dd u\\
&\qquad =
\int_0^p\int_p^1
H_\lambda(p-v,w-p)F_S(v)F_B^\circ(w)\dd w\dd v.
\end{split}
\]
The last double integral is exactly $\mathcal I_\lambda(p)$, while the contribution from the values of $M_\lambda$ on the axes was $2^{1-1/\lambda}G_1(p)$. Adding these two terms gives the claimed expression for $G_\lambda(p)$ and completes the proof.
\end{proof}

\section{Proofs for Section \ref{sec:hard-alg-main}}\label{app:hard-upper}

In this appendix, we provide the missing details in Section~\ref{sec:hard-alg-main}. More precisely, we proceed in the following manner. First, Section~\ref{subsection:basic:general} is dedicated to some properties about the expected gain from trade $G_\lambda$: we will decompose and approximate it, see Lemmas~\ref{lem:centered-decomposition} and~\ref{lem:truncation}. Second, we define an event on which the estimation procedure in Algorithm~\ref{alg:estimate} provides a good enough estimation in all epochs of Algorithm~\ref{alg:main} (event $\mathcal E_{M_T}$, see \eqref{eq:hard-def-Em}), and show that $\mathcal E_{M_T}$ occurs with high probability, see Lemma~\ref{lem:induction}. Third, under $\mathcal E_{M_T}$, we give a deterministic bound on the realized (pseudo-) regret suffered by Algorithm~\ref{alg:main}, see Lemma~\ref{lem:total-good-regret}. Finally, the proof of Theorem~\ref{thm:hard-main-statement} will follow easily from the preceding ingredients.

\subsection{Basic properties, decomposition and approximation of  \texorpdfstring{$G_{\lambda}$}{G\_lambda}}
\label{subsection:basic:general}

Throughout this section, recall
\[
        0<\lambda<\frac12,
        \qquad
        L\ge1,
        \qquad
        \Delta_m=2^{-m},
        \qquad
        G_\lambda^\star=\max_{p\in[0,1]}G_\lambda(p).
\]

The next lemma provides some convenient inequalities related to $\CL$, $\ell(\Delta,\delta)$ and $q_\Delta(\delta)$.

\begin{lemma}[Consequences of the choice of $\CL$]
\label{lem:constant-domination}
For $\CL$ defined in \eqref{eq:CL:Def},
\begin{equation}
\begin{gathered}
        \CL\ge2^{34},
        \qquad
        64\CL^{1/2}\le \frac{\CL}{16},
        \qquad
        L\left(2+\frac{1}{\lambda}\right)\le \frac{\CL}{16},
        \\
        \frac{2}{\lambda}\le\CL^{1/2},
        \qquad
        \frac{4}{\lambda^2}\le\CL^{1/2},
        \qquad
        \frac{4L}{\lambda(1+\lambda)}\le\CL^{1/2},
        \\
        16\left(2+\frac{1}{2\lambda}\right)
        \le\CL^{1/2}.
\end{gathered}
\label{eq:hard-constant-bounds}
\end{equation}
Moreover, for every $\Delta\in(0,1]$ and $\delta\in(0,1)$,
\begin{equation}
        q_\Delta(\delta)
        \le\Delta^{-2}\ell(\Delta,\delta).
\label{eq:hard-rq-upper-bounds}
\end{equation}
\end{lemma}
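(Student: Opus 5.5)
The plan is direct verification from the definition $\CL=2^{28}(1+L)^2\lambda^{-4}$, using only $0<\lambda<1/2$ and $L\ge1$. First I record the two elementary consequences $1+L\ge2$ and $\lambda^{-1}>2$. They give $\CL\ge 2^{28}\cdot 4\cdot 16=2^{34}$, and also
\[
\CL^{1/2}=\frac{2^{14}(1+L)}{\lambda^2}.
\]
Every inequality in \eqref{eq:hard-constant-bounds} except the second is then a comparison between this square root (or $\CL$ itself) and a simple rational expression in $L$ and $\lambda^{-1}$.

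I would check the items one at a time. The bound $64\CL^{1/2}\le\CL/16$ is equivalent to $\CL^{1/2}\ge 2^{10}$, which follows from $\CL\ge 2^{34}$. The remaining items compare against $\CL^{1/2}$:
\begin{itemize}
\item $2/\lambda\le 4/\lambda^2\le\CL^{1/2}$, since $\lambda<1$ and $2^{14}(1+L)\ge 4$.
\item $4L/(\lambda(1+\lambda))\le 4L/\lambda^2\le\CL^{1/2}$, since $\lambda\le1$ and $1+\lambda\ge1$.
\item $16(2+1/(2\lambda))\le 16\cdot 2/\lambda^2\le \CL^{1/2}$, using $2\le 1/\lambda\le 1/\lambda^2$ and $1/(2\lambda)\le 1/\lambda^2$.
\end{itemize}
For $L(2+1/\lambda)\le\CL/16$, I use $2+1/\lambda\le 2/\lambda\le 2/\lambda^4$, so that $L(2+1/\lambda)\le 2L/\lambda^4$. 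This is far below $\CL/16=2^{24}(1+L)^2/\lambda^4$.

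For \eqref{eq:hard-rq-upper-bounds}, I use $\lceil x\rceil\le x+1$ to get
\[
q_\Delta(\delta)\le \frac{\ell}{2\Delta^2}+1,
\]
and then show $1\le \ell/(2\Delta^2)$. This holds because $\Delta\le1$ and $\delta<1$ give $\ell(\Delta,\delta)\ge\log 64>2\ge 2\Delta^2$. Hence $q_\Delta(\delta)\le \ell/\Delta^2$.

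No step is a real obstacle; the only care needed is to keep track of which direction the constraints $\lambda<1/2$ and $L\ge1$ are used in each comparison.
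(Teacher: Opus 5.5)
Your verification is correct and follows the same route as the paper. You check each inequality directly from $\CL=2^{28}(1+L)^2\lambda^{-4}$ using $1+L\ge2$ and $\lambda^{-1}>2$, and you handle $q_\Delta(\delta)$ by noting that $\ell(\Delta,\delta)/(2\Delta^2)\ge(\log 64)/2\ge1$, so the ceiling at most doubles the quantity; the only cosmetic difference is that you sometimes bound by $\lambda^{-2}$ or $\lambda^{-4}$ where the paper keeps $\lambda^{-1}$, which is harmless since $\lambda<1$.
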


\begin{proof}
Since $1+L\ge2$ and $\lambda^{-1}>2$,
\[
\begin{aligned}
        \CL
        &=
        \frac{2^{28}(1+L)^2}{\lambda^4}
        \ge
        2^{28}2^2 2^4
        =
        2^{34},                                                    \\
        \CL^{1/2}
        &
        \ge
        2^{17}.
\end{aligned}
\]
In particular,
\[
        64\CL^{1/2}
        \le \frac{\CL}{16}.
\]

Moreover, since $L\le1+L$, $1+\lambda\ge1$, and
$2+1/\lambda\le2/\lambda$,
\[
\begin{aligned}
        L\left(2+{\frac{1}{\lambda}}\right)
        &\le
        {\frac{2(1+L)}{\lambda}}
        \le
        {\frac{\CL}{16}},                                             \\
        {\frac{2}{\lambda}}
        &\le
        \CL^{1/2},                                                 \\
        {\frac{4}{\lambda^2}}
        &\le
        \CL^{1/2},                                                 \\
        {\frac{4L}{\lambda(1+\lambda)}}
        &\le
        {\frac{4(1+L)}{\lambda}}
        \le
        \CL^{1/2},                                                 \\
        16\left(
        2+{\frac{1}{2\lambda}}
        \right)
        &\le
        {\frac{24}{\lambda}}
        \le
        \CL^{1/2}.
\end{aligned}
\]
This proves \eqref{eq:hard-constant-bounds}.

Finally, since
$\ell(\Delta,\delta)\ge\log64$ and $\Delta\le1$,
\[
        {\frac{\ell(\Delta,\delta)}{2\Delta^2}}
        \ge
        {\frac{\log64}{2}}
        \ge1.
\]
Therefore,
\[
\begin{aligned}
        q_\Delta(\delta)
        &\le
        2{\frac{\ell(\Delta,\delta)}{2\Delta^2}}
        =
        \Delta^{-2}\ell(\Delta,\delta).
\end{aligned}
\]
\end{proof}

The next lemma provides some inequalities for the kernel $H_{\lambda}$, defined in Lemma \ref{lem:main-reconstruction}. These inequalities will be used repeatedly in the sequel. 

\begin{lemma}[Kernel bounds]
\label{lem:hard-kernel-bounds}
For all $u,z\in(0,1]$,
\begin{equation}
        0\le H_\lambda(u,z)
        \le u^{\lambda-1}z^{\lambda-1},
\label{eq:hard-kernel-pointwise}
\end{equation}
\begin{equation}
        \int_0^1H_\lambda(u,z)\,\dd z
        \le {\frac{1}{\lambda}}u^{\lambda-1},
        \qquad
        \int_0^1H_\lambda(u,z)\,\dd u
        \le {\frac{1}{\lambda}}z^{\lambda-1},
\label{eq:hard-kernel-one-sided}
\end{equation}
and
\begin{equation}
        \int_0^1\int_0^1H_\lambda(u,z)\,\dd z\,\dd u
        \le {\frac{1}{\lambda^2}}.
\label{eq:hard-kernel-double}
\end{equation}
\end{lemma}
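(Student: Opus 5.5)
The plan is to reduce everything to the pointwise bound \eqref{eq:hard-kernel-pointwise}; the two integral bounds then follow by integrating $u^{\lambda-1}z^{\lambda-1}$ explicitly. Nonnegativity of $H_\lambda$ is immediate, since $1-\lambda>0$ and all other factors are positive powers of positive numbers. For the upper bound, write
\[
H_\lambda(u,z)=\frac{1-\lambda}{4}M_\lambda(u,z)^{1-2\lambda}u^{\lambda-1}z^{\lambda-1}.
\]
It therefore suffices to show that $\frac{1-\lambda}{4}M_\lambda(u,z)^{1-2\lambda}\le1$.

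This is where the range $0<\lambda<1/2$ is used. The exponent $1-2\lambda$ is positive. Also, since $u,z\in(0,1]$, the mean $M_\lambda(u,z)$ lies between $\min(u,z)$ and $\max(u,z)$, so $M_\lambda(u,z)\le1$. Hence $M_\lambda(u,z)^{1-2\lambda}\le1$, and since $\frac{1-\lambda}{4}\le\frac14\le1$, the pointwise bound follows. I expect this to be the only step needing care. The sign of $1-2\lambda$ matters: for $\lambda>1/2$ the factor $M_\lambda^{1-2\lambda}$ could blow up near the origin. In our regime, however, it is bounded by one.

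For \eqref{eq:hard-kernel-one-sided}, integrate the pointwise bound in $z$:
\[
\int_0^1H_\lambda(u,z)\dd z\le u^{\lambda-1}\int_0^1z^{\lambda-1}\dd z=\frac1\lambda u^{\lambda-1}.
\]
This uses $\lambda>0$, so that $z^{\lambda-1}$ is integrable at $0$. The second inequality follows in the same way, by the symmetry $H_\lambda(u,z)=H_\lambda(z,u)$. For \eqref{eq:hard-kernel-double}, integrate the first one-sided bound in $u$ over $(0,1]$. Tonelli's theorem applies because the integrand is nonnegative, and $\int_0^1u^{\lambda-1}\dd u=1/\lambda$ gives the bound $1/\lambda^2$.
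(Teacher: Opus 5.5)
Your proposal is correct and follows essentially the same route as the paper. Both obtain the pointwise bound from $1-2\lambda>0$, $M_\lambda\le1$ on $[0,1]^2$ and $(1-\lambda)/4\le1$, and then integrate $u^{\lambda-1}z^{\lambda-1}$ once and twice to get the one-sided and double-integral bounds.
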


\begin{proof}
Since $1-2\lambda>0$, $M_\lambda(u,z)\le1$ on $[0,1]^2$, and $(1-\lambda)/4\le1$,
\[
\begin{aligned}
        H_\lambda(u,z)
        &={\frac{1-\lambda}{4}}
        M_\lambda(u,z)^{1-2\lambda}
        u^{\lambda-1}z^{\lambda-1}                                \\
        &\le u^{\lambda-1}z^{\lambda-1}.
\end{aligned}
\]
Integrating this inequality once and twice gives
\[
\begin{aligned}
        \int_0^1H_\lambda(u,z)\,\dd z
        &\le u^{\lambda-1}\int_0^1z^{\lambda-1}\,\dd z
        ={\frac{1}{\lambda}}u^{\lambda-1},                            \\
        \int_0^1H_\lambda(u,z)\,\dd u
        &\le z^{\lambda-1}\int_0^1u^{\lambda-1}\,\dd u
        ={\frac{1}{\lambda}}z^{\lambda-1},                            \\
        \int_0^1\int_0^1H_\lambda(u,z)\,\dd z\,\dd u
        &\le
        \left(\int_0^1u^{\lambda-1}\,\dd u\right)
        \left(\int_0^1z^{\lambda-1}\,\dd z\right)
        ={\frac{1}{\lambda^2}}.
\end{aligned}
\]
\end{proof}

In the next lemma, we show that $G_\lambda (p)$ is Lipschitz in $p$ (and bounded). 

\begin{lemma}[Lipschitzness and boundedness of $G_\lambda$]
\label{lem:hard-lipschitz}
For all $p,q\in[0,1]$,
\begin{equation}
        |G_\lambda(p)-G_\lambda(q)|
        \le L\left(2+{\frac{1}{\lambda}}\right)|p-q|
        \le {\frac{\CL}{16}}|p-q|.
\label{eq:hard-G-lipschitz}
\end{equation}
Moreover,
\begin{equation}
        0\le G_\lambda(p)\le1
        \qquad
        \text{for every }p\in[0,1].
\label{eq:hard-G-bounded}
\end{equation}
\end{lemma}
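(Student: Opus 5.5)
We will prove both claims directly from the definition $G_\lambda(p)=\E[\gft_\lambda(p,S,B)]$, writing everything against the densities $f_S,f_B\le L$ of $\nu\in\mathcal D_L$. Boundedness is immediate: on the trade event $0\le p-S\le1$ and $0\le B-p\le1$, and a H\"older mean of order $\lambda>0$ lies between $0$ and the maximum of its arguments. Hence $0\le\gft_\lambda\le1$ pointwise, which gives \eqref{eq:hard-G-bounded}. The second inequality in \eqref{eq:hard-G-lipschitz} is exactly the bound $L(2+1/\lambda)\le\CL/16$ from Lemma~\ref{lem:constant-domination}. So the real work is the first inequality, with the precise constant $L(2+1/\lambda)$.

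We will show that $G_\lambda$ is absolutely continuous on $[0,1]$ and that $|G_\lambda'(p)|\le L(2+1/\lambda)$ for almost every $p$. Write
\[
G_\lambda(p)=\int_0^p\int_p^1 M_\lambda(p-s,b-p)\,f_S(s)f_B(b)\dd b\dd s .
\]
Formally, differentiating in $p$ (Leibniz rule) produces three contributions.

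The first contribution comes from the moving limit $s=p$. It equals
\[
f_S(p)\int_p^1 M_\lambda(0,b-p)f_B(b)\dd b=f_S(p)\,2^{-1/\lambda}\E[(B-p)_+],
\]
which is at most $L$ in absolute value.

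The second contribution comes from the moving limit $b=p$. It is $-f_B(p)\,2^{-1/\lambda}\E[(p-S)_+]$, again of absolute value at most $L$.

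The third contribution is the interior term
\[
\int\!\!\int\bigl(\partial_xM_\lambda-\partial_yM_\lambda\bigr)(p-s,b-p)\,f_Sf_B .
\]
For $x,y>0$ we have $\partial_xM_\lambda(x,y)=\tfrac12 M_\lambda^{1-\lambda}x^{\lambda-1}\le\tfrac12x^{\lambda-1}$, because $M_\lambda\le1$ on $[0,1]^2$ and $1-\lambda>0$. Therefore
\[
\int_0^p\int_p^1|\partial_xM_\lambda|\,f_Sf_B\le \tfrac L2\int_0^p(p-s)^{\lambda-1}\dd s\le \tfrac{L}{2\lambda},
\]
where we used $\int f_B\le 1$. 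The same bound holds for $\partial_y$. So the interior term is at most $L/\lambda$, and the three contributions together give $L(2+1/\lambda)$.

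The main obstacle is justifying this differentiation rigorously, since the derivative of $M_\lambda$ is singular at the axes. To avoid differentiating under the integral, we will work directly with the increment: for $p<q$ we will write $G_\lambda(q)-G_\lambda(p)=\int_p^q g(r)\dd r$, where $g$ is the sum of the three contributions above evaluated at $r$. The plan is to apply the fundamental theorem of calculus along $r\mapsto M_\lambda(r-s,b-r)$ for fixed $(s,b)$. This map is continuous on $[s,b]$ and $C^1$ in the interior, and its derivative is integrable near the endpoints because $x^{\lambda-1}$ is integrable. Then we will split the $(s,b)$-domain according to whether $s$ or $b$ falls inside $[p,q]$, which produces the boundary pieces. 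Tonelli's theorem, applied with the absolute bounds already established, justifies exchanging the $r$-integral with the $(s,b)$-integral. Bounding $|g(r)|\le L(2+1/\lambda)$ pointwise then yields $|G_\lambda(q)-G_\lambda(p)|\le L(2+1/\lambda)(q-p)$, which is \eqref{eq:hard-G-lipschitz}.
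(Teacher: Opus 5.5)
Your proof is correct and follows essentially the paper's route. The paper also splits $G_\lambda(q)-G_\lambda(p)$ into the pairs that trade at both prices and the two strips where $S$ or $B$ falls in $[p,q]$. The both-trade pairs are handled by the fundamental theorem of calculus along the price, with $\partial_xM_\lambda\le\frac12x^{\lambda-1}$ and Tonelli, giving $\frac{L}{\lambda}(q-p)$; the strips contribute $L(q-p)$ each. The only difference is that the paper bounds the strips crudely via $M_\lambda\le1$, whereas you evaluate the jumps exactly through $M_\lambda(0,y)=2^{-1/\lambda}y$, which even yields a slightly smaller Lipschitz constant.
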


\begin{proof}
The bound \eqref{eq:hard-G-bounded} follows from
$0\le M_\lambda(x,y)\le1$ on $[0,1]^2$.

Let $0\le p<q\le1$. By direct differentiation, for $x,y>0$,
\begin{equation}
        \partial_xM_\lambda(x,y)
        ={\frac{1}{2}}M_\lambda(x,y)^{1-\lambda}x^{\lambda-1},
        \qquad
        \partial_yM_\lambda(x,y)
        ={\frac{1}{2}}M_\lambda(x,y)^{1-\lambda}y^{\lambda-1}.
\label{eq:hard-M-derivatives}
\end{equation}
For almost every pair $(S,B)$ such that $S\le p$ and $B\ge q$,
the fundamental theorem of calculus and
\eqref{eq:hard-M-derivatives} give
\begin{align}
        &\left|
        M_\lambda(q-S,B-q)
        -M_\lambda(p-S,B-p)
        \right|                                                     \notag\\
        &\qquad=
        \left|
        \int_0^{q-p}
        \Bigl[
        \partial_xM_\lambda(p+t-S,B-p-t)
        -
        \partial_yM_\lambda(p+t-S,B-p-t)
        \Bigr]
        \,\dd t
        \right|                                                     \notag\\
        &\qquad\le
        {\frac{1}{2}}
        \int_0^{q-p}
        \left[
        (p+t-S)^{\lambda-1}
        +(B-p-t)^{\lambda-1}
        \right]
        \,\dd t,
\label{eq:hard-common-trade-pointwise}
\end{align}
where we used $M_\lambda\le1$.

Let $f_S$ and $f_B$ denote the marginal densities of $S$ and
$B$. Recall that $0\le f_S\le L$ and $0\le f_B\le L$ almost everywhere.
By \eqref{eq:hard-common-trade-pointwise} and
Tonelli's theorem,
\begin{align}
        &\E\left[
        \mathbb{I}\{S\le p,\ B\ge q\}
        \left|
        M_\lambda(q-S,B-q)
        -M_\lambda(p-S,B-p)
        \right|
        \right]                                                     \notag\\
        &\qquad\le
        {\frac{1}{2}}
        \int_0^{q-p}
        \E\left[
        \mathbb{I}\{S\le p,\ B\ge q\}
        \left(
        (p+t-S)^{\lambda-1}
        +(B-p-t)^{\lambda-1}
        \right)
        \right]
        \,\dd t                                                     \notag\\
        &\qquad=
        {\frac{1}{2}}
        \int_0^{q-p}
        \int_0^p
        \int_q^1
        \left[
        (p+t-s)^{\lambda-1}
        +(b-p-t)^{\lambda-1}
        \right]
        f_S(s)f_B(b)
        \,\dd b\,\dd s\,\dd t                                     \notag\\
        &\qquad=
        {\frac{1}{2}}
        \int_0^{q-p}
        \Bigg[
        \left(
        \int_0^p
        (p+t-s)^{\lambda-1}f_S(s)\,\dd s
        \right)
        \left(
        \int_q^1 f_B(b)\,\dd b
        \right)                                                     \notag\\
        &\hspace{4.5cm}
        +
        \left(
        \int_0^p f_S(s)\,\dd s
        \right)
        \left(
        \int_q^1
        (b-p-t)^{\lambda-1}f_B(b)\,\dd b
        \right)
        \Bigg]
        \,\dd t                                                     \notag\\
        &\qquad\le
        {\frac{L}{2}}
        \int_0^{q-p}
        \left[
        \int_0^p
        (p+t-s)^{\lambda-1}\,\dd s
        +
        \int_q^1
        (b-p-t)^{\lambda-1}\,\dd b
        \right]
        \,\dd t                                                     \notag\\
        &\qquad=
        {\frac{L}{2\lambda}}
        \int_0^{q-p}
        \left[
        (p+t)^\lambda-t^\lambda
        +(1-p-t)^\lambda-(q-p-t)^\lambda
        \right]
        \,\dd t                                                     \notag\\
        &\qquad\le
        {\frac{L}{\lambda}}(q-p).
\label{eq:hard-common-trade-bound}
\end{align}

Moreover,
\begin{align}
        \Prob(S\le p,\ p\le B<q)
        &\le
        \Prob(p\le B<q)
        \le L(q-p),                                                 \label{eq:hard-buyer-strip-bound}\\
        \Prob(p<S\le q,\ B\ge q)
        &\le
        \Prob(p<S\le q)
        \le L(q-p).
\label{eq:hard-seller-strip-bound}
\end{align}
Combining
\eqref{eq:hard-common-trade-bound}-\eqref{eq:hard-seller-strip-bound},
we obtain
\begin{align}
        |G_\lambda(q)-G_\lambda(p)|
        & = 
\Big|
  \E\left[
        \mathbb{I}\{S\le p,\ B\ge q\}
        \left(
        M_\lambda(q-S,B-q)
        -M_\lambda(p-S,B-p)
        \right)
        \right]          
       \notag \\
& ~ -
\E\left[
\mathbb{I}\{S\le p,\ p\le B<q\}
M_\lambda(p-S,B-p)
\right]
\\
& ~ +
\E\left[
\mathbb{I}\{p<S\le q,\ B\ge q\}
M_\lambda(q-S,B-q)
\right]
\Big|
\notag
\\
        &\le
        {\frac{L}{\lambda}}(q-p)+2L(q-p)                               \notag\\
        &=L\left(2+{\frac{1}{\lambda}}\right)(q-p)                     \notag\\
        &\le{\frac{\CL}{16}}(q-p),
\end{align}
where the last
inequality follows from \eqref{eq:hard-constant-bounds}.
\end{proof}

Recalling the notation
\[
   \mathcal I_\lambda(p)=
        \int_0^p\int_p^1H_\lambda(p-v,w-p)F_S(v)F_B^\circ(w)\dd w\dd v
\]
introduced in Lemma~\ref{lem:main-reconstruction}, the difficulty lies in the term $\mathcal I_\lambda(p)$ as $H_\lambda$ is singular when $v \to p$ or $w \to p$. The solution will consist in decomposing $\mathcal I_\lambda(p)$ in four terms, some of which replace $F_S(v)$ and $F_B^\circ(w)$ by $F_S(v)-F_S(p)$ and $F_B^\circ(w)-F_B^\circ(p)$, see Lemma~\ref{lem:centered-decomposition}. These latter two new quantities are $O_L(p-v)$ and $O_L(w-p)$, respectively, when $v \to p$ or $w \to p$, which counter-balance the singularity of $H_\lambda$. We start with some definitions.

For $p\in[0,1]$, define
\begin{equation}
        A^S(p):=\int_0^pF_S(v)\,\dd v,
        \qquad
        A^B(p):=\int_p^1F_B^\circ(w)\,\dd w,
\label{eq:hard-def-AS-AB}
\end{equation}
and recall \eqref{eq:KpS} and \eqref{eq:KpB} for $K_p^S$ and $K_p^B$, respectively,
\begin{equation}
        K_p^S(u):=\int_p^1H_\lambda(u,w-p)\,\dd w,
        \quad 0<u\le p,
        \quad
        K_p^B(z):=\int_0^pH_\lambda(p-v,z)\,\dd v,
        \quad 0<z\le1-p.
\label{eq:hard-def-KS-KB}
\end{equation}
as well as \eqref{eq:Jp} for $J(p)$,
\begin{equation}
        J(p):=\int_0^p\int_p^1
        H_\lambda(p-v,w-p)\,\dd w\,\dd v.
\label{eq:hard-def-J}
\end{equation}
We are ready for our decomposition lemma.

\begin{lemma}[Centered decomposition]
\label{lem:centered-decomposition}
For every $p\in[0,1]$,
\begin{equation}
\begin{aligned}
        \mathcal I_\lambda(p)
        ={}&F_S(p)F_B^\circ(p)J(p)
        +F_B^\circ(p)L^S(p)
        +F_S(p)L^B(p)
        +C(p),                                                       \\
        L^S(p)
        :={}&\int_0^p
        (F_S(v)-F_S(p))K_p^S(p-v)\,\dd v,                           \\
        L^B(p)
        :={}&\int_p^1
        (F_B^\circ(w)-F_B^\circ(p))K_p^B(w-p)\,\dd w,              \\
        C(p)
        :={}&\int_0^p\int_p^1
        (F_S(v)-F_S(p))(F_B^\circ(w)-F_B^\circ(p))                 \\
        &\hspace{35mm}\times
        H_\lambda(p-v,w-p)\,\dd w\,\dd v.
\end{aligned}
\label{eq:hard-centered-decomposition}
\end{equation}
\end{lemma}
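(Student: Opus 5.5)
The identity is an algebraic expansion of the product $F_S(v)F_B^\circ(w)$ around the point $p$, followed by termwise integration. For $v\in[0,p]$ and $w\in[p,1]$, write $a(v):=F_S(v)-F_S(p)$ and $b(w):=F_B^\circ(w)-F_B^\circ(p)$. Then
\[
F_S(v)F_B^\circ(w)=F_S(p)F_B^\circ(p)+F_B^\circ(p)\,a(v)+F_S(p)\,b(w)+a(v)\,b(w).
\]
Multiplying by $H_\lambda(p-v,w-p)$ and integrating over $[0,p]\times[p,1]$ reproduces the four terms of \eqref{eq:hard-centered-decomposition}.

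\textbf{Constant term.} This gives $F_S(p)F_B^\circ(p)J(p)$ by the definition \eqref{eq:hard-def-J}.

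\textbf{First cross term.} Integrate first in $w$ by Fubini. Since $a(v)$ does not depend on $w$,
\[
\int_0^p a(v)\int_p^1 H_\lambda(p-v,w-p)\,\dd w\,\dd v=\int_0^p a(v)K_p^S(p-v)\,\dd v=L^S(p).
\]

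\textbf{Second cross term.} Symmetrically, integrating first in $v$ gives $\int_p^1 b(w)K_p^B(w-p)\,\dd w=L^B(p)$.

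\textbf{Product term.} This is $C(p)$ by definition.

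The edge cases $p\in\{0,1\}$ are trivial, since then all integrals are over null sets and both sides are $0$.

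\textbf{Main step: integrability.} Splitting the integral into four pieces is legitimate only if each piece is absolutely integrable, because $H_\lambda$ is singular near $v=p$ and $w=p$. Here I use $0\le H_\lambda(u,z)\le u^{\lambda-1}z^{\lambda-1}$ from Lemma~\ref{lem:hard-kernel-bounds}, together with $|a|,|b|\le1$. Every integrand is then bounded in absolute value by $H_\lambda(p-v,w-p)$ itself. Its integral over $[0,p]\times[p,1]$ is at most $\int_0^1\int_0^1H_\lambda\le\lambda^{-2}<\infty$ by \eqref{eq:hard-kernel-double}.

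Hence $J(p)$, $L^S(p)$, $L^B(p)$ and $C(p)$ are finite, and linearity of the Lebesgue integral applies. Fubini--Tonelli, applied to the absolutely integrable functions $a(v)H_\lambda$ and $b(w)H_\lambda$, justifies the iterated forms $L^S$ and $L^B$. It also shows that $K_p^S(p-v)$ is finite for almost every $v$, and likewise $K_p^B(w-p)$ for almost every $w$; indeed $K_p^S(u)\le\lambda^{-1}u^{\lambda-1}$ by \eqref{eq:hard-kernel-one-sided}, and similarly for $K_p^B$.

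No bounded-density assumption is needed for this identity. The Lipschitz gain from centering, namely $|a(v)|\le L(p-v)$ and $|b(w)|\le L(w-p)$, matters only later, for the truncation bounds.
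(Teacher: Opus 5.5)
Your proposal is correct and follows the paper's proof: expand $F_S(v)F_B^\circ(w)$ around $p$ into four terms, multiply by $H_\lambda(p-v,w-p)$, and integrate over $[0,p]\times[p,1]$. Your extra integrability check, bounding each integrand by $H_\lambda$ and using $J(p)\le\lambda^{-2}$, is a valid justification of the termwise splitting that the paper leaves implicit.
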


\begin{proof}
For $v\le p\le w$,
\[
\begin{aligned}
        F_S(v)F_B^\circ(w)
        ={}&F_S(p)F_B^\circ(p)
        +F_B^\circ(p)(F_S(v)-F_S(p))                                \\
        &+F_S(p)(F_B^\circ(w)-F_B^\circ(p))                         \\
        &+(F_S(v)-F_S(p))(F_B^\circ(w)-F_B^\circ(p)).
\end{aligned}
\]
Multiplying by $H_\lambda(p-v,w-p)$ and integrating over
$[0,p]\times[p,1]$ gives \eqref{eq:hard-centered-decomposition}.
\end{proof}

In order to see how to deal with $L^S(p)$, $L^B(p)$ and $C(p)$, the strategy will consist in approximating each of them by truncating a small portion $\Delta^{1/(1+\lambda)}$ (defined below) of the range of integration in the one-dimensional integrations. Setting the truncation to $\Delta^{1/(1+\lambda)}$, the resulting bias will be of order $\Delta$ (Lemma~\ref{lem:truncation}). On the other hand, the resulting approximation of $L^S(p)$, $L^B(p)$ and $C(p)$ will suffer from an estimation fluctuation of order $\Delta$ (Lemma~\ref{lem:algorithmic-concentration}). Importantly, thanks to Lemma~\ref{lem:centered-decomposition}, the truncation will be of order $\Delta^{1/(1+\lambda)}$, which is of larger order than $\Delta$.

For $\Delta\in(0,1]$, define
\begin{equation}
L^S_\Delta(p):=
\begin{cases}
\displaystyle
\int_0^{p-\Delta^{\frac{1}{1+\lambda}}}
(F_S(v)-F_S(p))K_p^S(p-v)\,\dd v,
&\text{if }p\ge\Delta^{\frac{1}{1+\lambda}},\smallskip\\
0,
&\text{if }p<\Delta^{\frac{1}{1+\lambda}},
\end{cases}
\label{eq:hard-def-LS-Delta}
\end{equation}
\begin{equation}
L^B_\Delta(p):=
\begin{cases}
\displaystyle
\int_{p+\Delta^{\frac{1}{1+\lambda}}}^1
(F_B^\circ(w)-F_B^\circ(p))K_p^B(w-p)\,\dd w,
&\text{if }p\le1-\Delta^{\frac{1}{1+\lambda}},\smallskip\\
0,
&\text{if }p>1-\Delta^{\frac{1}{1+\lambda}},
\end{cases}
\label{eq:hard-def-LB-Delta}
\end{equation}
\begin{equation}
C_\Delta(p):=
\begin{cases}
\displaystyle
\begin{aligned}
&\int_0^{p-\Delta^{\frac{1}{1+\lambda}}}
\int_{p+\Delta^{\frac{1}{1+\lambda}}}^1
(F_S(v)-F_S(p))(F_B^\circ(w)-F_B^\circ(p))\\
&\hspace{25mm}\times
H_\lambda(p-v,w-p)\,\dd w\,\dd v,
\end{aligned}
&\begin{gathered}
\text{if }\Delta^{\frac{1}{1+\lambda}}\le\frac12\text{ and}\\[-1mm]
\Delta^{\frac{1}{1+\lambda}}\le p
\le1-\Delta^{\frac{1}{1+\lambda}},
\end{gathered}
\smallskip\\
0,
&\text{otherwise.}
\end{cases}
\label{eq:hard-def-C-Delta}
\end{equation}
Finally, set
\begin{equation}
        \mathcal I_{\lambda,\Delta}(p)
        :=F_S(p)F_B^\circ(p)J(p)
        +F_B^\circ(p)L^S_\Delta(p)
        +F_S(p)L^B_\Delta(p)
        +C_\Delta(p).
\label{eq:hard-def-I-Delta}
\end{equation}

\begin{lemma}[Truncation bias]
\label{lem:truncation}
For every $\Delta\in(0,1]$ and $p\in[0,1]$,
\begin{equation}
        |L^S(p)-L^S_\Delta(p)|
        +|L^B(p)-L^B_\Delta(p)|
        +|C(p)-C_\Delta(p)|
        \le\CL^{1/2}\Delta,
\label{eq:hard-truncation-components}
\end{equation}
and
\begin{equation}
        |\mathcal I_\lambda(p)-\mathcal I_{\lambda,\Delta}(p)|
        \le\CL^{1/2}\Delta.
\label{eq:hard-truncation-I}
\end{equation}
\end{lemma}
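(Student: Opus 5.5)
Write $h:=\Delta^{1/(1+\lambda)}$. The second inequality \eqref{eq:hard-truncation-I} follows from the first. Subtracting \eqref{eq:hard-def-I-Delta} from the decomposition of Lemma~\ref{lem:centered-decomposition}, the $J(p)$ terms cancel. The prefactors $F_B^\circ(p),F_S(p)$ lie in $[0,1]$, so the triangle inequality reduces everything to \eqref{eq:hard-truncation-components}. I will therefore bound the three truncation errors separately, aiming for something like $\tfrac{L}{\lambda^2}\Delta$ for each $L$-term and $\tfrac{4L^2}{\lambda^2(1+\lambda)}\Delta$ (or similar) for $C$. I then sum and absorb the constants into $\CL^{1/2}$ using \eqref{eq:hard-constant-bounds}. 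If the $L^2$ factor is not directly covered there, I use $\CL^{1/2}=2^{14}(1+L)/\lambda^2$, which dominates any $c\,L/\lambda^2$ with moderate $c$. An $L^2$ factor would not fit, so I will avoid squaring $L$ by bounding one centered factor by $1$ where needed.

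\emph{The $L^S$ term.} If $p\ge h$, the error is $\int_{p-h}^p (F_S(p)-F_S(v))K_p^S(p-v)\dd v$. If $p<h$, it is the full integral over $[0,p]$, which is also a window of length at most $h$ ending at $p$. In both cases I use $|F_S(v)-F_S(p)|\le L(p-v)$, and by \eqref{eq:hard-kernel-one-sided}, $K_p^S(u)\le\int_0^1H_\lambda(u,z)\dd z\le u^{\lambda-1}/\lambda$. Hence the error is at most
\[
\frac{L}{\lambda}\int_0^h u^{\lambda}\dd u=\frac{L}{\lambda(1+\lambda)}h^{1+\lambda}=\frac{L}{\lambda(1+\lambda)}\Delta.
\]
This is exactly where the exponent $1/(1+\lambda)$ is calibrated. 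The $L^B$ term is symmetric, using $|F_B^\circ(w)-F_B^\circ(p)|\le L(w-p)$ and the second bound in \eqref{eq:hard-kernel-one-sided}.

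\emph{The $C$ term.} This is the main obstacle. The difference $C-C_\Delta$ is the integral over $[0,p]\times[p,1]$ minus the rectangle $[0,p-h]\times[p+h,1]$. In the degenerate cases where $C_\Delta=0$ (that is, $h>1/2$ or $p\notin[h,1-h]$), it is the full region, but then one side has length less than $h$. In all cases the removed region is contained in the union of two strips: $\{p-v<h\}$ and $\{w-p<h\}$. On the strip $\{p-v<h\}$, I bound $|F_S(v)-F_S(p)|\le L(p-v)$ and $|F_B^\circ(w)-F_B^\circ(p)|\le1$. Then the bound $\int_0^1 H_\lambda(u,z)\dd z\le u^{\lambda-1}/\lambda$ again gives at most $\frac{L}{\lambda(1+\lambda)}\Delta$. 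The other strip is symmetric. Using only one Lipschitz factor per strip avoids both the $L^2$ factor and any need to control the product singularity near the corner $(p,p)$. Nonnegativity of the integrand magnitude makes the union bound legitimate. I expect the only care needed is in the bookkeeping of the degenerate cases, showing that the removed region always lies inside these two strips.

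\emph{Conclusion.} Summing gives a total of at most $\frac{4L}{\lambda(1+\lambda)}\Delta$, which is at most $\CL^{1/2}\Delta$ by \eqref{eq:hard-constant-bounds}. This proves \eqref{eq:hard-truncation-components}, and \eqref{eq:hard-truncation-I} follows as described at the start.
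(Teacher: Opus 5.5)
Your proposal is correct and follows the paper's proof essentially step for step. It uses the same one-sided kernel bound with a single Lipschitz factor, giving $\frac{L}{\lambda(1+\lambda)}\Delta$ for each of $L^S$, $L^B$ and each of the two strips in $C$, for a total of $\frac{4L}{\lambda(1+\lambda)}\Delta\le\CL^{1/2}\Delta$; the only cosmetic difference is that the paper treats the degenerate cases $p<\Delta^{1/(1+\lambda)}$ and $p>1-\Delta^{1/(1+\lambda)}$ for $C$ separately, rather than absorbing them into your two-strip containment.
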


\begin{proof}
By \eqref{eq:hard-kernel-one-sided} and the $L$-Lipschitzness of $F_S$,
\[
\begin{aligned}
        |L^S(p)-L^S_\Delta(p)|
        &\le
        \begin{cases}
        \displaystyle
        \int_{p-\Delta^{\frac{1}{1+\lambda}}}^{p}
        |F_S(v)-F_S(p)|K_p^S(p-v)\,\dd v,
        & p\ge\Delta^{\frac{1}{1+\lambda}},\smallskip\\
        \displaystyle
        \int_0^p|F_S(v)-F_S(p)|K_p^S(p-v)\,\dd v,
        & p<\Delta^{\frac{1}{1+\lambda}},
        \end{cases}                                                \\
        &\le
        {\frac{L}{\lambda}}
        \int_0^{\Delta^{\frac{1}{1+\lambda}}}u^\lambda\,\dd u
        ={\frac{L}{\lambda(1+\lambda)}}\Delta.
\end{aligned}
\]
Similarly,
\[
\begin{aligned}
        |L^B(p)-L^B_\Delta(p)|
        &\le
        \begin{cases}
        \displaystyle
        \int_p^{p+\Delta^{\frac{1}{1+\lambda}}}
        |F_B^\circ(w)-F_B^\circ(p)|K_p^B(w-p)\,\dd w,
        & p\le1-\Delta^{\frac{1}{1+\lambda}},\smallskip\\
        \displaystyle
        \int_p^1|F_B^\circ(w)-F_B^\circ(p)|K_p^B(w-p)\,\dd w,
        & p>1-\Delta^{\frac{1}{1+\lambda}},
        \end{cases}                                                \\
        &\le
        {\frac{L}{\lambda}}
        \int_0^{\Delta^{\frac{1}{1+\lambda}}}z^\lambda\,\dd z
        ={\frac{L}{\lambda(1+\lambda)}}\Delta.
\end{aligned}
\]
Let us finally address $C(p)-C_\Delta(p)$. 
We use \eqref{eq:hard-kernel-pointwise},
$|F_S(x)-F_S(y)|\le L|x-y|$, and
$|F_B^\circ(x)-F_B^\circ(y)|\le L |x-y|$. We distinguish three cases.

If $p<\Delta^{\frac{1}{1+\lambda}}$, then $C_\Delta(p)=0$, and
\[
\begin{aligned}
        |C(p)-C_\Delta(p)|
        &=|C(p)|                                                   \le
        \int_0^p\int_0^{1-p}
        Lu\,H_\lambda(u,z)\,\dd z\,\dd u                         \le
        {\frac{L}{\lambda}}\int_0^p u^\lambda\,\dd u               \le {\frac{L}{\lambda(1+\lambda)}}\Delta.
\end{aligned}
\]
If $p\ge\Delta^{\frac{1}{1+\lambda}}$ and
$p>1-\Delta^{\frac{1}{1+\lambda}}$, then $C_\Delta(p)=0$, and
\[
\begin{aligned}
        |C(p)-C_\Delta(p)|
        &=|C(p)|                                                   \le
        \int_0^p\int_0^{1-p}
        Lz\,H_\lambda(u,z)\,\dd z\,\dd u                         \le
        {\frac{L}{\lambda}}\int_0^{1-p} z^\lambda\,\dd z           \le {\frac{L}{\lambda(1+\lambda)}}\Delta.
\end{aligned}
\]
Finally, if
$\Delta^{\frac{1}{1+\lambda}}\le p
\le1-\Delta^{\frac{1}{1+\lambda}}$, then
\[
\begin{aligned}
        |C(p)-C_\Delta(p)|
        &\le
        \int_0^{\Delta^{\frac{1}{1+\lambda}}}
        \int_0^{1-p}
        Lu\,H_\lambda(u,z)\,\dd z\,\dd u                         \\
        &\quad+
        \int_0^p
        \int_0^{\Delta^{\frac{1}{1+\lambda}}}
        Lz\,H_\lambda(u,z)\,\dd z\,\dd u                         \\
        &\le
        {\frac{L}{\lambda}}
        \int_0^{\Delta^{\frac{1}{1+\lambda}}}u^\lambda\,\dd u
        +{\frac{L}{\lambda}}
        \int_0^{\Delta^{\frac{1}{1+\lambda}}}z^\lambda\,\dd z   \\
        &= {\frac{2L}{\lambda(1+\lambda)}}\Delta.
\end{aligned}
\]
Therefore,
\[
\begin{aligned}
        &|L^S(p)-L^S_\Delta(p)|
        +|L^B(p)-L^B_\Delta(p)|
        +|C(p)-C_\Delta(p)|                                        \\
        &\qquad\le
        {\frac{4L}{\lambda(1+\lambda)}}\Delta
        \le\CL^{1/2}\Delta,
\end{aligned}
\]
by \eqref{eq:hard-constant-bounds}. Equation~\eqref{eq:hard-truncation-I} follows from \eqref{eq:hard-centered-decomposition}, \eqref{eq:hard-def-I-Delta}, and $F_S(p),F_B^\circ(p)\in[0,1]$.
\end{proof}

\subsection{The good event}
\label{section:good:event}

For $m\in\{0,\ldots,M_T+1\}$, we will make use of the following two properties
\begin{equation*}
        p^\star\in A_m
        \quad
        \text{for every }p^\star\in\underset{p\in[0,1]}{\operatorname{argmax}}\,G_\lambda(p),
        \qquad
        G_\lambda^\star-G_\lambda(p)
        \le3\CL\Delta_m
        \quad\text{for every }p\in A_m,
        \tag{Act$_m$}
\label{eq:hard-active-invariant}
\end{equation*}
and
\begin{equation*}
        {\frac{\CL}{2}}\Delta_j
        \le G_\lambda^\star-G_\lambda(x)
        \le3\CL\Delta_j
        \quad
        \text{for every }x\in R_j,
        \quad j=0,\ldots,m-1.
        \tag{Rej$_{<m}$}
\label{eq:hard-rejected-invariant}
\end{equation*}
We denote by $H_m$ the event that \eqref{eq:hard-active-invariant} and \eqref{eq:hard-rejected-invariant} are verified at the beginning of the $m$-th epoch.

Intuitively, when $H_m$ occurs, Algorithm~\ref{alg:main} partitions its action space $[0,1]$ into the sets $R_0,\ldots,R_{m-1},A_m$ that capture the various levels of optimality of the different prices. Let us introduce some notations and events.

\begin{itemize}
\item
For $m=0,\ldots,M_T$, the $t_m$ random time denotes the last round in the $m$-th epoch, and we define its history vector up to (and including) $t_m$ by
\begin{equation}
        \cF_m
        :=(P_1,\ldots,P_{t_m},Y_1,\ldots,Y_{t_m},Z_1,\ldots,Z_{t_m}),
\label{eq:hard-def-Fm}
\end{equation}
recalling \eqref{eq:Yt:Zt}. We also denote $t_{-1}=0$, and let $\cF_{-1}$ be the empty history.

\item
For $m=0,\ldots,M_T$, define $B_m$ as the event
\begin{equation}
        \sup_{p\in N_m}
        |\widehat G_m(p)-G_\lambda(p)|
        \le {\frac{\CL}{16}}\Delta_m,
\label{eq:hard-def-Bm}
\end{equation}
We introduce
\begin{equation}
        \mathcal E_m:=\bigcap_{k=0}^mB_k,
        \qquad
        \mathcal E_{-1}:=\Omega.
\label{eq:hard-def-Em}
\end{equation}
\end{itemize}

By construction of the underlying probability space $\Omega$, the event $\mathcal E_{-1}$ is always true. Intuitively, event $B_m$ ensures that our estimation procedure in Algorithm~\ref{alg:estimate} enjoys an accuracy of order $\Delta_m$ in the $m$-th epoch. We will refer to $\mathcal E_{M_T}$ as the "good event".

Conditioned on $\cF_{m-1}$, the active and rejected sets, the action space partition $\cP_m$ as well as the number of rounds used in the $m$-th epoch are deterministic. Moreover, conditioned on $\cF_{m-1}$, all valuation pairs drawn during the $m$-th epoch are i.i.d. with distribution $\nu_S\otimes\nu_B$. The auxiliary random variables $U_J$, $J\in\cP_m$, uniformly sample their respective cells: by construction, conditioned on $\cF_{m-1}$, they are independent of one another and of the valuation pairs. Moreover, for all possible realizations of our algorithm,
\begin{equation}
        R_0,\ldots,R_{m-1},A_m
        \quad\text{are pairwise disjoint and form a partition of }[0,1].
\label{eq:hard-layer-partition}
\end{equation}

Intuitively, under $H_m$, for $p\in A_m$ and $x\in R_j$ with $j<m$, when $x$ is close to $p$, $\Delta_j$ is small and thus Algorithm~\ref{alg:main} explores the $p$ neighborhood densely. This ensures that the region close to $p$ is sampled sufficiently densely.

\begin{lemma}[Layer comparison]
\label{lem:layer-comparison}
Let $m \in \{ 0 ,\dots, M_T\}$.
Assume that $H_m$ holds. For every $p\in A_m$,
\begin{equation}
        x\in A_m
        \quad\Longrightarrow\quad
        \Delta_m\le7(\Delta_m+|p-x|),
\label{eq:hard-layer-control-active}
\end{equation}
and, for every $j<m$,
\begin{equation}
        x\in R_j
        \quad\Longrightarrow\quad
        \Delta_j\le7(\Delta_m+|p-x|).
\label{eq:hard-layer-control-rejected}
\end{equation}
\end{lemma}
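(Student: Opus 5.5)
The first claim \eqref{eq:hard-layer-control-active} is immediate, since $\Delta_m\le7\Delta_m\le7(\Delta_m+|p-x|)$; no use of $H_m$ is needed. The substance is \eqref{eq:hard-layer-control-rejected}. The plan is to compare the suboptimality gaps of $p$ and $x$ using the invariants in $H_m$ together with the Lipschitz bound of Lemma~\ref{lem:hard-lipschitz}.

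Fix $j<m$, $p\in A_m$ and $x\in R_j$. Since $H_m$ holds, the rejected invariant \eqref{eq:hard-rejected-invariant} gives the lower bound
\[
\frac{\CL}{2}\Delta_j\le G_\lambda^\star-G_\lambda(x),
\]
and the active invariant \eqref{eq:hard-active-invariant} gives $G_\lambda^\star-G_\lambda(p)\le3\CL\Delta_m$. We then write
\[
G_\lambda^\star-G_\lambda(x)=\bigl(G_\lambda^\star-G_\lambda(p)\bigr)+\bigl(G_\lambda(p)-G_\lambda(x)\bigr)
\]
and bound the second bracket by \eqref{eq:hard-G-lipschitz}, namely $|G_\lambda(p)-G_\lambda(x)|\le\frac{\CL}{16}|p-x|$. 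Chaining these estimates yields
\[
\frac{\CL}{2}\Delta_j\le3\CL\Delta_m+\frac{\CL}{16}|p-x|.
\]
Dividing by $\CL/2$ gives $\Delta_j\le6\Delta_m+\frac18|p-x|\le7(\Delta_m+|p-x|)$, which is the claim.

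There is no real obstacle; the argument is a direct chaining of the two invariants with the Lipschitz estimate. The only points to check are that both invariants may be applied to the given $x$ and $p$ (they can, because $H_m$ asserts them for every point of $R_j$ and of $A_m$), and that the Lipschitz constant $\CL/16$ is small enough relative to $\CL/2$ that the coefficient of $|p-x|$ stays below $7$.
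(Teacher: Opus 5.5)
Your proposal is correct and follows essentially the same argument as the paper: the first implication is immediate. For the second, you chain the rejected lower bound $\frac{\CL}{2}\Delta_j$ and the active upper bound $3\CL\Delta_m$ with the Lipschitz estimate \eqref{eq:hard-G-lipschitz}, which gives $\Delta_j\le6\Delta_m+\frac18|p-x|$.
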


\begin{proof}
Equation~\eqref{eq:hard-layer-control-active} is immediate. If $x\in R_j$, then $H_m$ and \eqref{eq:hard-G-lipschitz} give
\[
\begin{aligned}
        {\frac{\CL}{2}}\Delta_j
        &\le G_\lambda^\star-G_\lambda(x)                           \\
        &=G_\lambda^\star-G_\lambda(p)+G_\lambda(p)-G_\lambda(x)  \\
        &\le3\CL\Delta_m+{\frac{\CL}{16}}|p-x|,
\end{aligned}
\]
and hence
\[
        \Delta_j
        \le6\Delta_m+{\frac{1}{8}}|p-x|
        \le7(\Delta_m+|p-x|).
\]
\end{proof}

The next lemma provides estimates of the length of the sampling cells in Algorithm~\ref{alg:estimate}.

\begin{lemma}[Sampling-cell geometry]
\label{lem:partition-geometry}
Let $m\in\{0,\ldots,M_T\}$, let $\delta\in(0,1)$, set
\[
        \Delta=\Delta_m,
        \qquad
        \ell=\ell(\Delta,\delta),
\]
and let $\cP_m$ be the sampling partition constructed by
Algorithm~\ref{alg:estimate}. Every $J\in\cP_m$ satisfies
\begin{align}
        J\subset I,\ I\in\mathsf R_j
        &\quad\Longrightarrow\quad
        {\frac{8\Delta^2\Delta_j}{\CL\ell}}
        \le|J|\le
        {\frac{16\Delta^2\Delta_j}{\CL\ell}}
        \le2^{-12}\Delta^2\Delta_j,
\label{eq:hard-cell-size-rejected}\\
        J\subset I,\ I\in\mathsf A_m
        &\quad\Longrightarrow\quad
        {\frac{8\Delta^3}{\CL\ell}}
        \le|J|\le
        {\frac{16\Delta^3}{\CL\ell}}
        \le2^{-12}\Delta^3.
\label{eq:hard-cell-size-active}
\end{align}
Moreover,
\begin{equation}
        |\cP_m|\le{\frac{\CL\ell}{8\Delta^3}}.
\label{eq:hard-number-cells}
\end{equation}
\end{lemma}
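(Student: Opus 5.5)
The plan is to compute everything directly from the construction in Algorithm~\ref{alg:estimate}. Each interval $I\in\mathsf R_j$ has length $\Delta_j$, each $I\in\mathsf A_m$ has length $\Delta_m=\Delta$, and every such interval is split into exactly $n_m=\lceil \CL\ell/(16\Delta^2)\rceil$ equal cells. So a cell $J\subset I$ has $|J|=|I|/n_m$, and the cell-size claims reduce to two-sided bounds on $n_m$. I will treat \eqref{eq:hard-cell-size-active} as the special case of \eqref{eq:hard-cell-size-rejected} with $\Delta_j$ replaced by $\Delta$.

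\textbf{Bounds on $n_m$.} The upper bound $|J|\le 16\Delta^2\Delta_j/(\CL\ell)$ follows from $n_m\ge \CL\ell/(16\Delta^2)$. For the lower bound, I will show $n_m\le \CL\ell/(8\Delta^2)$. Since $\lceil x\rceil\le 2x$ whenever $x\ge1$, it suffices to check that $x:=\CL\ell/(16\Delta^2)\ge 1$. This holds because $\CL\ge2^{34}$ (Lemma~\ref{lem:constant-domination}), $\ell\ge\log 64$, and $\Delta\le1$. Dividing $\Delta_j$ by $n_m$ then gives $|J|\ge 8\Delta^2\Delta_j/(\CL\ell)$.

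\textbf{The crude bound $2^{-12}$.} I need $16/(\CL\ell)\le 2^{-12}$. This follows from $\CL\ge 2^{34}$ and $\ell\ge\log 64>1$, with a large margin.

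\textbf{Counting cells.} I will use the partition property \eqref{eq:hard-layer-partition}: the intervals in $\mathsf R_0,\dots,\mathsf R_{m-1},\mathsf A_m$ are disjoint and cover $[0,1]$, so their lengths sum to $1$. Every cell satisfies $|J|\ge 8\Delta^3/(\CL\ell)$, because $\Delta_j\ge\Delta$ for $j<m$. The cells of $\cP_m$ are disjoint and their total length is $1$, so
\[
|\cP_m|\cdot \frac{8\Delta^3}{\CL\ell}\le 1,
\]
which is \eqref{eq:hard-number-cells}.

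There is no real obstacle here. The only point needing care is verifying $x\ge1$ so that the factor-two bound on the ceiling applies.
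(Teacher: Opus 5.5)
Your proposal is correct and follows essentially the same route as the paper's proof. Both bound $n_m$ between $x=\CL\ell/(16\Delta^2)$ and $2x$ using $x\ge1$, divide the interval lengths $\Delta_j$ (or $\Delta$) by $n_m$, and use $\CL\ge2^{34}$ for the $2^{-12}$ bound. Both then count cells via the layer-partition property together with the minimal cell length $8\Delta^3/(\CL\ell)$.
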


\begin{proof}
Set
\[
        x:={\frac{\CL\ell}{16\Delta^2}}.
\]
By \eqref{eq:hard-constant-bounds} and $\ell\ge\log64$, we have
$x\ge1$. Hence,
\[
        x\le n_m=\lceil x\rceil\le2x.
\]
If $J\subset I\in\mathsf R_j$, then
\[
\begin{aligned}
        |J|
        &={\frac{\Delta_j}{n_m}},                                    \\
        {\frac{8\Delta^2\Delta_j}{\CL\ell}}
        &\le |J|\le
        {\frac{16\Delta^2\Delta_j}{\CL\ell}}.
\end{aligned}
\]
Since $\CL\ge2^{34}$ and $\ell\ge1$,
\[
        {\frac{16}{\CL\ell}}\le2^{-12},
\]
which proves \eqref{eq:hard-cell-size-rejected}. If
$J\subset I\in\mathsf A_m$, the same computation with
$|I|=\Delta$ proves \eqref{eq:hard-cell-size-active}.

Since $\Delta_j\ge\Delta$ for $j<m$, every cell in $\cP_m$
has length at least $8\Delta^3/(\CL\ell)$. Using
\eqref{eq:hard-layer-partition},
\[
        1=\sum_{J\in\cP_m}|J|
        \ge|\cP_m|{\frac{8\Delta^3}{\CL\ell}},
\]
which proves \eqref{eq:hard-number-cells}.
\end{proof}

Lemma~\ref{lem:algorithmic-concentration} below gives the concentration of each component in our estimator $\widehat G_m(p)$. The proof is obtained by a repeated use of Hoeffding's and McDiarmid's concentration inequalities. The proof of Lemma~\ref{lem:algorithmic-concentration} will require bounding the sums of squared ranges appearing when applying Hoeffding's concentration inequality and the sums of squared variation ranges appearing when applying McDiarmid's concentration inequality. These quantities will be controlled in Lemmas~\ref{lem:discrete-mass} and~\ref{lem:quadratic-envelope}.

\begin{lemma}[Bounds on sums over $\cP_m$]
\label{lem:discrete-mass}
Let $m \in \{ 0 ,\dots, M_T\}$ and let $\delta\in(0,1)$. Assume that $H_m$ holds, set $\Delta=\Delta_m$, let $\cP_m$ be the sampling partition constructed by Algorithm~\ref{alg:estimate} with confidence $\delta$, and choose arbitrary points $U_J\in J$ for $J\in\cP_m$. For every $p\in A_m$, the following inequalities hold.

If $p\ge\Delta^{\frac{1}{1+\lambda}}$ and
$0\le x\le p-\Delta^{\frac{1}{1+\lambda}}$, then
\begin{equation}
        \sum_{\substack{K\in\cP_m\\
        U_K\ge p+\Delta^{\frac{1}{1+\lambda}}}}
        |K|H_\lambda(p-x,U_K-p)
        \le\CL^{1/2}(p-x)^{\lambda-1}.
\label{eq:hard-discrete-mass-right}
\end{equation}
If $p\le1-\Delta^{\frac{1}{1+\lambda}}$ and
$p+\Delta^{\frac{1}{1+\lambda}}\le y\le1$, then
\begin{equation}
        \sum_{\substack{J\in\cP_m\\
        U_J\le p-\Delta^{\frac{1}{1+\lambda}}}}
        |J|H_\lambda(p-U_J,y-p)
        \le\CL^{1/2}(y-p)^{\lambda-1}.
\label{eq:hard-discrete-mass-left}
\end{equation}
If $\Delta^{\frac{1}{1+\lambda}}\le1/2$ and
$\Delta^{\frac{1}{1+\lambda}}\le p
\le1-\Delta^{\frac{1}{1+\lambda}}$, then
\begin{equation}
\begin{aligned}
        &\sum_{\substack{J,K\in\cP_m\\
        U_J\le p-\Delta^{\frac{1}{1+\lambda}}\\
        U_K\ge p+\Delta^{\frac{1}{1+\lambda}}}}
        |J||K|H_\lambda(p-U_J,U_K-p)
        \le\CL^{1/2}.
\end{aligned}
\label{eq:hard-discrete-rectangular-mass}
\end{equation}
Finally, for $p\ge\Delta^{\frac{1}{1+\lambda}}$,
\begin{equation}
        \sum_{\substack{J\in\cP_m\\
        U_J\le p-\Delta^{\frac{1}{1+\lambda}}}}
        |J|K_p^S(p-U_J)
        \le\CL^{1/2},
\label{eq:hard-discrete-KS-mass}
\end{equation}
and, for  $p\le1-\Delta^{\frac{1}{1+\lambda}}$,
\begin{equation}
        \sum_{\substack{K\in\cP_m\\
        U_K\ge p+\Delta^{\frac{1}{1+\lambda}}}}
        |K|K_p^B(U_K-p)
        \le\CL^{1/2}.
\label{eq:hard-discrete-KB-mass}
\end{equation}
\end{lemma}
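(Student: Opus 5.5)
The strategy is to compare each Riemann-type sum over $\cP_m$ with the corresponding integral, using the kernel bounds of Lemma~\ref{lem:hard-kernel-bounds}. The key point is that the cells are small relative to their distance from $p$. This holds thanks to the truncation at $\Delta^{\frac{1}{1+\lambda}}$ together with the cell-size bounds of Lemma~\ref{lem:partition-geometry} and the layer comparison of Lemma~\ref{lem:layer-comparison}.

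\textbf{Step 1 (cells are short relative to the distance to $p$).} Fix $p\in A_m$ and a cell $K\in\cP_m$ with some point $U_K\in K$ satisfying $|U_K-p|\ge\Delta^{\frac{1}{1+\lambda}}\ge\Delta$. Suppose $K\subset I\in\mathsf R_j$. By \eqref{eq:hard-cell-size-rejected}, $|K|\le 2^{-12}\Delta^2\Delta_j$. Take any $x\in K$. By \eqref{eq:hard-layer-control-rejected}, $\Delta_j\le 7(\Delta+|p-x|)\le 14\,(1+|p-x|)$, so $|K|\le 2^{-8}\Delta^2$. If instead $K\subset I\in\mathsf A_m$, then \eqref{eq:hard-cell-size-active} gives $|K|\le 2^{-12}\Delta^3$ directly. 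In either case $|K|\le \Delta/4\le |U_K-p|/4$. Hence every point $z\in K$ satisfies $\tfrac12|U_K-p|\le |z-p|\le 2|U_K-p|$, and the cell stays on the same side of $p$.

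\textbf{Step 2 (pointwise comparison of the kernel).} On $K$, the factor $(U_K-p)^{\lambda-1}$ is within a factor $2^{1-\lambda}\le 2$ of $(z-p)^{\lambda-1}$. The factor $M_\lambda^{1-2\lambda}$ is also comparable up to a constant, since $M_\lambda$ is monotone and homogeneous, so $M_\lambda(a,2b)\le 2M_\lambda(a,b)$. Therefore $H_\lambda(a,U_K-p)\le 4H_\lambda(a,z-p)$ for all $z\in K$. It is simpler, though, to bound directly by \eqref{eq:hard-kernel-pointwise}: $H_\lambda(a,U_K-p)\le a^{\lambda-1}(U_K-p)^{\lambda-1}\le 2\,a^{\lambda-1}(z-p)^{\lambda-1}$. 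Thus $|K|H_\lambda(p-x,U_K-p)\le 2(p-x)^{\lambda-1}\int_K (z-p)^{\lambda-1}\dd z$.

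\textbf{Step 3 (the five bounds).} The cells satisfying $U_K\ge p+\Delta^{\frac{1}{1+\lambda}}$ lie in $[p+\Delta^{\frac{1}{1+\lambda}}/2,1]$ and are disjoint. Summing the bound of Step~2 over them gives at most $2(p-x)^{\lambda-1}\int_0^1 z^{\lambda-1}\dd z=\tfrac{2}{\lambda}(p-x)^{\lambda-1}$. This yields \eqref{eq:hard-discrete-mass-right}, since $2/\lambda\le\CL^{1/2}$ by \eqref{eq:hard-constant-bounds}. The bound \eqref{eq:hard-discrete-mass-left} follows symmetrically. For \eqref{eq:hard-discrete-rectangular-mass}, apply the comparison on both coordinates simultaneously to get a bound of $4\int_0^1\!\int_0^1 u^{\lambda-1}z^{\lambda-1}\dd u\dd z=4/\lambda^2\le\CL^{1/2}$. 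For \eqref{eq:hard-discrete-KS-mass}, use \eqref{eq:hard-kernel-one-sided}, which gives $K_p^S(u)\le u^{\lambda-1}/\lambda$, so each term is at most $\tfrac{2}{\lambda}\int_J (p-z)^{\lambda-1}\dd z$ and the sum is at most $2/\lambda^2\le\CL^{1/2}$. The bound \eqref{eq:hard-discrete-KB-mass} is identical.

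The main obstacle is Step~1. Rejected cells can be large, with $\Delta_j$ up to $1$, and we must still ensure that they are short compared to their distance from $p$. This is exactly where Lemma~\ref{lem:layer-comparison} and the extra factor $\Delta^2$ in the cell length are used. The remaining steps are routine integral comparisons.
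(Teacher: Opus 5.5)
Your proof is correct and follows essentially the same route as the paper. Both show that each cell is short compared with its distance to $p$, bound $(U_K-p)^{\lambda-1}$ by twice its average over the cell, and sum using the pointwise and one-sided kernel bounds to get $2/\lambda$, $4/\lambda^2$ and $2/\lambda^2$. In Step~1, the detour through Lemma~\ref{lem:layer-comparison} is unnecessary, and as written $14(1+|p-x|)$ does not yield $2^{-8}$. However, $\Delta_j\le1$ and \eqref{eq:hard-cell-size-rejected} already give $|K|\le 2^{-12}\Delta^2\le 2^{-12}|U_K-p|$, so this lemma does not need $H_m$; the layer comparison is only essential in Lemma~\ref{lem:quadratic-envelope}.
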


\begin{proof}
Consider $K\in\cP_m$ such that
$U_K\ge p+\Delta^{\frac{1}{1+\lambda}}$. By
Lemmas~\ref{lem:layer-comparison} and~\ref{lem:partition-geometry},
\[
\begin{aligned}
        |K|
        &\le
        2^{-12}\Delta^2
        \begin{cases}
        \Delta_j,&K\subset R_j,\ j<m,\\
        \Delta,&K\subset A_m,
        \end{cases}                                                \\
        &\le7\cdot2^{-12}\Delta^2(\Delta+U_K-p)                   \\
        &\le14\cdot2^{-12}\Delta^2(U_K-p)
        \le{\frac{U_K-p}{64}},
\end{aligned}
\]
because $U_K-p\ge\Delta^{\frac{1}{1+\lambda}}\ge\Delta$. Thus, for every $z\in K$,
\[
\begin{aligned}
        z-p
        &\ge U_K-p-|K|
        \ge{\frac{63}{64}}(U_K-p)
        \ge{\frac{1}{2}}(U_K-p),                                      \\
        z-p
        &\le U_K-p+|K|
        \le{\frac{65}{64}}(U_K-p)
        \le2(U_K-p),                                               \\
        (z-p)^{\lambda-1}
        &\ge2^{\lambda-1}(U_K-p)^{\lambda-1}
        \ge{\frac{1}{2}}(U_K-p)^{\lambda-1}.
\end{aligned}
\]
Consequently,
\begin{equation}
\begin{aligned}
        &\sum_{\substack{K\in\cP_m\\
        U_K\ge p+\Delta^{\frac{1}{1+\lambda}}}}
        |K|(U_K-p)^{\lambda-1}                       
        \qquad\le
        2\sum_{\substack{K\in\cP_m\\
        U_K\ge p+\Delta^{\frac{1}{1+\lambda}}}}
        \int_K(z-p)^{\lambda-1}\,\dd z                            \\
        &\qquad\le
        2\int_{\min\{1,p+\frac12\Delta^{\frac{1}{1+\lambda}}\}}^1
        (z-p)^{\lambda-1}\,\dd z
        \le{\frac{2}{\lambda}}.
\end{aligned}
\label{eq:hard-right-power-sum}
\end{equation}
Using \eqref{eq:hard-kernel-pointwise} and
\eqref{eq:hard-right-power-sum},
\[
\begin{aligned}
        &\sum_{\substack{K\in\cP_m\\
        U_K\ge p+\Delta^{\frac{1}{1+\lambda}}}}
        |K|H_\lambda(p-x,U_K-p)                                   \qquad\le
        (p-x)^{\lambda-1}
        \sum_{\substack{K\in\cP_m\\
        U_K\ge p+\Delta^{\frac{1}{1+\lambda}}}}
        |K|(U_K-p)^{\lambda-1}                                    \\
        &\qquad\le{\frac{2}{\lambda}}(p-x)^{\lambda-1}
        \le\CL^{1/2}(p-x)^{\lambda-1},
\end{aligned}
\]
which proves \eqref{eq:hard-discrete-mass-right}.

For $J\in\cP_m$ such that
$U_J\le p-\Delta^{\frac{1}{1+\lambda}}$, Lemmas~\ref{lem:layer-comparison} and~\ref{lem:partition-geometry} give
\[
\begin{aligned}
        |J|
        &\le7\cdot2^{-12}\Delta^2(\Delta+p-U_J)
        \le14\cdot2^{-12}\Delta^2(p-U_J)
        \le{\frac{p-U_J}{64}},                                        \\
        p-v
        &\ge p-U_J-|J|
        \ge{\frac{63}{64}}(p-U_J)
        \ge{\frac{1}{2}}(p-U_J),                                      \\
        p-v
        &\le p-U_J+|J|
        \le2(p-U_J)
        \qquad(v\in J).
\end{aligned}
\]
Therefore,
\begin{equation}
\begin{aligned}
        &\sum_{\substack{J\in\cP_m\\
        U_J\le p-\Delta^{\frac{1}{1+\lambda}}}}
        |J|(p-U_J)^{\lambda-1}                                    \qquad\le
        2\int_0^{\max\{0,p-\frac12\Delta^{\frac{1}{1+\lambda}}\}}
        (p-v)^{\lambda-1}\,\dd v
        \le{\frac{2}{\lambda}}.
\end{aligned}
\label{eq:hard-left-power-sum}
\end{equation}
It follows that
\[
\begin{aligned}
        &\sum_{\substack{J\in\cP_m\\
        U_J\le p-\Delta^{\frac{1}{1+\lambda}}}}
        |J|H_\lambda(p-U_J,y-p)                                   \qquad\le
        (y-p)^{\lambda-1}
        \sum_{\substack{J\in\cP_m\\
        U_J\le p-\Delta^{\frac{1}{1+\lambda}}}}
        |J|(p-U_J)^{\lambda-1}                                    \\
        &\qquad\le{\frac{2}{\lambda}}(y-p)^{\lambda-1}
        \le\CL^{1/2}(y-p)^{\lambda-1},
\end{aligned}
\]
which proves \eqref{eq:hard-discrete-mass-left}.

Equations~\eqref{eq:hard-kernel-pointwise},
\eqref{eq:hard-right-power-sum}, and
\eqref{eq:hard-left-power-sum} also give
\[
\begin{aligned}
        &\sum_{\substack{J,K\in\cP_m\\
        U_J\le p-\Delta^{\frac{1}{1+\lambda}}\\
        U_K\ge p+\Delta^{\frac{1}{1+\lambda}}}}
        |J||K|H_\lambda(p-U_J,U_K-p)                              \\
        &\qquad\le
        \left(
        \sum_{\substack{J\in\cP_m\\
        U_J\le p-\Delta^{\frac{1}{1+\lambda}}}}
        |J|(p-U_J)^{\lambda-1}
        \right)
        \left(
        \sum_{\substack{K\in\cP_m\\
        U_K\ge p+\Delta^{\frac{1}{1+\lambda}}}}
        |K|(U_K-p)^{\lambda-1}
        \right)                                                    \\
        &\qquad\le{\frac{4}{\lambda^2}}
        \le\CL^{1/2}.
\end{aligned}
\]
Finally, \eqref{eq:hard-kernel-one-sided},
\eqref{eq:hard-left-power-sum}, and
\eqref{eq:hard-right-power-sum} yield
\[
\begin{aligned}
        \sum_{\substack{J\in\cP_m\\
        U_J\le p-\Delta^{\frac{1}{1+\lambda}}}}
        |J|K_p^S(p-U_J)
        &\le{\frac{1}{\lambda}}
        \sum_{\substack{J\in\cP_m\\
        U_J\le p-\Delta^{\frac{1}{1+\lambda}}}}
        |J|(p-U_J)^{\lambda-1}                                    \\
        &\le{\frac{2}{\lambda^2}}
        \le\CL^{1/2},                                             \\
        \sum_{\substack{K\in\cP_m\\
        U_K\ge p+\Delta^{\frac{1}{1+\lambda}}}}
        |K|K_p^B(U_K-p)
        &\le{\frac{1}{\lambda}}
        \sum_{\substack{K\in\cP_m\\
        U_K\ge p+\Delta^{\frac{1}{1+\lambda}}}}
        |K|(U_K-p)^{\lambda-1}                                    \\
        &\le{\frac{2}{\lambda^2}}
        \le\CL^{1/2}.
\end{aligned}
\]
\end{proof}

\begin{lemma}[Bounds on sums of squares over $\cP_m$]
\label{lem:quadratic-envelope}
Let $m\in\{0,\ldots,M_T\}$ and $\delta\in(0,1)$. Assume that
$H_m$ holds, set
\[
        \Delta=\Delta_m,
        \qquad
        \ell=\ell(\Delta,\delta),
\]
and let $\cP_m$ be the sampling partition constructed by
Algorithm~\ref{alg:estimate} with confidence $\delta$. For
$p\in N_m$ and $J\in\cP_m$, define
\begin{equation}
        a_J^-(p):=
        \begin{cases}
        \displaystyle
        |J|\sup_{\substack{x\in J\\
        x\le p-\Delta^{\frac{1}{1+\lambda}}}}
        (p-x)^{\lambda-1},
        &\begin{array}{l}
        \text{if }p\ge\Delta^{\frac{1}{1+\lambda}}
        \text{ and}\\[-1mm]
        J\cap[0,p-\Delta^{\frac{1}{1+\lambda}}]\neq\varnothing,
        \end{array}\smallskip\\
        0,&\text{otherwise,}
        \end{cases}
\label{eq:hard-def-a-minus}
\end{equation}
and
\begin{equation}
        a_J^+(p):=
        \begin{cases}
        \displaystyle
        |J|\sup_{\substack{x\in J\\
        x\ge p+\Delta^{\frac{1}{1+\lambda}}}}
        (x-p)^{\lambda-1},
        &\begin{array}{l}
        \text{if }p\le1-\Delta^{\frac{1}{1+\lambda}}
        \text{ and}\\[-1mm]
        J\cap[p+\Delta^{\frac{1}{1+\lambda}},1]\neq\varnothing,
        \end{array}\smallskip\\
        0,&\text{otherwise.}
        \end{cases}
\label{eq:hard-def-a-plus}
\end{equation}
Then
\begin{equation}
        \sum_{J\in\cP_m}(a_J^-(p))^2
        \le{\frac{32\Delta^2}{\CL^{1/2}\ell}},
        \qquad
        \sum_{J\in\cP_m}(a_J^+(p))^2
        \le{\frac{32\Delta^2}{\CL^{1/2}\ell}}.
\label{eq:hard-quadratic-envelope}
\end{equation}
\end{lemma}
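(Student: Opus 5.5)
The plan is to reduce each squared envelope term to $|J|$ times a factor of order $\Delta^2/(\CL\ell)$ times a locally integrable power of $p-v$. The point is that $u\mapsto u^{2\lambda-1}$ is integrable on $(0,1]$ because $\lambda>0$. We do not want to integrate $u^{2\lambda-2}$, which is non-integrable at $0$ for $\lambda<1/2$. I treat $a_J^-(p)$; the bound for $a_J^+(p)$ is symmetric, using the right-hand estimates of Lemma~\ref{lem:discrete-mass} in place of the left-hand ones.

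Fix $p\in N_m\subseteq A_m$ with $p\ge\Delta^{\frac{1}{1+\lambda}}$. Fix $J\in\cP_m$ meeting $[0,p-\Delta^{\frac{1}{1+\lambda}}]$, and let $x^\star$ be the right endpoint of $J\cap[0,p-\Delta^{\frac{1}{1+\lambda}}]$. The supremum in \eqref{eq:hard-def-a-minus} is at most $(p-x^\star)^{\lambda-1}$; if the endpoint is not attained, pass to the closure. Exactly as in the proof of Lemma~\ref{lem:discrete-mass}, Lemmas~\ref{lem:layer-comparison} and~\ref{lem:partition-geometry} give
\[
|J|\le\frac{16\Delta^2}{\CL\ell}\cdot7(\Delta+p-x^\star)\le\frac{224\Delta^2}{\CL\ell}(p-x^\star),
\]
using $p-x^\star\ge\Delta^{\frac{1}{1+\lambda}}\ge\Delta$. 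The same computation gives $|J|\le(p-x^\star)/64$. Hence every $v\in J$ satisfies $v<p$ and $p-v\le 2(p-x^\star)$. Since $2\lambda-1<0$, this yields $(p-v)^{2\lambda-1}\ge\frac12(p-x^\star)^{2\lambda-1}$.

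Splitting one factor $|J|$ and using the previous bound on the other, we get
\[
(a_J^-(p))^2\le|J|^2(p-x^\star)^{2\lambda-2}
\le|J|\frac{224\Delta^2}{\CL\ell}(p-x^\star)^{2\lambda-1}
\le\frac{448\Delta^2}{\CL\ell}\int_J(p-v)^{2\lambda-1}\dd v .
\]
The cells are disjoint and contained in $[0,p)$. Summing over $J$ and using $\int_0^p(p-v)^{2\lambda-1}\dd v\le\frac{1}{2\lambda}$ gives
\[
\sum_{J\in\cP_m}(a_J^-(p))^2\le\frac{224\Delta^2}{\lambda\CL\ell}.
\]
It remains to check $224/\lambda\le32\CL^{1/2}$, that is, $7/\lambda\le\CL^{1/2}$. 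This follows from the last inequality of \eqref{eq:hard-constant-bounds}, since $16(2+\frac{1}{2\lambda})\ge 8/\lambda$.

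The main obstacle is the first step: we cannot simply bound $|J|$ by its uniform size $\le 2^{-12}\Delta^2\Delta_j$. The factor $\Delta_j$ must be converted, via the layer comparison under $H_m$, into $\Delta+p-x^\star$. That extra factor $p-x^\star$ is what raises the exponent from $2\lambda-2$ to the integrable $2\lambda-1$. The truncation at $\Delta^{\frac{1}{1+\lambda}}\ge\Delta$ is used only to absorb $\Delta$ into $p-x^\star$ and to guarantee that cells do not straddle $p$.
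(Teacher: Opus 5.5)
Your proof is correct and follows the paper's route: Lemmas~\ref{lem:layer-comparison} and~\ref{lem:partition-geometry} trade one factor of $|J|$ for the distance to $p$, then you compare the supremum with the cell average and integrate the resulting power. The only difference is that you absorb $\Delta$ into $p-x^\star$ before integrating, so you only integrate $u^{2\lambda-1}$ and avoid the paper's separate cutoff estimate of $\Delta\int_{\frac12\Delta^{1/(1+\lambda)}}^1 u^{2\lambda-2}\,\dd u$.
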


\begin{proof}
If $p<\Delta^{\frac{1}{1+\lambda}}$, then
$a_J^-(p)=0$ for all $J$. Assume
$p\ge\Delta^{\frac{1}{1+\lambda}}$, and define
\[
        \cP_m^-(p)
        :=
        \left\{
        J\in\cP_m:
        J\cap[0,p-\Delta^{\frac{1}{1+\lambda}}]
        \neq\varnothing
        \right\}.
\]
Let $J\in\cP_m^-(p)$, and choose
$x_J\in J\cap[0,p-\Delta^{\frac{1}{1+\lambda}}]$. For every
$x\in J$, Lemmas~\ref{lem:layer-comparison} and
\ref{lem:partition-geometry} give
\[
\begin{aligned}
        |J|
        &\le{\frac{16\Delta^2}{\CL\ell}}
        \begin{cases}
        \Delta_j,&J\subset R_j,\ j<m,\\
        \Delta,&J\subset A_m,
        \end{cases}                                                \qquad \le{\frac{7\cdot16\Delta^2}{\CL\ell}}
        (\Delta+|p-x|),                                            \\
        |J|
        &\le{\frac{7\cdot16\Delta^2}{\CL\ell}}
        (\Delta+p-x_J)                                             \qquad\le{\frac{14\cdot16\Delta^2}{\CL\ell}}(p-x_J)
        \qquad
        \le{\frac{p-x_J}{64}}.
\end{aligned}
\]
Consequently, for every $x,y\in J$,
\[
\begin{aligned}
        p-x
        &\ge p-x_J-|J|
        \ge{\frac{63}{64}}(p-x_J)
        \ge{\frac{1}{2}}(p-x_J),                                      \\
        p-x
        &\le p-x_J+|J|
        \le{\frac{65}{64}}(p-x_J),                                    \\
        \sup_{y\in J}(p-y)^{2\lambda-2}
        &\le4{\frac{1}{|J|}}\int_J(p-x)^{2\lambda-2}\,\dd x.
\end{aligned}
\]
Thus, in particular, the cells in $\cP_m^-(p)$ are contained in
$[0,p-\frac12\Delta^{\frac{1}{1+\lambda}}]$. Therefore,
\[
\begin{aligned}
        \sum_{J\in\cP_m}(a_J^-(p))^2
        &\le4\sum_{J\in\cP_m^-(p)}|J|
        \int_J(p-x)^{2\lambda-2}\,\dd x                           \\
        &\le{\frac{2^9\Delta^2}{\CL\ell}}
        \sum_{J\in\cP_m^-(p)}\int_J
        (\Delta+p-x)(p-x)^{2\lambda-2}\,\dd x                    \\
        &\le{\frac{2^9\Delta^2}{\CL\ell}}
        \int_{\frac12\Delta^{\frac{1}{1+\lambda}}}^1
        (\Delta+u)u^{2\lambda-2}\,\dd u                           \\
        &= {\frac{2^9\Delta^2}{\CL\ell}}\left[
        \Delta\int_{\frac12\Delta^{\frac{1}{1+\lambda}}}^1
        u^{2\lambda-2}\,\dd u
        +\int_{\frac12\Delta^{\frac{1}{1+\lambda}}}^1
        u^{2\lambda-1}\,\dd u
        \right]                                                     \\
        &\le {\frac{2^9\Delta^2}{\CL\ell}}\left[
        \Delta\int_{\frac12\Delta^{\frac{1}{1+\lambda}}}^1
        u^{-2}\,\dd u
        +{\frac{1}{2\lambda}}
        \right]                                                     \\
        &\le {\frac{2^9\Delta^2}{\CL\ell}}\left[
        2\Delta^{\frac{\lambda}{1+\lambda}}
        +{\frac{1}{2\lambda}}
        \right]                                                     \\
        &\le {\frac{2^9}{\CL\ell}}
        \left(2+{\frac{1}{2\lambda}}\right)\Delta^2                \\
        &\le {\frac{32\Delta^2}{\CL^{1/2}\ell}},
\end{aligned}
\]
where the last inequality follows from
\eqref{eq:hard-constant-bounds}.

If $p>1-\Delta^{\frac{1}{1+\lambda}}$, then
$a_J^+(p)=0$ for all $J$. Otherwise, define
\[
        \cP_m^+(p)
        :=
        \left\{
        J\in\cP_m:
        J\cap[p+\Delta^{\frac{1}{1+\lambda}},1]
        \neq\varnothing
        \right\}.
\]
Choosing $x_J\in J\cap[p+\Delta^{\frac{1}{1+\lambda}},1]$ for
$J\in\cP_m^+(p)$, and replacing $p-x$ by $x-p$ in the
preceding computation gives
\[
\begin{aligned}
        \sum_{J\in\cP_m}(a_J^+(p))^2
        &\le {\frac{2^9\Delta^2}{\CL\ell}}
        \int_{\frac12\Delta^{\frac{1}{1+\lambda}}}^1
        (\Delta+u)u^{2\lambda-2}\,\dd u                           \le {\frac{2^9}{\CL\ell}}
        \left(2+{\frac{1}{2\lambda}}\right)\Delta^2                \le {\frac{32\Delta^2}{\CL^{1/2}\ell}}.
\end{aligned}
\]
\end{proof}

Before providing the concentration in Lemma \ref{lem:algorithmic-concentration}, we show that the individual components of the estimator $\widehat{G}_{m}(p)$ are unbiased for their targets.

\begin{lemma}[Unbiasedness and deterministic bounds]
\label{lem:algorithmic-unbiasedness}
Fix $m\in\{0,\ldots,M_T\}$
and $\delta\in(0,1)$, condition on a realization of
$\cF_{m-1}$ for which $H_m$ holds, and set $\Delta=\Delta_m$. For every $p\in N_m$,
\begin{equation}
\begin{gathered}
        \E[\widehat F_S(p)\mid\cF_{m-1}]=F_S(p),
        \qquad
        \E[\widehat F_B^\circ(p)\mid\cF_{m-1}]=F_B^\circ(p),
        \\
        \E[\widehat A^S(p)\mid\cF_{m-1}]=A^S(p),
        \qquad
        \E[\widehat A^B(p)\mid\cF_{m-1}]=A^B(p).
\end{gathered}
\label{eq:hard-unbiased-local-A}
\end{equation}
For every $p\in N_m$,
\begin{equation}
        \E[\widehat L_\Delta^S(p)\mid\cF_{m-1}]=L^S_\Delta(p),
\label{eq:hard-unbiased-LS}
\end{equation}
\begin{equation}
        \E[\widehat L_\Delta^B(p)\mid\cF_{m-1}]=L^B_\Delta(p),
\label{eq:hard-unbiased-LB}
\end{equation}
\begin{equation}
        \E[\widehat C_\Delta(p)\mid\cF_{m-1}]=C_\Delta(p)
\label{eq:hard-unbiased-C}
\end{equation}
and
\begin{equation}
        \E[\widehat{\mathcal I}_{\lambda,\Delta}(p)\mid\cF_{m-1}]
        =\mathcal I_{\lambda,\Delta}(p).
\label{eq:hard-Ihat-unbiased}
\end{equation}
Moreover, for every $p\in[0,1]$,
\begin{equation}
        J(p),
        \ |L^S_\Delta(p)|,
        \ |L^B_\Delta(p)|,
        \ |C_\Delta(p)|
        \le\CL^{1/2}.
\label{eq:hard-deterministic-component-bounds}
\end{equation}
\end{lemma}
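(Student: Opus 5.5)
We plan to prove the unbiasedness statements by conditioning on $\cF_{m-1}$. Given $\cF_{m-1}$, the partition $\cP_m$, the set $N_m$ and the number of rounds are deterministic. The valuation pairs used in epoch $m$ are i.i.d. from $\nu_S\otimes\nu_B$, and the $U_J$ are independent uniforms on their cells, independent of the valuations.

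\textbf{First-order estimators.} For $\widehat F_S(p)$ and $\widehat F_B^\circ(p)$, each $Y_{p,a}$ is a Bernoulli variable with mean $\Prob(S\le p)=F_S(p)$, and each $Z_{p,a}$ has mean $F_B^\circ(p)$. For $\widehat A^S(p)$, we condition on $U_J$ first: $\E[Y_J\mid U_J]=F_S(U_J)$. Hence
\[
\E\bigl[|J|\mathbb{I}\{U_J\le p\}Y_J\bigr]=\int_J\mathbb{I}\{v\le p\}F_S(v)\dd v,
\]
since $U_J$ has density $1/|J|$ on $J$. Summing over the partition $\cP_m$ of $[0,1]$, which holds by \eqref{eq:hard-layer-partition}, gives $A^S(p)$. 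The same argument gives $A^B(p)$.

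\textbf{Linear terms.} For $\widehat L^S_\Delta(p)$, the key observation is that $\widehat F_S(p)$ is built from the rounds of Line~\ref{line:Ypa:Zpa}, and these are independent of $(U_J,Y_J)_{J}$ given $\cF_{m-1}$. Thus each summand has conditional expectation
\[
\int_J\mathbb{I}\{v\le p-\Delta^{\frac{1}{1+\lambda}}\}\bigl(F_S(v)-F_S(p)\bigr)K_p^S(p-v)\dd v .
\]
Summing over $J$ yields $L^S_\Delta(p)$. The same holds for $\widehat L^B_\Delta(p)$. For $\widehat C_\Delta(p)$:
\begin{itemize}
\item when $J\neq K$, the pairs $(U_J,Y_J)$ and $(U_K,Z_K)$ are independent, and both are independent of $(\widehat F_S(p),\widehat F_B^\circ(p))$. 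These two estimators are themselves independent, since $Y_{p,a}$ and $Z_{p,a}$ come from $S$ and $B$, which are independent. So the expectation factorises into the integrand of $C_\Delta$ over $J\times K$.
\item when $J=K$, the indicators $U_J\le p-\Delta^{\frac{1}{1+\lambda}}$ and $U_J\ge p+\Delta^{\frac{1}{1+\lambda}}$ are incompatible, so diagonal terms vanish.
\end{itemize}

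\textbf{The full estimator.} For $\widehat{\mathcal I}_{\lambda,\Delta}(p)$, we use independence of $\widehat F_S(p)$ and $\widehat F_B^\circ(p)$. This gives $\E[\widehat F_S\widehat F_B^\circ]=F_SF_B^\circ$. For the cross term, $\E[\widehat F_B^\circ(p)\widehat L^S_\Delta(p)]$ is the delicate part, because $\widehat L^S_\Delta$ itself contains $\widehat F_S(p)$. However, $\widehat L^S_\Delta$ depends only on seller-side data, namely $Y_J$ and $Y_{p,a}$. The estimator $\widehat F_B^\circ(p)$ depends only on the $Z_{p,a}$. The $Z_{p,a}$ are independent of the $Y_{p,a}$, because $S_t$ and $B_t$ are independent within each round. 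They are also independent of all quantities from the other rounds. Hence the expectation factorises to $F_B^\circ(p)L^S_\Delta(p)$, and symmetrically for the third term. Adding the four pieces gives \eqref{eq:hard-Ihat-unbiased}.

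\textbf{Deterministic bounds and main obstacle.} For the bounds, $J(p)\le 1/\lambda^2$ by \eqref{eq:hard-kernel-double}. Next, $|F_S(v)-F_S(p)|\le1$ and $K_p^S(u)\le u^{\lambda-1}/\lambda$ by \eqref{eq:hard-kernel-one-sided}. This gives $|L^S_\Delta(p)|\le\lambda^{-1}\int_0^1u^{\lambda-1}\dd u=\lambda^{-2}$, and likewise for $L^B_\Delta$. Finally, $|C_\Delta(p)|\le\lambda^{-2}$ by \eqref{eq:hard-kernel-double}. All these are at most $4/\lambda^2\le\CL^{1/2}$ by \eqref{eq:hard-constant-bounds}. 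I expect the main obstacle to be the careful bookkeeping of the independence structure behind the product terms. In particular, one must check that the $Y_{p,a}$ and $Z_{p,a}$ from the same round are independent, which follows from the product form of $\nu$.
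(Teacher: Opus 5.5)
Your route is the same as the paper's: condition on $\cF_{m-1}$, integrate out each $U_J$ over its cell, use independence of the local samples from the partition samples and of the $Y_{p,a}$ from the $Z_{p,a}$, and bound every component by $J(p)\le\lambda^{-2}\le\CL^{1/2}$. All of it is fine except one step in the proof of \eqref{eq:hard-unbiased-C}.

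Write $\tau:=\Delta^{\frac{1}{1+\lambda}}$. Summing your off-diagonal expectations gives
\[
\sum_{J\neq K}\int_{J\cap[0,p-\tau]}\int_{K\cap[p+\tau,1]}\bigl(F_S(v)-F_S(p)\bigr)\bigl(F_B^\circ(w)-F_B^\circ(p)\bigr)H_\lambda(p-v,w-p)\,\dd w\,\dd v .
\]
By contrast, $C_\Delta(p)$ is the same sum over all pairs $(J,K)$, including $J=K$.

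Your observation that a single $U_J$ cannot satisfy both indicators only shows that the diagonal summands of the \emph{estimator} vanish. It says nothing about the diagonal blocks $\int_{J\cap[0,p-\tau]}\int_{J\cap[p+\tau,1]}\cdots$ of the \emph{target}. These blocks vanish only if no cell meets both $[0,p-\tau]$ and $[p+\tau,1]$, that is, only if no cell straddles the gap of width $2\tau$ around $p$. If some cell did, $C_\Delta(p)$ would contain a $J\times J$ contribution that $\widehat C_\Delta(p)$ can never reproduce, and the estimator would be biased.

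So unbiasedness of $\widehat C_\Delta(p)$ genuinely depends on the geometry of $\cP_m$, not just on independence. The missing ingredient is the cell-size bound of Lemma~\ref{lem:partition-geometry}, namely $|J|\le 16\Delta^2/(\CL\ell)\le\Delta^2\le\tau$. This rules out straddling cells, and it is exactly what the paper invokes to kill the diagonal integrals. With this check added, your argument is complete.
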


\begin{proof}
Recall the expressions of $\widehat F_S(p)$ and
$\widehat F_B^\circ(p)$ in \eqref{eq:hatFS:FB}, and set
\[
        q=q_\Delta(\delta),
        \qquad
        \ell=\ell(\Delta,\delta).
\]
For every $a\in\{1,\ldots,q\}$,
\[
        \E[Y_{p,a}\mid\cF_{m-1}]=F_S(p),
        \qquad
        \E[Z_{p,a}\mid\cF_{m-1}]=F_B^\circ(p).
\]
Therefore,
\[
\begin{aligned}
        \E[\widehat F_S(p)\mid\cF_{m-1}]
        &={\frac{1}{q}}\sum_{a=1}^q\E[Y_{p,a}\mid\cF_{m-1}]
        =F_S(p),                                                    \\
        \E[\widehat F_B^\circ(p)\mid\cF_{m-1}]
        &={\frac{1}{q}}\sum_{a=1}^q\E[Z_{p,a}\mid\cF_{m-1}]
        =F_B^\circ(p).
\end{aligned}
\]
For $\widehat A^S(p)$, recalling \eqref{eq:hatAS},
\[
\begin{aligned}
        \E[\widehat A^S(p)\mid\cF_{m-1}]
        &=\sum_{J\in\cP_m}|J|
        \E[\mathbb I\{U_J\le p\}Y_J\mid\cF_{m-1}]              \\
        &=\sum_{J\in\cP_m}
        \int_J\mathbb I\{v\le p\}F_S(v)\,\dd v                 \\
        &=\int_0^pF_S(v)\,\dd v
        =A^S(p).
\end{aligned}
\]
Similarly, recalling \eqref{eq:hatAB},
\[
\begin{aligned}
        \E[\widehat A^B(p)\mid\cF_{m-1}]
        &=\sum_{J\in\cP_m}
        \int_J\mathbb I\{w\ge p\}F_B^\circ(w)\,\dd w           \\
        &=\int_p^1F_B^\circ(w)\,\dd w
        =A^B(p).
\end{aligned}
\]

If $p<\Delta^{\frac{1}{1+\lambda}}$, then
$\widehat L_\Delta^S(p)=L^S_\Delta(p)=0$. Assume
$p\ge\Delta^{\frac{1}{1+\lambda}}$. Conditionally on
$\cF_{m-1}$,
\[
        (Y_{p,a})_{1\le a\le q}
        \quad\text{and}\quad
        (U_J,Y_J)_{J\in\cP_m}
\]
are independent. Therefore,
\[
\begin{aligned}
        \E[\widehat L_\Delta^S(p)\mid\cF_{m-1}]
        &=\sum_{J\in\cP_m}|J|
        \E\left[
        \mathbb I\{U_J\le p-\Delta^{\frac{1}{1+\lambda}}\}
        (Y_J-\widehat F_S(p))
        K_p^S(p-U_J)
        \middle|\cF_{m-1}\right]                                  \\
        &=\sum_{J\in\cP_m}
        \int_{J\cap[0,p-\Delta^{\frac{1}{1+\lambda}}]}
        (F_S(v)-F_S(p))K_p^S(p-v)\,\dd v                           \\
        &=L^S_\Delta(p).
\end{aligned}
\]
If $p>1-\Delta^{\frac{1}{1+\lambda}}$, then
$\widehat L_\Delta^B(p)=L^B_\Delta(p)=0$. Assume
$p\le1-\Delta^{\frac{1}{1+\lambda}}$. Conditionally on
$\cF_{m-1}$,
\[
        (Z_{p,a})_{1\le a\le q}
        \quad\text{and}\quad
        (U_J,Z_J)_{J\in\cP_m}
\]
are independent. Therefore,
\[
\begin{aligned}
        \E[\widehat L_\Delta^B(p)\mid\cF_{m-1}]
        &=\sum_{J\in\cP_m}
        \int_{J\cap[p+\Delta^{\frac{1}{1+\lambda}},1]}
        (F_B^\circ(w)-F_B^\circ(p))K_p^B(w-p)\,\dd w               \\
        &=L^B_\Delta(p).
\end{aligned}
\]

If $\Delta^{\frac{1}{1+\lambda}}>1/2$, or if
$p<\Delta^{\frac{1}{1+\lambda}}$, or if
$p>1-\Delta^{\frac{1}{1+\lambda}}$, then
$\widehat C_\Delta(p)=C_\Delta(p)=0$. Assume
$\Delta^{\frac{1}{1+\lambda}}\le1/2$ and
$\Delta^{\frac{1}{1+\lambda}}\le p
\le1-\Delta^{\frac{1}{1+\lambda}}$.
By Lemma~\ref{lem:partition-geometry}, every $J\in\cP_m$ satisfies
\begin{equation}
\begin{aligned}
        |J|
        &\le {\frac{16\Delta^2}{\CL\ell}}
        \le\Delta^2
        \le\Delta^{\frac{1}{1+\lambda}}.
\end{aligned}
\label{eq:hard-no-cell-both-sides}
\end{equation}
Consequently, no cell $J\in\cP_m$ can intersect both
$[0,p-\Delta^{\frac{1}{1+\lambda}}]$ and
$[p+\Delta^{\frac{1}{1+\lambda}},1]$. In particular, for
$J=K$, the corresponding summand in \eqref{eq:hard-def-Chat} is
zero, and
\[
\begin{aligned}
        &\int_{J\cap[0,p-\Delta^{\frac{1}{1+\lambda}}]}
        \int_{J\cap[p+\Delta^{\frac{1}{1+\lambda}},1]}
        H_\lambda(p-v,w-p)\,\dd w\,\dd v
        =0.
\end{aligned}
\]
For $J\ne K$, the triples $(U_J,Y_J,Z_J)$ and
$(U_K,Y_K,Z_K)$ are independent conditionally on
$\cF_{m-1}$. They are also independent of the local samples $(Y_{p,a},Z_{p,a})$ at
$p$. Therefore,
\[
\begin{aligned}
        \E[\widehat C_\Delta(p)\mid\cF_{m-1}]
        &=\sum_{\substack{J,K\in\cP_m\\J\ne K}}|J||K|          \\
        &\quad\times
        \E\left[
        \mathbb I\{U_J\le p-\Delta^{\frac{1}{1+\lambda}}\}
        \mathbb I\{U_K\ge p+\Delta^{\frac{1}{1+\lambda}}\}
        \right.                                                     \\
        &\hspace{17mm}\left.
        (Y_J-\widehat F_S(p))
        (Z_K-\widehat F_B^\circ(p))
        H_\lambda(p-U_J,U_K-p)
        \middle|\cF_{m-1}\right]                                  \\
        &=\sum_{\substack{J,K\in\cP_m\\J\ne K}}
        \int_{J\cap[0,p-\Delta^{\frac{1}{1+\lambda}}]}
        \int_{K\cap[p+\Delta^{\frac{1}{1+\lambda}},1]}
        (F_S(v)-F_S(p))(F_B^\circ(w)-F_B^\circ(p))                \\
        &\hspace{35mm}\times
        H_\lambda(p-v,w-p)\,\dd w\,\dd v                         \\
        &=C_\Delta(p),
\end{aligned}
\]
where the last equality follows from the vanishing of the diagonal
integrals proved above.

Conditionally on $\cF_{m-1}$, the arrays
\[
        (U_J,Y_J,Z_J)_{J\in\cP_m},
        \qquad
        (Y_{p,a})_{1\le a\le q},
        \qquad
        (Z_{p,a})_{1\le a\le q}
\]
are mutually independent, except for the dependence between $Y_J$
and $Z_J$ within the same partition triple. Moreover, the last two
arrays are independent because seller and buyer valuations are
independent. The dependence within a partition triple does not enter
the products above because the diagonal terms vanish. Hence,
\[
\begin{aligned}
        \E[\widehat{\mathcal I}_{\lambda,\Delta}(p)\mid\cF_{m-1}]
        & =
        \E[\widehat F_S(p)\widehat F_B^\circ(p)\mid\cF_{m-1}]J(p)\\
        &\quad+
        \E[\widehat F_B^\circ(p)\widehat L_\Delta^S(p)\mid\cF_{m-1}]
        +\E[\widehat F_S(p)\widehat L_\Delta^B(p)\mid\cF_{m-1}]  \\
        &\quad+
        \E[\widehat C_\Delta(p)\mid\cF_{m-1}]                    \\
        &=F_S(p)F_B^\circ(p)J(p)
        +F_B^\circ(p)L^S_\Delta(p)
        +F_S(p)L^B_\Delta(p)
        +C_\Delta(p)                                               \\
        &=\mathcal I_{\lambda,\Delta}(p).
\end{aligned}
\]
Finally, \eqref{eq:hard-kernel-double} gives
\[
\begin{aligned}
        J(p)
        &\le\int_0^1\int_0^1H_\lambda(u,z)\,\dd z\,\dd u
        \le{\frac{1}{\lambda^2}},                                    \\
        |L^S_\Delta(p)|
        &\le\int_0^pK_p^S(p-v)\,\dd v
        =J(p),                                                       \\
        |L^B_\Delta(p)|
        &\le\int_p^1K_p^B(w-p)\,\dd w
        =J(p),                                                       \\
        |C_\Delta(p)|
        &\le\int_0^p\int_p^1H_\lambda(p-v,w-p)\,\dd w\,\dd v
        =J(p)                                                        \\
        &\le{\frac{1}{\lambda^2}}
        \le\CL^{1/2}.
\end{aligned}
\]
\end{proof}

We can finally provide the announced concentration of the individual components of the estimator $\widehat{G}_{m}(p)$.

\begin{lemma}[Concentration of the estimators]
\label{lem:algorithmic-concentration}
Fix $m\in\{0,\ldots,M_T\}$ and $\delta\in(0,1)$, condition on a realization of
$\cF_{m-1}$ for which $H_m$ holds, and set
\[
        \Delta=\Delta_m,
        \qquad
        q=q_\Delta(\delta),
        \qquad
        \ell=\ell(\Delta,\delta).
\]
For every fixed $p\in N_m$,
\begin{equation}
\begin{aligned}
        \Prob\left(
        |\widehat F_S(p)-F_S(p)|>\Delta
        \mid\cF_{m-1}\right)
        &\le2e^{-\ell},                                             \\
        \Prob\left(
        |\widehat F_B^\circ(p)-F_B^\circ(p)|>\Delta
        \mid\cF_{m-1}\right)
        &\le2e^{-\ell},                                             \\
        \Prob\left(
        |\widehat A^S(p)-A^S(p)|>\Delta
        \mid\cF_{m-1}\right)
        &\le2e^{-4\ell},                                            \\
        \Prob\left(
        |\widehat A^B(p)-A^B(p)|>\Delta
        \mid\cF_{m-1}\right)
        &\le2e^{-4\ell}.
\end{aligned}
\label{eq:hard-concentration-local-A}
\end{equation}
For every $p\in N_m$,
\begin{equation}
        \Prob\left(
        |\widehat L_\Delta^S(p)-L^S_\Delta(p)|>4\CL^{1/2}\Delta
        \mid\cF_{m-1}\right)
        \le2e^{-4\ell},
\label{eq:hard-concentration-LS}
\end{equation}
\begin{equation}
        \Prob\left(
        |\widehat L_\Delta^B(p)-L^B_\Delta(p)|>4\CL^{1/2}\Delta
        \mid\cF_{m-1}\right)
        \le2e^{-4\ell},
\label{eq:hard-concentration-LB}
\end{equation}
and
\begin{equation}
        \Prob\left(
        |\widehat C_\Delta(p)-C_\Delta(p)|>8\CL^{1/2}\Delta
        \mid\cF_{m-1}\right)
        \le2e^{-4\ell}.
\label{eq:hard-concentration-C}
\end{equation}
\end{lemma}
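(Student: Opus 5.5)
The plan is to treat every component as a function of conditionally independent inputs and apply Hoeffding's inequality for the linear statistics and McDiarmid's bounded-differences inequality for the nonlinear ones. Conditionally on $\cF_{m-1}$, the cells $\cP_m$, the counts $q$ and $n_m$, and the set $N_m$ are deterministic. The inputs are the triples $(U_J,Y_J,Z_J)_{J\in\cP_m}$, which are independent across $J$, and the local bits $(Y_{p,a})_a$, $(Z_{p,a})_a$, which are independent of everything else. The centering is supplied by Lemma~\ref{lem:algorithmic-unbiasedness}, so only the fluctuations need to be bounded. Throughout, I write $c_i$ for the maximal change of the statistic when the $i$-th independent input is changed and the others are held fixed, so that McDiarmid gives $2\exp(-2t^2/\sum_i c_i^2)$.

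\emph{Local and $A$-terms.} Since $\widehat F_S(p)$ is an average of $q\ge \ell/(2\Delta^2)$ Bernoulli variables, Hoeffding gives $2\exp(-2q\Delta^2)\le 2e^{-\ell}$, and the same holds for $\widehat F_B^\circ(p)$. The statistic $\widehat A^S(p)$ is a sum of independent terms with ranges $|J|$. By \eqref{eq:hard-cell-size-rejected}--\eqref{eq:hard-cell-size-active}, $\sum_J|J|^2\le \max_J|J|\le 16\Delta^2/(\CL\ell)$, because $\Delta_j\le1$ and $\sum_J|J|=1$. Hoeffding then gives $2\exp(-\CL\ell/8)\le 2e^{-4\ell}$, and the argument for $\widehat A^B(p)$ is identical.

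\emph{$\widehat L^S_\Delta(p)$.} I would not split off $\widehat F_S(p)-F_S(p)$ and use the event $|\widehat F_S(p)-F_S(p)|\le\Delta$: that event only has probability $1-2e^{-\ell}$, which is too weak for the claimed $e^{-4\ell}$. Instead I apply McDiarmid to $\widehat L^S_\Delta(p)$ as a function of all the inputs jointly.
\begin{itemize}
\item Changing one $Y_{p,a}$ moves $\widehat F_S(p)$ by $1/q$, so by \eqref{eq:hard-discrete-KS-mass} the statistic moves by at most $\CL^{1/2}/q$. Summed over the $q$ local bits, $\sum c^2\le \CL/q\le 2\CL\Delta^2/\ell$.
\item Changing $(U_J,Y_J)$ moves only the $J$-th summand, and by at most $2|J|\sup_x K_p^S(p-x)\le \tfrac2\lambda a_J^-(p)$, using \eqref{eq:hard-kernel-one-sided}. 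Lemma~\ref{lem:quadratic-envelope} together with $4/\lambda^2\le\CL^{1/2}$ then gives $\sum_J c_J^2\le 128\Delta^2/\ell$.
\end{itemize}
With $t=4\CL^{1/2}\Delta$ we have $2t^2=32\CL\Delta^2$, while $\sum c^2\le 4\CL\Delta^2/\ell$. This gives a bound $2e^{-8\ell}\le 2e^{-4\ell}$. The term $\widehat L^B_\Delta(p)$ is symmetric, using \eqref{eq:hard-discrete-KB-mass} and $a_J^+$.

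\emph{$\widehat C_\Delta(p)$, the main obstacle.} This is a degenerate $U$-statistic with a singular kernel, so its bounded differences must be controlled carefully. The key step is to observe, from \eqref{eq:hard-no-cell-both-sides}, that a given cell $J$ enters either only as a left index or only as a right index.
\begin{itemize}
\item If $J$ is a left index, changing $(U_J,Y_J,Z_J)$ alters the row $\sum_K$. Using \eqref{eq:hard-kernel-pointwise} and the right power-sum bound \eqref{eq:hard-right-power-sum}, the change is at most $2|J|\sup_x(p-x)^{\lambda-1}\cdot\tfrac{2}{\lambda}\cdot 2\le \tfrac{8}{\lambda}a_J^-(p)$. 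The row sum of kernel evaluations has to hold uniformly for an arbitrary replacement value of $U_J$ in $J$, and the envelope $a_J^-$ handles exactly this. A right index is symmetric, with $a_J^+$.
\item Changing a local bit $Y_{p,a}$ or $Z_{p,a}$ moves the statistic by at most $\CL^{1/2}/q$, by \eqref{eq:hard-discrete-rectangular-mass}.
\end{itemize}
Summing, Lemma~\ref{lem:quadratic-envelope} gives $\sum c^2\le \tfrac{64}{\lambda^2}\cdot\tfrac{64\Delta^2}{\CL^{1/2}\ell}+\tfrac{4\CL\Delta^2}{\ell}\lesssim \CL\Delta^2/\ell$, using $4/\lambda^2\le \CL^{1/2}$. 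Then $t=8\CL^{1/2}\Delta$ gives $2t^2=128\CL\Delta^2$, which yields the bound $2e^{-4\ell}$. I expect the only delicate point to be tracking these constants so that they fit the required slack.
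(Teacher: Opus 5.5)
Your proposal is correct and follows essentially the same route as the paper. It uses Hoeffding for $\widehat F_S,\widehat F_B^\circ,\widehat A^S,\widehat A^B$, and McDiarmid applied jointly to all triples and local bits for $\widehat L^S_\Delta,\widehat L^B_\Delta,\widehat C_\Delta$, with the bounded differences controlled by the same ingredients: the envelopes $a_J^\pm(p)$ of Lemma~\ref{lem:quadratic-envelope}, the discrete mass bounds of Lemma~\ref{lem:discrete-mass}, and the fact \eqref{eq:hard-no-cell-both-sides} that no cell lies on both sides. The differences are only in constants: for instance, you bound a row change in $\widehat C_\Delta$ by $8a_J^-(p)/\lambda$ directly from the power sum, where the paper uses $2\CL^{1/2}a_J^-(p)$ via \eqref{eq:hard-discrete-mass-right}. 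The slack coming from $\CL\ge 2^{34}$ absorbs these differences.
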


\begin{proof}
All probabilities and expectations below are conditional on
$\cF_{m-1}$. For fixed $p$, the independent variables used by the
estimators are
\[
        (U_J,Y_J,Z_J)_{J\in\cP_m},
        \qquad
        (Y_{p,a})_{1\le a\le q},
        \qquad
        (Z_{p,a})_{1\le a\le q},
\]
where the partition triples are independent across cells. The last two
arrays are mutually independent and are independent of the partition
triples because seller and buyer valuations are independent.

Since $q\ge\ell/(2\Delta^2)$, Hoeffding's inequality gives
\[
\begin{aligned}
        \Prob(|\widehat F_S(p)-F_S(p)|>\Delta)
        &\le2\exp(-2q\Delta^2)
        \le2e^{-\ell},                                              \\
        \Prob(|\widehat F_B^\circ(p)-F_B^\circ(p)|>\Delta)
        &\le2\exp(-2q\Delta^2)
        \le2e^{-\ell}.
\end{aligned}
\]

For $\widehat A^S(p)$, the random variables
\[
        |J|\mathbb I\{U_J\le p\}Y_J,
        \qquad J\in\cP_m,
\]
are independent and take values in intervals of length $|J|$. By
Lemma~\ref{lem:partition-geometry},
\[
\begin{aligned}
        \sum_{J\in\cP_m}|J|^2
        &\le\max_{J\in\cP_m}|J|
        \sum_{J\in\cP_m}|J|                                      \le{\frac{16\Delta^2}{\CL\ell}}.
\end{aligned}
\]
Hence, by Hoeffding's inequality,
\[
\begin{aligned}
        \Prob(|\widehat A^S(p)-A^S(p)|>\Delta)
        &\le2\exp\left(
        -{\frac{2\Delta^2}{16\Delta^2/(\CL\ell)}}
        \right)                                                     =2\exp\left(-{\frac{\CL\ell}{8}}\right)
        \le2e^{-4\ell}.
\end{aligned}
\]
The same calculation gives the bound for $\widehat A^B(p)$.

If $p<\Delta^{\frac{1}{1+\lambda}}$, then
$\widehat L_\Delta^S(p)=L^S_\Delta(p)=0$, and
\eqref{eq:hard-concentration-LS} is immediate. Assume
$p\ge\Delta^{\frac{1}{1+\lambda}}$. Recall
\eqref{eq:hard-def-LShat}. If the triple $(U_J,Y_J,Z_J)$ changes
from $(u,y,z)$ to $(u',y',z')$, then
$\widehat L_\Delta^S(p)$ changes by at most
\[
\begin{aligned}
        |J|\Bigg[&
        \mathbb I\!\left\{u\le p-\Delta^{\frac{1}{1+\lambda}}\right\}
        |y-\widehat F_S(p)|K_p^S(p-u)                              \\
        &+
        \mathbb I\!\left\{u'\le p-\Delta^{\frac{1}{1+\lambda}}\right\}
        |y'-\widehat F_S(p)|K_p^S(p-u')
        \Bigg]
        \le {\frac{2}{\lambda}}a_J^-(p).
\end{aligned}
\]
Also, changing one local variable $Y_{p,a}$ changes
$\widehat L_\Delta^S(p)$ by at most
\[
\begin{aligned}
        {\frac{1}{q}}
        \sum_{\substack{J\in\cP_m\\
        U_J\le p-\Delta^{\frac{1}{1+\lambda}}}}
        |J|K_p^S(p-U_J)
        \le{\frac{\CL^{1/2}}{q}},
\end{aligned}
\]
by \eqref{eq:hard-discrete-KS-mass}. Therefore, by
\eqref{eq:hard-quadratic-envelope}, the sum of the squares of these
upper bounds on the changes is bounded as
\[
\begin{aligned}
        &\sum_{J\in\cP_m}
        \left({\frac{2a_J^-(p)}{\lambda}}\right)^2
        +\sum_{a=1}^q\left({\frac{\CL^{1/2}}{q}}\right)^2           \qquad\le
        {\frac{4}{\lambda^2}}
        {\frac{32\Delta^2}{\CL^{1/2}\ell}}
        +{\frac{\CL}{q}}                                             \\
        &\qquad\le
        {\frac{32\Delta^2}{\ell}}
        +{\frac{2\CL\Delta^2}{\ell}}                                \qquad\le{\frac{4\CL\Delta^2}{\ell}},
\end{aligned}
\]
where we used \eqref{eq:hard-constant-bounds} and
$q\ge\ell/(2\Delta^2)$. McDiarmid's inequality
(\cite{mcdiarmid1989bounded}; see also
\cite[Chapter~6]{boucheron2013concentration}) and
\eqref{eq:hard-unbiased-LS} give
\[
\begin{aligned}
        \Prob\left(
        |\widehat L_\Delta^S(p)-L^S_\Delta(p)|>4\CL^{1/2}\Delta
        \right)
        &\le2\exp\left(
        -{\frac{2(4\CL^{1/2}\Delta)^2
        }{4\CL\Delta^2/\ell}}
        \right)                                                     \\
        &\le2e^{-8\ell}
        \le2e^{-4\ell}.
\end{aligned}
\]

If $p>1-\Delta^{\frac{1}{1+\lambda}}$, then
$\widehat L_\Delta^B(p)=L^B_\Delta(p)=0$, and
\eqref{eq:hard-concentration-LB} is immediate. Assume
$p\le1-\Delta^{\frac{1}{1+\lambda}}$. Proceeding similarly from
\eqref{eq:hard-def-LBhat}, changing one triple
$(U_J,Y_J,Z_J)$ changes $\widehat L_\Delta^B(p)$ by at most
$2a_J^+(p)/\lambda$, and changing one $Z_{p,a}$ changes it by at
most $\CL^{1/2}/q$, by \eqref{eq:hard-discrete-KB-mass}. Hence,
\[
\begin{aligned}
        &\sum_{J\in\cP_m}
        \left({\frac{2a_J^+(p)}{\lambda}}\right)^2
        +\sum_{a=1}^q\left({\frac{\CL^{1/2}}{q}}\right)^2           \\
        &\qquad\le{\frac{4\CL\Delta^2}{\ell}}.
\end{aligned}
\]
McDiarmid's inequality therefore gives
\[
\begin{aligned}
        \Prob\left(
        |\widehat L_\Delta^B(p)-L^B_\Delta(p)|>4\CL^{1/2}\Delta
        \right)
        &\le2e^{-8\ell}
        \le2e^{-4\ell}.
\end{aligned}
\]

If $\Delta^{\frac{1}{1+\lambda}}>1/2$, or if
$p<\Delta^{\frac{1}{1+\lambda}}$, or if
$p>1-\Delta^{\frac{1}{1+\lambda}}$, then
$\widehat C_\Delta(p)=C_\Delta(p)=0$, and
\eqref{eq:hard-concentration-C} is immediate. Assume
$\Delta^{\frac{1}{1+\lambda}}\le1/2$ and
$\Delta^{\frac{1}{1+\lambda}}\le p
\le1-\Delta^{\frac{1}{1+\lambda}}$.
As shown in \eqref{eq:hard-no-cell-both-sides}, no cell in $\cP_m$
intersects both truncated integration ranges. Hence, changing one
triple $(U_J,Y_J,Z_J)$ can affect either the row indexed by $J$ or
the column indexed by $J$ in \eqref{eq:hard-def-Chat}, but not both.
If $J$ intersects the left truncated range, the change is at most
\[
\begin{aligned}
        |J|\sum_{K\in\cP_m}|K|
        &\mathbb I\!\left\{U_K\ge p+\Delta^{\frac{1}{1+\lambda}}\right\}
        |Z_K-\widehat F_B^\circ(p)|                               \\
        &\times\Bigg[
        \mathbb I\!\left\{u\le p-\Delta^{\frac{1}{1+\lambda}}\right\}
        |y-\widehat F_S(p)|H_\lambda(p-u,U_K-p)                  \\
        &\hspace{12mm}+
        \mathbb I\!\left\{u'\le p-\Delta^{\frac{1}{1+\lambda}}\right\}
        |y'-\widehat F_S(p)|H_\lambda(p-u',U_K-p)
        \Bigg]                                                      \\
        &\le2\CL^{1/2}a_J^-(p),
\end{aligned}
\]
by \eqref{eq:hard-discrete-mass-right}. If $J$ intersects the right
truncated range, the symmetric argument and
\eqref{eq:hard-discrete-mass-left} give the upper bound
$2\CL^{1/2}a_J^+(p)$. Changing one $Y_{p,a}$ changes
$\widehat C_\Delta(p)$ by at most
\[
\begin{aligned}
        {\frac{1}{q}}
        \sum_{\substack{J,K\in\cP_m\\
        U_J\le p-\Delta^{\frac{1}{1+\lambda}}\\
        U_K\ge p+\Delta^{\frac{1}{1+\lambda}}}}
        |J||K|H_\lambda(p-U_J,U_K-p)
        \le {\frac{\CL^{1/2}}{q}},
\end{aligned}
\]
by \eqref{eq:hard-discrete-rectangular-mass}. The same bound holds when
one $Z_{p,a}$ changes. Thus, by
\eqref{eq:hard-quadratic-envelope}, the sum of the squares of these
upper bounds on the changes is bounded as
\[
\begin{aligned}
        &4\CL\left[
        \sum_{J\in\cP_m}(a_J^-(p))^2+
        \sum_{J\in\cP_m}(a_J^+(p))^2
        \right]
        +{\frac{2\CL}{q}}                                            \\
        &\qquad\le
        {\frac{2^8\CL^{1/2}\Delta^2}{\ell}}
        +{\frac{4\CL\Delta^2}{\ell}}                                \\
        &\qquad\le{\frac{8\CL\Delta^2}{\ell}},
\end{aligned}
\]
where the last inequality uses $\CL^{1/2}\ge2^8$. McDiarmid's
inequality and \eqref{eq:hard-unbiased-C} give
\[
\begin{aligned}
        \Prob\left(
        |\widehat C_\Delta(p)-C_\Delta(p)|>8\CL^{1/2}\Delta
        \right)
        &\le2\exp\left(
        -{\frac{2(8\CL^{1/2}\Delta)^2
        }{8\CL\Delta^2/\ell}}
        \right)                                                     \le2e^{-16\ell}
        \le2e^{-4\ell}.
\end{aligned}
\]
\end{proof}

We can now state our main technical lemma. Its proof will follow from Lemma~\ref{lem:algorithmic-concentration}. Assume $H_m$ holds. Then, with high conditional probability, $B_m$ holds. Intuitively, if $H_m$ occurs, Algorithm~\ref{alg:main} enjoys a good enough partitioning of its action space at the beginning of its $m$-th epoch, which ensures, with high conditional probability, $B_m$, meaning that it enjoys good enough estimations $(\widehat G_m(p))_{p\in N_m}$ at the end of the $m$-th epoch.

\begin{lemma}[Conditional estimation]
\label{lem:estimation}
For every $m=0,\ldots,M_T$,
\begin{equation}
        \mathbb I_{H_m}
        \Prob(B_m^c\mid\cF_{m-1})
        \le\delta_m\mathbb I_{H_m}.
\label{eq:hard-Bm-conditional}
\end{equation}
\end{lemma}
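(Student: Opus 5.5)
On $H_m^c$ both sides of \eqref{eq:hard-Bm-conditional} vanish, so it suffices to fix a realization of $\cF_{m-1}$ on which $H_m$ holds and to show $\Prob(B_m^c\mid\cF_{m-1})\le\delta_m$. Set $\Delta=\Delta_m$ and $\ell=\ell(\Delta,\delta_m)$. For each fixed $p\in N_m$, let $\mathcal G_p$ be the event on which the seven deviations controlled in Lemma~\ref{lem:algorithmic-concentration} are all within their thresholds. Those are $\widehat F_S,\widehat F_B^\circ,\widehat A^S,\widehat A^B$ within $\Delta$, $\widehat L^S_\Delta,\widehat L^B_\Delta$ within $4\CL^{1/2}\Delta$, and $\widehat C_\Delta$ within $8\CL^{1/2}\Delta$. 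A union bound gives $\Prob(\mathcal G_p^c\mid\cF_{m-1})\le 4e^{-\ell}+10e^{-4\ell}\le 14e^{-\ell}$.

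On $\mathcal G_p$ we bound $|\widehat G_m(p)-G_\lambda(p)|$ deterministically.

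\emph{Gain-from-trade part.} Write $\widehat F_B^\circ\widehat A^S-F_B^\circ A^S=(\widehat F_B^\circ-F_B^\circ)\widehat A^S+F_B^\circ(\widehat A^S-A^S)$, and use $A^S\le1$, so $\widehat A^S\le 1+\Delta\le2$. This gives $|\widehat G_1(p)-G_1(p)|\le 3\Delta$. Since $2^{1-1/\lambda}\le1$, this part contributes at most $3\Delta$.

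\emph{Kernel part.} Use the triangle inequality against $\mathcal I_{\lambda,\Delta}(p)$ and then Lemma~\ref{lem:truncation}, which costs $\CL^{1/2}\Delta$. Expand each product as before:
\begin{itemize}
\item $|\widehat F_S\widehat F_B^\circ-F_SF_B^\circ|J(p)\le 2\Delta\,\CL^{1/2}$, using $J(p)\le\CL^{1/2}$ from \eqref{eq:hard-deterministic-component-bounds};
\item $|\widehat F_B^\circ\widehat L^S_\Delta-F_B^\circ L^S_\Delta|\le 4\CL^{1/2}\Delta+\Delta\,\CL^{1/2}$, using $\widehat F_B^\circ\le1$ and $|L^S_\Delta|\le\CL^{1/2}$;
\item the same bound $5\CL^{1/2}\Delta$ holds for the $L^B$ term;
\item $|\widehat C_\Delta-C_\Delta|\le 8\CL^{1/2}\Delta$.
\end{itemize}

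Adding everything gives
\[
|\widehat G_m(p)-G_\lambda(p)|\le 3\Delta+(1+2+5+5+8)\CL^{1/2}\Delta\le 24\CL^{1/2}\Delta\le 64\CL^{1/2}\Delta\le \frac{\CL}{16}\Delta,
\]
where the last step is \eqref{eq:hard-constant-bounds}. Hence $\bigcap_{p\in N_m}\mathcal G_p\subseteq B_m$.

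It remains to control the union over $N_m$, and the key is the size of $N_m$. Since $N_m$ has one midpoint per interval of $\mathsf A_m\subseteq\mathcal D_m$, we have $|N_m|\le 2^m=1/\Delta$. Therefore
\[
\Prob(B_m^c\mid\cF_{m-1})\le \frac{14}{\Delta}e^{-\ell}=\frac{14}{\Delta}\cdot\frac{\Delta\delta_m}{64}\le\delta_m .
\]
Note that the estimator $\widehat G_m(p)$ of each fixed $p$ is a function of the conditionally independent samples, so applying Lemma~\ref{lem:algorithmic-concentration} pointwise and then taking the union bound is legitimate. The only delicate point is bookkeeping: constants must stay under $\CL/16$, and the union must be over at most $1/\Delta$ points so that it is absorbed by the factor $64/\Delta$ inside $\ell$. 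Both are comfortably satisfied.
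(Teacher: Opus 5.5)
Your proof is correct and follows the paper's argument: a union bound of Lemma~\ref{lem:algorithmic-concentration} over the seven components and the at most $1/\Delta$ midpoints in $N_m$, followed by deterministic error propagation using \eqref{eq:hard-deterministic-component-bounds} and Lemma~\ref{lem:truncation}. The only difference is cosmetic: you bound $\widehat A^S\le 1+\Delta\le2$, whereas the paper uses the deterministic bound $\widehat A^S(p)\le\sum_J|J|=1$, so you get $3\Delta$ instead of $2\Delta$ for the gain-from-trade part, which changes nothing.
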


\begin{proof}
Fix a realization of $\cF_{m-1}$ for which $H_m$ holds, and write
\[
        \Delta=\Delta_m,
        \qquad
        \ell=\ell(\Delta,\delta_m).
\]
By construction,
\[
        |N_m|=|\mathsf A_m|\le\Delta^{-1}.
\]
If $p<\Delta^{\frac{1}{1+\lambda}}$, then
$\widehat L_\Delta^S(p)=L^S_\Delta(p)=0$. If
$p>1-\Delta^{\frac{1}{1+\lambda}}$, then
$\widehat L_\Delta^B(p)=L^B_\Delta(p)=0$. If $p$ does not belong to
$[\Delta^{\frac{1}{1+\lambda}},1-\Delta^{\frac{1}{1+\lambda}}]$, then
$\widehat C_\Delta(p)=C_\Delta(p)=0$. Therefore, Lemma~\ref{lem:algorithmic-concentration} and a union bound imply that, with conditional failure probability at most
\[
\begin{aligned}
        4\Delta^{-1}e^{-\ell}
        +10\Delta^{-1}e^{-4\ell}
        &\le4\Delta^{-1}e^{-\ell}
        +10\Delta^{-1}e^{-\ell}                                   ={\frac{14}{64}}\delta_m
        \le{\frac{\delta_m}{2}},
\end{aligned}
\]
the following inequalities hold simultaneously for all $p\in N_m$:
\begin{equation}
\begin{gathered}
        |\widehat F_S(p)-F_S(p)|\le\Delta,
        \qquad
        |\widehat F_B^\circ(p)-F_B^\circ(p)|\le\Delta,            \\
        |\widehat A^S(p)-A^S(p)|\le\Delta,
        \qquad
        |\widehat A^B(p)-A^B(p)|\le\Delta,                         \\
        |\widehat L_\Delta^S(p)-L^S_\Delta(p)|\le4\CL^{1/2}\Delta,
        \qquad
        |\widehat L_\Delta^B(p)-L^B_\Delta(p)|\le4\CL^{1/2}\Delta,      \\
        |\widehat C_\Delta(p)-C_\Delta(p)|\le8\CL^{1/2}\Delta.
\end{gathered}
\label{eq:hard-simultaneous-component-errors}
\end{equation}
Here $e^{-\ell}=\Delta\delta_m/64$.

On the event \eqref{eq:hard-simultaneous-component-errors}, using
$A^S(p),A^B(p),\widehat A^S(p),\widehat A^B(p)\in[0,1]$,
\[
\begin{aligned}
        |2^{1-1/\lambda}(\widehat G_1(p)-G_1(p))|
        &\le{\frac{1}{2}}
        \left|
        \widehat F_B^\circ(p)\widehat A^S(p)
        -F_B^\circ(p)A^S(p)
        \right|                                                     \\
        &\quad+{\frac{1}{2}}
        \left|
        \widehat F_S(p)\widehat A^B(p)
        -F_S(p)A^B(p)
        \right|                                                     \\
        &\le{\frac{1}{2}}(2\Delta+2\Delta)
        =2\Delta.
\end{aligned}
\]
By \eqref{eq:hard-deterministic-component-bounds},
\begin{align*}
&
    |\widehat{\mathcal I}_{\lambda,\Delta}(p)
    -\mathcal I_{\lambda,\Delta}(p)|
\\
& \qquad \le 
    J(p)\left|
    \widehat F_S(p)\widehat F_B^\circ(p)
    -F_S(p)F_B^\circ(p)
    \right|
    +
    \left|
    \widehat F_B^\circ(p)\widehat L_\Delta^S(p)
    -F_B^\circ(p)L^S_\Delta(p)
    \right|
\\
& \qquad \qquad
    +\left|
    \widehat F_S(p)\widehat L_\Delta^B(p)
    -F_S(p)L^B_\Delta(p)
    \right|
    +|\widehat C_\Delta(p)-C_\Delta(p)|
\\
& \qquad \le
    2\CL^{1/2}\Delta
    +\left(4\CL^{1/2}\Delta
    +\Delta|L^S_\Delta(p)|\right)                     +
    \left(4\CL^{1/2}\Delta
    +\Delta|L^B_\Delta(p)|\right)
    +8\CL^{1/2}\Delta
\\
& \qquad
    \le20\CL^{1/2}\Delta.
\end{align*}
Combining this inequality with \eqref{eq:hard-truncation-I} gives
\[
\begin{aligned}
        |\widehat G_m(p)-G_\lambda(p)|
        &\le
        |2^{1-1/\lambda}(\widehat G_1(p)-G_1(p))|                  +
        |\widehat{\mathcal I}_{\lambda,\Delta}(p)
        -\mathcal I_{\lambda,\Delta}(p)|
        +|\mathcal I_{\lambda,\Delta}(p)-\mathcal I_\lambda(p)| \\
        &\le2\Delta+21\CL^{1/2}\Delta                            \le32\CL^{1/2}\Delta
        \le{\frac{\CL}{16}}\Delta.
\end{aligned}
\]
Thus $B_m$ holds outside an event of conditional probability at most
$\delta_m/2\le\delta_m$, proving \eqref{eq:hard-Bm-conditional}.
\end{proof}

Finally, we can show that the good event $\mathcal E_{M_T}$ holds with high probability. Furthermore, this good event implies all the events $H_0,\dots,H_{M_T+1}$ that the partitions of the action space are accurate.

\begin{lemma}[The good event and the active and rejected regions]
\label{lem:induction}
For every $m=0,\ldots,M_T+1$,
\begin{equation}
        \mathcal E_{m-1}\Longrightarrow H_m.
\label{eq:hard-Em-implies-Hm}
\end{equation}
Moreover,
\begin{equation}
        \Prob(\mathcal E_{M_T}^c)
        \le{\frac{\pi^2}{6T^2}}.
\label{eq:hard-good-event-probability}
\end{equation}
\end{lemma}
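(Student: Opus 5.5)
We prove \eqref{eq:hard-Em-implies-Hm} by induction on $m$, and then deduce \eqref{eq:hard-good-event-probability} from Lemma~\ref{lem:estimation} by a telescoping union bound.

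\emph{Base case.} For $m=0$ we have $A_0=[0,1]$ and there is no rejected layer, so (Rej$_{<0}$) is vacuous. (Act$_0$) holds because every maximizer lies in $[0,1]$, and by \eqref{eq:hard-G-bounded} we get $G_\lambda^\star-G_\lambda(p)\le1\le3\CL\Delta_0$.

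\emph{Induction step.} Assume $\mathcal E_{m}$ holds for some $m\le M_T$. Since $\mathcal E_m\subseteq\mathcal E_{m-1}$, the induction hypothesis gives $H_m$, and $B_m$ holds as well. We must show $H_{m+1}$, namely (Act$_{m+1}$) and the new layer of (Rej$_{<m+1}$) for $j=m$; the layers $j<m$ are inherited from $H_m$.

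Let $p^\star$ be a maximizer. By (Act$_m$), $p^\star\in I^\star$ for some $I^\star\in\mathsf A_m$. Set $c^\star=c(I^\star)$, so that $|p^\star-c^\star|\le\Delta_m/2$. By \eqref{eq:hard-G-lipschitz}, $G_\lambda(c^\star)\ge G_\lambda^\star-\CL\Delta_m/32$.

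\emph{Survival of the optimal cell.} On $B_m$,
\[
\widehat G_m(c^\star)\ge G_\lambda^\star-\tfrac{3\CL}{32}\Delta_m,
\qquad
\widehat M_m\le G_\lambda^\star+\tfrac{\CL}{16}\Delta_m,
\]
so $\widehat G_m(c^\star)\ge\widehat M_m-\CL\Delta_m$. Hence $I^\star\in\mathsf A_m^+$, and $p^\star$ lies in one of its children, that is, $p^\star\in A_{m+1}$.

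\emph{Gaps on surviving cells.} Let $I\in\mathsf A_m^+$ and $p\in I$. Then
\[
G_\lambda(c(I))\ge\widehat M_m-\CL\Delta_m-\tfrac{\CL}{16}\Delta_m\ge\widehat G_m(c^\star)-\tfrac{17}{16}\CL\Delta_m\ge G_\lambda^\star-\tfrac{41}{32}\CL\Delta_m .
\]
Using Lipschitzness once more over the distance $\Delta_m/2$,
\[
G_\lambda^\star-G_\lambda(p)\le\tfrac{42}{32}\CL\Delta_m\le\tfrac32\CL\Delta_m=3\CL\Delta_{m+1}.
\]
This proves (Act$_{m+1}$).

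\emph{Gaps on rejected cells.} Let $I\in\mathsf R_m$ and $x\in I$. The upper bound $3\CL\Delta_m$ is inherited from (Act$_m$). For the lower bound, rejection means $\widehat G_m(c(I))<\widehat M_m-\CL\Delta_m$. Since $\widehat M_m\ge\widehat G_m(c^\star)$, on $B_m$ this gives
\[
G_\lambda(c(I))<G_\lambda^\star-\CL\Delta_m+\tfrac{3\CL}{32}\Delta_m+\tfrac{\CL}{16}\Delta_m .
\]
Moving from $c(I)$ to $x$ costs at most a further $\CL\Delta_m/32$, so
\[
G_\lambda^\star-G_\lambda(x)\ge\bigl(1-\tfrac{6}{32}\bigr)\CL\Delta_m\ge\tfrac{\CL}{2}\Delta_m .
\]
This gives the new layer of (Rej$_{<m+1}$) and completes the induction. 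The step requiring the most care is this constant bookkeeping, in particular checking that all the slack stays below the rejection threshold $\CL\Delta_m$; the margins above show that it does.

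\emph{Probability bound.} For $0\le m\le M_T$, write
\[
\Prob(\mathcal E_m^c)\le\Prob(\mathcal E_{m-1}^c)+\Prob(\mathcal E_{m-1}\cap B_m^c).
\]
Since $\mathcal E_{m-1}\subseteq H_m$, and $\mathcal E_{m-1}$ and $H_m$ are $\cF_{m-1}$-measurable, Lemma~\ref{lem:estimation} and the tower property give
\[
\Prob(\mathcal E_{m-1}\cap B_m^c)\le\E\bigl[\mathbb I_{H_m}\Prob(B_m^c\mid\cF_{m-1})\bigr]\le\delta_m .
\]
Summing over $m$,
\[
\Prob(\mathcal E_{M_T}^c)\le\sum_{m\ge0}\frac{1}{T^2(m+1)^2}=\frac{\pi^2}{6T^2}.
\]
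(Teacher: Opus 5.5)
Your proof follows the paper's route step for step: survival of the cell containing $p^\star$, a gap bound on the surviving children, a gap bound on the rejected cells, and then the tower-property union bound. The one real flaw is local, in the lower bound for rejected cells.

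To turn $\widehat G_m(c(I))<\widehat M_m-\CL\Delta_m$ into an upper bound on $G_\lambda(c(I))$, you need an \emph{upper} bound on $\widehat M_m$. The reason you give, $\widehat M_m\ge\widehat G_m(c^\star)$, is a lower bound. It points the wrong way, so the displayed inequality $G_\lambda(c(I))<G_\lambda^\star-\CL\Delta_m+\tfrac{3\CL}{32}\Delta_m+\tfrac{\CL}{16}\Delta_m$ does not follow from what you cite.

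The fix uses a bound you already wrote in the survival paragraph. On $B_m$, $\widehat M_m=\widehat G_m(p')$ for some $p'\in N_m$, and $G_\lambda(p')\le G_\lambda^\star$, so $\widehat M_m\le G_\lambda^\star+\tfrac{\CL}{16}\Delta_m$. With this, $G_\lambda(c(I))\le\widehat G_m(c(I))+\tfrac{\CL}{16}\Delta_m<G_\lambda^\star-\tfrac{7}{8}\CL\Delta_m$. Moving from $c(I)$ to $x$ costs at most $\tfrac{\CL}{32}\Delta_m$, giving $G_\lambda^\star-G_\lambda(x)>\tfrac{27}{32}\CL\Delta_m\ge\tfrac{\CL}{2}\Delta_m$, exactly as in the paper.

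A minor point: in the surviving-cells chain, the correct constant is $\tfrac{37}{32}$ (so $\tfrac{38}{32}=\tfrac{19}{16}$ after the Lipschitz step), not $\tfrac{41}{32}$. Your looser constant is still a valid upper bound below $\tfrac32$, so that step stands.
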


\begin{proof}
At epoch $0$, $A_0=[0,1]$, there are no rejected layers, and
\eqref{eq:hard-G-bounded} together with $3\CL\ge1$ gives
\[
        \mathcal E_{-1}=\Omega\Longrightarrow H_0.
\]
Assume that $m\le M_T$ and
$\mathcal E_{m-1}\Longrightarrow H_m$. Work on
$\mathcal E_m=\mathcal E_{m-1}\cap B_m$.
Let $p^\star$ be a maximizer of $G_\lambda$.
Let $I^\star\in\mathsf A_m$ contain $p^\star$, and set
$p_m^\star=c(I^\star)$. Then
\[
\begin{aligned}
        G_\lambda^\star-G_\lambda(p_m^\star)
        &\le{\frac{\CL}{16}}|p^\star-p_m^\star|
        \le{\frac{\CL}{32}}\Delta_m,                                 \\
        \widehat G_m(p_m^\star)
        &\ge G_\lambda^\star-{\frac{3\CL}{32}}\Delta_m,             \\
        \widehat M_m
        &\le G_\lambda^\star+{\frac{\CL}{16}}\Delta_m,              \\
        \widehat M_m-\widehat G_m(p_m^\star)
        &\le{\frac{5\CL}{32}}\Delta_m
        <\CL\Delta_m.
\end{aligned}
\]
Thus every interval containing a maximizer is retained, and every maximizer belongs to $A_{m+1}$.

Let $q\in A_{m+1}$, let $I\in\mathsf A_m^+$ be its parent interval, and set $p=c(I)$. Since $|q-p|\le\Delta_m/2$,
\[
\begin{aligned}
        G_\lambda^\star-G_\lambda(q)
        &\le
        G_\lambda^\star-\widehat G_m(p_m^\star)
        +\widehat G_m(p_m^\star)-\widehat M_m                     \\
        &\quad+
        \widehat M_m-\widehat G_m(p)
        +\widehat G_m(p)-G_\lambda(p)
        +G_\lambda(p)-G_\lambda(q)                                \\
        &\le
        {\frac{3\CL}{32}}\Delta_m
        +\CL\Delta_m
        +{\frac{\CL}{16}}\Delta_m
        +{\frac{\CL}{32}}\Delta_m                                    \\
        &={\frac{19\CL}{16}}\Delta_m
        \le3\CL\Delta_{m+1}.
\end{aligned}
\]
Hence $(\mathrm{Act}_{m+1})$ holds.

Let $x\in R_m$, let $I\in\mathsf R_m$ contain $x$, and set
$p=c(I)$. Since $I$ is rejected and $|x-p|\le\Delta_m/2$,
\[
\begin{aligned}
        G_\lambda^\star-G_\lambda(x)
        &=G_\lambda^\star-\widehat M_m
        +\widehat M_m-\widehat G_m(p)                              \\
        &\quad+
        \widehat G_m(p)-G_\lambda(p)
        +G_\lambda(p)-G_\lambda(x)                                \\
        &>{\frac{-\CL}{16}}\Delta_m
        +\CL\Delta_m
        -{\frac{\CL}{16}}\Delta_m
        -{\frac{\CL}{32}}\Delta_m                                    \\
        &={\frac{27\CL}{32}}\Delta_m
        \ge{\frac{\CL}{2}}\Delta_m.
\end{aligned}
\]
Moreover, $R_m\subset A_m$, so $H_m$ gives
\[
        G_\lambda^\star-G_\lambda(x)
        \le3\CL\Delta_m.
\]
Thus $(\mathrm{Rej}_{<m+1})$ holds. We have proved
\[
        H_m\cap B_m\Longrightarrow H_{m+1},
\]
and therefore
\[
        \mathcal E_m\Longrightarrow H_{m+1}.
\]
This proves \eqref{eq:hard-Em-implies-Hm} by induction.

Since $\mathcal E_{m-1}$ and $H_m$ are
$\cF_{m-1}$-measurable and
$\mathcal E_{m-1}\Longrightarrow H_m$,
\[
\begin{aligned}
        \Prob(\mathcal E_{m-1}\cap B_m^c)
        &=\E\left[
        \mathbb I_{\mathcal E_{m-1}}
        \Prob(B_m^c\mid\cF_{m-1})
        \right]                                                     \le\E\left[
        \mathbb I_{H_m}
        \Prob(B_m^c\mid\cF_{m-1})
        \right]                                                     \le\delta_m.
\end{aligned}
\]
Consequently,
\[
\begin{aligned}
        \Prob(\mathcal E_{M_T}^c)
        &\le
        \sum_{m=0}^{M_T}
        \Prob(\mathcal E_{m-1}\cap B_m^c)                         \le\sum_{m=0}^{M_T}\delta_m
        \le{\frac{1}{T^2}}\sum_{m=0}^\infty{\frac{1}{(m+1)^2}}
        ={\frac{\pi^2}{6T^2}}.
\end{aligned}
\]
\end{proof}

\subsection{Regret analysis under the good event}
\label{section:regret:under:good:event}

We first bound the realized (pseudo-) regret when Algorithm~\ref{alg:main} is run under $\mathcal E_{M_T}$. Our analysis will use Lemma~\ref{lem:induction}, which ensures that $H_m$ holds true for every $m=0,\ldots,M_T+1$ on this event.

We now come to the realized (pseudo-) regret. Define the random (pseudo-) regret
\[
\mathcal R_T 
=
\sum_{t=1}^T 
\left( 
G_{\lambda}^\star
-
G_{\lambda}(P_t)
\right).
\]
It follows that $\mathbb E[\mathcal R_T]=R_T:=R_\lambda^\nu(\alpha,T)$, where $G_\lambda^\star-G_\lambda(P_t)$ intuitively measures the (pseudo) regret suffered by Algorithm~\ref{alg:main} in the $t$th round. We need the following lemma. It intuitively upper-bounds the total regret suffered in rounds belonging to the estimation epochs of Algorithm~\ref{alg:main}.

\begin{lemma}[Exploration budget and exploration regret]
\label{lem:exploration-budget-regret}
Assume $\overline\Delta_T\le1$. The epochs
$0,\ldots,M_T$ use at most $T/2$ rounds (deterministically, regardless of whether $\mathcal E_{M_T}$ holds). On
$\mathcal E_{M_T}$, the regret incurred during these epochs is at most
\begin{equation}
        {\frac{2^{19}}{3}}\CL^2\log(64T^5)\Delta_{M_T}^{-2}.
\label{eq:hard-exploration-regret}
\end{equation}
\end{lemma}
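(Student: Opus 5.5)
The plan is to count rounds per epoch, sum the counts geometrically, and then bound the regret by charging each exploration price according to the layer it falls in.

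\emph{Round count.} Epoch $m$ uses $|\cP_m|$ rounds for the uniform samples of Line~\ref{line:unif:price} and $|N_m|\,q_{\Delta_m}(\delta_m)$ rounds for the local samples of Line~\ref{line:Ypa:Zpa}. By \eqref{eq:hard-number-cells}, $|\cP_m|\le \CL\ell_m/(8\Delta_m^3)$. By \eqref{eq:hard-rq-upper-bounds} and $|N_m|\le\Delta_m^{-1}$, the local part is at most $\ell_m\Delta_m^{-3}$. Here $\ell_m=\ell(\Delta_m,\delta_m)=\log(64T^2(m+1)^2/\Delta_m)$. Since $\Delta_m\ge\overline\Delta_T\ge T^{-1/3}$ and $m+1\le 2/\Delta_m\le 2T^{1/3}$, we get $\ell_m\le\log(64T^5)=:\bar\ell$ (crudely). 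Hence epoch $m$ uses at most $\CL\bar\ell\,\Delta_m^{-3}/4$ rounds, provided $\CL\ge 8$. Summing over $m\le M_T$ is geometric with ratio $8$, giving at most $(8/7)\CL\bar\ell\,\Delta_{M_T}^{-3}/4\le (2/7)\CL\bar\ell\,\overline\Delta_T^{-3}$. By definition $\overline\Delta_T^{3}=4\CL\bar\ell/T$, so the total is at most $T/14\le T/2$. This part is deterministic.

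\emph{Regret per epoch on $\mathcal E_{M_T}$.} By Lemma~\ref{lem:induction}, $H_m$ holds for every $m$. Local samples are posted at points of $N_m\subset A_m$, and each costs at most $3\CL\Delta_m$ by (Act$_m$). Their total contribution is therefore at most $3\CL\Delta_m\cdot\bar\ell\Delta_m^{-3}=3\CL\bar\ell\Delta_m^{-2}$. For the uniform samples, group the cells by layer.
\begin{itemize}
\item Cells in $A_m$: there are at most $|\mathsf A_m|\,n_m\le\Delta_m^{-1}\cdot\CL\bar\ell/(8\Delta_m^2)$ of them. Each costs at most $3\CL\Delta_m$, for a total of at most $3\CL^2\bar\ell\Delta_m^{-2}/8$.
\item Cells in $R_j$ with $j<m$: there are $|\mathsf R_j|\,n_m$ of them, each costing at most $3\CL\Delta_j$ by (Rej$_{<m}$). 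Since $|\mathsf R_j|\le|\mathsf A_j|\le\Delta_j^{-1}$, the layer $R_j$ contributes at most $\Delta_j^{-1}n_m\cdot3\CL\Delta_j=3\CL n_m\le 3\CL^2\bar\ell\Delta_m^{-2}/8$.
\end{itemize}
Summing over $j<m$ adds a factor $m$, so epoch $m$ costs $O(\CL^2\bar\ell\,m\,\Delta_m^{-2})$. Summing $m\,4^m$ over $m\le M_T$ gives $O(M_T\Delta_{M_T}^{-2})$, which is off by a logarithm.

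\emph{Main obstacle: removing the factor $m$.} The naive count $|\mathsf R_j|\le\Delta_j^{-1}$ is too crude, and the point is to exploit that the rejected cells are few. I would use that $\sum_j|R_j|\le1$ together with $|\mathsf R_j|=|R_j|/\Delta_j$. The cost of layer $j$ is then $n_m\,(|R_j|/\Delta_j)\cdot3\CL\Delta_j=3\CL n_m|R_j|$, and summing over $j$ gives at most $3\CL n_m\sum_j|R_j|\le3\CL n_m$. This removes the factor $m$ exactly. Epoch $m$ then costs at most $\bigl(3\CL+\tfrac34\CL^2\bigr)\bar\ell\Delta_m^{-2}\le \CL^2\bar\ell\Delta_m^{-2}$. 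Summing over $m\le M_T$, with ratio $4$, gives at most $\tfrac43\CL^2\bar\ell\Delta_{M_T}^{-2}$, well within \eqref{eq:hard-exploration-regret}. The generous constant $2^{19}/3$ leaves ample slack for cruder bookkeeping of $\ell_m$, $n_m\le 2x$, and the ceilings.
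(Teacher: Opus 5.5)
Your proposal is correct and follows the paper's proof essentially step for step. It uses the same per-epoch round count via \eqref{eq:hard-number-cells} and \eqref{eq:hard-rq-upper-bounds}, summed geometrically to $T/14$, and the same layer-mass charging of $3\CL n_m|R_j|$ per rejected layer, summed using $|A_m|+\sum_j|R_j|=1$. The paper merges the active and rejected cells directly into a single $3\CL n_m$, so your detour through the naive factor-$m$ bound can be skipped. Your route to $\ell(\Delta_m,\delta_m)\le\log(64T^5)$, via $\Delta_m\ge T^{-1/3}$ and $m+1\le 2T^{1/3}$ (valid since $T\ge2$), replaces the paper's $\Delta_m\ge T^{-1}$ and $m+1\le T$; this and your slightly looser constants are cosmetic and remain well within $2^{19}/3$.
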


\begin{proof}
We first prove the deterministic bound on the number of exploration rounds.
For $m\le M_T$,
$\Delta_m\ge\overline\Delta_T\ge T^{-1}$ and $m+1\le T$. Hence
\begin{equation}
\begin{aligned}
        \ell(\Delta_m,\delta_m)
        &=\log\left({\frac{64T^2(m+1)^2}{\Delta_m}}\right)
        \le\log(64T^5).
\end{aligned}
\label{eq:hard-ell-LT}
\end{equation}
Using Lemma~\ref{lem:partition-geometry} and
\eqref{eq:hard-rq-upper-bounds}, the number of rounds used at epoch $m$ satisfies
\[
\begin{aligned}
        |\cP_m|
        &\le{\frac{\CL}{8}}\Delta_m^{-3}
        \ell(\Delta_m,\delta_m)                                   \le{\frac{\CL}{8}}\Delta_m^{-3}\log(64T^5),                \\
        |N_m|q_{\Delta_m}(\delta_m)
        &\le\Delta_m^{-1}\Delta_m^{-2}
        \ell(\Delta_m,\delta_m)                                   \le{\frac{\CL}{8}}\Delta_m^{-3}\log(64T^5).
\end{aligned}
\]
Therefore,
\[
\begin{aligned}
        \sum_{m=0}^{M_T}
        \left[|\cP_m|+|N_m|q_{\Delta_m}(\delta_m)\right]
        &\le
        {\frac{\CL}{4}}\log(64T^5)
        \sum_{m=0}^{M_T}\Delta_m^{-3}                              \\
        &\le{\frac{2}{7}}\CL\log(64T^5)\Delta_{M_T}^{-3}            \\
        &\le{\frac{2}{7}}\CL\log(64T^5)\overline\Delta_T^{-3}      \\
        &={\frac{T}{14}}
        <{\frac{T}{2}}.
\end{aligned}
\]

We next bound the regret at epoch $m$. We work under
$\mathcal E_{M_T}$ from now on. By the definition of $n_m$ and
\eqref{eq:hard-constant-bounds},
\begin{equation}
\begin{aligned}
        n_m
        &=\left\lceil
        {\frac{\CL\ell(\Delta_m,\delta_m)}{16\Delta_m^2}}
        \right\rceil
        \le{\frac{\CL\ell(\Delta_m,\delta_m)}{8\Delta_m^2}}.
\end{aligned}
\label{eq:hard-nm-upper}
\end{equation}
The partition samples contained in active intervals incur regret at most
\[
\begin{aligned}
        {\frac{|A_m|}{\Delta_m}}
        n_m\,3\CL\Delta_m
        =3\CL|A_m|n_m.
\end{aligned}
\]
For $j<m$, the partition samples contained in $R_j$ incur regret at
most
\[
\begin{aligned}
        {\frac{|R_j|}{\Delta_j}}
        n_m\,3\CL\Delta_j
        =3\CL|R_j|n_m.
\end{aligned}
\]
Using \eqref{eq:hard-layer-partition} and \eqref{eq:hard-nm-upper}, the
regret of all partition samples at epoch $m$ is therefore at most
\[
\begin{aligned}
        3\CL n_m
        \left(|A_m|+\sum_{j=0}^{m-1}|R_j|\right)
        &=3\CL n_m                                                   \le{\frac{3}{8}}\CL^2\Delta_m^{-2}
        \ell(\Delta_m,\delta_m).
\end{aligned}
\]
The local samples are posted at points of $N_m\subset A_m$, and hence
their regret is at most
\[
\begin{aligned}
        3\CL\Delta_m
        |N_m|q_{\Delta_m}(\delta_m)
        &\le3\CL\Delta_m^{-2}\ell(\Delta_m,\delta_m)             \le3\CL^2\Delta_m^{-2}\ell(\Delta_m,\delta_m).
\end{aligned}
\]
Thus, using \eqref{eq:hard-ell-LT}, the total exploration regret is at
most
\[
\begin{aligned}
        \sum_{m=0}^{M_T}
        4\CL^2\Delta_m^{-2}\ell(\Delta_m,\delta_m)
        &\le4\CL^2\log(64T^5)
        \sum_{m=0}^{M_T}\Delta_m^{-2}                              \\
        &\le{\frac{16}{3}}\CL^2\log(64T^5)\Delta_{M_T}^{-2}          \\
        &\le{\frac{2^{19}}{3}}\CL^2
        \log(64T^5)\Delta_{M_T}^{-2},
\end{aligned}
\]
which proves \eqref{eq:hard-exploration-regret}.
\end{proof}

The next lemma bounds the sum of regrets suffered during the times $t$ corresponding to the exploitation rounds of Algorithm \ref{alg:main} (Line~\ref{line:exploit}).

\begin{lemma}[Exploitation regret]
\label{lem:exploitation-regret}
On $\mathcal E_{M_T}$, every exploitation round has regret at most
\begin{equation}
        3\CL\overline\Delta_T,
\label{eq:hard-one-round-exploitation}
\end{equation}
and the total exploitation regret is at most
\begin{equation}
        3T\CL\overline\Delta_T.
\label{eq:hard-total-exploitation}
\end{equation}
\end{lemma}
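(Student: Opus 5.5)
The proof is short and rests on Lemma~\ref{lem:induction}. On $\mathcal E_{M_T}$, relation \eqref{eq:hard-Em-implies-Hm} with $m=M_T+1$ gives $H_{M_T+1}$. In particular, $(\mathrm{Act}_{M_T+1})$ holds, so every $p\in A_{M_T+1}$ satisfies
\[
G_\lambda^\star-G_\lambda(p)\le 3\CL\Delta_{M_T+1}.
\]
Every exploitation round (Line~\ref{line:exploit}) posts a price in $A_{M_T+1}$, so its per-round regret is at most $3\CL\Delta_{M_T+1}$.

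We then compare $\Delta_{M_T+1}$ with $\overline\Delta_T$. By definition (Line~\ref{line:MT}), $M_T$ is the largest integer $m\ge0$ with $\Delta_m\ge\overline\Delta_T$. This $M_T$ is well defined because $\Delta_0=1\ge\overline\Delta_T$ under \eqref{eq:T:large:enough}, and $\Delta_m\to0$ ensures the maximum exists. Maximality gives $\Delta_{M_T+1}<\overline\Delta_T$, and hence $3\CL\Delta_{M_T+1}\le 3\CL\overline\Delta_T$. This proves \eqref{eq:hard-one-round-exploitation}.

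For \eqref{eq:hard-total-exploitation}, note that there are at most $T$ exploitation rounds. Summing the per-round bound gives a total of at most $3T\CL\overline\Delta_T$. One could sharpen this to about $T/2$ rounds using Lemma~\ref{lem:exploration-budget-regret}, but that is unnecessary.

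The only point needing care is that $A_{M_T+1}$ is nonempty, so that the exploitation price is well defined. Lemma~\ref{lem:induction} shows that every maximizer of $G_\lambda$ lies in $A_{M_T+1}$ on the good event. A maximizer exists because $G_\lambda$ is Lipschitz by Lemma~\ref{lem:hard-lipschitz}, hence continuous on the compact set $[0,1]$.
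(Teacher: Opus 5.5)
Your proof is correct and follows the paper's argument: Lemma~\ref{lem:induction} gives $H_{M_T+1}$ on $\mathcal E_{M_T}$, then $(\mathrm{Act}_{M_T+1})$ together with $\Delta_{M_T+1}<\overline\Delta_T$ (from the maximality of $M_T$) bounds each exploitation round, and multiplying by at most $T$ rounds gives the total. Your added checks that $M_T$ is well defined and that $A_{M_T+1}$ is nonempty (it contains a maximizer of the continuous $G_\lambda$) are valid details that the paper leaves implicit.
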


\begin{proof}
On $\mathcal E_{M_T}$, Lemma~\ref{lem:induction} gives $H_{M_T+1}$. Since the exploitation prices belong to $A_{M_T+1}$ and
$\Delta_{M_T+1}<\overline\Delta_T$,
\[
        G_\lambda^\star-G_\lambda(p)
        \le3\CL\Delta_{M_T+1}
        \le3\CL\overline\Delta_T
        \qquad(p\in A_{M_T+1}).
\]
Multiplication by the number of remaining rounds, which is at most $T$, gives \eqref{eq:hard-total-exploitation}.
\end{proof}

We can now bound deterministically the realized pseudo-regret $\mathcal R_T$ under the good event. 

\begin{lemma}[Total realized regret on the good event]
\label{lem:total-good-regret}
Assume $\overline\Delta_T\le1$. On $\mathcal E_{M_T}$,
\begin{equation}
\begin{aligned}
        \mathcal R_T
        \le{}&
        {\frac{2^{19}}{3}}\CL^2(4\CL)^{-2/3}
        T^{2/3}\bigl(\log(64T^5)\bigr)^{1/3}                     +3\CL(4\CL)^{1/3}
        T^{2/3}\bigl(\log(64T^5)\bigr)^{1/3}.
\end{aligned}
\label{eq:hard-total-good-regret}
\end{equation}
\end{lemma}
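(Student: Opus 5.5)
On $\mathcal E_{M_T}$ the rounds split into the exploration epochs $0,\ldots,M_T$ and the exploitation rounds of Line~\ref{line:exploit}. So $\mathcal R_T$ is at most the sum of the bound \eqref{eq:hard-exploration-regret} of Lemma~\ref{lem:exploration-budget-regret} and the bound \eqref{eq:hard-total-exploitation} of Lemma~\ref{lem:exploitation-regret}. Both lemmas apply here, since $\overline\Delta_T\le1$ is assumed. Lemma~\ref{lem:exploration-budget-regret} also guarantees that the exploration epochs end within $T/2$ rounds, so the decomposition is well defined and the algorithm actually reaches the exploitation phase. What remains is to express both bounds in terms of $T$.

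For the exploitation term, I substitute the definition $\overline\Delta_T=(4\CL\log(64T^5)/T)^{1/3}$:
\[
3T\CL\overline\Delta_T=3\CL(4\CL)^{1/3}T^{2/3}\bigl(\log(64T^5)\bigr)^{1/3},
\]
which is exactly the second term of \eqref{eq:hard-total-good-regret}.

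For the exploration term, I need $\Delta_{M_T}^{-2}$ in terms of $\overline\Delta_T$. By the definition of $M_T$ (Line~\ref{line:MT}), $\Delta_{M_T}\ge\overline\Delta_T$, hence $\Delta_{M_T}^{-2}\le\overline\Delta_T^{-2}=(4\CL)^{-2/3}T^{2/3}(\log(64T^5))^{-2/3}$. Multiplying by $\frac{2^{19}}{3}\CL^2\log(64T^5)$ gives
\[
\frac{2^{19}}{3}\CL^2(4\CL)^{-2/3}T^{2/3}\bigl(\log(64T^5)\bigr)^{1/3},
\]
which is the first term of \eqref{eq:hard-total-good-regret}.

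Adding the two terms concludes the proof. There is no genuine obstacle; the only point needing care is the direction of the inequality $\Delta_{M_T}\ge\overline\Delta_T$. The lower bound on $\Delta_{M_T}$ controls the exploration regret. The upper bound $\Delta_{M_T+1}<\overline\Delta_T$ is the one that controls exploitation, and it is already built into Lemma~\ref{lem:exploitation-regret}.
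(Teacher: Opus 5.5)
Your proposal is correct and follows essentially the same route as the paper. You add the exploration bound of Lemma~\ref{lem:exploration-budget-regret} to the exploitation bound of Lemma~\ref{lem:exploitation-regret}, use $\Delta_{M_T}\ge\overline\Delta_T$ to replace $\Delta_{M_T}^{-2}$ by $\overline\Delta_T^{-2}$, and substitute the definition of $\overline\Delta_T$; your arithmetic for both terms checks out.
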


\begin{proof}
By Lemmas~\ref{lem:exploration-budget-regret} and
\ref{lem:exploitation-regret},
\[
\begin{aligned}
        \mathcal R_T
        &\le
        {\frac{2^{19}}{3}}\CL^2\log(64T^5)\Delta_{M_T}^{-2}
        +3T\CL\overline\Delta_T                                  \\
        &\le
        {\frac{2^{19}}{3}}\CL^2\log(64T^5)\overline\Delta_T^{-2}
        +3T\CL\overline\Delta_T                                  \\
        &=
        {\frac{2^{19}}{3}}\CL^2(4\CL)^{-2/3}
        T^{2/3}\bigl(\log(64T^5)\bigr)^{1/3}                      +
        3\CL(4\CL)^{1/3}
        T^{2/3}\bigl(\log(64T^5)\bigr)^{1/3}.
\end{aligned}
\]
\end{proof}

\subsection{Proof of Theorem~\ref{thm:hard-main-statement}}
\label{subsection:conclusion:theorem}

\begin{proof}[Proof of Theorem~\ref{thm:hard-main-statement}]
Assume
\[
        \overline\Delta_T
        =\left({\frac{4\CL\log(64T^5)}{T}}\right)^{1/3}
        \le1.
\]
Lemma~\ref{lem:exploration-budget-regret} shows that the estimation epochs end before time $T/2$. By
Lemma~\ref{lem:total-good-regret}, on $\mathcal E_{M_T}$,
\[
\begin{aligned}
        \mathcal R_T
        \le
        \left[
        {\frac{2^{19}}{3}}\CL^2(4\CL)^{-2/3}
        +3\CL(4\CL)^{1/3}
        \right]
        T^{2/3}\bigl(\log(64T^5)\bigr)^{1/3}.
\end{aligned}
\]
On $\mathcal E_{M_T}^c$, $\mathcal R_T\le T$, and
\eqref{eq:hard-good-event-probability} gives
\[
\begin{aligned}
        R_T
        &=\E[\mathcal R_T]                                         \\
        &\le
        \left[
        {\frac{2^{19}}{3}}\CL^2(4\CL)^{-2/3}
        +3\CL(4\CL)^{1/3}
        \right]
        T^{2/3}\bigl(\log(64T^5)\bigr)^{1/3}
        +T\Prob(\mathcal E_{M_T}^c)                                \\
        &\le
        \left[
        {\frac{2^{19}}{3}}\CL^2(4\CL)^{-2/3}
        +3\CL(4\CL)^{1/3}
        \right]
        T^{2/3}\bigl(\log(64T^5)\bigr)^{1/3}
        +{\frac{\pi^2}{6T}}.
\end{aligned}
\]
\end{proof}

\section{Proofs for Section~\ref{subsection:lambda:half:one}}
\label{app:easy-upper}

We first provide bounds on the kernel $H_\lambda$ for
$1/2\le\lambda < 1$. 

\begin{lemma}[Kernel bounds]
\label{lem:easy-kernel-bounds}
Let $1/2\le\lambda < 1$. For $0<u\le z\le1$,
\begin{equation}
        H_\lambda(u,z)
        \le
        (1-\lambda)2^{-1/\lambda}
        u^{\lambda-1}z^{-\lambda},
\label{eq:easy-kernel-u-small}
\end{equation}
and, for $0<z\le u\le1$,
\begin{equation}
        H_\lambda(u,z)
        \le
        (1-\lambda)2^{-1/\lambda}
        u^{-\lambda}z^{\lambda-1}.
\label{eq:easy-kernel-z-small}
\end{equation}
Consequently, for every $u,z\in(0,1]$,
\begin{equation}
        \int_0^1H_\lambda(u,z)\,\dd z
        \le u^{\lambda-1},
        \qquad
        \int_0^1H_\lambda(u,z)\,\dd u
        \le z^{\lambda-1}.
\label{eq:easy-one-sided-integrals}
\end{equation}
\end{lemma}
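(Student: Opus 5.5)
The plan is to start from the explicit formula for $H_\lambda$ in Lemma~\ref{lem:main-reconstruction}. Since $\lambda\ge1/2$, the exponent $1-2\lambda$ is nonpositive. So the factor $M_\lambda(u,z)^{1-2\lambda}$ is controlled by a \emph{lower} bound on $M_\lambda$; this reverses the argument used in Lemma~\ref{lem:hard-kernel-bounds}, where $M_\lambda\le 1$ was used.

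\textbf{Step 1: the lower bound on $M_\lambda$.} For $\lambda>0$, $M_\lambda$ is nondecreasing in each argument. So for $0<u\le z$ we get $M_\lambda(u,z)\ge M_\lambda(0,z)=2^{-1/\lambda}z$, using the boundary value from the proof of Lemma~\ref{lem:main-reconstruction}. Raising to the power $1-2\lambda\le0$ reverses the inequality:
\[
M_\lambda(u,z)^{1-2\lambda}\le 2^{-(1-2\lambda)/\lambda}z^{1-2\lambda}=2^{2-1/\lambda}z^{1-2\lambda}.
\]
At $\lambda=1/2$ this is trivially an equality with $1$.

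\textbf{Step 2: the pointwise bounds.} Substituting into $H_\lambda=\frac{1-\lambda}{4}M_\lambda^{1-2\lambda}u^{\lambda-1}z^{\lambda-1}$ gives
\[
H_\lambda(u,z)\le \frac{1-\lambda}{4}\,2^{2-1/\lambda}\,z^{1-2\lambda}u^{\lambda-1}z^{\lambda-1}=(1-\lambda)2^{-1/\lambda}u^{\lambda-1}z^{-\lambda},
\]
which is \eqref{eq:easy-kernel-u-small}. The bound \eqref{eq:easy-kernel-z-small} follows by the symmetry $M_\lambda(u,z)=M_\lambda(z,u)$ and $H_\lambda(u,z)=H_\lambda(z,u)$, using $M_\lambda(u,z)\ge 2^{-1/\lambda}u$ when $z\le u$.

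\textbf{Step 3: the one-sided integrals.} Fix $u\in(0,1]$ and split $\int_0^1H_\lambda(u,z)\dd z$ at $z=u$.
\begin{itemize}
\item On $[u,1]$, use \eqref{eq:easy-kernel-u-small} and $\int_u^1 z^{-\lambda}\dd z\le 1/(1-\lambda)$. This contributes at most $2^{-1/\lambda}u^{\lambda-1}$.
\item On $(0,u]$, use \eqref{eq:easy-kernel-z-small} and $u^{-\lambda}\int_0^u z^{\lambda-1}\dd z=1/\lambda$. This contributes at most $(1-\lambda)2^{-1/\lambda}/\lambda$.
\end{itemize}
Since $u^{\lambda-1}\ge1$, the total is at most
\[
2^{-1/\lambda}u^{\lambda-1}\Bigl(1+\frac{1-\lambda}{\lambda}\Bigr)=\frac{2^{-1/\lambda}}{\lambda}\,u^{\lambda-1}.
\]

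\textbf{Step 4: the numerical constant.} It remains to check that $\frac{2^{-1/\lambda}}{\lambda}\le1$ for $\lambda\in[1/2,1)$. Writing $x=1/\lambda\in(1,2]$, the function $x2^{-x}$ is maximized at $x=1/\log 2$, where it equals $1/(e\log2)\approx0.53<1$. This gives the first inequality in \eqref{eq:easy-one-sided-integrals}; the second follows by symmetry.

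The only delicate point is noticing the sign of $1-2\lambda$, which forces the lower bound on $M_\lambda$ in Step 1. The rest is routine calculus.
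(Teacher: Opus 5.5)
Your proof is correct and follows the paper's route: you lower-bound $M_\lambda(u,z)\ge 2^{-1/\lambda}\max\{u,z\}$, use $1-2\lambda\le0$ to turn this into the pointwise bounds, and then split the one-sided integral at $z=u$. The only difference is in the final bookkeeping. The paper keeps the exact value $\int_u^1 z^{-\lambda}\dd z=(u^{\lambda-1}-1)/(1-\lambda)$, so the resulting $-1$ absorbs the term $(1-\lambda)/\lambda\le1$. You instead bound crudely via $u^{\lambda-1}\ge1$ and then verify $2^{-1/\lambda}/\lambda\le1$ numerically, which is equally valid.
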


\begin{proof}
If $u\le z$, then
\[
        M_\lambda(u,z)
        =
        \left(\frac{u^\lambda+z^\lambda}{2}\right)^{1/\lambda}
        \ge
        2^{-1/\lambda}z.
\]
Since $1-2\lambda\le0$, Lemma~\ref{lem:main-reconstruction} gives
\[
\begin{aligned}
        H_\lambda(u,z)
        &=
        \frac{1-\lambda}{4}
        M_\lambda(u,z)^{1-2\lambda}
        u^{\lambda-1}z^{\lambda-1}\\
        &\le
        \frac{1-\lambda}{4}
        \left(2^{-1/\lambda}z\right)^{1-2\lambda}
        u^{\lambda-1}z^{\lambda-1}\\
        &=
        (1-\lambda)2^{-1/\lambda}
        u^{\lambda-1}z^{-\lambda},
\end{aligned}
\]
which proves \eqref{eq:easy-kernel-u-small}. The proof of
\eqref{eq:easy-kernel-z-small} is symmetric.

Set $c_\lambda:=(1-\lambda)2^{-1/\lambda}$. By
\eqref{eq:easy-kernel-u-small} and
\eqref{eq:easy-kernel-z-small},
\[
\begin{aligned}
        \int_0^1H_\lambda(u,z)\,\dd z
        &\le
        c_\lambda
        \left[
        u^{-\lambda}\int_0^u z^{\lambda-1}\,\dd z
        +
        u^{\lambda-1}\int_u^1z^{-\lambda}\,\dd z
        \right]\\
        &=
        2^{-1/\lambda}
        \left[
        \frac{1-\lambda}{\lambda}
        +
        u^{\lambda-1}-1
        \right]\\
        &\le
        u^{\lambda-1},
\end{aligned}
\]
where the last inequality follows from
$(1-\lambda)/\lambda\le1$ and $2^{-1/\lambda}\le1$.
The second inequality in \eqref{eq:easy-one-sided-integrals} follows
by symmetry.
\end{proof}

The first inequality in Lemma~\ref{lem:hard-lipschitz} was proved
using only $0 < \lambda \le 1$. Therefore, the same proof gives, for
$1/2\le\lambda\le1$,
\begin{equation}
        |G_\lambda(p)-G_\lambda(q)|
        \le
        L\left(2+\frac1\lambda\right)|p-q|
        \le4L|p-q|,
        \qquad p,q\in[0,1].
\label{eq:easy-G-lipschitz}
\end{equation}
Also, recall that $0\le G_\lambda(p)\le1$.

For $q\in[0,1]$, define the truncated version of $\mathcal I_{\lambda}(q)$,
\begin{equation}
\mathcal I_{\lambda,n}(q)
:=
\begin{cases}
\displaystyle
\int_0^{q-1/n}\int_{q+1/n}^1
H_\lambda(q-v,w-q)F_S(v)F_B^\circ(w)
\,\dd w\,\dd v,
&
\text{if }\dfrac1n\le q\le1-\dfrac1n,
\smallskip\\
0,
&
\text{otherwise.}
\end{cases}
\label{eq:easy-I-truncated}
\end{equation}

The next lemma first bounds the truncation error. Then, it provides bounds on sums of squares that will then be used in the proof of Lemma \ref{lem:easy-scouting-concentration}. The latter lemma provides the concentration of 
$\widehat A^S(q)$,
$\widehat A^B(q)$
and $\widehat{\mathcal I}_{\lambda,n}(q)$, by means of Hoeffding's and McDiarmid's inequalities.

\begin{lemma}[Truncation bias and partition bounds]
\label{lem:easy-partition-bounds}
Let $n\ge1$, and let $\mathcal P_n$ be defined in
\eqref{eq:easy-partition}. For every $q\in[0,1]$,
\begin{equation}
        \left|
        \mathcal I_\lambda(q)-\mathcal I_{\lambda,n}(q)
        \right|
        \le\frac4{\sqrt n}.
\label{eq:easy-truncation-bound}
\end{equation}

For $J\in\mathcal P_n$, set
\begin{equation}
a_J^-(q)
:=
\begin{cases}
\displaystyle
|J|
\sup_{\substack{x\in J\\x\le q-1/n}}
(q-x)^{\lambda-1},
&
\text{if }J\cap[0,q-1/n]\ne\emptyset,
\smallskip\\
0,
&
\text{otherwise,}
\end{cases}
\label{eq:easy-a-minus}
\end{equation}
and
\begin{equation}
a_J^+(q)
:=
\begin{cases}
\displaystyle
|J|
\sup_{\substack{x\in J\\x\ge q+1/n}}
(x-q)^{\lambda-1},
&
\text{if }J\cap[q+1/n,1]\ne\emptyset,
\smallskip\\
0,
&
\text{otherwise.}
\end{cases}
\label{eq:easy-a-plus}
\end{equation}
Then
\begin{equation}
        \sum_{J\in\mathcal P_n}
        \left(a_J^-(q)\right)^2
        \le
        \frac{2(1+\log n)}n,
        \qquad
        \sum_{J\in\mathcal P_n}
        \left(a_J^+(q)\right)^2
        \le
        \frac{2(1+\log n)}n.
\label{eq:easy-a-square-bounds}
\end{equation}

Moreover, for arbitrary points $u_J\in J$,
$J\in\mathcal P_n$, if $u_J\le q-1/n$, then
\begin{equation}
        \sum_{J'\in\mathcal P_n}
        |J'|\mathbb I\{u_{J'}\ge q+1/n\}
        H_\lambda(q-u_J,u_{J'}-q)
        \le
        2(q-u_J)^{\lambda-1}.
\label{eq:easy-discrete-right-mass}
\end{equation}
Similarly, if $u_J\ge q+1/n$, then
\begin{equation}
        \sum_{J'\in\mathcal P_n}
        |J'|\mathbb I\{u_{J'}\le q-1/n\}
        H_\lambda(q-u_{J'},u_J-q)
        \le
        2(u_J-q)^{\lambda-1}.
\label{eq:easy-discrete-left-mass}
\end{equation}
\end{lemma}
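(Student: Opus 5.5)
We prove the three groups of claims separately. All of them rest on the one-sided kernel bounds \eqref{eq:easy-one-sided-integrals} of Lemma~\ref{lem:easy-kernel-bounds} and on $F_S,F_B^\circ\in[0,1]$.

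\textbf{Truncation bias \eqref{eq:easy-truncation-bound}.} Since the integrand of $\mathcal I_\lambda(q)$ is nonnegative and bounded by $H_\lambda(q-v,w-q)$, the difference $\mathcal I_\lambda(q)-\mathcal I_{\lambda,n}(q)$ is at most the integral of $H_\lambda$ over the strips $\{q-v<1/n\}$ and $\{w-q<1/n\}$. In the degenerate case $q\notin[1/n,1-1/n]$ the whole integral is controlled by one such strip. For the strip $\{q-v<1/n\}$, integrate first in $w$ using \eqref{eq:easy-one-sided-integrals}:
\[
\int_0^{1/n}u^{\lambda-1}\dd u=\frac{n^{-\lambda}}{\lambda}\le 2n^{-1/2},
\]
because $\lambda\ge1/2$ and $n\ge1$. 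The other strip is symmetric, and adding the two bounds gives $4/\sqrt n$.

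\textbf{Squared envelopes \eqref{eq:easy-a-square-bounds}.} All cells have length $|J|\le1/n$; the last one is $[(n-1)/n,1]$, which also has length $1/n$. Take a cell $J$ meeting $[0,q-1/n]$. Its supremum of $(q-x)^{\lambda-1}$ is attained at the point of $J\cap[0,q-1/n]$ closest to $q$, so $(q-x)\ge k/n$ roughly, where $k\ge1$ indexes the distance of $J$ from $q$ in units of $1/n$. Hence
\[
(a_J^-)^2\le n^{-2}(k/n)^{2\lambda-2}=n^{-2\lambda}k^{2\lambda-2}\le n^{-1}k^{-1},
\]
using $2\lambda-2\ge-1$ and $n^{-2\lambda}\le n^{-1}$. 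More carefully, the distances of distinct cells are at least $1/n, 2/n,\dots$, except that one boundary cell may straddle $q-1/n$ and have distance exactly $1/n$. This costs at most a factor $2$, or one extra term. Summing over $k\le n$ gives $\frac{1}{n}\sum_{k=1}^n k^{-1}\le (1+\log n)/n$, and the factor $2$ absorbs the boundary cell. The bound for $a_J^+$ is symmetric.

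\textbf{Discrete mass bounds \eqref{eq:easy-discrete-right-mass}--\eqref{eq:easy-discrete-left-mass}.} This is the main obstacle, since the kernel is evaluated at random points rather than integrated. Fix $u_J\le q-1/n$ and write $a=q-u_J$. We compare the sum with $\int_0^1 H_\lambda(a,z)\dd z\le a^{\lambda-1}$.

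Consider a cell $J'$ with $u_{J'}\ge q+1/n$, and set $z'=u_{J'}-q\ge 1/n\ge|J'|$. For any $z$ in $J'$ shifted so that $z\in[z'-|J'|,z']$, we have $z\ge z'/2$ unless the cell is the first one, but $z'\ge1/n$ covers that case. So the variables range over comparable values. Rather than rely on monotonicity of $H_\lambda(a,\cdot)$, which fails in general, we use the explicit bounds \eqref{eq:easy-kernel-u-small}--\eqref{eq:easy-kernel-z-small}. These give $H_\lambda(a,z)\le c_\lambda\,\psi(a,z)$, with $\psi$ a product of powers of $a$ and $z$, monotone decreasing in $z$ on each of the regions $z\le a$ and $z\ge a$. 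For $z\in[z'/2,z']$, this yields $\psi(a,z')\le 2^{1-\lambda}\psi(a,z)$ or better.

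Bounding $|J'|\,c_\lambda\psi(a,z')$ by $2c_\lambda\int_{J'}\psi(a,z)\dd z$ and summing over $J'$, the cells are disjoint and contained in $[1/n-|J'|,1]\subset[0,1]$. Repeating the integral computation in the proof of Lemma~\ref{lem:easy-kernel-bounds} then gives a total of at most $2a^{\lambda-1}$. The cells whose $[z'/2,z']$ interval crosses the breakpoint $z=a$ need a check that the constant still works. If not, we bound them crudely by $c_\lambda a^{-\lambda}(a/2)^{\lambda-1}|J'|$, which is absorbed because $c_\lambda(1-\lambda)^{-1}\cdot 2^{1-\lambda}$ has slack, as $2^{-1/\lambda}\le1/2$.

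The left-mass bound \eqref{eq:easy-discrete-left-mass} follows by the symmetry $H_\lambda(u,z)=H_\lambda(z,u)$.
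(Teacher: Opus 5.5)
Your truncation bound is the paper's strip argument, and your squared-envelope bound is the paper's grouping of cells by distance to $q$. The gap is in the discrete mass bound \eqref{eq:easy-discrete-right-mass}. You compare $\psi(a,z')$ with $\psi(a,z)$ only for $z$ to the left of $z'=u_{J'}-q$ (on $[z'/2,z']$, or on $[z'-|J'|,z']$). You then bound $|J'|\,\psi(a,z')$ by $2\int_{J'-q}\psi(a,z)\dd z$ and sum using disjointness of the shifted cells. But $J'-q$ contains $z'$ and may lie almost entirely to the right of it, where the envelope is smaller. So that inequality does not follow from your comparison, and the intervals $[z'-|J'|,z']$ on which the comparison does apply are not disjoint. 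The breakpoint patch is likewise only asserted: you neither count the crossing cells nor check the constants. There is in fact slack, since $\int_0^1c_\lambda\psi(a,z)\dd z\le 2^{-1/\lambda}a^{\lambda-1}\le a^{\lambda-1}/2$, but this has to be verified.

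The missing observation, which the paper uses, is that the envelope $c_\lambda\psi(a,\cdot)$ is non-increasing on all of $(0,1]$. Both branches equal $c_\lambda a^{-1}$ at $z=a$, so there is no breakpoint case at all. The argument then runs as follows:
\begin{itemize}
\item Monotonicity gives $\frac1n c_\lambda\psi(a,z')\le\int_{z'-1/n}^{z'}c_\lambda\psi(a,z)\dd z$.
\item The intervals $[z'-1/n,z']\subset[0,1]$ overlap at most twice, because there is one point $u_{J'}$ per cell of length $1/n$.
\item Hence the sum is at most $2\int_0^1c_\lambda\psi(a,z)\dd z\le 2a^{\lambda-1}$.
\end{itemize}
If you prefer to keep disjoint cells, add the right-side comparison $\psi(a,z)\ge 2^{-\lambda}\psi(a,z')$ for $z\in[z',z'+1/n]\subset[z',2z']$.

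Your claim that $H_\lambda(a,\cdot)$ fails to be monotone is also wrong in this regime. For $\lambda\in[1/2,1)$, both $M_\lambda(a,z)^{1-2\lambda}$ and $z^{\lambda-1}$ are non-increasing in $z$. So the same overlap argument works with $H_\lambda(a,\cdot)$ itself together with \eqref{eq:easy-one-sided-integrals}, with no envelope needed.

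There is also a smaller slip in the squared envelopes. The fact $2\lambda-2\ge-1$ gives $k^{2\lambda-2}\ge k^{-1}$, which is the wrong direction. The inequality $n^{-2\lambda}k^{2\lambda-2}\le n^{-1}k^{-1}$ does hold, but because it equals $n^{-1}k^{-1}(k/n)^{2\lambda-1}$ with $k\le n$ and $2\lambda-1\ge0$. With that justification, your reduction to the harmonic sum is slightly shorter than the paper's case split on $\beta=2\lambda-1$.
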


\begin{proof}
We first prove \eqref{eq:easy-truncation-bound}. If $q<1/n$, then
$\mathcal I_{\lambda,n}(q)=0$, and
\[
\begin{aligned}
        \mathcal I_\lambda(q)
        &\le
        \int_0^q\int_0^1
        H_\lambda(u,z)\,\dd z\,\dd u\le
        \int_0^q u^{\lambda-1}\,\dd u=
        \frac{q^\lambda}{\lambda}\le
        \frac{n^{-\lambda}}\lambda\le
        \frac2{\sqrt n}.
\end{aligned}
\]
The same argument gives
$\mathcal I_\lambda(q)\le2/\sqrt n$ when
$q>1-1/n$. If $1/n\le q\le1-1/n$, then
\[
\begin{aligned}
        0
        &\le
        \mathcal I_\lambda(q)-\mathcal I_{\lambda,n}(q)\le
        \int_0^{1/n}\int_0^1
        H_\lambda(u,z)\,\dd z\,\dd u
        +
        \int_0^1\int_0^{1/n}
        H_\lambda(u,z)\,\dd z\,\dd u\\
        &\le
        2\int_0^{1/n}u^{\lambda-1}\,\dd u=
        \frac{2n^{-\lambda}}\lambda\le
        \frac4{\sqrt n}.
\end{aligned}
\]
This proves \eqref{eq:easy-truncation-bound}.

We prove the first inequality in \eqref{eq:easy-a-square-bounds};
the second is symmetric. The cells intersecting $[0,q-1/n]$ can
be grouped according to their distance from $q$. For every integer
$k\ge1$, at most two cells contain points $x$ such that
\[
        \frac{k}{n}\le q-x<\frac{k+1}{n}.
\]
Consequently,
\[
\begin{aligned}
        \sum_{J\in\mathcal P_n}
        \left(a_J^-(q)\right)^2
        &\le
        \frac2{n^2}
        \sum_{k=1}^n
        \left(\frac{k}{n}\right)^{2\lambda-2}\\
        &=
        2n^{-2\lambda}
        \sum_{k=1}^n k^{2\lambda-2}.
\end{aligned}
\]
Set $\beta:=2\lambda-1\in[0,1)$. If $\beta=0$, then
\[
        \sum_{k=1}^n k^{2\lambda-2}
        =
        \sum_{k=1}^n\frac1k
        \le1+\log n.
\]
If $\beta>0$, then
\[
\begin{aligned}
        n^{-\beta}
        \sum_{k=1}^n k^{\beta-1}
        &\le
        n^{-\beta}
        \left[
        1+\int_1^n x^{\beta-1}\,\dd x
        \right]\\
        &=
        n^{-\beta}
        +
        \frac{1-n^{-\beta}}\beta\\
        &\le
        1+\log n,
\end{aligned}
\]
where the last inequality follows from $1-e^{-\beta\log n}\le \beta\log n$. Since
$n^{-2\lambda}=n^{-1}n^{-\beta}$, the first inequality in
\eqref{eq:easy-a-square-bounds} follows.

It remains to prove \eqref{eq:easy-discrete-right-mass}. Fix
$u:=q-u_J\ge1/n$, and set
\[
\psi_u(z)
:=
(1-\lambda)2^{-1/\lambda}
\left[
        u^{-\lambda}z^{\lambda-1}\mathbb I\{z\le u\}
        +
        u^{\lambda-1}z^{-\lambda}\mathbb I\{z>u\}
\right].
\]
By \eqref{eq:easy-kernel-u-small} and
\eqref{eq:easy-kernel-z-small},
\[
        H_\lambda(u,z)\le\psi_u(z),
        \qquad z>0.
\]
The function $\psi_u$ is non-increasing. For each point
$z_{J'}:=u_{J'}-q\ge1/n$, consider the interval
$[z_{J'}-1/n,z_{J'}]$. Since there is one point $u_{J'}$ in
each cell of length $1/n$, these intervals overlap at most twice,
up to their endpoints. Hence,
\[
\begin{aligned}
        &\sum_{J'\in\mathcal P_n}
        |J'|\mathbb I\{u_{J'}\ge q+1/n\}
        H_\lambda(u,u_{J'}-q)\\
        &\qquad\le
        \frac1n\sum_{J':\,z_{J'}\ge1/n}\psi_u(z_{J'})\\
        &\qquad\le
        \sum_{J':\,z_{J'}\ge1/n}
        \int_{z_{J'}-1/n}^{z_{J'}}\psi_u(z)\,\dd z\\
        &\qquad\le
        2\int_0^1\psi_u(z)\,\dd z\\
        &\qquad\le
        2u^{\lambda-1},
\end{aligned}
\]
where the last inequality follows from the computation in the proof
of Lemma~\ref{lem:easy-kernel-bounds}. This proves
\eqref{eq:easy-discrete-right-mass}; the proof of
\eqref{eq:easy-discrete-left-mass} is symmetric.
\end{proof}

We can now provide the concentration of 
$\widehat A^S(q)$,
$\widehat A^B(q)$
and $\widehat{\mathcal I}_{\lambda,n}(q)$. 

\begin{lemma}[Uniform accuracy of the scouting quantities]
\label{lem:easy-scouting-concentration}
Let
\begin{equation}
        b_T:=\log(8KT^2),
        \qquad
        \varepsilon_A
        :=
        \sqrt{\frac{b_T}{2n}},
        \qquad
        \varepsilon_I
        :=
        8\sqrt{
        \frac{(1+\log n)b_T}{n}
        }.
\label{eq:easy-epsilons}
\end{equation}
With probability at least $1-T^{-2}$, simultaneously for every
$q\in\mathcal G_K$,
\begin{equation}
        |\widehat A^S(q)-A^S(q)|
        \le\varepsilon_A,
        \qquad
        |\widehat A^B(q)-A^B(q)|
        \le\varepsilon_A,
\label{eq:easy-A-concentration}
\end{equation}
and
\begin{equation}
        \left|
        \widehat{\mathcal I}_{\lambda,n}(q)
        -
        \mathcal I_{\lambda,n}(q)
        \right|
        \le\varepsilon_I.
\label{eq:easy-I-concentration}
\end{equation}
\end{lemma}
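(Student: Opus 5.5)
The plan is to prove each inequality for a fixed $q\in\mathcal G_K$ with failure probability at most $2/(8KT^2)$, and then take a union bound over the $K$ grid points and the three quantities. This gives total failure probability at most $3K\cdot 2/(8KT^2)\le T^{-2}$. Throughout, the independent inputs are the triples $(U_J,Y_J,Z_J)_{J\in\mathcal P_n}$. They are independent across cells, since each scouting round draws a fresh valuation pair and a fresh uniform price.

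For $\widehat A^S(q)$, the summands $|J|\mathbb I\{U_J\le q\}Y_J$ are independent and lie in intervals of length $|J|=1/n$. Their expectations sum to $A^S(q)$, exactly as in the unbiasedness computation of Lemma~\ref{lem:algorithmic-unbiasedness}. Since $\sum_J|J|^2=1/n$, Hoeffding's inequality gives $\Prob(|\widehat A^S(q)-A^S(q)|>\varepsilon_A)\le 2\exp(-2\varepsilon_A^2 n)=2e^{-b_T}$. The buyer side $\widehat A^B(q)$ is handled identically.

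For $\widehat{\mathcal I}_{\lambda,n}(q)$, the trivial case is $q\notin[1/n,1-1/n]$, where both the estimator and its target vanish. Otherwise, I first check unbiasedness. A cell has length $1/n$ and the gap between $q-1/n$ and $q+1/n$ is $2/n$, so no cell meets both truncated ranges and the diagonal terms $J=J'$ vanish. For $J\ne J'$, independence gives
\[
\E\bigl[|J||J'|\mathbb I\{U_J\le q-\tfrac1n\}\mathbb I\{U_{J'}\ge q+\tfrac1n\}Y_JZ_{J'}H_\lambda(q-U_J,U_{J'}-q)\bigr]
=\int_{J}\int_{J'}\mathbb I\{v\le q-\tfrac1n\}\mathbb I\{w\ge q+\tfrac1n\}H_\lambda(q-v,w-q)F_S(v)F_B^\circ(w)\dd w\dd v .
\]
Summing over all pairs yields $\E[\widehat{\mathcal I}_{\lambda,n}(q)]=\mathcal I_{\lambda,n}(q)$.

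Next come the bounded differences. Changing a single triple $(U_J,Y_J,Z_J)$ alters only the row or only the column indexed by $J$, again because no cell meets both sides. If $J$ meets the left range, the change is at most $|J|\,[\,\text{row sum at }u\,]+|J|\,[\,\text{row sum at }u'\,]$. By \eqref{eq:easy-discrete-right-mass} each row sum is at most $2(q-u)^{\lambda-1}$ for admissible $u\in J$. So the change is at most $4a_J^-(q)$, and symmetrically at most $4a_J^+(q)$ using \eqref{eq:easy-discrete-left-mass}. Hence the sum of squared ranges is at most $16\sum_J[(a_J^-)^2+(a_J^+)^2]\le 64(1+\log n)/n$ by \eqref{eq:easy-a-square-bounds}. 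McDiarmid's inequality then gives failure probability
\[
2\exp\!\left(-\frac{2\varepsilon_I^2}{64(1+\log n)/n}\right)=2e^{-2b_T}\le 2e^{-b_T}.
\]

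The step I expect to be the delicate one is this bounded-difference estimate. Specifically, one must justify that a single cell cannot contribute to both factors; this uses $|J|=1/n<2/n$. One must also justify that the row-sum bound \eqref{eq:easy-discrete-right-mass} applies uniformly for every admissible value of the changed coordinate, not just the realized one. The rest is bookkeeping of constants and the union bound.
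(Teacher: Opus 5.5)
Your proposal is correct and follows essentially the same route as the paper: Hoeffding with squared ranges summing to $1/n$ for $\widehat A^S(q)$ and $\widehat A^B(q)$; for $\widehat{\mathcal I}_{\lambda,n}(q)$, unbiasedness via the vanishing diagonal, then McDiarmid with bounded differences $4a_J^\pm(q)$ from the row/column mass bounds; and finally a union bound over the grid. The only difference is cosmetic: you bound the $e^{-2b_T}$ term by $e^{-b_T}$ and get $6Ke^{-b_T}=3/(4T^2)$, where the paper writes $4Ke^{-b_T}+2Ke^{-2b_T}\le T^{-2}$, and both are valid.
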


\begin{proof}
For every $q\in\mathcal G_K$, independence across the cells and
uniform sampling inside each cell give
\[
        \E[\widehat A^S(q)]
        =
        \sum_{J\in\mathcal P_n}
        |J|
        \E[
        \mathbb I\{U_J\le q\}Y_J
        ]=
        \sum_{J\in\mathcal P_n}
        \int_J
        \mathbb I\{v\le q\}F_S(v)\,\dd v=
        \int_0^qF_S(v)\,\dd v=
        A^S(q).
\]
Similarly,
\[
        \E[\widehat A^B(q)]=A^B(q).
\]
Every summand in $\widehat A^S(q)$ and
$\widehat A^B(q)$ belongs to an interval of length $1/n$, and
\[
\begin{aligned}
        \sum_{J\in\mathcal P_n}\left(\frac1n\right)^2
        &=
        \frac1n.
\end{aligned}
\]
Hoeffding's inequality therefore gives
\[
\begin{aligned}
        \Prob\left(
        |\widehat A^S(q)-A^S(q)|>\varepsilon_A
        \right)
        &\le
        2\exp\left(-2n\varepsilon_A^2\right)
        =
        2e^{-b_T},\\
        \Prob\left(
        |\widehat A^B(q)-A^B(q)|>\varepsilon_A
        \right)
        &\le
        2e^{-b_T}.
\end{aligned}
\]

Let us now turn to $ \widehat{\mathcal I}_{\lambda,n}(q)
        -
        \mathcal I_{\lambda,n}(q)$.
If $q<1/n$ or $q>1-1/n$, both
$\widehat{\mathcal I}_{\lambda,n}(q)$ and
$\mathcal I_{\lambda,n}(q)$ are zero. Assume now that
$1/n\le q\le1-1/n$. If $J=J'$, the corresponding summand in
\eqref{eq:easy-Ihat} is zero, because $U_J$ cannot satisfy both
\[
        U_J\le q-\frac1n
        \qquad\text{and}\qquad
        U_J\ge q+\frac1n.
\]
The corresponding integral
in \eqref{eq:easy-I-truncated}
over $J\times J$ is also zero. Indeed,
if $v,w\in J$, then $|w-v|\le1/n$, whereas
$v\le q-1/n$ and $w\ge q+1/n$ would imply
$w-v\ge2/n$.

For $J\ne J'$, the observations obtained in the two cells come
from independent rounds. Consequently,
\[
\begin{aligned}
        \E[
        \widehat{\mathcal I}_{\lambda,n}(q)
        ]
        &=
        \sum_{\substack{J,J'\in\mathcal P_n\\J\ne J'}}
        |J||J'|
        \E\Bigg[
        \mathbb I\left\{U_J\le q-\frac1n\right\}
        \mathbb I\left\{U_{J'}\ge q+\frac1n\right\}
        Y_JZ_{J'}\\
        &\hspace{4.2cm}\times
        H_\lambda(q-U_J,U_{J'}-q)
        \Bigg]\\
        &=
        \sum_{\substack{J,J'\in\mathcal P_n\\J\ne J'}}
        \int_J\int_{J'}
        \mathbb I\left\{v\le q-\frac1n\right\}
        \mathbb I\left\{w\ge q+\frac1n\right\}\\
        &\hspace{3.2cm}\times
        H_\lambda(q-v,w-q)F_S(v)F_B^\circ(w)
        \,\dd w\,\dd v\\
        &=
        \int_0^{q-1/n}\int_{q+1/n}^1
        H_\lambda(q-v,w-q)F_S(v)F_B^\circ(w)
        \,\dd w\,\dd v\\
        &=
        \mathcal I_{\lambda,n}(q).
\end{aligned}
\]

We apply McDiarmid's inequality to
$\widehat{\mathcal I}_{\lambda,n}(q)$, viewed as a function of the
independent triples $(U_J,Y_J,Z_J)$, $J\in\mathcal P_n$. Fix a
cell $J$. Since the diameter of $J$ is $1/n$, the cell cannot
intersect both $[0,q-1/n]$ and $[q+1/n,1]$. Therefore, when the
triple associated with $J$ is changed, the terms in
\eqref{eq:easy-Ihat} can change only through the first index $J$,
or only through the second index $J$, but not through both.

Suppose first that $J\cap[0,q-1/n]\ne\emptyset$. For every possible
value $U_J\in J$ such that $U_J\le q-1/n$, the absolute value of
the corresponding row of \eqref{eq:easy-Ihat} is bounded by
\[
\begin{aligned}
        &|J|Y_J
        \sum_{J'\in\mathcal P_n}
        |J'|\mathbb I\left\{U_{J'}\ge q+\frac1n\right\}Z_{J'}
        H_\lambda(q-U_J,U_{J'}-q)\\
        &\qquad\le
        |J|
        \sum_{J'\in\mathcal P_n}
        |J'|\mathbb I\left\{U_{J'}\ge q+\frac1n\right\}
        H_\lambda(q-U_J,U_{J'}-q)\\
        &\qquad\le
        2|J|(q-U_J)^{\lambda-1}\\
        &\qquad\le
        2a_J^-(q),
\end{aligned}
\]
where the penultimate inequality follows from
\eqref{eq:easy-discrete-right-mass}. Thus changing the triple
associated with $J$ changes the estimator by at most
$4a_J^-(q)$. The symmetric argument shows that, if
$J\cap[q+1/n,1]\ne\emptyset$, the estimator changes by at most
$4a_J^+(q)$. If $J$ intersects neither set, it does not affect the
estimator. Hence the sum of the squares of the bounded-difference
constants is at most
\[
\begin{aligned}
        16\sum_{J\in\mathcal P_n}
        \left[
        (a_J^-(q))^2+(a_J^+(q))^2
        \right]
        &\le
        \frac{64(1+\log n)}n,
\end{aligned}
\]
by \eqref{eq:easy-a-square-bounds}. McDiarmid's inequality therefore
gives
\[
\begin{aligned}
        \Prob\left(
        \left|
        \widehat{\mathcal I}_{\lambda,n}(q)
        -
        \mathcal I_{\lambda,n}(q)
        \right|>\varepsilon_I
        \right) \le
        2\exp\left(
        -\frac{2\varepsilon_I^2}
        {64(1+\log n)/n}
        \right) =
        2e^{-2b_T}.
\end{aligned}
\]
A union bound over the $K$ grid prices and the three preceding
inequalities gives a failure probability at most
\[
        4Ke^{-b_T}+2Ke^{-2b_T}
        \le
        \frac1{T^2}.
\]
\end{proof}

On the event of Lemma~\ref{lem:easy-scouting-concentration}, set
\begin{equation}
        \varepsilon_T
        :=
        \varepsilon_A+\varepsilon_I+\frac4{\sqrt n}.
\label{eq:easy-total-error}
\end{equation}
Conditionally on the scouting observations, when a price
$q\in\mathcal G_K$ is posted after round $n$, the expectation of
\eqref{eq:easy-Ghat} is
\begin{equation}
\begin{aligned}
        \overline G_\lambda(q)
        &:=%
        2^{1-1/\lambda}\,
        \frac12
        \left[
        F_B^\circ(q)\widehat A^S(q)
        +
        F_S(q)\widehat A^B(q)
        \right]
        +
        \widetilde{\mathcal I}_{\lambda,n}(q).
\end{aligned}
\label{eq:easy-surrogate-mean}
\end{equation}
By Lemma~\ref{lem:main-reconstruction},
\eqref{eq:easy-A-concentration},
\eqref{eq:easy-I-concentration}, and
\eqref{eq:easy-truncation-bound},
\begin{align}
        |\overline G_\lambda(q)-G_\lambda(q)|
        &\le
        2^{1-1/\lambda}\,
        \frac12
        \left[
        F_B^\circ(q)\varepsilon_A
        +
        F_S(q)\varepsilon_A
        \right]
        +
        \left|
        \widetilde{\mathcal I}_{\lambda,n}(q)
        -
        \mathcal I_\lambda(q)
        \right|                                                   \notag\\
        &\le
        \varepsilon_A
        +
        \left|
        \widehat{\mathcal I}_{\lambda,n}(q)
        -
        \mathcal I_{\lambda,n}(q)
        \right|
        +
        \left|
        \mathcal I_{\lambda,n}(q)
        -
        \mathcal I_\lambda(q)
        \right|                                                   \notag\\
        &\le
        \varepsilon_A+\varepsilon_I+\frac4{\sqrt n}\\
        &=
        \varepsilon_T,
\label{eq:easy-surrogate-accuracy}
\end{align}
where we used that projection onto $[0,1]$
of $  \widehat{\mathcal I}_{\lambda,n}(q) \ge 0$
cannot increase the
distance to $\mathcal I_\lambda(q)\in[0,1]$.

Finally, we can provide the proof of Theorem~\ref{thm:easy-upper}.

\begin{proof}[Proof of Theorem~\ref{thm:easy-upper}]
Let $p^\star$ be a maximizer of $G_\lambda$, and let
$q^\star\in\mathcal G_K$ be a closest grid point. By
\eqref{eq:easy-G-lipschitz},
\begin{equation}
        G_\lambda^\star-G_\lambda(q^\star)
        \le
        \frac{4L}{K+1}.
\label{eq:easy-grid-error}
\end{equation}

The regret incurred during the first $n$ rounds is at most $n$.
Let $\mathcal E$ denote the event in
Lemma~\ref{lem:easy-scouting-concentration}.
Let $\mathcal{H}_n = (P_t,Y_t,Z_t)_{t=1}^n$, the history vector of the scouting phase.
On $\mathcal E$,
conditionally on $\mathcal{H}_n$, the rewards
$\widehat G_{\lambda,t}(q)$ belong to $[0,2]$, and their means are
given by $\overline G_\lambda(q)$. Therefore,
\eqref{eq:easy-bandit-guarantee} and
\eqref{eq:easy-surrogate-accuracy} give
\[
\begin{aligned}
        &\E\left[
        \sum_{t=n+1}^T
        \left(
        G_\lambda(q^\star)-G_\lambda(P_t)
        \right)
        \,\middle|\,
 \mathcal{H}_n
        \right]\\
        &\qquad\le
        \E\left[
        \sum_{t=n+1}^T
        \left(
        \overline G_\lambda(q^\star)
        -
        \overline G_\lambda(P_t)
        \right)
        \,\middle|\,
     \mathcal{H}_n
        \right]
        +
        2(T-n)\varepsilon_T\\
        &\qquad\le
        6\sqrt{K(T-n)\log(K)}
        +
        2T\varepsilon_T.
\end{aligned}
\]

Since the regret is at most $T$ on $\mathcal E^c$, we obtain
\begin{equation}
\begin{aligned}
        R_\lambda^\nu(\alpha,T)
        &\le
        n
        +
        \frac{4LT}{K+1}
        +
        6\sqrt{KT\log K}
        +
        2T\varepsilon_T
        +
        T\Prob(\mathcal E^c)\\
        &\le
        n
        +
        \frac{4LT}{K+1}
        +
        6\sqrt{KT\log K}
        +
        2T\varepsilon_T
        +
        \frac1T.
\end{aligned}
\label{eq:easy-regret-decomposition}
\end{equation}

We now bound the terms in \eqref{eq:easy-regret-decomposition}. For
$T\ge1$,
\begin{equation}
        n
        \le
        2T^{2/3}\log(eT),
        \qquad
        K
        \le
        2T^{1/3}.
\label{eq:easy-n-bound}
\end{equation}
Moreover, using $n\le T$,
\[
\begin{aligned}
        n
        &\ge
        T^{2/3}\log(eT),
        &
        b_T
        &=
        \log(8KT^2)
        \le
        4\log(eT),
        &
        1+\log n
        &\le
        \log(eT).
\end{aligned}
\]
It follows from \eqref{eq:easy-epsilons} that
\[
\begin{aligned}
        \varepsilon_A
        &\le
        \sqrt{
        \frac{4\log(eT)}
        {2T^{2/3}\log(eT)}
        }
        =
        \sqrt2\,T^{-1/3},\\
        \varepsilon_I
        &\le
        8\sqrt{
        \frac{4(\log(eT))^2}
        {T^{2/3}\log(eT)}
        }
        =
        16T^{-1/3}\sqrt{\log(eT)},\\
        \frac4{\sqrt n}
        &\le
        \frac4{T^{1/3}\sqrt{\log(eT)}}
        \le
        4T^{-1/3}.
\end{aligned}
\]
Since $\log(eT)\ge1$, we obtain
\begin{equation}
        \varepsilon_T
        \le
        32T^{-1/3}\sqrt{\log(eT)}.
\label{eq:easy-epsilon-final}
\end{equation}
Also, using
$\log K\le\log(2T^{1/3})\le2\log(eT)$,
\[
\begin{aligned}
        \frac{4LT}{K+1}
        &\le
        4LT^{2/3},\\
        6\sqrt{KT\log K}
        &\le
        6\sqrt{
        2T^{4/3}\,2\log(eT)
        }
        =
        12T^{2/3}\sqrt{\log(eT)},\\
        2T\varepsilon_T
        &\le
        64T^{2/3}\sqrt{\log(eT)}.
\end{aligned}
\]
Substituting these inequalities and \eqref{eq:easy-n-bound} into
\eqref{eq:easy-regret-decomposition}, and using
$\sqrt{\log(eT)}\le\log(eT)$, gives
\[
\begin{aligned}
        R_\lambda^\nu(\alpha,T)
        &\le
        \left[
        2+4L+12+64+1
        \right]
        T^{2/3}\log(eT)\\
        &\le
        2^8(1+L)T^{2/3}\log(eT),
\end{aligned}
\]

which proves \eqref{eq:easy-regret-bound}.
\end{proof}

\section{Generic revealing-action lower-bound template}\label{app:generic-template}

We use the following template, reproduced from Appendix~G of \cite{bachoc2026repeated}.

\begin{assumption}[Generic revealing-action structure]\label{ass:generic-revealing-template} Fix positive constants $\eps_0,c_{\mathrm{opt}},c_{\mathrm{gap}},c_{\mathrm{info}}$.
For every $\eps\in(0,\eps_0]$, there are two environments $+\eps$ and $-\eps$, and a partition $[0,1]=\Xminus\cup\Xinfo\cup\Xplus$, satisfying the following. (A1) Under $+\eps$, the best action lies in $\Xplus$, and under $-\eps$, the best action lies in $\Xminus$.  (A2) The wrong side is worse by at least $c_{\mathrm{opt}}\eps$. (A3) Every action in $\Xinfo$ is worse than the optimum by at least $c_{\mathrm{gap}}$.  (A4) Outside $\Xinfo$, the feedback law is identical in the two environments. (A5) Inside $\Xinfo$, one observation has KL divergence at most $c_{\mathrm{info}}\eps^2$.
\end{assumption}

\begin{theorem}[Generic $T^{2/3}$ lower-bound template]\label{thm:generic-lower-template}
Under Assumption~\ref{ass:generic-revealing-template}, if
\[
        T\ge
        \max\left\{
        {\frac{c_{\mathrm{gap}}}{4c_{\mathrm{info}}c_{\mathrm{opt}}\eps_0^3}},
        {\frac{2c_{\mathrm{opt}}^2}{c_{\mathrm{info}}c_{\mathrm{gap}}^2}}
        \right\},
\]
then every algorithm suffers, on at least one of the two environments with
\[
        \eps_T=\left({\frac{c_{\mathrm{gap}}}{4c_{\mathrm{info}}c_{\mathrm{opt}}T}}\right)^{1/3},
\]
regret at least
\[
        {\frac{c_{\mathrm{gap}}^{1/3}c_{\mathrm{opt}}^{2/3}}{4^{4/3}c_{\mathrm{info}}^{1/3}}}T^{2/3}.
\]
\end{theorem}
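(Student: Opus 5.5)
The statement to prove is Theorem~\ref{thm:generic-lower-template}. I would use the standard two-environment argument for revealing actions: count the informative pulls, use a KL / Bretagnolle--Huber bound to show the two environments cannot be told apart, and then tune $\eps$.

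Fix an algorithm and write $\Prob_{\pm}$, $\E_{\pm}$ for the laws of the whole interaction under the environments $\pm\eps$. Let $N_{\mathrm{info}}$ be the number of rounds with $P_t\in\Xinfo$, and $N_-$, $N_+$ the numbers of rounds with $P_t\in\Xminus$ and $P_t\in\Xplus$.

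\textbf{Step 1 (regret decomposition).} By (A2) and (A3), under $+\eps$ every round in $\Xminus\cup\Xinfo$ costs at least $\min(c_{\mathrm{opt}}\eps,c_{\mathrm{gap}})$. Moreover, every round in $\Xinfo$ costs at least $c_{\mathrm{gap}}$. Hence
\[
R_+\ge c_{\mathrm{gap}}\E_+[N_{\mathrm{info}}]+c_{\mathrm{opt}}\eps\,\E_+[N_-],
\qquad
R_-\ge c_{\mathrm{gap}}\E_-[N_{\mathrm{info}}]+c_{\mathrm{opt}}\eps\,\E_-[N_+].
\]
Here I use that, under $+\eps$, points of $\Xminus$ are worse than the optimum by at least $c_{\mathrm{opt}}\eps$, which is how I read (A2).

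\textbf{Step 2 (information bound).} Apply the chain rule for KL divergence to the history. By (A4), rounds outside $\Xinfo$ contribute zero. By (A5), each round in $\Xinfo$ contributes at most $c_{\mathrm{info}}\eps^2$. This gives
\[
\KL(\Prob_+,\Prob_-)\le c_{\mathrm{info}}\eps^2\,\E_+[N_{\mathrm{info}}].
\]
Then either $\E_+[N_{\mathrm{info}}]$ is large, say at least $1/(c_{\mathrm{info}}\eps^2)$, so that $R_+\ge c_{\mathrm{gap}}/(c_{\mathrm{info}}\eps^2)$. Or the KL divergence is at most $1$.

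\textbf{Step 3 (indistinguishability).} In the second case I would use the event $E=\{N_-\ge T/2\}$ together with a Bretagnolle--Huber (or Pinsker) bound:
\[
\Prob_+(E)+\Prob_-(E^c)\ge \tfrac12 e^{-1}.
\]
On $E^c$ we have $N_++N_{\mathrm{info}}\ge T/2$. So
\[
R_++R_-\ge c_{\mathrm{opt}}\eps\,\tfrac T2\bigl(\Prob_+(E)+\Prob_-(E^c)\bigr)-(\text{info term}),
\]
and the $N_{\mathrm{info}}$ contribution under $-\eps$ is itself paid at rate $c_{\mathrm{gap}}\ge c_{\mathrm{opt}}\eps$. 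This yields
\[
\max(R_+,R_-)\gtrsim \min\Bigl\{\frac{c_{\mathrm{gap}}}{c_{\mathrm{info}}\eps^2},\; c_{\mathrm{opt}}\eps T\Bigr\}.
\]
To land exactly on the stated constants $4^{-4/3}$, I would instead use Pinsker with threshold $\KL\le 1/2$, or the template's own choice. I would then set $\eps=\eps_T$ to balance the two terms: $c_{\mathrm{gap}}/(4c_{\mathrm{info}}\eps^2)=c_{\mathrm{opt}}\eps T$ gives $\eps_T$ as stated. Plugging in produces the bound $c_{\mathrm{gap}}^{1/3}c_{\mathrm{opt}}^{2/3}c_{\mathrm{info}}^{-1/3}4^{-4/3}T^{2/3}$.

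The two conditions on $T$ ensure, respectively, that $\eps_T\le\eps_0$ and that $c_{\mathrm{opt}}\eps_T\le c_{\mathrm{gap}}$ (up to constants). The latter is used in Step 3 to absorb the $\Xinfo$ rounds into the wrong-side cost.

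The main obstacle is the careful bookkeeping of constants in Step 3, namely choosing the event, the divergence inequality and the threshold on $\E_+[N_{\mathrm{info}}]$ so as to exactly reproduce $4^{-4/3}$. The structure of the argument itself is routine.
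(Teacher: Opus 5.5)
The paper does not prove this statement; it cites Theorem~37 of \cite{bachoc2026repeated} as a black box. Your self-contained argument is the standard route and does give the $T^{2/3}$ rate: regret decomposition from (A2)--(A3), the KL chain rule with (A4)--(A5), a case split on $\E_+[N_{\mathrm{info}}]$, then change of measure. You also read the two conditions on $T$ correctly, as $\eps_T\le\eps_0$ and $c_{\mathrm{opt}}\eps_T\le c_{\mathrm{gap}}/2$.

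The gap is the constant, which is part of the statement and produces the $1/12800$ in Theorem~\ref{thm:smooth-five-tower-lower}. The target equals exactly $\frac14 c_{\mathrm{opt}}\eps_T T$, while $c_{\mathrm{gap}}/(c_{\mathrm{info}}\eps_T^2)=4c_{\mathrm{opt}}\eps_T T$, so the bottleneck is your second case. With Bretagnolle--Huber and $\KL\le1$ you only get $\max(R_+,R_-)\ge c_{\mathrm{opt}}\eps_T T/(8e)$. Your proposed fix falls short too. Pinsker with $\KL\le1/2$ gives $\Prob_+(E)+\Prob_-(E^c)\ge 1/2$ for $E=\{N_-\ge T/2\}$, hence only $\max(R_+,R_-)\ge c_{\mathrm{opt}}\eps_T T/8$. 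The missing factor $2$ comes from lower-bounding $\E_+[N_-]$ and $\E_-[N_++N_{\mathrm{info}}]$ by $\frac T2$ times the probability of an event. Moving the threshold away from $T/2$ only makes it worse.

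The fix is to compare expectations directly instead of event probabilities. Set $f:=N_++N_{\mathrm{info}}$. Since $f\in[0,T]$, you have $\E_-[f]\ge\E_+[f]-T\,\mathrm{TV}(\Prob_+,\Prob_-)$, and also $\E_+[N_-]=T-\E_+[f]$. Together with $c_{\mathrm{gap}}\ge c_{\mathrm{opt}}\eps$, this gives
\[
R_++R_-\ge c_{\mathrm{opt}}\eps\bigl(\E_+[N_-]+\E_-[N_++N_{\mathrm{info}}]\bigr)\ge c_{\mathrm{opt}}\eps T\bigl(1-\mathrm{TV}(\Prob_+,\Prob_-)\bigr).
\]
Now split on $\E_+[N_{\mathrm{info}}]$:
\begin{itemize}
\item If $\E_+[N_{\mathrm{info}}]\le 1/(2c_{\mathrm{info}}\eps^2)$, then $\KL(\Prob_+,\Prob_-)\le 1/2$. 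Pinsker gives $\mathrm{TV}\le 1/2$, so $\max(R_+,R_-)\ge c_{\mathrm{opt}}\eps T/4$, which at $\eps=\eps_T$ is exactly the claimed bound.
\item Otherwise $R_+>c_{\mathrm{gap}}/(2c_{\mathrm{info}}\eps_T^2)=2c_{\mathrm{opt}}\eps_T T$, which is larger than the claimed bound.
\end{itemize}
With this replacement your argument proves the theorem as stated. Unlike the paper, it does not rely on an external result.
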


\begin{proof}
This is Theorem 37 of \cite{bachoc2026repeated}.  We invoke it as a black-box information-theoretic template in Appendix~\ref{app:smooth-lower}; all constants are substituted explicitly there.
\end{proof}

\section{Smooth five-tower revealing-action lower bound}\label{app:smooth-lower}

\begin{theorem}[Smooth five-tower lower bound]
\label{thm:smooth-five-tower-lower}
Fix any H\"older order $\lambda\in[0,1]$.
Let $L\ge 20$.
For every horizon
\[
        T\ge 32
\]
and every randomized posted-price policy $\alpha$, there exists an independent i.i.d. bilateral-trade instance whose seller and buyer marginal densities are bounded by $20$ and hence by $L$, such that
\[
        R_\lambda(\alpha,T)
        \ge
        \frac{1}{12800}T^{2/3}.
\]
Equivalently,
\[
        \inf_{\alpha}\sup_{\nu\in\mathcal D_L}
        R_\lambda^\nu(\alpha,T)
        \ge
        \frac{1}{12800}T^{2/3},
        \qquad L\ge20,\quad T\ge32.
\]
The constants are uniform in $\lambda\in[0,1]$.
\end{theorem}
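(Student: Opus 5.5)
The plan is to verify Assumption~\ref{ass:generic-revealing-template} for an explicit family of smooth instances and then invoke Theorem~\ref{thm:generic-lower-template}. The target constant $1/12800$ should come out of substituting the explicit $c_{\mathrm{opt}},c_{\mathrm{gap}},c_{\mathrm{info}},\eps_0$. The constants must not depend on $\lambda\in[0,1]$.

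\textbf{Construction.} Both $\nu_S$ and $\nu_B$ will be mixtures of uniform ``towers'' of fixed width $w$ (say $w=1/20$ or a bit larger) and heights at most $20$, so every density is bounded by $20\le L$. I would place the seller towers near $0$ and the buyer towers near $1$, and add one pair of ``information'' towers in the middle. In environment $\pm\eps$, a mass $\eps$ is shifted between two adjacent sub-towers. The only prices whose feedback law depends on the sign are those lying inside the support of the perturbed sub-towers; these form $\Xinfo$, which gives (A4).

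\textbf{Reward structure.} The design should create two separated reward peaks, one in $\Xminus$ and one in $\Xplus$. They should be symmetric under the reflection $p\mapsto 1-p$ combined with swapping seller and buyer, so that $G_\lambda$ is equal at the two peaks when $\eps=0$. Shifting mass $\eps$ then tilts $G_\lambda$ by $\pm c\,\eps$ at the peaks in opposite directions, which gives (A1)--(A2). Concretely, I would compute $G_\lambda(p)=\E[\mathbb I\{S\le p\le B\}M_\lambda(p-S,B-p)]$ using the bounds $2^{-1/\lambda}(x+y)\le M_\lambda(x,y)\le (x+y)/2$ for $\lambda\in(0,1]$ and $M_0=\sqrt{xy}$. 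The key property is monotonicity in the valuations: moving seller mass downward, with the buyer distribution fixed, increases $p-S$ for every $p$ above the shifted tower. The derivative of $G_\lambda$ with respect to the perturbation at each peak is then an integral of $\partial_x M_\lambda$, which is at least $\tfrac12 M_\lambda^{1-\lambda}x^{\lambda-1}\ge$ a positive constant uniform in $\lambda$, because the relevant gaps $x,y$ are bounded away from $0$ and $1$ at the peaks.

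For (A3), the middle prices in $\Xinfo$ should have uniformly smaller $G_\lambda$, by a constant gap. I would obtain this by making $\Xinfo$ sit where the trade probability $F_S(p)F_B^\circ(p)$ is small: put little seller mass below $\Xinfo$, or place $\Xinfo$ beyond the bulk of the buyers.

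\textbf{KL bound.} For (A5), at $p\in\Xinfo$ the feedback is the pair of independent Bernoullis $\Ber(F_S(p))\otimes\Ber(F_B^\circ(p))$, with parameters perturbed by $O(\eps)$ and kept in $[c,1-c]$. Using $\KL(\Ber(a)\|\Ber(b))\le (a-b)^2/(b(1-b))$ gives $c_{\mathrm{info}}=O(1)$.

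\textbf{Main obstacle.} The hard step is the uniform-in-$\lambda$ verification of (A2) and (A3) with explicit constants, since $M_\lambda$ varies between the Nash mean and the arithmetic mean. I would first try to bound $G_\lambda$ at the peaks and on $\Xinfo$ only through the sandwich $\sqrt{xy}\le M_\lambda(x,y)\le (x+y)/2$, together with the increment lemma above. If that is too lossy, I would exploit the symmetric design, which makes the first-order tilt exactly antisymmetric, and bound the second-order terms by $O(\eps^2)$ using $\eps\le\eps_0$. The last step is to check that $T\ge32$ meets the threshold conditions of Theorem~\ref{thm:generic-lower-template}.
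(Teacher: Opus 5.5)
\textbf{Gap: the quantitative verification is missing.} Your architecture matches the paper's: a tower instance with densities bounded by $20$, a reflection-symmetric base, a seller-only mass shift confined to $\Xinfo$, the bound $\KL(\Ber(u)\|\Ber(v))\le (u-v)^2/(v(1-v))$ for (A5), and Theorem~\ref{thm:generic-lower-template}. But the statement fixes the density bound $20$, the threshold $T\ge 32$ and the constant $1/12800$. These come from an explicit instance with $\eps_0=1/50$, $c_{\mathrm{opt}}=1/200$, $c_{\mathrm{gap}}=1/2000$ and $c_{\mathrm{info}}=100$. You never fix an instance or verify (A1)--(A3), and that is the whole content of the proof; you name it as the main obstacle and leave it open.

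Even the qualitative layout is not settled. With seller mass near $0$ and buyer mass near $1$, $G_\lambda$ is concave on the price range where every pair trades, since $M_\lambda$ is concave for $\lambda\le 1$. It is also symmetric, so it is maximized at $1/2$, on the boundary between $\Xminus$ and $\Xplus$. Two separated peaks with a depressed middle need substantial crossing mass. In the paper this is a seller tower of mass $17/20$ near $3/5$ and a buyer tower of mass $17/20$ near $2/5$.

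\textbf{Two steps that would fail as written.} First, a seller-only shift moves $G_\lambda$ in the same direction at every price above the perturbed towers; this is your own monotonicity remark. So a positive lower bound on the $\eps$-slope ``at each peak'' does not give opposite tilts. For (A2) you need a lower bound on the \emph{difference} of slopes. It must hold uniformly over every location where a side maximizer can sit, for all $\lambda\in[0,1]$ and $\eps\le\eps_0$.
\begin{itemize}
\item The paper writes $G_{\lambda,\delta}=G_{\lambda,0}-\delta D_\lambda$ exactly and uses $G_{\lambda,0}(p)=G_{\lambda,0}(1-p)$.
\item It localizes the two side maxima in $I_-$ and $I_+$.
\item It proves $D_\lambda(p)-D_\lambda(1-p)>1/200$ on $I_-$, because buyers in $K_{2/5}$ trade at $p$ but not at $1-p$.
\end{itemize}
Comparing a maximizer with its reflection then gives (A1)--(A2). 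You could instead place the perturbed towers between the peaks, so that only one peak moves. You would then have to say so and redo (A3) and (A5) for that layout.

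Second, your fallback of expanding to second order in $\eps$ targets a non-issue. Since only the seller density is perturbed, $G_{\lambda,\eps}$ is exactly affine in $\eps$, so there are no second-order terms. The real loss is in the $\lambda$-sandwich $G_0\le G_\lambda\le G_1$, which can fail outright. In the paper's instance, with $e=1/50$,
\[
G_{0,-e}(1/3)<0.02002<0.0205<\tfrac{6269}{320000}+\tfrac{1}{1000}=G_{1,-e}(161/800),
\]
so comparing order $0$ at the left peak with order $1$ on $\Xinfo$ does not separate them. The paper's fix is an intermediate order. Since $M_{1/2}=\tfrac12(M_0+M_1)$ and $M_\lambda$ is nondecreasing in $\lambda$, one compares $G_0$ at the peak with $G_{1/2}$ on $\Xinfo$ for $\lambda\le 1/2$. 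For $\lambda\ge 1/2$ one compares $G_{1/2}$ at the peak with $G_1$ on $\Xinfo$. This yields $c_{\mathrm{gap}}=1/2000$ uniformly in $\lambda$.
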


\begin{remark}[What ``smooth'' means here]
We now explain how to make the densities used below continuous (or even $C^\infty$, if desired). First note that in fact, the densities are piecewise constant, and moreover, are everywhere bounded by $20$. As a consequence, the resulting cdfs are $20$-Lipschitz. If $C^\infty$ densities are desired, it suffices to replace each of the towers used below with a $C^\infty$-smooth, compactly supported mollified version of the original tower, keeping the same mass and its support inside the original interval, and having height at most $21$. For each fixed $\eps\in(0,1/50]$, the four reward-margin inequalities in Lemma~\ref{lem:reward-margins} continue to hold when the smoothing radius is sufficiently small, possibly depending on $\eps$: the proof gives strict slack uniformly in $\lambda\in[0,1]$, and bounded rewards make the change in each expected reward uniformly small as the product distributions converge in total variation. The proof below uses the original, piecewise-constant densities, which are allowed in the model with bounded densities considered by \cite{cesa2023smoothT34COLT}.
\end{remark}

\subsection{The five-tower construction}

For $\lambda\in[0,1]$, let
\[
M_0(x,y)=\sqrt{xy},
    \qquad
M_\lambda(x,y)=\left(\frac{x^\lambda+y^\lambda}{2}\right)^{1/\lambda}
    \quad (0<\lambda\le1)
\]
where $M_\lambda$ is the H\"older mean, and with the convention $0^\lambda=0$ for $\lambda>0$. The realized reward is
\[
    \gft_\lambda(p,s,b)
    =\mathbb{I}\{s\le p\le b\}M_\lambda(p-s,b-p).
\]
We set
\[
a=\frac{1}{10},\qquad c=\frac15,
    \qquad d=\frac{3}{10},
    \qquad r=\frac35,
    \qquad \eps_0=\frac1{50}
\]
and replace the five atoms
\[
    0,\frac1{10},\frac15,\frac3{10},\frac35
\]
on the seller side by five disjoint intervals, given by
\[
\begin{aligned}
J_0&=\left[0,\frac1{400}\right]=:[s_1^-,s_1^+],\\
J_a&=\left[\frac{79}{800},\frac{81}{800}\right]=:[s_2^-,s_2^+],\\
J_c&=\left[\frac{159}{800},\frac{161}{800}\right]=:[s_3^-,s_3^+],\\
J_d&=\left[\frac{239}{800},\frac{241}{800}\right]=:[s_4^-,s_4^+],\\
J_r&=\left[\frac{463}{800},\frac{497}{800}\right]=:[s_5^-,s_5^+].
\end{aligned}
\]
The five intervals on the buyer side are the reflected version of these intervals:
\[
\begin{aligned}
K_{2/5}&=\left[\frac{303}{800},\frac{337}{800}\right]=:[b_1^-,b_1^+],\\
K_{7/10}&=\left[\frac{559}{800},\frac{561}{800}\right]=:[b_2^-,b_2^+],\\
K_{4/5}&=\left[\frac{639}{800},\frac{641}{800}\right]=:[b_3^-,b_3^+],\\
K_{9/10}&=\left[\frac{719}{800},\frac{721}{800}\right]=:[b_4^-,b_4^+],\\
K_1&=\left[\frac{399}{400},1\right]=:[b_5^-,b_5^+].
\end{aligned}
\]
Now define the seller-side density by
\[
\begin{aligned}
f_{S,\eps}(s)
    &:=20\,\mathbb{I}_{J_0}(s)
    +(10-400\eps)\,\mathbb{I}_{J_a}(s)
    +(10+400\eps)\,\mathbb{I}_{J_c}(s)\\
    &\hspace{2cm}+20\,\mathbb{I}_{J_d}(s)
    +20\,\mathbb{I}_{J_r}(s)
\end{aligned}
\]
for $\eps\in[-1/50,1/50]$. We have
\[
\begin{aligned}
\int_{J_0}f_{S,\eps}&=\frac1{20},\qquad
\int_{J_a}f_{S,\eps}=\frac1{40}-\eps,\qquad
\int_{J_c}f_{S,\eps}=\frac1{40}+\eps,\\
\int_{J_d}f_{S,\eps}&=\frac1{20},\qquad
\int_{J_r}f_{S,\eps}=\frac{17}{20}
,
\end{aligned}
\]
and thus (by $|\eps|\le1/50$) the two perturbed heights satisfy
\[
    10-400\eps\in[2,18],
    \qquad
    10+400\eps\in[2,18],
\]
which in particular implies $0\le f_{S,\eps}\le20$. The buyer-side density is kept fixed to
\[
f_B(b)
    :=20\,\mathbb{I}_{K_{2/5}}(b)
    +20\,\mathbb{I}_{K_{7/10}}(b)
    +10\,\mathbb{I}_{K_{4/5}}(b)
    +10\,\mathbb{I}_{K_{9/10}}(b)
    +20\,\mathbb{I}_{K_1}(b),
\]
the reflected version of the unperturbed seller-side density, which ensures $0\le f_B\le20$. We take $S_\eps\perp B$. Now let us write
\[
G_{\lambda,\eps}(p)
    :=\mathbb E\bigl[\gft_\lambda(p,S_\eps,B)\bigr]
\]
for each $p\in[0,1]$. The two hard environments used in Theorem~\ref{thm:generic-lower-template} are $+\eps$ and $-\eps$.

The smoothed five-tower construction is shown in Figure~\ref{fig:five-tower-instance}. The perturbation only affects two of the five seller towers (centered at $a=1/10$ and $c=1/5$, respectively), keeping the buyer-side density fixed. The shaded interval is the revealing region $\Xinfo$.

\begin{figure}[t]
\centering
\includegraphics[width=0.88\textwidth]{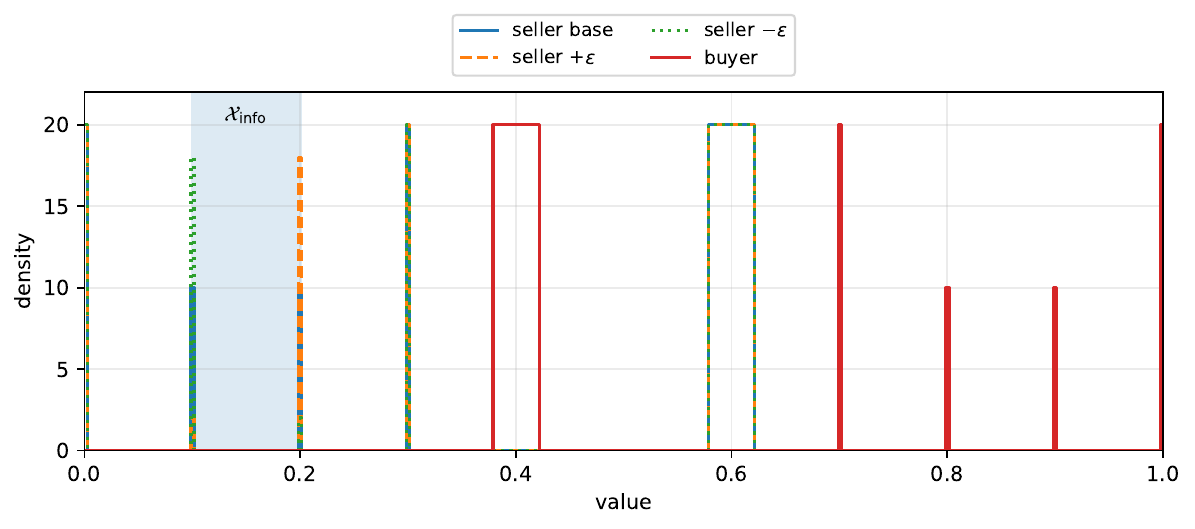}
\caption{The smooth five-tower lower-bound instance.  The plot shows the base seller density, the two perturbed seller densities $f_{S,\pm\eps_0}$ with $\eps_0=1/50$, and the fixed buyer density $f_B$.  Only the two seller towers around $a=1/10$ and $c=1/5$ are perturbed.  The shaded interval is the revealing region $\Xinfo=[79/800,161/800]$.}
\label{fig:five-tower-instance}
\end{figure}

\subsection{Action partition}

Let
\[
    \Xinfo:=\left[\frac{79}{800},\frac{161}{800}\right],
    \qquad
    \Xminus:=\left[0,\frac12\right)\setminus \Xinfo,
    \qquad
    \Xplus:=\left[\frac12,1\right].
\]
Then these three sets are disjoint and form a partition of $[0,1]$, where $\Xinfo$ is the smoothed version of the revealing interval, from the left edge of the tower around $a=1/10$ to the right edge of the tower around $c=1/5$.

In fact, Figure~\ref{fig:five-tower-curves} shows the geometric picture behind the lower bound. The top panel shows the perturbed reward curves $G_{\lambda,\pm\eps_0}$ for several values of $\lambda\in(0,1)$. The bottom panel shows the per-round information
\[
    \KL\bigl(P^p_{+\eps_0}\,\|\,P^p_{-\eps_0}\bigr),
\]
where $P^p_{\pm\eps_0}$ denotes the law of the threshold-feedback pair $(Y,Z)$ at price $p$ under the environments $\pm\eps_0$. Since neither marginal density depends on $\lambda$, this quantity does not depend on $\lambda$. Note that the KL divergence is positive only inside $\Xinfo$, yet this informative region stays separated from the maximizing peaks by a visible constant reward gap. This is the revealing-action geometry used in the proof.

\begin{figure}[t]
\centering
\includegraphics[width=0.88\textwidth]{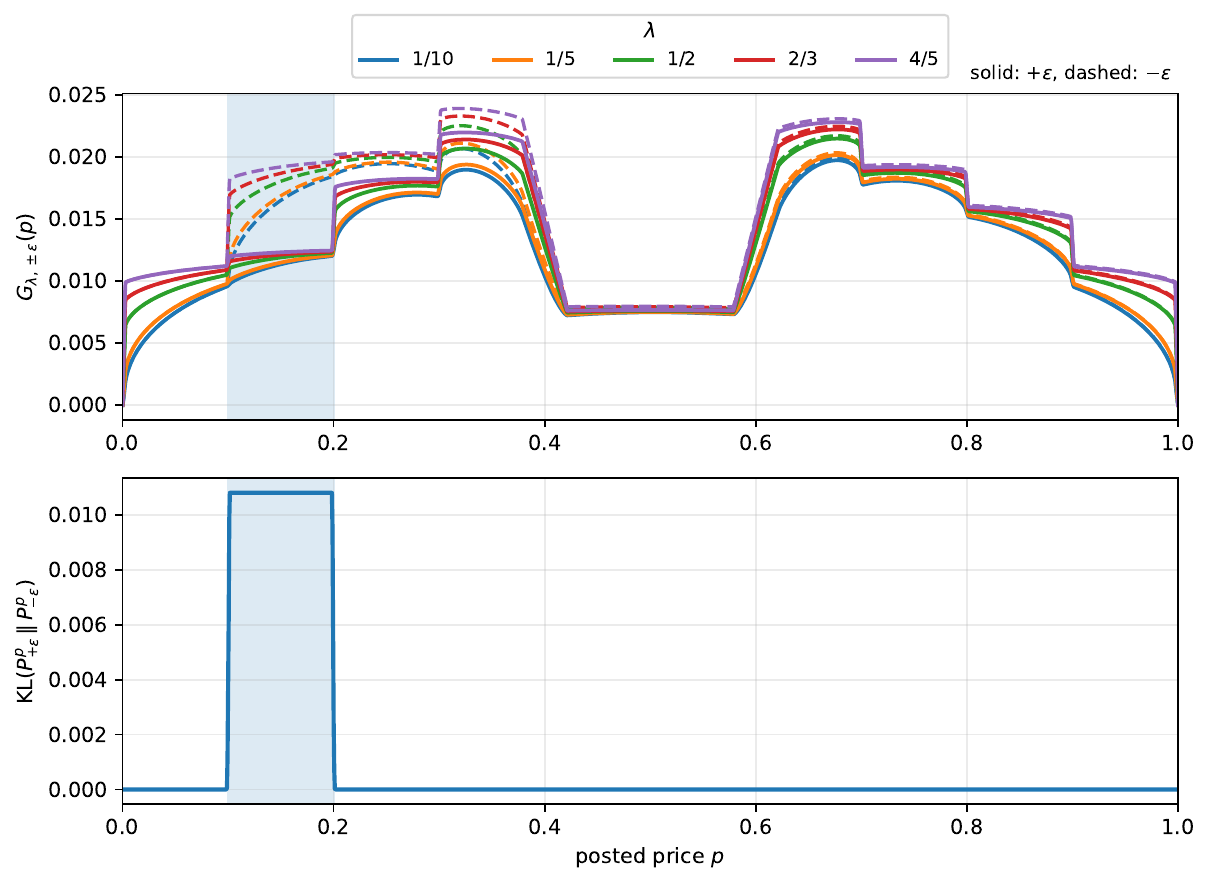}
\caption{Top: reward curves $G_{\lambda,\pm\eps_0}(p)$ for $\lambda\in\{1/10,1/5,1/2,2/3,4/5\}$, with solid lines for $+\eps_0$ and dashed lines for $-\eps_0$.  Bottom: the per-round KL divergence between the feedback laws under $+\eps_0$ and $-\eps_0$, which is independent of $\lambda$ because neither marginal density depends on $\lambda$.  Information is available only inside the shaded region $\Xinfo$, and prices in this region pay a visible $\Omega(1)$ reward gap compared with the maximizing peaks in the top panel.}
\label{fig:five-tower-curves}
\end{figure}

\subsection{The deterministic reward inequalities}

The next lemma is the deterministic core of the construction.
It is where the five-tower modification is used.

\begin{lemma}[Explicit reward margins]
\label{lem:reward-margins}
For every $\lambda\in[0,1]$ and every $\eps\in(0,1/50]$, the following four inequalities hold:
\begin{align}
    \sup_{p\in\Xplus}G_{\lambda,+\eps}(p)
    -\sup_{p\in\Xminus}G_{\lambda,+\eps}(p)
    &\ge \frac{1}{200}\eps,\label{eq:plus-side-margin}\\
    \sup_{p\in\Xminus}G_{\lambda,-\eps}(p)
    -\sup_{p\in\Xplus}G_{\lambda,-\eps}(p)
    &\ge \frac{1}{200}\eps,\label{eq:minus-side-margin}\\
    \sup_{p\in\Xplus}G_{\lambda,+\eps}(p)
    -\sup_{p\in\Xinfo}G_{\lambda,+\eps}(p)
    &\ge \frac{1}{2000},\label{eq:plus-info-margin}\\
    \sup_{p\in\Xminus}G_{\lambda,-\eps}(p)
    -\sup_{p\in\Xinfo}G_{\lambda,-\eps}(p)
    &\ge \frac{1}{2000}.\label{eq:minus-info-margin}
\end{align}
\end{lemma}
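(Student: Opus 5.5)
The plan is to exploit linearity of $G_{\lambda,\eps}$ in the seller distribution. Since $S_\eps\perp B$ and $f_{S,\eps}=f_{S,0}+400\eps(\mathbb I_{J_c}-\mathbb I_{J_a})$, we can write
\[
G_{\lambda,\eps}(p)=G_{\lambda,0}(p)+\eps\,D_\lambda(p),
\qquad
D_\lambda(p):=400\int\bigl(\mathbb I_{J_c}(s)-\mathbb I_{J_a}(s)\bigr)\,\E[\gft_\lambda(p,s,B)]\dd s .
\]
This is exact and affine in $\eps$. Moreover $|D_\lambda(p)|\le 1$, because $0\le\gft_\lambda\le 1$ and each perturbed tower carries mass $1/40$. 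The four inequalities then reduce to statements about the unperturbed curve $G_{\lambda,0}$ and the sign of $D_\lambda$ on each region.

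\textbf{Step 1: symmetry of the base instance.} The base seller density is the reflection $s\mapsto 1-s$ of $f_B$, and $\gft_\lambda(1-p,1-b,1-s)=\gft_\lambda(p,s,b)$ since $M_\lambda$ is symmetric. Hence $G_{\lambda,0}(p)=G_{\lambda,0}(1-p)$, and so
\[
\sup_{\Xminus\cup\Xinfo}G_{\lambda,0}=\sup_{\Xplus}G_{\lambda,0}=:G^\star_0 .
\]
I will locate the maximizers explicitly: a left peak near $p\approx 3/10$ (just above $J_d$, where trade with $S\in J_0\cup J_a\cup J_c\cup J_d$ and $B\ge 2/5$ happens), and its mirror near $7/10$.

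\textbf{Step 2: the margins \eqref{eq:plus-info-margin} and \eqref{eq:minus-info-margin}.} I will show that $\sup_{\Xinfo}G_{\lambda,0}\le G_0^\star-1/1000$ uniformly in $\lambda$. Then $|\eps D_\lambda|\le \eps\le 1/50$ would only give slack of order $1/50$, which is too large. So instead I will bound the sign-relevant part directly: on $\Xinfo\subset[0.09875,0.20125]$ the seller mass below $p$ is at most $1/20+1/20+\eps$, while at the peak near $3/10$ it is at least $1/20+1/20+1/20$. The trade gain $M_\lambda(p-s,b-p)$ is also comparable. Comparing the two candidate prices term by term should give an $\Omega(1)$ gap that absorbs the $\pm\eps$ contributions. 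To make this uniform in $\lambda\in[0,1]$, I will use $M_0\le M_\lambda\le M_1$ together with monotone comparisons at the evaluated points.

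\textbf{Step 3: the $\eps$ margins \eqref{eq:plus-side-margin} and \eqref{eq:minus-side-margin}.} For $p\ge 1/2$, and in particular at the right peak, every $s\in J_a\cup J_c$ trades whenever $B\ge p$. The gain $M_\lambda(p-s,b-p)$ is then decreasing in $s$, because $s\mapsto p-s$ is decreasing and $M_\lambda$ is increasing in its first argument. Since $J_c$ lies to the right of $J_a$, this gives $D_\lambda(p)<0$ on $\Xplus$. For $p\in\Xminus$ near $3/10$ the same effect appears, but it should be dominated by a positive contribution. The mechanism I expect is that at the right peak the buyer factor multiplying $M_\lambda$ is large, whereas at the left peak the relevant buyer towers give smaller weight. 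I must verify this carefully: the monotonicity argument alone gives the same sign on both sides, so the construction must use the different gain magnitudes at the two peaks.

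In practice I will compute $D_\lambda$ at the two peak prices and bound $D_\lambda(p_{\rm right})-D_\lambda(p_{\rm left})\le -1/100$. Since $\eps D_\lambda$ is affine in $\eps$, at $+\eps$ the right peak wins by at least $\eps/200$ after accounting for the shift of the argmax. At $-\eps$ the left peak wins by the mirrored bound. Controlling the sup rather than fixed points needs the Lipschitz bound of Lemma~\ref{lem:hard-lipschitz} (constant $20(2+1/\lambda)$), which blows up as $\lambda\to0$. I will therefore instead use sup-over-fixed-point bounds: lower-bound each sup by its value at the peak, and upper-bound the other sup by $G_0^\star+\eps\sup D_\lambda$ over that region.

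\textbf{Main obstacle.} Step 3 is the hard part: getting the sign and size of $\sup_{\Xminus}D_\lambda$ versus $D_\lambda$ on $\Xplus$, uniformly in $\lambda\in[0,1]$, using only the explicit towers. I would first try to reduce this to the endpoints $\lambda=0$ and $\lambda=1$ through monotonicity of $M_\lambda$ in $\lambda$, and fall back on explicit interval arithmetic over the tower supports if that reduction fails.
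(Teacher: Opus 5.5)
Step 3, which has to deliver the $\eps$-margins, has the asymmetry backwards. Since $\gft_\lambda(p,s,b)$ is nonincreasing in $s$, your $D_\lambda$ is nonpositive at \emph{every} price, not only on $\Xplus$. So there is no positive contribution on the left that could dominate. For \eqref{eq:plus-side-margin} you need the left side to be hurt \emph{more} by the $+\eps$ shift, i.e.\ $D_\lambda(p_{\mathrm{right}})-D_\lambda(p_{\mathrm{left}})\ge c>0$. The bound $D_\lambda(p_{\mathrm{right}})-D_\lambda(p_{\mathrm{left}})\le-1/100$ that you plan to prove would make the left peak win under $+\eps$, and it is false. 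Your heuristic is reversed as well. At left prices in $I_-:=(161/800,79/200)$, the perturbed sellers in $J_a\cup J_c$ trade with the heavy buyer tower $K_{2/5}$ of mass $17/20$, which never trades at prices $\ge1/2$. At right prices, only buyer mass at most $3/20$ is active. The paper isolates exactly this effect: keeping only the $K_{2/5}$ buyers at $p_0=79/200$ and using $\partial_xM_\lambda(x,y)\ge\frac12\sqrt{y/x}$ for $x\ge y$, it obtains $D(p_0)-D(1-p_0)>1/200$ (its $D$ is minus yours).

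Your control of the suprema is also missing its key step. The decoupled bound $\sup_{\Xminus}G_{\lambda,\eps}\le G_0^\star+\eps\sup_{\Xminus}D_\lambda$ is vacuous. On $[0,79/800)\subset\Xminus$ the perturbed sellers never trade, so $D_\lambda=0$ there, and your lower bound on the margin collapses to $\eps D_\lambda(p_{\mathrm{right}})$, which is negative. Evaluating $D_\lambda$ ``at the two peaks'' does not help either, because the peaks move with $\lambda$ and are not explicit. What is needed first is a localization of the side suprema to $I_-$ and $I_+=1-I_-$, uniform in $\lambda\in[0,1]$ and in the signed perturbation $\delta\in[-1/50,1/50]$. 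The paper obtains it from:
\begin{itemize}
\item monotonicity of $G_{0,\delta}$ and $G_{1,\delta}$ on $[0,161/800]$;
\item monotonicity of $G_{1,\delta}$ on the empty gaps $[79/200,1/2]$ and $[1/2,121/200]$, combined with $G_{0,\delta}\le G_{\lambda,\delta}\le G_{1,\delta}$;
\item the bound $D\le 1/200$ on $[639/800,1]$;
\item explicit evaluations.
\end{itemize}
This is followed by a pointwise comparison of each $p\in I_-$ with $1-p$. Every buyer-specific contribution to $D$ is nonincreasing in $p$ beyond $161/800$, so $D(p)-D(1-p)\ge D(p_0)-D(1-p_0)$ on all of $I_-$.

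Finally, your fallback of reducing to $\lambda\in\{0,1\}$ through $M_0\le M_\lambda\le M_1$ fails. Indeed $G_{0,0}(1/3)\approx0.01925<G_{1,0}(161/800)=6269/320000\approx0.01959$, so no $\lambda=0$ lower bound at the peak beats a $\lambda=1$ upper bound on $\Xinfo$. The paper needs the interior anchor $\lambda=1/2$, where $M_{1/2}=\frac12(M_0+M_1)$ makes $G_{1/2}$ explicit, and it treats $[0,1/2]$ and $[1/2,1]$ separately. The resulting slack, about $5.3\cdot10^{-4}$ against $1/2000$, also shows that the loose term-by-term comparison of Step 2 must be replaced by exact evaluation.
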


\begin{proof}
Write
\[
    e:=\frac1{50},\qquad q_0:=\frac{161}{800},\qquad
    p_0:=\frac{79}{200},\qquad
    p_-:=\frac13,\qquad p_+:=\frac23.
\]
We use a signed perturbation $\delta\in[-e,e]$. The unperturbed seller density and the buyer density are reflections of one another, so
\[
    G_{\lambda,0}(p)=G_{\lambda,0}(1-p).
\]
Let $U$ be uniform on $J_a$, independently of $B$. Since $U+1/10$ is uniform on $J_c$, define
\[
    D_\lambda(p):=
    \E\!\left[
    \gft_\lambda(p,U,B)-\gft_\lambda(p,U+1/10,B)
    \right].
\]
Increasing the seller's value cannot increase the reward, hence $D_\lambda(p)\ge0$. The dependence on the perturbation is exactly
\begin{equation}
    G_{\lambda,\delta}(p)
    =G_{\lambda,0}(p)-\delta D_\lambda(p).
\label{eq:margin-affine-perturbation}
\end{equation}
In particular, every fixed-price reward is affine in $\delta$.

\paragraph{Elementary endpoint bounds.}
We will only need explicit evaluations at orders $0$, $1/2$, and $1$. Recall that power means are nondecreasing in their order and that
\[
    G_{1/2,\delta}(p)
    =\frac12\bigl(G_{0,\delta}(p)+G_{1,\delta}(p)\bigr).
\]
At order zero, independence gives
\[
    G_{0,\delta}(p)=Q_S^\delta(p)Q_B(p),\qquad
    Q_S^\delta(p):=\E\sqrt{(p-S_\delta)_+},\qquad
    Q_B(p):=\E\sqrt{(B-p)_+}.
\]
Writing the seller intervals as $[s_i^-,s_i^+]$ with heights $h_i(\delta)$ and the buyer intervals as $[b_j^-,b_j^+]$ with heights $k_j$, their exact expressions are
\begin{align*}
    Q_S^\delta(p)
    &=\frac23\sum_{i=1}^5 h_i(\delta)
       \left[(p-s_i^-)_+^{3/2}-(p-s_i^+)_+^{3/2}\right],\\
    Q_B(p)
    &=\frac23\sum_{j=1}^5 k_j
       \left[(b_j^+-p)_+^{3/2}-(b_j^--p)_+^{3/2}\right].
\end{align*}
Direct calculation gives the following bounds. Each row means
$a/10^6<G_{0,\delta}(p)<b/10^6$:
\begin{center}
\begin{tabular}{cccc}
\toprule
$p$ & $\delta$ & $a$ & $b$\\
\midrule
$p_-$ & $-e$ & $20018$ & $20019$\\
$p_-$ & $e$  & $18483$ & $18484$\\
$p_+$ & $-e$ & $19333$ & $19334$\\
$p_+$ & $e$  & $19168$ & $19169$\\
$q_0$ & $-e$ & $18384$ & $18385$\\
$q_0$ & $0$  & $15542$ & $15543$\\
$q_0$ & $e$  & $12699$ & $12700$\\
\bottomrule
\end{tabular}
\end{center}

At order one, direct integration, or \eqref{eq:Gun}, gives
\begin{align*}
    G_{1,\delta}(q_0)&=\frac{6269}{320000}-\frac{\delta}{20},\\
    G_{1,\delta}(p_-)&=\frac{15217}{640000}-\frac{\delta}{20},
    &G_{1,\delta}(p_+)&=\frac{15217}{640000}-\frac{3\delta}{400},\\
    G_{1,\delta}(p_0)&=\frac{46113}{2560000}-\frac{27\delta}{800},
    &G_{1,\delta}(1-p_0)&=\frac{46113}{2560000}-\frac{3\delta}{400}.
\end{align*}
The preceding bounds imply, for every $\delta\in[-e,e]$,
\begin{align}
    G_{0,\delta}(p_-)-G_{1/2,\delta}(q_0)&>\frac1{2000},\notag\\
    G_{1/2,\delta}(p_-)-G_{1,\delta}(q_0)&>\frac1{2000},\notag\\
    G_{0,\delta}(p_-)-G_{1,\delta}(p_0)&>\frac1{1000},\notag\\
    G_{0,\delta}(p_+)-G_{1,\delta}(1-p_0)&>\frac1{1000}.
\label{eq:margin-endpoint-checks}
\end{align}
Indeed, each difference is affine in $\delta$, so it suffices to substitute $\delta=-e,e$. They also imply
\begin{align}
    G_{0,e}(p_+)-G_{1/2,0}(q_0)-\frac1{10000}&>\frac1{1000},\notag\\
    G_{1/2,e}(p_+)-G_{1,0}(q_0)-\frac1{10000}&>\frac1{1000}.
\label{eq:margin-right-endpoint-checks}
\end{align}

\paragraph{Localization of the best price on each side.}
First, $G_{0,\delta}$ and $G_{1,\delta}$ are nondecreasing on $[0,q_0]$. To check the order-zero assertion, for $0<p\le q_0$,
\[
    (Q_S^\delta)'(p)\ge\frac{Q_S^\delta(p)}{2p}.
\]
The buyer has mass $17/20$ in $K_{2/5}$ and mass $3/20$ in its other towers. In this price range, the respective distances from $p$ are at least $71/400$ and $199/400$. Since their square roots exceed $21/50$ and $7/10$, respectively,
\[
    Q_B(p)\ge\frac{231}{500},\qquad
    -2Q_B'(p)\le\frac{47}{21},\qquad
    \frac{47/21}{231/500}<\frac{800}{161}\le\frac1p.
\]
Thus $(Q_S^\delta Q_B)'(p)\ge0$. At order one, all buyers accept throughout $[0,q_0]$, and raising the price only adds nonnegative trade contributions. The identity for $G_{1/2,\delta}$ then gives its monotonicity as well. Consequently, using the first two inequalities of \eqref{eq:margin-endpoint-checks} for $\lambda\in[0,1/2]$ and $\lambda\in[1/2,1]$, respectively,
\begin{equation}
    G_{\lambda,\delta}(p_-)
    -\sup_{p\in[0,q_0]}G_{\lambda,\delta}(p)
    >\frac1{2000}.
\label{eq:margin-low-price-separation}
\end{equation}

There are no seller values in $[p_0,1/2]$, so $G_{1,\delta}$ is nonincreasing there. There are no buyer values in $[1/2,1-p_0]$, so $G_{1,\delta}$ is nondecreasing there. Since $G_{0,\delta}\le G_{\lambda,\delta}\le G_{1,\delta}$, the last two inequalities of \eqref{eq:margin-endpoint-checks} exclude these two central intervals from the respective side maxima.

It remains to exclude $p\ge1-q_0$ from the right-side maximum. For $x\ge y>0$ and $0\le\lambda\le1$,
\[
    \partial_x M_\lambda(x,y)
    =\frac12 M_\lambda(1,y/x)^{1-\lambda}\le\frac12,
\]
where the formula at $\lambda=0$ follows directly from the geometric mean. In the difference defining $D_\lambda(p)$, for $p\ge1-q_0$ all intermediate seller gains exceed the buyer gain. Also $\Prob(B\ge p)\le1/10$. Hence
\[
    0\le D_\lambda(p)\le\frac1{10}\cdot\frac12\cdot\frac1{10}
    =\frac1{200}.
\]
By reflection and \eqref{eq:margin-affine-perturbation},
\[
    G_{\lambda,\delta}(p)
    \le G_{\lambda,0}(1-p)+\frac1{10000}.
\]
For $\lambda\in[0,1/2]$, the right-hand side is at most $G_{1/2,0}(q_0)+1/10000$; for $\lambda\in[1/2,1]$, it is at most $G_{1,0}(q_0)+1/10000$. On the other hand, $D_\lambda\ge0$ implies $G_{\lambda,\delta}(p_+)\ge G_{\lambda,e}(p_+)$. The two inequalities of \eqref{eq:margin-right-endpoint-checks} therefore exclude $[1-q_0,1]$.

The rewards are continuous in the price, since the marginals have densities. The strict exclusions just proved show that the two side suprema are attained at prices in
\begin{equation}
    I_-:=(q_0,p_0)\subset\Xminus,
    \qquad
    I_+:=(1-p_0,1-q_0)\subset\Xplus,
\label{eq:margin-localized-peaks}
\end{equation}
respectively, for every $\lambda\in[0,1]$ and $\delta\in[-e,e]$.

\paragraph{Comparison of reflected prices.}
For fixed $s,b$ and $s+1/10<p<b$,
\[
    M_\lambda(p-s,b-p)-M_\lambda(p-s-1/10,b-p)
    =\int_0^{1/10}\partial_xM_\lambda(p-s-t,b-p)\,\dd t.
\]
The derivative in the integrand is
$\frac12 M_\lambda(1,(b-p)/(p-s-t))^{1-\lambda}$, which is nonincreasing in $p$. After $p$ passes $b$, the trade contribution drops to zero. Thus each buyer-specific contribution to $D_\lambda(p)$ is nonincreasing for $p>q_0$. For every $p\in I_-$, this gives
\[
    D_\lambda(p)-D_\lambda(1-p)
    \ge D_\lambda(p_0)-D_\lambda(1-p_0).
\]
All buyer-specific differences on the right are nonnegative. Buyers in $K_{2/5}$ never trade at $1-p_0$. Retaining only their contributions at $p_0$ therefore gives a lower bound. For $x\ge y>0$, the inequality $M_\lambda(x,y)\ge\sqrt{xy}$ implies
\[
    \partial_xM_\lambda(x,y)
    \ge\frac12\left(\frac yx\right)^{(1-\lambda)/2}
    \ge\frac12\sqrt{\frac yx}.
\]
Here $x=p_0-U-t\in[155/800,237/800]$ for $0\le t\le1/10$, while $y=b-p_0\in[0,21/800]$ for the contributing buyers in $K_{2/5}$, so $x\ge y$. Using their density $20$, we obtain
\begin{align*}
    D_\lambda(p_0)-D_\lambda(1-p_0)
    &\ge
    20\cdot\frac1{10}\cdot\frac1{2\sqrt{237/800}}
    \int_0^{21/800}\sqrt y\,\dd y\\
    &=\frac7{400}\sqrt{\frac7{79}}
    >\frac1{200}.
\end{align*}
The last inequality follows by squaring, since $343>316$. Hence
\begin{equation}
    D_\lambda(p)-D_\lambda(1-p)>\frac1{200}
    \qquad(p\in I_-).
\label{eq:margin-reflected-gap}
\end{equation}

Fix $\eps\in(0,e]$. Under $+\eps$, take a price attaining the left-side maximum, which lies in $I_-$ by \eqref{eq:margin-localized-peaks}. Its reflection improves its reward by
$\eps(D_\lambda(p)-D_\lambda(1-p))>\eps/200$.
Under $-\eps$, reflect a price attaining the right-side maximum instead; the improvement is again greater than $\eps/200$.
This proves \eqref{eq:plus-side-margin} and \eqref{eq:minus-side-margin}.
Finally, \eqref{eq:margin-low-price-separation} separates $p_-\in\Xminus$ from every price in $\Xinfo\subset[0,q_0]$ by more than $1/2000$ in both environments. It gives \eqref{eq:minus-info-margin} directly, and together with \eqref{eq:plus-side-margin} gives \eqref{eq:plus-info-margin}.
\end{proof}

\subsection{Verification of the template assumptions}

We now verify Assumption~\ref{ass:generic-revealing-template} of Theorem~\ref{thm:generic-lower-template} from Appendix~G of \cite{bachoc2026repeated} with
\[
        \eps_0=\frac1{50},
        \qquad
        c_{\mathrm{opt}}=\frac1{200},
        \qquad
        c_{\mathrm{gap}}=\frac1{2000},
        \qquad
        c_{\mathrm{info}}=100.
\]

\subsubsection{Optimal side and side separation}

By \eqref{eq:plus-side-margin}, under $+\eps$ the best action in $\Xplus$ beats the best action in $\Xminus$ by at least $\eps/200$.
By \eqref{eq:plus-info-margin}, the best action in $\Xplus$ also beats every action in $\Xinfo$ by at least $1/2000$.
Therefore
\[
    \sup_{p\in\Xplus}G_{\lambda,+\eps}(p)
    =\sup_{p\in[0,1]}G_{\lambda,+\eps}(p),
\]
and
\[
    \sup_{p\in\Xplus}G_{\lambda,+\eps}(p)
    -\sup_{p\in\Xminus}G_{\lambda,+\eps}(p)
    \ge \frac{1}{200}\eps.
\]
This proves the $+\eps$ parts of assumptions (A1) and (A2) with $c_{\mathrm{opt}}=1/200$.
The $-\eps$ parts follow symmetrically from \eqref{eq:minus-side-margin} and \eqref{eq:minus-info-margin}.

\subsubsection{Info actions are uniformly worse}

For $p\in\Xinfo$, \eqref{eq:plus-info-margin} gives
\[
    G_{\lambda,+\eps}^\star-G_{\lambda,+\eps}(p)
    \ge \frac{1}{2000},
\]
and \eqref{eq:minus-info-margin} gives
\[
    G_{\lambda,-\eps}^\star-G_{\lambda,-\eps}(p)
    \ge \frac{1}{2000}.
\]
Thus assumption (A3) holds with $c_{\mathrm{gap}}=1/2000$.

\subsubsection{Outside info, the feedback is sign-blind}

For a posted price $p$, the feedback is
\[
    F_t^\eps(p)=\bigl(\mathbb{I}\{S_t^\eps\le p\},\mathbb{I}\{p\le B_t\}\bigr).
\]
The buyer distribution is identical under $+\eps$ and $-\eps$.
The only difference between the seller densities is
\[
    f_{S,+\eps}(s)-f_{S,-\eps}(s)
    = -800\eps\,\mathbb{I}_{J_a}(s)+800\eps\,\mathbb{I}_{J_c}(s).
\]
Since $J_a$ and $J_c$ have the same length $1/400$, this signed density has total integral zero.  Hence the seller cdfs agree for every
\[
    p\notin\left[\frac{79}{800},\frac{161}{800}\right]=\Xinfo.
\]
Therefore the law of the feedback is the same under $+\eps$ and $-\eps$ for all $p\notin\Xinfo$.  Assumption (A4) holds.

\subsubsection{Info actions carry limited information}

It remains to verify assumption (A5).  Fix $p\in\Xinfo$ and let
\[
    q_+(p)=\mathbb P(S_{+\eps}\le p),
    \qquad
    q_-(p)=\mathbb P(S_{-\eps}\le p).
\]
The buyer bit has the same law under $+\eps$ and $-\eps$, and it is independent of the seller bit.  Hence
\[
    \KL\bigl(\mathcal L(F_t^{+\eps}(p))\,\|\,\mathcal L(F_t^{-\eps}(p))\bigr)
    =\KL\bigl(\Ber(q_+(p))\,\|\,\Ber(q_-(p))\bigr).
\]
For Bernoulli variables,
\[
    \KL(\Ber(u)\|\Ber(v))
    \le \frac{(u-v)^2}{v(1-v)}.
\]
On $\Xinfo$, the cdf difference satisfies
\[
    |q_+(p)-q_-(p)|\le 2\eps.
\]
Also, for every $p\in\Xinfo$ and every $\eps\in(0,1/50]$,
\[
    q_-(p)\in\left[\frac1{20},\frac1{10}\right].
\]
Consequently,
\[
    q_-(p)(1-q_-(p))
    \ge \frac1{20}\cdot\frac9{10}
    =\frac9{200}.
\]
Therefore
\[
    \KL\bigl(\mathcal L(F_t^{+\eps}(p))\,\|\,\mathcal L(F_t^{-\eps}(p))\bigr)
    \le
    \frac{4\eps^2}{9/200}
    =\frac{800}{9}\eps^2
    \le 100\eps^2.
\]
Thus (A5) holds with $c_{\mathrm{info}}=100$.

\subsection{Application of Theorem~\ref{thm:generic-lower-template}}

Theorem~\ref{thm:generic-lower-template} applies with
\[
        \eps_0=\frac1{50},
        \qquad
        c_{\mathrm{opt}}=\frac1{200},
        \qquad
        c_{\mathrm{gap}}=\frac1{2000},
        \qquad
        c_{\mathrm{info}}=100.
\]
Its threshold is
\begin{align*}
    T_0
    &=\max\left\{
        \frac{c_{\mathrm{gap}}}{4c_{\mathrm{info}}c_{\mathrm{opt}}\eps_0^3},
        \frac{2c_{\mathrm{opt}}^2}{c_{\mathrm{info}}c_{\mathrm{gap}}^2}
    \right\}\\
    &=\max\left\{
        \frac{1/2000}{4\cdot100\cdot(1/200)\cdot(1/50)^3},
        \frac{2\cdot(1/200)^2}{100\cdot(1/2000)^2}
    \right\}\\
    &=\max\left\{\frac{125}{4},2\right\}
    =\frac{125}{4}.
\end{align*}
For every $T\ge32$, Theorem~\ref{thm:generic-lower-template} chooses
\[
    \eps_T
    =\left(\frac{c_{\mathrm{gap}}}{4c_{\mathrm{info}}c_{\mathrm{opt}}T}\right)^{1/3}
    =\left(\frac{1}{4000T}\right)^{1/3}.
\]
Since $T\ge32$, the theorem guarantees $\eps_T\le\eps_0=1/50$.
Moreover, for every randomized policy $\alpha$,
\begin{align*}
    \max\left\{R_{\lambda,+\eps_T}(\alpha,T),R_{\lambda,-\eps_T}(\alpha,T)\right\}
    &\ge
    \frac{c_{\mathrm{gap}}^{1/3}c_{\mathrm{opt}}^{2/3}}{4^{4/3}c_{\mathrm{info}}^{1/3}}T^{2/3}\\
    &=
    \frac{(1/2000)^{1/3}(1/200)^{2/3}}{4^{4/3}\,100^{1/3}}T^{2/3}\\
    &=
    \frac{1}{2000\cdot4^{4/3}}T^{2/3}\\
    &\ge
    \frac{1}{12800}T^{2/3}.
\end{align*}
where the last inequality uses $4^{4/3}<6.4$.
Thus one of the two bounded-density instances, corresponding to $+\eps_T$ or $-\eps_T$, gives regret at least $T^{2/3}/12800$.
This proves Theorem~\ref{thm:smooth-five-tower-lower}.

\section{Proof of Theorem~\ref{thm:linear-arbitrary}}
\label{app:no-density}

In this appendix we show how to derive a lower bound on the regret for randomized algorithms from the needle instance, bypassing a reduction from a lower bound for either the pure-exploration or the fixed-confidence objective. This proves Theorem~\ref{thm:linear-arbitrary}. The construction is an exact atomic needle: for each hidden location $\theta$, the unique maximizer is $p=\theta$, and every other price has a fixed $\lambda$-fair gap.

Let $\lambda \in (0,1]$. To simplify the formulas for trades in which one participant receives no gain, we write
\[
        b_\lambda:=2^{-1/\lambda}.
\]
Thus $M_\lambda(x,0)=b_\lambda x$ and $M_\lambda(0,y)=b_\lambda y$.  Set
\[
        d:=\frac14,
        \qquad
        \Theta:=\left[\frac13,\frac23\right].
\]
For each $\theta\in\Theta$ and $\eta\in(0,1)$, the seller's value will be either $\theta-d$ or $\theta$, and the buyer's value will be either $\theta$ or $\theta+d$. We draw their values independently according to the following distributions:
\begin{equation}\label{eq:atomic-needle-seller}
        \Prob_\theta(S=\theta-d)=1-\eta,
        \qquad
        \Prob_\theta(S=\theta)=\eta,
\end{equation}
and
\begin{equation}\label{eq:atomic-needle-buyer}
        \Prob_\theta(B=\theta)=\eta,
        \qquad
        \Prob_\theta(B=\theta+d)=1-\eta.
\end{equation}
All atoms lie in $[0,1]$, since $\theta\in[1/3,2/3]$ and $d=1/4$.  We denote the corresponding product distribution by $\nu_\theta$.

\subsection{A fixed gap away from the hidden needle}

For $u\in[0,1]$, define
\[
        A_\lambda(u):=M_\lambda(1+u,1-u),
        \qquad
        R_\lambda(u):=M_\lambda(u,1-u).
\]
Note that it is elementary to show that $A_\lambda(u)$ is strictly decreasing in $u \in [0,1]$ for $\lambda \in (0,1)$, while $A_1(u)=1$.
If $0<\lambda<1$, set
\begin{equation}\label{eq:rho-explicit}
        \rho_\lambda:=\left((3/2)^\lambda-1\right)^{1/\lambda}
\end{equation}
and
\begin{equation}\label{eq:m-explicit}
        m_\lambda:=1-M_\lambda(1+\rho_\lambda,1-\rho_\lambda)>0.
\end{equation}
Define
\begin{equation}\label{eq:eta-small-explicit}
        \eta_\lambda:=\min\left\{\frac12,\frac{m_\lambda}{2(1+m_\lambda)}\right\},
        \qquad 0<\lambda<1.
\end{equation}
If $\lambda= 1$, define
\begin{equation}\label{eq:eta-large-explicit}
        \eta_\lambda:=\frac34.
\end{equation}
Finally, set
\begin{equation}\label{eq:gamma-lambda-explicit}
        \gamma_\lambda:=
        \begin{cases}
        \displaystyle
        \frac{d(1-\eta_\lambda)}{2}\min\{\eta_\lambda b_\lambda,m_\lambda\},
        &0<\lambda<1,\\[8pt]
        \displaystyle
        \frac{b_\lambda}{32},
        &\lambda = 1.
        \end{cases}
\end{equation}
This is the constant used in Theorem~\ref{thm:linear-arbitrary}.

\begin{lemma}[Atomic needle gap]\label{lem:atomic-needle-gap-direct}
For every $\lambda \in (0,1]$, every $\theta\in\Theta$, and $\eta=\eta_\lambda$, the instance $\nu_\theta$ in \eqref{eq:atomic-needle-seller}-\eqref{eq:atomic-needle-buyer} satisfies
\[
        G_\lambda^\theta(\theta)-G_\lambda^\theta(p)\ge \gamma_\lambda
        \qquad\text{for every }p\in[0,1]\setminus\{\theta\}.
\]
In particular, $\theta$ is the unique maximizer of $G_\lambda^\theta$.
\end{lemma}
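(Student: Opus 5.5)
The plan is a direct computation of $G_\lambda^\theta$ on the whole price range. Under $\nu_\theta$ there are only four valuation pairs:
\begin{itemize}
\item $(\theta-d,\theta+d)$, with probability $(1-\eta)^2$;
\item $(\theta-d,\theta)$ and $(\theta,\theta+d)$, each with probability $\eta(1-\eta)$;
\item $(\theta,\theta)$, with probability $\eta^2$.
\end{itemize}
Hence $G_\lambda^\theta(p)$ is an explicit finite sum. At the needle, using $M_\lambda(d,d)=d$ and $M_\lambda(d,0)=M_\lambda(0,d)=b_\lambda d$,
\[
G_\lambda^\theta(\theta)=(1-\eta)^2d+2\eta(1-\eta)b_\lambda d ,
\]
and the pair $(\theta,\theta)$ contributes $0$.

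The comparison prices split into four regions, and I would treat them in this order.
\begin{enumerate}
\item For $p<\theta-d$ or $p>\theta+d$, no trade occurs, so $G_\lambda^\theta(p)=0$. The gap is then the full value $G_\lambda^\theta(\theta)\ge (1-\eta)^2d$, which dominates $\gamma_\lambda$.
\item The endpoints $p=\theta\pm d$ only give trades in which one side gets zero. Their value is at most $(1-\eta)b_\lambda\cdot 2d$ times a probability bounded away from what $\theta$ achieves, and they are handled by the same inequalities as the interior.
\item By the symmetry $S\leftrightarrow 2\theta-B$ of the construction, $(\theta,\theta+d)$ mirrors $(\theta-d,\theta)$. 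So it suffices to treat $p\in(\theta-d,\theta)$.
\item For $p\in(\theta-d,\theta)$, write $p=\theta-ud$ with $u\in(0,1)$. Only the seller $\theta-d$ trades, and by homogeneity
\[
G_\lambda^\theta(p)=\eta(1-\eta)\,d\,M_\lambda(1-u,u)+(1-\eta)^2 d\,A_\lambda(u).
\]
The gap is therefore
\[
(1-\eta)d\Bigl[(1-\eta)\bigl(1-A_\lambda(u)\bigr)+\eta\bigl(2b_\lambda-M_\lambda(1-u,u)\bigr)\Bigr].
\]
\end{enumerate}

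For $\lambda=1$, $A_1\equiv1$ and $M_1(1-u,u)=1/2=b_1$. The bracket equals $\eta b_1$, giving the gap $\eta(1-\eta)d b_1=\tfrac{3}{64}b_1\ge b_1/32$ for $\eta=3/4$.

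For $0<\lambda<1$, I would split at $u=\rho_\lambda$.
\begin{itemize}
\item If $u\ge\rho_\lambda$: $A_\lambda$ is strictly decreasing, so $1-A_\lambda(u)\ge m_\lambda$. Crudely, $M_\lambda(1-u,u)\le 1/2$. The bracket is then at least $(1-\eta)m_\lambda-\eta/2$, and the choice $\eta\le m_\lambda/(2(1+m_\lambda))$ makes this at least $m_\lambda/2$.
\item If $u<\rho_\lambda$: the first term is nonnegative, so one needs $2b_\lambda-M_\lambda(1-u,u)\ge b_\lambda$, i.e. $M_\lambda(1-u,u)\le b_\lambda$. The bound $M_\lambda(1-u,u)\le M_\lambda(1,\rho_\lambda)$ together with the defining relation $\rho_\lambda^\lambda=(3/2)^\lambda-1$ should yield this.
\end{itemize}
Combining the two cases gives the bound $\tfrac{d(1-\eta)}{2}\min\{\eta b_\lambda,m_\lambda\}=\gamma_\lambda$. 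Uniqueness of the maximizer then follows from the strict positivity of the gap.

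The main obstacle is the small-$u$ case, where the price sits just below the needle. There the loss on the pair $(\theta-d,\theta+d)$ is negligible, so the gap must come entirely from the $\eta$-weighted one-sided trades. One must verify that $M_\lambda(1-u,u)$ stays below roughly $b_\lambda$ uniformly on $u<\rho_\lambda$.

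A naive bound by $M_\lambda(1,\rho_\lambda)=(3/4)^{1/\lambda}$ may be too weak for small $\lambda$. If so, the first thing I would try is keeping the $(1-\eta)(1-A_\lambda(u))$ term and trading it against the excess $M_\lambda(1-u,u)-b_\lambda$. Both are first order in $u$ near $0$, with explicit derivatives at $u=0$ computable from the formula for $\partial_xM_\lambda$ used in Lemma~\ref{lem:hard-lipschitz}. Failing that, I would move the threshold $\rho_\lambda$ accordingly, at the cost of changing the explicit constant.
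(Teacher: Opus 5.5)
Most of your plan is right: the four-pair computation, the reflection symmetry, the gap formula $d(1-\eta)\bigl[(1-\eta)(1-A_\lambda(u))+\eta\bigl(2b_\lambda-M_\lambda(1-u,u)\bigr)\bigr]$, the case $\lambda=1$, and the case $u\ge\rho_\lambda$ all go through. In the last case your bound $M_\lambda(1-u,u)\le\tfrac12$ is even slightly sharper than needed.

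The gap is the case $0<\lambda<1$, $u<\rho_\lambda$. There you require $M_\lambda(1-u,u)\le b_\lambda$, i.e.\ a bracket of at least $\eta b_\lambda$. This inequality is false for \emph{every} $u\in(0,1)$: since $u^\lambda>u$ and $(1-u)^\lambda>1-u$, we get $M_\lambda(1-u,u)^\lambda=\tfrac12\bigl(u^\lambda+(1-u)^\lambda\bigr)>\tfrac12=b_\lambda^\lambda$. Your fallbacks cannot repair this. Moving $\rho_\lambda$ is useless, because the inequality fails for all $u>0$. The proposed trade-off rests on a wrong expansion: $1-A_\lambda(u)=\tfrac{1-\lambda}{2}u^2+O(u^4)$ is second order with zero slope at $0$, whereas $M_\lambda(1-u,u)-b_\lambda\sim\tfrac{b_\lambda}{\lambda}u^\lambda$ has infinite slope at $0$. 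Hence the bracket is strictly below $\eta b_\lambda$ for all small $u>0$.

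The missing observation is that $\gamma_\lambda$ already contains a factor $\tfrac12$. On $(0,\rho_\lambda]$ you only need a bracket of at least $\eta b_\lambda/2$, i.e.\ $M_\lambda(1-u,u)\le\tfrac32 b_\lambda$, and this is exactly what $\rho_\lambda$ is built for. Your evaluation $M_\lambda(1,\rho_\lambda)=(3/4)^{1/\lambda}$ is a miscalculation; it would require $\rho_\lambda^\lambda=1/2$. In fact $M_\lambda(1,\rho_\lambda)^\lambda=\tfrac12\bigl(1+(3/2)^\lambda-1\bigr)=\tfrac12(3/2)^\lambda$, so $M_\lambda(1,\rho_\lambda)=\tfrac32 b_\lambda$, uniformly in $\lambda$.

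Monotonicity of $M_\lambda$ in each argument then gives $M_\lambda(1-u,u)\le M_\lambda(1,\rho_\lambda)=\tfrac32 b_\lambda$ for $u\le\rho_\lambda$, hence $2b_\lambda-M_\lambda(1-u,u)\ge b_\lambda/2$. Since the first term is nonnegative, the bracket is at least $\eta b_\lambda/2$. Combined with your bound $m_\lambda/2$ on $[\rho_\lambda,1]$, this yields $\gamma_\lambda$.

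With this correction your argument coincides with the paper's proof. The paper also avoids treating the endpoint separately: the interior formula extends to $u=1$ via $A_\lambda(1)=2b_\lambda$ and $M_\lambda(0,1)=b_\lambda$, so that point is covered by the $u\ge\rho_\lambda$ case.
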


\begin{proof}
Fix $\theta\in\Theta$ and write $\eta=\eta_\lambda$.  At the price $p=\theta$, the pair $(S,B)=(\theta-d,\theta+d)$ gives reward $d$, while the two boundary pairs $(\theta-d,\theta)$ and $(\theta,\theta+d)$ each give reward $b_\lambda d$.  The pair $(\theta,\theta)$ gives zero.  Hence
\begin{equation}\label{eq:G-needle-at-theta}
        G_\lambda^\theta(\theta)
        =d(1-\eta)^2+2d\eta(1-\eta)b_\lambda.
\end{equation}
If $p\notin[\theta-d,\theta+d]$, no pair trades, and therefore $G_\lambda^\theta(p)=0$.  Since $\gamma_\lambda\le d(1-\eta)b_\lambda/2$ in both cases of \eqref{eq:gamma-lambda-explicit}, this already gives the required bound.

It remains to consider $p\in[\theta-d,\theta+d]\setminus\{\theta\}$.  By symmetry, write
\[
        p=\theta+\sigma d u,
        \qquad
        \sigma\in\{-1,+1\},
        \quad u\in(0,1].
\]
The centered pair $(\theta-d,\theta+d)$ contributes $dA_\lambda(u)$.  Exactly one boundary pair contributes $dR_\lambda(u)$, and the other boundary pair does not trade.  Thus
\begin{equation}\label{eq:G-needle-away}
        G_\lambda^\theta(\theta+\sigma du)
        =d(1-\eta)^2A_\lambda(u)+d\eta(1-\eta)R_\lambda(u).
\end{equation}
Subtracting \eqref{eq:G-needle-away} from \eqref{eq:G-needle-at-theta} gives
\begin{equation}\label{eq:needle-gap-bracket-direct}
\begin{split}
        &G_\lambda^\theta(\theta)-G_\lambda^\theta(\theta+\sigma du)\\
        &\quad=d(1-\eta)
        \left[(1-\eta)(1-A_\lambda(u))+\eta(2b_\lambda-R_\lambda(u))\right].
\end{split}
\end{equation}

Assume first that $0<\lambda<1$.  For $u\in[0,\rho_\lambda]$, we have $A_\lambda(u)\le1$.  Also,
\[
        R_\lambda(u)^\lambda
        ={\frac{u^\lambda+(1-u)^\lambda}{2}}
        \le {\frac{u^\lambda+1}{2}}
        \le {\frac{(3/2)^\lambda}{2}}
        =\left({\frac{3b_\lambda}{2}}\right)^\lambda,
\]
where the second inequality uses $u\le\rho_\lambda$ and \eqref{eq:rho-explicit}.  Hence $R_\lambda(u)\le 3b_\lambda/2$, and the bracket in \eqref{eq:needle-gap-bracket-direct} is at least $\eta b_\lambda/2$.

For $u\in[\rho_\lambda,1]$, \eqref{eq:m-explicit} gives $1-A_\lambda(u)\ge m_\lambda$.  The crude bound $R_\lambda(u)\le1$ gives
\[
\begin{split}
        &(1-\eta)(1-A_\lambda(u))+\eta(2b_\lambda-R_\lambda(u))\\
        &\quad\ge (1-\eta)m_\lambda-\eta
        =m_\lambda-\eta(1+m_\lambda)
        \ge {\frac{m_\lambda}{2}},
\end{split}
\]
by \eqref{eq:eta-small-explicit}.  Combining the two regions and using $d(1-\eta)/2$ times the minimum of $\eta b_\lambda$ and $m_\lambda$ proves the claim for $0<\lambda<1$.

Assume now that $\lambda = 1$.  
Then $A_\lambda(u)\le A_\lambda(1)=2b_\lambda$ and $R_\lambda(u)\le R_\lambda(0)=b_\lambda$.  With $\eta=3/4$, the bracket in \eqref{eq:needle-gap-bracket-direct} is at least
\[
        -{\frac{1}{4}}(2b_\lambda-1)+{\frac{3}{4}}b_\lambda
        ={\frac{1+b_\lambda}{4}}
        \ge {\frac{b_\lambda}{2}}.
\]
Consequently,
\[
        G_\lambda^\theta(\theta)-G_\lambda^\theta(\theta+\sigma du)
        \ge d(1-\eta){\frac{b_\lambda}{2}}
        ={\frac{1}{4}}\cdot {\frac{1}{4}}\cdot {\frac{b_\lambda}{2}}
        ={\frac{b_\lambda}{32}}
        =\gamma_\lambda.
\]
Thus every price other than $\theta$ has a gap of at least $\gamma_\lambda$, no matter how close it is to $\theta$.
\end{proof}

\subsection{Randomization cannot hit a non-atomic needle}

The impossibility is a consequence of the "needle in a continuum": if the true value of the hidden parameter is drawn according to some non-atomic distribution on $\Theta$ (e.g. the uniform distribution), independently of the algorithm's private randomness, any given randomized threshold-feedback algorithm will choose a particular (random) price in each finite round, which will coincide with the hidden parameter with probability zero.

\begin{lemma}[Zero probability of an exact hit]\label{lem:exact-hit-zero}
Put the normalized Lebesgue prior on $\Theta$, and then run any possibly randomized posted-price algorithm on the instance $\nu_\theta$.  For every finite round $t$,
\[
        \Prob(P_t=\theta)=0,
\]
where the probability is over the prior on $\theta$, the learner's private randomization, and the valuation/feedback randomness.
\end{lemma}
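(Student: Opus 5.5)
The plan is to condition on the learner's private randomness and then show that, given this randomness, the price $P_t$ takes at most countably many values, while $\theta$ is drawn from a non-atomic prior. To make the countability claim precise, model a possibly randomized algorithm as a sequence of measurable maps $P_t=\pi_t(\xi,F_1,\ldots,F_{t-1})$. Here $\xi$ is the private random seed, drawn independently of $\theta$ and of the valuations, and $F_s=(Y_s,Z_s)\in\{0,1\}^2$ is the two-bit feedback at round $s$. The key observation is that the feedback history up to round $t-1$ takes values in the finite set $(\{0,1\}^2)^{t-1}$, of cardinality $4^{t-1}$. Consequently, for each fixed seed $\xi$, the price $P_t$ belongs to the finite set $\{\pi_t(\xi,h):h\in(\{0,1\}^2)^{t-1}\}$, and this set does not depend on $\theta$ at all.

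The steps, in order, are as follows. First, apply a union bound over histories:
\[
\{P_t=\theta\}\subseteq\bigcup_{h\in(\{0,1\}^2)^{t-1}}\{\pi_t(\xi,h)=\theta\}.
\]
Second, for each fixed $h$, the random variable $\pi_t(\xi,h)$ is a measurable function of $\xi$ alone, and $\xi$ is independent of $\theta$. Fubini's theorem then gives
\[
\Prob(\pi_t(\xi,h)=\theta)=\E_\xi\bigl[\Prob_\theta(\theta=\pi_t(\xi,h))\bigr]=0,
\]
because the normalized Lebesgue measure on $\Theta$ has no atoms. Third, summing over the $4^{t-1}$ histories yields $\Prob(P_t=\theta)=0$.

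The only care needed is in the modeling step. We have to make sure that the event $\{\pi_t(\xi,h)=\theta\}$ is jointly measurable in $(\xi,\theta)$, which holds since it is the preimage of the diagonal under a measurable map. We also have to justify the inclusion in the first step: on the event $\{P_t=\theta\}$, the realized history $h$ satisfies $\pi_t(\xi,h)=\theta$. This holds regardless of how the realized history depends on $\theta$ and on the valuations.

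The decoupling in the first step is exactly what lets us avoid analyzing how the feedback laws depend on $\theta$. I expect no real obstacle beyond stating the measurable representation of randomized algorithms cleanly. If one prefers not to use the finite-history bound, an alternative is to condition on $(\xi,F_1,\ldots,F_{t-1})$, but that would require controlling the conditional law of $\theta$ given the feedback. The union-bound route avoids this entirely.
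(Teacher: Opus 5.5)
Your proof is correct and matches the paper's argument: both split over the finitely many feedback strings and use that, for a fixed seed and string, the posted price is a single point, which the non-atomic prior on $\Theta$ hits with probability zero. The only difference is that you use a union bound (dominating the indicator of the realized history by $1$), whereas the paper weights each string by its likelihood $L_{t-1}(\theta;u,h)$, so you do not need to check that this likelihood is measurable in $\theta$.
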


\begin{proof}
It is enough to consider algorithms whose private randomization is represented by independent uniform $[0,1]$-valued seeds $U_1,U_2,\ldots$; this is without loss of generality by the standard randomization representation.  Fix a round $t$.  For a seed prefix $u=(u_1,\ldots,u_t)\in[0,1]^t$ and a feedback string
\[
        h=(y_1,z_1,\ldots,y_{t-1},z_{t-1})\in\{0,1\}^{2(t-1)},
\]
let $p_t(u,h)\in[0,1]$ be the price that the algorithm posts at round $t$ when the previous feedback string is $h$ and the seed prefix is $u$.

For any fixed value of $u$, $h$, and $\theta$, let $L_{t-1}(\theta;u,h)$ be the probability that the feedback string before round $t$ equals $h$, under instance $\nu_\theta$, when the seed prefix is $u$.  This likelihood is a bounded measurable function of $\theta$ with values in $[0,1]$.  Indeed, recursively fixing $u$ and $h$ fixes all previous prices, and each feedback outcome probability is a finite sum of products of the atom masses multiplied by threshold indicators involving $\theta-d,\theta,\theta+d$.

Let $\pi$ be normalized Lebesgue measure on $\Theta$.  By Tonelli's theorem,
\[
\begin{split}
        \Prob(P_t=\theta)
        &=\sum_{h\in\{0,1\}^{2(t-1)}}
        \int_{[0,1]^t}\int_\Theta
        \mathbb{I}\{p_t(u,h)=\theta\}
        L_{t-1}(\theta;u,h)
        \pi(\dd\theta)\dd u.
\end{split}
\]
For every fixed $u$ and $h$, the set $\{\theta\in\Theta:p_t(u,h)=\theta\}$ contains at most one point, hence has $\pi$-measure zero.  The inner integral is therefore zero, and the claim follows.
\end{proof}

\begin{proof}[Proof of Theorem~\ref{thm:linear-arbitrary}]
Fix $\lambda \in (0,1]$ and use the needle construction above with $\eta=\eta_\lambda$.  Let $\pi$ be normalized Lebesgue measure on $\Theta$, and consider the joint experiment in which $\theta\sim\pi$ is first drawn and the learner then interacts with $\nu_\theta$.

By Lemma~\ref{lem:atomic-needle-gap-direct}, for every $p\in[0,1]$,
\[
        G_\lambda^{\theta,\star}-G_\lambda^\theta(p)
        \ge \gamma_\lambda\mathbb{I}\{p\ne\theta\}.
\]
Therefore, using Lemma~\ref{lem:exact-hit-zero} at each round,
\[
\begin{split}
        \int_\Theta R_\lambda^\theta(\alpha,T)\,\pi(\dd\theta)
        &=\sum_{t=1}^T
        \E_{\theta\sim\pi,\alpha,\nu_\theta}
        \left[G_\lambda^{\theta,\star}-G_\lambda^\theta(P_t)\right]\\
        &\ge \gamma_\lambda\sum_{t=1}^T
        \Prob(P_t\ne\theta)
        =\gamma_\lambda T.
\end{split}
\]
Consequently, there exists at least one $\theta\in\Theta$ such that
\[
        R_\lambda^\theta(\alpha,T)\ge\gamma_\lambda T.
\]
The corresponding law $\nu_\theta$ is a product of two atomic marginals, hence an independent i.i.d. bilateral-trade instance with arbitrary marginals.  This proves the theorem with $c_\lambda=\gamma_\lambda$.
\end{proof}


\begin{thebibliography}{32}
	\providecommand{\natexlab}[1]{#1}
	\providecommand{\url}[1]{\texttt{#1}}
	\expandafter\ifx\csname urlstyle\endcsname\relax
	\providecommand{\doi}[1]{doi: #1}\else
	\providecommand{\doi}{doi: \begingroup \urlstyle{rm}\Url}\fi
	
	\bibitem[Auer et~al.(2002)Auer, Cesa-Bianchi, Freund, and
	Schapire]{auer2002nonstochastic}
	Peter Auer, Nicol{\`o} Cesa-Bianchi, Yoav Freund, and Robert~E. Schapire.
	\newblock The nonstochastic multiarmed bandit problem.
	\newblock \emph{SIAM Journal on Computing}, 32\penalty0 (1):\penalty0 48--77,
	2002.
	\newblock \doi{10.1137/S0097539701398375}.
	
	\bibitem[Azar et~al.(2022)Azar, Fiat, and Fusco]{azar2022alphaNeurIPS}
	Yossi Azar, Amos Fiat, and Federico Fusco.
	\newblock An $\alpha$-regret analysis of adversarial bilateral trade.
	\newblock In \emph{Advances in Neural Information Processing Systems},
	volume~35, pages 1685--1697, 2022.
	
	\bibitem[Bacchiocchi et~al.(2025)Bacchiocchi, Castiglioni, Colomboni, and
	Marchesi]{bacchiocchi2025NeurIPS}
	Francesco Bacchiocchi, Matteo Castiglioni, Roberto Colomboni, and Alberto
	Marchesi.
	\newblock Online bilateral trade with minimal feedback: Don’t waste
	seller’s time.
	\newblock In \emph{The Thirty-ninth Annual Conference on Neural Information
		Processing Systems (NeurIPS)}, 2025.
	
	\bibitem[Bachoc et~al.(2024{\natexlab{a}})Bachoc, Cesa-Bianchi, Cesari, and
	Colomboni]{bachoc2024fairNeurIPS}
	Fran\c{c}ois Bachoc, Nicol\`{o} Cesa-Bianchi, Tommaso Cesari, and Roberto
	Colomboni.
	\newblock Fair online bilateral trade.
	\newblock In \emph{Proceedings of the 38th International Conference on Neural
		Information Processing Systems (NeurIPS)}, 2024{\natexlab{a}}.
	
	\bibitem[Bachoc et~al.(2025{\natexlab{a}})Bachoc, Cesari, and
	Colomboni]{bachoc2025parcnxtICML}
	Fran\c{c}ois Bachoc, Tommaso Cesari, and Roberto Colomboni.
	\newblock A parametric contextual online learning theory of brokerage.
	\newblock In \emph{Proceedings of the 42nd International Conference on Machine
		Learning (ICML)}, volume 267 of \emph{Proceedings of Machine Learning
		Research}, pages 2293--2309. PMLR, 2025{\natexlab{a}}.
	
	\bibitem[Bachoc et~al.(2021{\natexlab{a}})Bachoc, Cesari, and
	Gerchinovitz]{bachoc2021instance}
	Fran{\c{c}}ois Bachoc, Tom Cesari, and S{\'e}bastien Gerchinovitz.
	\newblock Instance-dependent bounds for zeroth-order lipschitz optimization
	with error certificates.
	\newblock \emph{Advances in Neural Information Processing Systems},
	34:\penalty0 24180--24192, 2021{\natexlab{a}}.
	
	\bibitem[Bachoc et~al.(2021{\natexlab{b}})Bachoc, Cesari, and
	Gerchinovitz]{bachoc2021sample}
	Fran{\c{c}}ois Bachoc, Tommaso Cesari, and S{\'e}bastien Gerchinovitz.
	\newblock The sample complexity of level set approximation.
	\newblock In \emph{International Conference on Artificial Intelligence and
		Statistics}, pages 424--432. PMLR, 2021{\natexlab{b}}.
	
	\bibitem[Bachoc et~al.(2024{\natexlab{b}})Bachoc, Cesari, Colomboni, and
	Paudice]{bachoc2024zerothTMLR}
	Fran{\c{c}}ois Bachoc, Tommaso Cesari, Roberto Colomboni, and Andrea Paudice.
	\newblock A theoretical framework for zeroth-order budget convex optimization.
	\newblock \emph{Transactions on Machine Learning Research}, 2024{\natexlab{b}}.
	
	\bibitem[Bachoc et~al.(2025{\natexlab{b}})Bachoc, Boli{\'c}, Cesari, and
	Colomboni]{bachoc2025online}
	Fran{\c{c}}ois Bachoc, Nata{\v{s}}a Boli{\'c}, Tommaso Cesari, and Roberto
	Colomboni.
	\newblock An online learning theory of brokerage.
	\newblock \emph{HAL preprint hal-04987905}, 2025{\natexlab{b}}.
	
	\bibitem[Bachoc et~al.(2025{\natexlab{c}})Bachoc, Cesari, and
	Colomboni]{bachoc2025nonparcnxtAISTATS}
	Fran{\c{c}}ois Bachoc, Tommaso Cesari, and Roberto Colomboni.
	\newblock A tight regret analysis of non-parametric repeated contextual
	brokerage.
	\newblock In \emph{Proceedings of The 28th International Conference on
		Artificial Intelligence and Statistics (AISTATS)}, volume 258 of
	\emph{Proceedings of Machine Learning Research}, pages 2836--2844. PMLR,
	2025{\natexlab{c}}.
	
	\bibitem[Bachoc et~al.(2026)Bachoc, Colomboni, and
	Kaufmann]{bachoc2026repeated}
	Fran{\c{c}}ois Bachoc, Roberto Colomboni, and Emilie Kaufmann.
	\newblock Repeated bilateral trade: The quest for fairness.
	\newblock \emph{arXiv preprint arXiv:2606.15369}, 2026.
	
	\bibitem[Bart{\'o}k et~al.(2014)Bart{\'o}k, Foster, P{\'a}l, Rakhlin, and
	Szepesv{\'a}ri]{bartok2014partial}
	G{\'a}bor Bart{\'o}k, Dean~P. Foster, D{\'a}vid P{\'a}l, Alexander Rakhlin, and
	Csaba Szepesv{\'a}ri.
	\newblock Partial monitoring---classification, regret bounds, and algorithms.
	\newblock \emph{Mathematics of Operations Research}, 39\penalty0 (4):\penalty0
	967--997, 2014.
	\newblock \doi{10.1287/moor.2014.0663}.
	
	\bibitem[Bernasconi et~al.(2024)Bernasconi, Castiglioni, Celli, and
	Fusco]{bernasconi2024globalSTOC}
	Martino Bernasconi, Matteo Castiglioni, Andrea Celli, and Federico Fusco.
	\newblock No-regret learning in bilateral trade via global budget balance.
	\newblock In \emph{Proceedings of the 56th Annual ACM Symposium on Theory of
		Computing (STOC)}, pages 247--258, 2024.
	
	\bibitem[Boli{\'c} et~al.(2024)Boli{\'c}, Cesari, and
	Colomboni]{bolic2024onlineAAMAS}
	Natasa Boli{\'c}, Tommaso Cesari, and Roberto Colomboni.
	\newblock An online learning theory of brokerage.
	\newblock In \emph{Proceedings of the 23rd International Conference on
		Autonomous Agents and Multiagent Systems (AAMAS)}, pages 216--224, 2024.
	
	\bibitem[Boli\'{c} et~al.(2025)Boli\'{c}, Cesari, Colomboni, and
	Paravalos]{bolic2025newsvendorNeurIPS}
	Natasa Boli\'{c}, Tommaso Cesari, Roberto Colomboni, and Christian Paravalos.
	\newblock Online learning in the repeated mediated newsvendor problem.
	\newblock In \emph{The Thirty-ninth Annual Conference on Neural Information
		Processing Systems (NeurIPS)}, 2025.
	
	\bibitem[Boucheron et~al.(2013)Boucheron, Lugosi, and
	Massart]{boucheron2013concentration}
	St{\'e}phane Boucheron, Gabor Lugosi, and Pascal Massart.
	\newblock \emph{{Concentration inequalities : a non asymptotic theory of
			independence}}.
	\newblock {Oxford University Press}, 2013.
	
	\bibitem[Bouttier et~al.(2020)Bouttier, Cesari, Ducoffe, and
	Gerchinovitz]{bouttier2020regret}
	Cl{\'e}ment Bouttier, Tommaso Cesari, M{\'e}lanie Ducoffe, and S{\'e}bastien
	Gerchinovitz.
	\newblock Regret analysis of the {Piyavskii-Shubert} algorithm for global
	{Lipschitz} optimization.
	\newblock \emph{arXiv preprint arXiv:2002.02390}, 2020.
	
	\bibitem[Bubeck et~al.(2011)Bubeck, Munos, Stoltz, and
	Szepesv{\'a}ri]{bubeck2011xarmed}
	S{\'e}bastien Bubeck, R{\'e}mi Munos, Gilles Stoltz, and Csaba Szepesv{\'a}ri.
	\newblock {X}-armed bandits.
	\newblock \emph{Journal of Machine Learning Research}, 12\penalty0
	(46):\penalty0 1655--1695, 2011.
	
	\bibitem[Cesa-Bianchi et~al.(2021)Cesa-Bianchi, Cesari, Colomboni, Fusco, and
	Leonardi]{cesa2021bilateralEC}
	Nicol{\`o} Cesa-Bianchi, Tommaso~R Cesari, Roberto Colomboni, Federico Fusco,
	and Stefano Leonardi.
	\newblock A regret analysis of bilateral trade.
	\newblock In \emph{Proceedings of the 22nd ACM Conference on Economics and
		Computation}, EC '21, pages 289--309. Association for Computing Machinery,
	2021.
	
	\bibitem[Cesa-Bianchi et~al.(2023)Cesa-Bianchi, Cesari, Colomboni, Fusco, and
	Leonardi]{cesa2023smoothT34COLT}
	Nicol{\`o} Cesa-Bianchi, Tommaso~R. Cesari, Roberto Colomboni, Federico Fusco,
	and Stefano Leonardi.
	\newblock Repeated bilateral trade against a smoothed adversary.
	\newblock In \emph{Proceedings of Thirty Sixth Conference on Learning Theory
		(COLT)}, volume 195 of \emph{Proceedings of Machine Learning Research}, pages
	1095--1130. PMLR, 2023.
	
	\bibitem[Cesa-Bianchi et~al.(2024{\natexlab{a}})Cesa-Bianchi, Cesari,
	Colomboni, Fusco, and Leonardi]{cesa2024bilateralMOR}
	Nicol\`{o} Cesa-Bianchi, Tommaso Cesari, Roberto Colomboni, Federico Fusco, and
	Stefano Leonardi.
	\newblock Bilateral trade: A regret minimization perspective.
	\newblock \emph{Mathematics of Operations Research}, 49\penalty0 (1):\penalty0
	171–203, 2024{\natexlab{a}}.
	
	\bibitem[Cesa-Bianchi et~al.(2024{\natexlab{b}})Cesa-Bianchi, Cesari,
	Colomboni, Fusco, and Leonardi]{cesa2024smoothT34JMLR}
	Nicol{{\`o}} Cesa-Bianchi, Tommaso Cesari, Roberto Colomboni, Federico Fusco,
	and Stefano Leonardi.
	\newblock Regret analysis of bilateral trade with a smoothed adversary.
	\newblock \emph{Journal of Machine Learning Research (JMLR)}, 25\penalty0
	(234):\penalty0 1--36, 2024{\natexlab{b}}.
	
	\bibitem[Cesa-Bianchi et~al.(2025)Cesa-Bianchi, Cesari, Colomboni, Foscari, and
	Pathak]{cesa2025marketCOLT}
	Nicol{\`o} Cesa-Bianchi, Tommaso Cesari, Roberto Colomboni, Luigi Foscari, and
	Vinayak Pathak.
	\newblock Market making without regret.
	\newblock In \emph{Proceedings of the Thirty-Eighth Conference on Learning
		Theory}, volume 291 of \emph{Proceedings of Machine Learning Research}, pages
	799--837, 2025.
	
	\bibitem[Cesari and Colomboni(2025)]{cesari2025volumeICLR}
	Tommaso Cesari and Roberto Colomboni.
	\newblock An online learning theory of trading-volume maximization.
	\newblock In \emph{International Conference on Learning Representations
		(ICLR)}, volume 2025, 2025.
	
	\bibitem[Cesa‐Bianchi et~al.(2025)Cesa‐Bianchi, Colomboni, and
	Kasy]{Cesa2025econometrica}
	Nicolò Cesa‐Bianchi, Roberto Colomboni, and Maximilian Kasy.
	\newblock Adaptive maximization of social welfare.
	\newblock \emph{Econometrica}, 93\penalty0 (3):\penalty0 1073--1104, 2025.
	
	\bibitem[Chen et~al.(2026)Chen, Jin, Lu, and Zhang]{chen2026tightICALP}
	Houshuang Chen, Yaonan Jin, Pinyan Lu, and Chihao Zhang.
	\newblock Tight regret bounds for fixed-price bilateral trade.
	\newblock In \emph{53rd International Colloquium on Automata, Languages, and
		Programming (ICALP 2026)}, Leibniz International Proceedings in Informatics
	(LIPIcs), 2026.
	
	\bibitem[Cohn et~al.(1996)Cohn, Ghahramani, and Jordan]{cohn1996active}
	David~A. Cohn, Zoubin Ghahramani, and Michael~I. Jordan.
	\newblock Active learning with statistical models.
	\newblock \emph{Journal of Artificial Intelligence Research}, 4:\penalty0
	129--145, 1996.
	
	\bibitem[De~Freitas et~al.(2012)De~Freitas, Smola, and
	Zoghi]{de2012exponential}
	Nando De~Freitas, Alex Smola, and Masrour Zoghi.
	\newblock Exponential regret bounds for {Gaussian} process bandits with
	deterministic observations.
	\newblock \emph{International Conference on Machine Learning (ICML)}, 2012.
	
	\bibitem[He et~al.(2024)He, Jiang, Lam, and Fu]{he2024adaptive}
	Shengyi He, Guangxin Jiang, Henry Lam, and Michael~C. Fu.
	\newblock Adaptive importance sampling for efficient stochastic root finding
	and quantile estimation.
	\newblock \emph{Operations Research}, 72\penalty0 (6):\penalty0 2612--2630,
	2024.
	\newblock \doi{10.1287/opre.2023.2484}.
	
	\bibitem[Horvitz and Thompson(1952)]{horvitz1952generalization}
	Daniel~G. Horvitz and Donovan~J. Thompson.
	\newblock A generalization of sampling without replacement from a finite
	universe.
	\newblock \emph{Journal of the American Statistical Association}, 47\penalty0
	(260):\penalty0 663--685, 1952.
	
	\bibitem[McDiarmid(1989)]{mcdiarmid1989bounded}
	Colin McDiarmid.
	\newblock On the method of bounded differences.
	\newblock In J.~Siemons, editor, \emph{Surveys in Combinatorics, 1989}, volume
	141 of \emph{London Mathematical Society Lecture Note Series}, pages
	148--188. Cambridge University Press, 1989.
	\newblock \doi{10.1017/CBO9781107359949.008}.
	
	\bibitem[Munos(2011)]{munos2011optimistic}
	R{\'e}mi Munos.
	\newblock Optimistic optimization of a deterministic function without the
	knowledge of its smoothness.
	\newblock \emph{Advances in neural information processing systems}, 24, 2011.
	
\end{thebibliography}
\end{document}